\renewcommand*\libertine@figurestyle{LF}
\renewcommand*\libertine@figurestyle{OsF}
\definecolor{green}{RGB}{0,127,0}
\definecolor{red}{RGB}{191,0,0}
\theoremstyle{plain}
\newtheorem{lemma}{Lemma}[section]
\newaliascnt{theorem}{lemma}
\newtheorem{theorem}[theorem]{Theorem}
\newaliascnt{theoremdefinition}{lemma}
\newtheorem{theoremdefinition}[theoremdefinition]{Theorem-Definition}
\newaliascnt{corollary}{lemma}
\newtheorem{corollary}[corollary]{Corollary}
\newaliascnt{proposition}{lemma}
\newtheorem{proposition}[proposition]{Proposition}
\newaliascnt{problem}{lemma}
\newaliascnt{question}{lemma}
\newaliascnt{conjecture}{lemma}
\newaliascnt{definition}{lemma}
\newtheorem{definition}[definition]{Definition}
\newaliascnt{assumption}{lemma}
\newaliascnt{claim}{lemma}
\newtheorem{claim}[claim]{Claim}
\theoremstyle{remark}
\newtheorem{remark}{Remark}
\newtheorem{example}{Example}
\crefname{theorem}{theorem}{theorems}
\Crefname{theorem}{Theorem}{Theorems}
\crefname{lemma}{lemma}{lemmas}
\Crefname{lemma}{Lemma}{Lemmas}
\crefname{corollary}{corollary}{corollaries}
\Crefname{corollary}{Corollary}{Corollaries}
\crefname{proposition}{proposition}{propositions}
\Crefname{proposition}{Proposition}{Propositions}
\crefname{problem}{problem}{problems}
\Crefname{problem}{Problem}{Problems}
\crefname{question}{question}{questions}
\Crefname{question}{Question}{Questions}
\crefname{conjecture}{conjecture}{conjectures}
\Crefname{conjecture}{Conjecture}{Conjectures}
\crefname{definition}{definition}{definitions}
\Crefname{definition}{Definition}{Definitions}
\crefname{assumption}{assumption}{assumptions}
\Crefname{assumption}{Assumption}{Assumptions}
\crefname{claim}{claim}{claims}
\Crefname{claim}{Claim}{Claims}
\crefname{theoremdefinition}{theorem-definition}{theorem-definitions}
\Crefname{theoremdefinition}{Theorem-Definition}{Theorem-Definitions}
\crefname{remark}{remark}{remarks}
\Crefname{remark}{Remark}{Remarks}
\crefname{example}{example}{examples}
\Crefname{example}{Example}{Examples}
\DeclareMathOperator{\Sym}{Sym}
\DeclareMathOperator{\Alpha}{Alpha}
\DeclareMathOperator{\Beta}{Beta}
\DeclareMathOperator{\Planch}{Planch}
\DeclareMathOperator{\dd}{d}
\DeclareMathOperator{\wt}{wt}
\DeclareMathOperator{\topp}{top}
\DeclareMathOperator{\GL}{GL}
\DeclareMathOperator{\Or}{O}
\DeclareMathOperator{\U}{U}
\newcommand{\GT}{\mathbb{S}}
\newcommand{\C}{\mathbb{C}}
\newcommand{\R}{\mathbb{R}}
\newcommand{\Q}{\mathbb{Q}}
\newcommand{\Z}{\mathbb{Z}}
\newcommand{\N}{\mathbb{N}}
\newcommand{\HH}{\mathbb{H}}
\newcommand{\PP}{\mathbb{P}}
\newcommand{\E}{\mathbb{E}}
\newcommand{\Y}{\mathbb{Y}}
\newcommand{\LL}{\mathscr{L}}
\newcommand{\ev}{\text{ev}}
\newcommand{\xx}{\mathbf{x}}
\newcommand{\yy}{\mathbf{y}}
\newcommand{\Luk}{\mathbf{L}}
\newcommand{\la}{\lambda}
\newcommand{\Prob}{\mathbb P}
\newcommand{\Sy}[1]{\mathfrak{S}_{#1}}
\title[$N$-particle systems at high temperature]{Discrete $N$-particle systems at high temperature through Jack generating functions}
\author[C.~Cuenca]{Cesar Cuenca}
\address{
Department of Mathematics,
The Ohio State University,
231 West 18th Avenue,
Columbus, OH 43210, USA.
}
\email{cesar.a.cuenk@gmail.com}
\author[M.~Dołęga]{Maciej Dołęga}
\address{
Institute of Mathematics, 
Polish Academy of Sciences, 
ul. Śniadeckich 8, 
00-956 Warszawa, Poland.
}
\email{mdolega@impan.pl}
\thanks{CC was partially supported by the NSF grant DMS-2348139 and by the Simons Foundation's Travel Support for Mathematicians grant MP-TSM-00006777. This research was funded in whole or in part by {\it Narodowe Centrum Nauki}, grant 2021/42/E/ST1/00162. For the purpose of Open Access, the author has applied a CC-BY public copyright licence to any Author Accepted Manuscript (AAM) version arising from this submission.}
\begin{document}
\emergencystretch 3em

\begin{abstract}
We find necessary and sufficient conditions for the Law of Large Numbers for random discrete $N$-particle systems with the deformation (inverse temperature) parameter $\theta$, as their size $N$ tends to infinity simultaneously with the inverse temperature going to zero.
Our conditions are expressed in terms of the Jack generating functions, and our analysis is based on the asymptotics of the action of Cherednik operators obtained via Hecke relations.
We apply the general framework to obtain the LLN for a large class of Markov chains of $N$ nonintersecting particles with interaction of log-gas type, and the LLN for the multiplication of Jack polynomials, as the inverse temperature tends to zero. We express the answer in terms of novel one-parameter deformations of cumulants and their description provided by us recovers previous work by Bufetov--Gorin~\cite{BufetovGorin2015b} on quantized free cumulants when $\theta=1$, and by Benaych-Georges--Cuenca--Gorin~\cite{Benaych-GeorgesCuencaGorin2022} after a deformation to continuous space of random matrix eigenvalues.
Our methods are robust enough to be applied to the fixed temperature regime, where we recover the LLN of Huang~\cite{Huang2021}.
\end{abstract}

\maketitle

\tableofcontents

\section{Introduction}

\subsection{Overview}

Motivated by classical problems in statistical physics and their deep connections with random matrix theory, understanding the global asymptotics of various random $N$-particle systems $x_1 < \dots < x_N$ in the real line, at inverse temperature $\beta > 0$, has been a central question in probability theory for the last few decades. When the parameter $\beta$ is one of the values $1, 2, 4$, the random $N$-particle systems usually correspond to the spectra of self-adjoint random matrices. The first insight into their global asymptotics traces back to Wigner's discovery~\cite{Wigner1958} of the semicircle distribution as the limiting empirical measure for the Gaussian Unitary Ensemble, which corresponds to a specific model at $\beta=2$. In many cases, it has been shown that the global asymptotics of these random $N$-particle systems exhibit the following universality: they display the same behavior regardless of the actual value of $\beta$, as long as it remains fixed. Another important discovery, relevant to us, is due to Voiculescu~\cite{Voiculescu1986,Voiculescu1991}, who showed that the behavior of large independent random matrices can be captured by freely independent random variables. In particular, the spectrum of the sum of large independent uniformly-rotated matrices with fixed eigenvalues is described by the free convolution of probability measures.

In the limiting case $\beta=0$, the interaction between random particles disappears and they become independent; then the classical Law of Large Numbers describes the system when the number of particles $N$ tends to infinity.
Moreover, in the regime where $\beta$ tends to zero at the same time as $N$ grows, one can observe phenomena interpolating between classical and free probability.
This middle ground is known as the \emph{high temperature regime} and occurs when $\beta N$ tends to a positive constant $\gamma$.
The high temperature LLN for the Hermite and Laguerre $\beta$-ensembles, as well as the $\gamma$-deformation of the semicircle and Marchenko-Pastur distributions, were studied in~\cite{AllezBouchaudGuionnet2012,AllezBouchaudMajumdarVivo2013,DuyShirai2015,TrinhTrinh2021}. More general ensembles and $\gamma$-dependent limits, including $\beta$-sums of random matrices, were studied in~\cite{Benaych-GeorgesCuencaGorin2022} (see also~\cite{MergnyPotters2022}), where the authors provided necessary and sufficient conditions for the high temperature LLN.

To the best of our knowledge, and in sharp contrast with the continuous case, the high temperature regime of the global asymptotics of discrete random $N$-particle systems at inverse temperature $\beta > 0$ has not been studied in the literature. Other discrete models of $\beta$-ensembles, however, have only recently been studied in the high and low-temperature regimes, see \cite{DolegaSniady2019, Moll2023,CuencaDolegaMoll2023}, but the case of discrete random $N$-particle systems was the remaining case to be understood. In this paper, we fill this gap by providing necessary and sufficient conditions for the LLN for a very general class of discrete random $N$-particle systems in the high temperature regime.

\subsection{Main results}\label{sec:main_results_intro}

Motivated by the previous work on discrete random $N$-particle systems at inverse temperature $\beta > 0$ (see~\cite{GorinShkolnikov2015,BorodinGorinGuionnet2017,Huang2021} and references therein), we consider the following model. Let $\{\Prob_N\}_{N\ge 1}$ be a sequence of finite signed measures on the set of $N$-tuples $(\LL_1>\dots>\LL_N)$ of the form $\LL_i := \la_i - (i-1)\theta$, for $i=1,2,\dots,N$, where
\begin{equation*}
\la = (\la_1,\dots,\la_N) \in \GT(N) := \{ \la = (\la_1\ge\cdots\ge\la_N)\in\Z^N \}
\end{equation*}
belongs to the set of highest weights of irreducible $U(N)$-representations, also known as $N$-signatures.
Clearly, $\Prob_N$ can also be interpreted as a finite signed measure on the set of $N$-signatures, and we will use this interpretation. At this point, the measure $\Prob_N$ is as general as it can be, and in particular it might\footnote{It is more interesting when it does depend on the parameter $\theta$, and in the examples we will consider later this is always the case. In the context of the discrete log-gas model, the parameter $\theta$ is related to the parameter $\beta$ by $2\theta = \beta$. Moreover, most of our applications concern probability measures, but our main result is more general and applies to finite signed measures.} or might not depend on the parameter $\theta$. We are interested in the behavior of the empirical measures
\begin{equation*}
\mu_N := \frac{1}{N}\sum_{i=1}^N\delta_{\LL_i},\text{ where }(\LL_1>\dots>\LL_N)\text{ is }\PP_N\text{--distributed}.
\end{equation*}

In this paper, we find explicit necessary and sufficient conditions for the empirical measures $\mu_N$ to converge (either weakly, or in the sense of moments) to some probability measure, as $N\to\infty$, $N\theta\to\gamma\in\R_{>0}$.
The conditions are expressed in terms of certain discrete Fourier-type transforms of $\Prob_N$, called the \emph{Jack generating functions}. 

To motivate the definition, let us mention at this point that in the special case $\theta = 1$, the underlying $N$-particles can be treated as random highest weights of irreducible $U(N)$-representations. The Fourier transform on the unitary group $U(N)$ that allows to control natural representation-theoretic operations as $N \to \infty$ is given by the so-called~\emph{Schur generating function}, which turned out to be a very convenient tool to approach many interesting classes of interacting particles beyond representation theoretic models, for instance random tilings (see e.g.~\cite{BufetovGorin2015b, BufetovGorin2018, BufetovGorin2019}). The Fourier transform on the unitary group $U(N)$ can be naturally interpreted as the Fourier transform on the Gelfand pair $(\GL_N(\C),\U_N(\C))$, and by switching to classical (skew) fields like $\R$ or the quaternions $\HH$, one can study other $N$-particle systems of representation-theoretic origin corresponding to the Gelfand pairs $(\GL_N(\R),\Or_N(\R))$ and $(\GL_N(\HH),\U_N(\HH))$, through their Fourier transforms. The Jack generating function is equipped with a continuous parameter $\theta>0$, and interpolates between these three Fourier transforms at $\theta = 1,1/2,2$, respectively. Our main result explains that the germ at unity of the Jack generating function contains complete information about the Law of Large Numbers not only for a fixed parameter $\theta>0$, as considered in~\cite{Huang2021}, but also in the high temperature regime.

Another reason for the significance of the Jack generating function is that it can be explicitly computed for a natural subclass of Jack measures (see \cref{def:general_jack_measure}).
The Jack measures are one-parameter generalizations of the Schur measures~\cite{Okounkov2001} and have been studied in connection to asymptotic representation theory~\cite{BorodinOlshanski2005,CuencaDolegaMoll2023}, Markov processes with interaction of log-gas type~\cite{GorinShkolnikov2015,Huang2021}, etc.
They generally depend on two specializations $\rho,\rho'$ of $\Sym$, but if one of them is $\rho'=(1^N)$, then the corresponding Jack measures are supported on the set $\Y(N) = \{ \la=(\la_1\ge\dots\ge\la_N)\in(\Z_{\ge 0})^N \}$ of partitions with at most $N$ parts.
Natural choices for the other specialization $\rho$ are parametrized by points belonging to the Thoma cone, and we are able to prove the LLN at high temperature due to the fact that the Jack generating function have explicit factorized forms and the conditions for LLN are easily verified. We will furter motivate the definition of the Jack generating function in the following section, where we discuss applications of our criterion for the LLN.

\smallskip

Let $P_\la(x_1,\dots,x_N;\theta)$ be the Jack symmetric polynomial\footnote{See \cref{sec:jacks} for a review of the necessary background of the theory of Jack symmetric polynomials.} that depends on the Jack parameter $\theta>0$. We associate to any measure $\Prob_N$ on $\GT(N)$ the following formal multivariate symmetric power series, to be called the \emph{Jack generating function of $\Prob_N$}:
\begin{equation}\label{eq:intro_JGF}
G_{\Prob_N,\theta}(x_1,\dots,x_N) := \sum_{\la\in\GT(N)}{ \Prob_N(\la)\frac{P_\la(x_1,\dots,x_N;\theta)}{P_\la(1^N;\theta)} }.
\end{equation}

Our main result (\cref{theo:main1} in the text) establishes necessary and sufficient conditions for the Law of Large Numbers of particles $(\LL_1>\dots>\LL_N)$ associated with a sequence of finite signed measures $\{\PP_N\}_{N\ge 1}$ on $N$-signatures in the high temperature regime:
\begin{equation*}
N\to\infty,\quad\theta\to 0,\quad N\theta\to\gamma\in (0,\infty).
\end{equation*}
For simplicity, we break down this result into the next two theorems in this introduction. Informally speaking, the first part states that all the information about the LLN of the empirical measures in the high temperature regime is encoded in the germ of the Jack generating function at the point $(1^N)$ in this regime. 

\begin{theorem}[First part of \cref{theo:main1} in the text]\label{thm:intro_1}
Let $\{\PP_N\}_{N\ge 1}$ be a sequence of finite signed measures on $N$-signatures (that satisfies a natural condition of \cref{def:small_tails} on the growth rate of $G_{\Prob_N,\theta}$) and let $\{\mu_N\}_{N\ge 1}$ be their empirical measures.
Then $\E_{\PP_N}\mu_N$ converge in the sense of moments, in the high temperature regime, i.e. 
there exist $m_1,m_2,\dots\in\R$ such that, for all $s\in\Z_{\ge 1}$ and $k_1,\dots,k_s\in\Z_{\ge 1}$, we have
\begin{equation*}
\lim_{\substack{N\to\infty\\ N\theta \to \gamma}}\frac{1}{N^s}\,\E_{\PP_N} \left[ \prod_{j=1}^s{ \sum_{i=1}^N{\LL_i^{k_j}} } \right]
= \prod_{j=1}^s{m_{k_j}},
\end{equation*}
if and only if the following two conditions are satisfied, for some constants $\kappa_1^\gamma,\kappa_2^\gamma,\cdots$:

\vspace{.1cm}

\begin{enumerate}
	\item $\displaystyle\lim_{\substack{N\to\infty\\N\theta\to\gamma}}\frac{1}{(n-1)!}\frac{\partial^n}{\partial x_1^n} \ln\big( G_{\Prob_N,\theta}\big) \Big|_{(x_1,\dots,x_N) = (1^N)} = \kappa_n^\gamma$ exists and is finite, for all $n \in \Z_{\geq 1}$,

	\item $\displaystyle\lim_{\substack{N\to\infty\\N\theta\to\gamma}} \frac{\partial^r}{\partial x_{i_1}\cdots \partial x_{i_r}} \ln\big( G_{\Prob_N,\theta}\big) \Big|_{(x_1,\dots,x_N) = (1^N)} = 0$, for all $r\ge 2$ and $i_1,\dots,i_r\in\Z_{\geq 1}$ with at least two distinct indices among $i_1,\dots,i_r$.
\end{enumerate}
Moreover, if $\PP_N$ are probability measures and $\sup_{n\ge 1}\big|\kappa_n^\gamma\big|^{1/n}<\infty$, then $\mu_N$ converge in the sense of moments, in probability, to a probability measure $\mu_\gamma$ uniquely determined by its moments.
\end{theorem}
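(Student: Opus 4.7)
The plan is to build a dictionary between the mixed moments $\frac{1}{N^s}\E_{\PP_N}[\prod_{j=1}^s\sum_{i=1}^N \LL_i^{k_j}]$ and the partial derivatives of $\ln G_{\Prob_N,\theta}$ at $(1^N)$, and then translate the moment-convergence statement into the two conditions of the theorem. The key analytic tool is the asymptotic action of the Cherednik operators on normalized Jack polynomials $P_\la(\cdot;\theta)/P_\la(1^N;\theta)$ in the regime $N\theta\to\gamma$. Using the Hecke relations announced in the abstract, the power-sum statistics $\sum_i \LL_i^k$, which are eigenvalues of suitable symmetric combinations of these operators, pull back to explicit differential operators on symmetric polynomials in $x_1,\dots,x_N$; evaluating their action on $G_{\Prob_N,\theta}$ at $(1^N)$ recovers $\E_{\PP_N}[\sum_i \LL_i^k]$ up to controlled correction terms whose orders in $N$ and $\theta$ can be tracked in the high temperature regime.

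With this dictionary in place, the heart of the argument becomes combinatorial. Writing the Taylor expansion of $\ln G_{\Prob_N,\theta}$ around $(1^N)$, its coefficients are, up to factorials, the derivatives appearing in conditions (1) and (2). Standard cumulant-to-moment identities, transported to the Jack setting via the dictionary of the previous paragraph, express the rescaled mixed moment $\frac{1}{N^s}\E_{\PP_N}[\prod_j\sum_i \LL_i^{k_j}]$ as a sum indexed by set partitions of $\{1,\dots,s\}$ of products of these Taylor coefficients. The leading contribution as $N\to\infty$, $\theta\to 0$ is the one where each block is a singleton, and it factorizes as $\prod_j m_{k_j}$ with $m_{k_j}$ determined by the diagonal derivatives $\kappa_n^\gamma$; all other contributions become negligible precisely when the mixed derivatives in condition (2) vanish in the limit. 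Necessity follows by a standard inductive inversion of these identities, starting from $s=1$ to extract the $\kappa_n^\gamma$ and then moving to $s\ge 2$ to force the mixed derivatives to zero. Sufficiency follows from the direct expansion, with the hypothesis of \cref{def:small_tails} justifying termwise limits in the formal power series.

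For the final statement, assume the $\PP_N$ are probability measures and $\sup_{n\ge 1}|\kappa_n^\gamma|^{1/n}<\infty$. The $\kappa_n^\gamma$ determine the moments $m_k$ through polynomial relations that preserve at most exponential growth, hence $\limsup_{k\to\infty}|m_k|^{1/k}<\infty$; by Carleman's criterion the sequence $\{m_k\}$ is the moment sequence of a unique probability measure $\mu_\gamma$. Convergence in probability of $\frac{1}{N}\sum_i \LL_i^k$ to $m_k$ reduces to a second-moment computation:
\[
\E_{\PP_N}\left[\left(\frac{1}{N}\sum_i \LL_i^k - m_k\right)^2\right] = \frac{1}{N^2}\E_{\PP_N}\left[\left(\sum_i \LL_i^k\right)^2\right] - \frac{2m_k}{N}\E_{\PP_N}\left[\sum_i \LL_i^k\right] + m_k^2,
\]
and applying the equivalence already established at $s=2$, $k_1=k_2=k$ shows the right-hand side tends to $m_k^2 - 2m_k^2 + m_k^2 = 0$. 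Chebyshev's inequality then yields convergence in probability, and this upgrades to joint convergence of any finite collection of moments by a union bound and polarization.

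The main obstacle is the first step: rigorously establishing the moments-to-derivatives dictionary in the high temperature regime. The Cherednik operator formalism involves $\theta$ in a singular way as $\theta\to 0$ with $N$ fixed, so extracting a useful asymptotic expansion requires the joint limit $N\theta\to\gamma$ to produce nontrivial cancellations between subleading terms. This is precisely where the Hecke relations, encoding the algebraic structure of the noncommutative operator algebra, enter critically and where the main technical work must be concentrated.
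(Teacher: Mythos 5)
Your overall architecture coincides with the paper's: express the mixed moments as the action of symmetrized Cherednik operators on $G_{\PP_N,\theta}$ evaluated at $(1^N)$, show that this action asymptotically factorizes over the indices $k_1,\dots,k_s$ so that only the ``singleton'' contribution survives, invert the resulting system to get necessity, and finish with Carleman plus Chebyshev. However, the proposal defers exactly the step where all the difficulty lives, and two of the claims you do make are not right as stated.

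First, in the sufficiency direction you never establish that the limits $m_\ell$ exist. The factorization reduces everything to $\lim \frac{1}{N}\sum_{i=1}^N \xi_i^\ell e^{F_{N,\theta}}\big|_{(1^N)}$, and computing this limit is the core of the paper's argument: the Hecke relations are used to reduce $\xi_i^\ell$ to an expression in $\xi_1$ alone (at the cost of correction terms of order $\theta$ that must be resummed, producing the $\frac{1}{N}\sum_i((1-i)\theta)^k \to \frac{(-\gamma)^k}{1+k}$ contribution and hence the divided-difference operator $\Delta_\gamma$), and then $\xi_1^\ell e^{F_N}$ is analyzed via a transfer-operator identity whose expansion is a weighted sum over \L{}ukasiewicz paths. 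Your sketch acknowledges this as ``the main obstacle'' but offers no mechanism for it; note also that the naive symmetry argument (``all terms contribute equally after evaluation at $(1^N)$'') that works for Dunkl operators fails here, which is precisely why the Hecke relations are needed. Second, in the necessity direction, ``standard inductive inversion'' conceals the essential structural input: one needs that the matrix $(b^\la_\mu)_{|\la|=|\mu|=k}$ of linear coefficients is asymptotically lower-triangular (i.e.\ $b^\la_\mu=o(1)$ unless $\ell(\mu)>\ell(\la)$ or $\mu=\la$) with nonvanishing diagonal limits $\prod_j\la_j(\gamma+1)^{\uparrow(\la_j-1)}$; without identifying and proving this the inversion does not go through. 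Third, your Carleman step asserts that the polynomial relations between $\vec\kappa$ and $\vec m$ ``preserve at most exponential growth, hence $\limsup_k|m_k|^{1/k}<\infty$.'' This is false: the linear coefficient of $\kappa_\ell$ in $m_\ell$ is $(\gamma+1)^{\uparrow(\ell-1)}$, and the path weights contain factors $(j+\gamma)$ with $j$ up to $\ell$, so even with $|\kappa_n|\le C^n$ one only gets a bound of the form $m_{2\ell}\le(4\ell A)^{2\ell}$. Carleman's condition $\sum_\ell m_{2\ell}^{-1/(2\ell)}=\infty$ is still satisfied by this weaker factorial-type bound, so the conclusion survives, but the intermediate claim must be corrected and the bound must actually be extracted from the moment--cumulant formula.
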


The values $\kappa_1^\gamma, \kappa_2^\gamma,\cdots$ from part (1) of \cref{thm:intro_1} will be called the \emph{quantized $\gamma$-cumulants} of the limiting measure $\mu_\gamma$; they uniquely determine and, at the same time, are determined by the moments $m_1,m_2,\dots$ of $\mu_\gamma$. The term quantized $\gamma$-cumulant is used because, upon certain normalization followed by the limit $\gamma\to\infty$, the quantities $\kappa_n^\gamma$ turn into the coefficients of the quantized $R$-transform from~\cite{BufetovGorin2015b}, as shown in \cref{sec:comparison}. The second part of our main theorem provides a combinatorial transform for calculating the moments of the limiting measure in terms of the quantized $\gamma$-cumulants. It refines several classical and recently discovered moment-cumulant-type formulas and fits into the universal class of combinatorial formulas expressed as the weighted generating function of Łukasiewicz lattice paths (see~\cref{subsec:methods} for references on these other works). The exact relation is the following.

\begin{theorem}[Second part of \cref{theo:main1} in the text]\label{thm:intro_2}
Assume that we are in the setting of \cref{thm:intro_1}.
If $(m_\ell)_{\ell\ge 1}$ is the sequence of moments of $\mu_\gamma$, then
\begin{multline}\label{eq:intro_moms_cums}
m_\ell =  \sum_{\Gamma\in\Luk(\ell)}
\frac{\Delta_\gamma\left(x^{1+\text{\#\,horizontal steps at height $0$ in $\Gamma$}}\right)\big|_{x=\kappa_1^\gamma}}{1+\text{\#\,horizontal steps at height $0$ in $\Gamma$}}
\cdot\prod_{i\ge 1}\big(\kappa_1^\gamma + i\big)^{\text{\#\,horizontal steps at height $i$ in $\Gamma$}}\\
\cdot\prod_{j\ge 1}\big(\kappa_j^\gamma + \kappa_{j+1}^\gamma\big)^{\text{\#\,steps $(1,j)$ in $\Gamma$}}(j+\gamma)^{\text{\#\,down steps from height $j$ in $\Gamma$}},
\end{multline}
for all $\ell\in\Z_{\ge 1}$, where $\Luk(\ell)$ is the set of Łukasiewicz paths of length $\ell$ (certain lattice paths in $\Z^2$ whose definition is recalled in \cref{def:lukasiewicz}), and $\Delta_\gamma(f)(x) := \frac{1}{\gamma}(f(x)-f(x-\gamma))$.
\end{theorem}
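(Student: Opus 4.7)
The plan is to extract the moment formula by analyzing the asymptotic action of Cherednik operators on the Jack generating function in the high temperature regime, and then reorganize the resulting expansion into a weighted enumeration over Łukasiewicz paths. The starting point, already used for the first part of Theorem~\ref{thm:intro_1}, is that symmetric functions of the commuting Cherednik operators $\xi_1,\dots,\xi_N$ produce the power sums of the particle coordinates: $p_k(\xi_1,\dots,\xi_N)$ acts on $P_\la(x;\theta)/P_\la(1^N;\theta)$ as multiplication by $\sum_{i=1}^N \LL_i^k$. Hence the mixed moment $\E_{\PP_N}\!\left[\prod_{j=1}^s p_{k_j}(\LL)\right]$ equals $\prod_{j=1}^s p_{k_j}(\xi)$ applied to $G_{\Prob_N,\theta}$ and evaluated at $(1^N)$. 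The task becomes to understand this operator evaluation in the limit.

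I would next expand $\prod_j p_{k_j}(\xi)$ into elementary differential--difference pieces using the Hecke relations that underlie the Cherednik representation, and apply the resulting operator to $G_{\Prob_N,\theta} = \exp(\ln G_{\Prob_N,\theta})$. By Faà di Bruno, this produces a sum indexed by set partitions of the differentiations, each block contributing a mixed partial derivative of $\ln G_{\Prob_N,\theta}$ at $(1^N)$. Condition~(2) of Theorem~\ref{thm:intro_1} forces every block that involves two or more distinct variable indices to vanish in the high temperature limit, so only blocks of repeated differentiations in a single variable survive, and condition~(1) replaces each such block by the corresponding $\kappa_n^\gamma$. The $\theta$-dependent rational coefficients arising from the Hecke expansion converge, under $N\theta \to \gamma$, to the weights $(j+\gamma)$ that will later be identified with down steps from height $j$.

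What remains is combinatorial: I would interpret each surviving monomial in the expansion as a Łukasiewicz path built step by step by recording, at the $i$-th elementary action, the current profile of open ``clusters of fused indices.'' In this dictionary, horizontal steps at height $i \ge 1$ correspond to pushing one more differentiation into a cluster that already contains $i$ derivatives, contributing $\kappa_1^\gamma + i$; up steps $(1,j)$ with $j \ge 1$ correspond to opening a new cluster of size $j+1$ and contribute $\kappa_j^\gamma + \kappa_{j+1}^\gamma$ from the appropriate residue in the Hecke expansion; and down steps from height $j$ close a cluster and produce the weight $(j+\gamma)$ already identified. Summing the contributions of horizontal steps at height $0$ over the free range of indices collapses to the finite-difference operator $\Delta_\gamma$ applied to a power of $\kappa_1^\gamma$, which accounts for the prefactor $\Delta_\gamma(x^{1+h_0})\big|_{x=\kappa_1^\gamma}/(1+h_0)$ in the formula.

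The main obstacle is precisely this last identification: translating the analytic coefficients produced by the Hecke-relation expansion into the exact combinatorial weights of $\Luk(\ell)$, and in particular pinning down the $\Delta_\gamma$-factor at ground level, which is nonstandard and requires carefully tracking the finite-difference structure of the discrete shifts $\LL_i = \la_i - (i-1)\theta$ through the $N\theta \to \gamma$ scaling. Reassuring consistency checks are that specialization to $\theta = 1$ should recover the quantized free cumulant moment--cumulant formula of Bufetov--Gorin, and that sending $\gamma \to \infty$ (or the corresponding continuous scaling) should recover the Benaych-Georges--Cuenca--Gorin formula, both mentioned in the abstract.
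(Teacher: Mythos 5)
Your outline follows the same route as the paper (Cherednik operators acting on the Jack generating function, Hecke relations, and a Viennot-style path enumeration), but the two steps that actually produce the formula are not carried out, and you name one of them yourself as ``the main obstacle.'' The first missing step is the evaluation of $\xi_1^{\ell}\exp(F_N)\big|_{(1^N)}$ as a Łukasiewicz-path sum with the weights $\kappa_1+i$, $\kappa_j+\kappa_{j+1}$, $j+\gamma$. In the paper this is \cref{lem:Xi1}: one passes to the shifted variables so that $\xi_1^\ell$ becomes $(\xi_1+\tilde\xi_1)^\ell$ evaluated at $(0^N)$ with $\tilde\xi_1$ a Dunkl operator, discards the exchange terms that leave a surviving factor $x_j$, and arrives at the operator identity $(x\partial_x+\partial_x+\gamma\!\dd+\kappa)^{\ell-1}\kappa(x)\big|_{x=0}$; the bijection with $\Luk(\ell)$ then comes from grading the operator letters by degree ($x\partial_x$ and $\kappa_1$ give horizontal steps, $(\kappa_j+\kappa_{j+1})x^j$ gives up steps, $\partial_x+\gamma\!\dd$ gives down steps with weight $j+\gamma$). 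Your ``clusters of fused indices'' dictionary does not establish this correspondence, and it misattributes the down-step weight $(j+\gamma)$ to the Hecke relations: in the paper the $\gamma$ there comes from summing the $O(N)$ exchange terms $\theta\frac{1}{x_1-x_j}(1-s_{1,j})$ of the Dunkl operator, not from the Hecke commutators.

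The second missing step is the ground-level $\Delta_\gamma$ factor. In the paper the Hecke relations are used precisely and only here (\cref{lem:Moment}): they yield, up to controlled errors, $\Xi_i(z)=\Xi_1(z)\sum_{k\ge 0}\big((1-i)\theta z\,\Xi_1(z)\big)^k$, the average $\frac1N\sum_{i=1}^N((1-i)\theta)^k\to\frac{(-\gamma)^k}{1+k}$ produces the coefficients $\frac{(-\gamma)^k}{1+k}$, and a separate combinatorial decomposition of each Łukasiewicz path at its horizontal steps at height $0$ (\cref{eq:weights_comb_1}--\cref{eq:complicated_series}), together with $\Delta_\gamma\big(\frac{x^{1+k}}{1+k}\big)(c)=\frac1\gamma\int_0^\gamma(c-x)^k\,dx$, converts this into the stated $\Delta_\gamma$ prefactor. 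Saying that the sum over indices ``collapses to'' $\Delta_\gamma$ is the conclusion, not an argument. Finally, the error control you dispose of via ``condition (2) forces every block to vanish'' in fact requires the polynomiality and uniform-boundedness statements of \cref{lem:Polynomiality,lem:TopDegree_2} (degree bounds in the Taylor coefficients $c^\nu_{F_N}$, uniformity over the indices $i_1,\dots,i_s$, and the generic-versus-singular term count), without which one cannot pass the limit through the $N$-fold sums. So the proposal is a correct roadmap of the paper's proof rather than a proof: the decisive identifications are asserted, not derived.
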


In the following, we discuss various applications of our results.

\subsection{Applications}

\subsubsection{Quantized $\gamma$-convolution and the Jack Littlewood--Richardson coefficients}

Recall that classical cumulants linearize the classical convolution of two probability measures. Similarly, free cumulants linearize the free convolution of probability measures, which, as shown by Voiculescu in his seminal papers~\cite{Voiculescu1986,Voiculescu1991}, describes the spectrum of the sum of large independent uniformly-rotated matrices. An analogous result was shown by Bufetov and Gorin~\cite{BufetovGorin2015b}, who demonstrated that their quantized $R$-transform linearizes the operation of decomposing the tensor product of two random irreducible representations of the unitary group $U(N)$ into irreducible components, and they established the LLN for this operation as $N \to \infty$. We will show that $\gamma$-deformed cumulants linearize the Jack-deformed version of the aforementioned operation of tensoring representations, and we will prove the associated LLN in the high temperature regime. Interestingly, we will show that the existence of quantized $\gamma$-convolution for probability measures is strictly related to the long-standing open problem posed by Stanley~\cite{Stanley1989} regarding the structure constants associated to the Jack symmetric functions.

Let $\{\Prob_N^{(1)}\}_{N \ge 1}$ and $\{\Prob_N^{(2)}\}_{N \ge 1}$ be sequences of probability measures on $N$-signatures that satisfy the growth-rate condition from \cref{def:small_tails}, and such that (see \cref{sec:app_1} for more details):
\begin{itemize}

  \item Their empirical measures $\big\{\mu_N^{(1)}\big\}_{N \ge 1}$ and $\big\{\mu_N^{(2)}\big\}_{N \ge 1}$ are supported on the same compact set in the high temperature regime.
  
  \item $\mu_N^{(1)} \to \mu^{(1)}$ and $\mu_N^{(2)} \to \mu^{(2)}$ as $N \to \infty$, weakly, in probability.

\end{itemize}
Let $\kappa_n^\gamma[\mu^{(1)}]$ and $\kappa_n^\gamma[\mu^{(2)}]$ be the quantized $\gamma$-cumulants of $\mu^{(1)}$ and $\mu^{(2)}$, respectively. We are interested in whether the sequence of sums
\begin{equation*}
\tilde{\kappa}_n^\gamma := \kappa_n^\gamma[\mu^{(1)}] + \kappa_n^\gamma[\mu^{(2)}], \quad n \in \Z_{\ge 1},
\end{equation*}
and the associated sequence $\tilde{m}_1, \tilde{m}_2, \dots$ of "moments" obtained from $\big(\tilde{\kappa}_n^\gamma\big)_{n \ge 1}$ by the combinatorial formula \eqref{eq:intro_moms_cums} correspond to some measure that arises as a limit of certain natural "convolution" in the high temperature regime.

Define the finite signed measures $\Prob_N^{(1)}\boxplus_\theta\Prob_N^{(2)}$ on $\GT(N)$ by
\[
\Prob_N^{(1)}\boxplus_\theta\Prob_N^{(2)}(\la) := \sum_{\mu,\,\nu\in\Y(N)}c^\la_{\mu,\,\nu}(\theta)\Prob_N^{(1)}(\mu)\Prob_N^{(2)}(\nu),\qquad\la\in\GT(N),
\]
where $c^\la_{\mu,\,\nu}(\theta)$ are the Jack Littlewood--Richardson coefficients, defined by \cref{eq:multiplication_jacks}.
Equivalently, $\Prob_N^{(1)}\boxplus_\theta\Prob_N^{(2)}$ is the unique measure on $\GT(N)$ whose Jack generating function is the product of Jack generating functions of $\PP_N^{(1)}$ and $\PP_N^{(2)}$.
We prove in \cref{sec:app_1} that $\E_{\Prob_N^{(1)}\boxplus_\theta\Prob_N^{(2)}}\mu_N$ converges in the sense of moments, in the high temperature regime, and the limiting moment sequence is $\tilde{m}_1,\tilde{m}_2,\cdots$. Moreover, if we additionally knew that $\Prob_N^{(1)}\boxplus_\theta\Prob_N^{(2)}$ is a sequence of probability measures, then we would have a natural operation of \emph{quantized $\gamma$-convolution} that produces a probability measure $\mu^{(1)} \boxplus^{(\gamma)} \mu^{(2)}$ uniquely determined by the property that its quantized $\gamma$-cumulants are:
\begin{equation*}
\kappa_n^\gamma\big[ \mu^{(1)} \boxplus^{(\gamma)} \mu^{(2)} \big]
= \kappa_n^\gamma\big[ \mu^{(1)} \big] + \kappa_n^\gamma\big[ \mu^{(2)} \big],\quad\text{for all }n\in\Z_{\ge 1}.
\end{equation*}
In that case, the empirical measures $\mu_N$ of $\Prob_N^{(1)}\boxplus_\theta\Prob_N^{(2)}$ would converge in moments, in probability to the convolution $\mu^{(1)} \boxplus^{(\gamma)} \mu^{(2)}$.

One of the celebrated conjectures of Stanley~\cite{Stanley1989} states that the Jack Littlewood--Richardson coefficients are (up to a proper normalization) polynomials in $\theta$ with nonnegative integer coefficients. This conjecture is much stronger than the assumption that $\Prob_N^{(1)}\boxplus_\theta\Prob_N^{(2)}$ are probability measures. Even though Stanley's conjecture is still wide open, we will actually show the existence of the quantized $\gamma$-convolution for a large class of measures.
In fact, next we demonstrate that it naturally appears in the study of global asymptotics of the law of certain Markov chains on $N$ nonintersecting particles with log-gas type interaction in the high temperature regime.

\subsubsection{Nonintersecting particle systems in the high temperature regime through Jack measures}\label{sec:pure_jack_measures}

In this section, we assume that we have a sequence of Jack parameters $\theta_N>0$ such that $N\theta_N\to\gamma$, as $N\to\infty$, for some $\gamma>0$, as well as a sequence of specializations $\rho_N\colon\Sym\to\C$ such that $P_\la\big(\rho_N;\theta_N\big)\ge 0$, for all $\la\in\Y(N)$ and all $N\in\Z_{\ge 1}$.
Then if the condition given in~\cref{def:stable} is satisfied, then there exist unique probability measures with weights proportional to
\begin{equation*}
\Prob_N(\la)\propto Q_\la\big(\rho;\theta_N\big)P_\la\big(1^N;\theta_N\big),\quad\la\in\Y(N),
\end{equation*}
where $Q_\la$ is defined in~\eqref{j_lambda}.

\begin{figure}[htbp]
  \centering
  \begin{subfigure}[b]{0.325\textwidth}
    \centering
    \includegraphics[width=\textwidth]{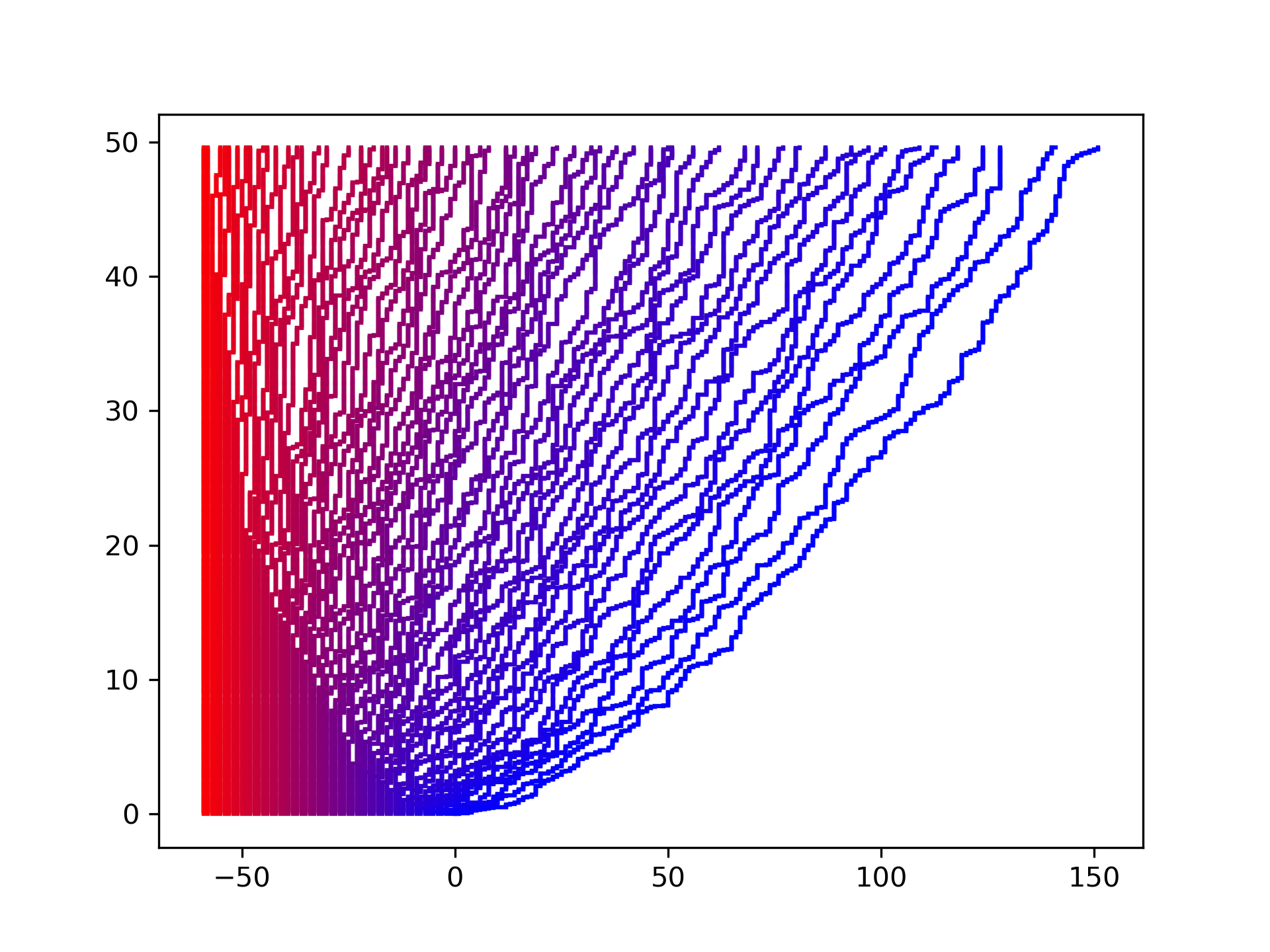}
    \caption{$\theta=1, N=60$}
    \label{fig:left}
  \end{subfigure}
  \hfill
  \begin{subfigure}[b]{0.325\textwidth}
    \centering
    \includegraphics[width=\textwidth]{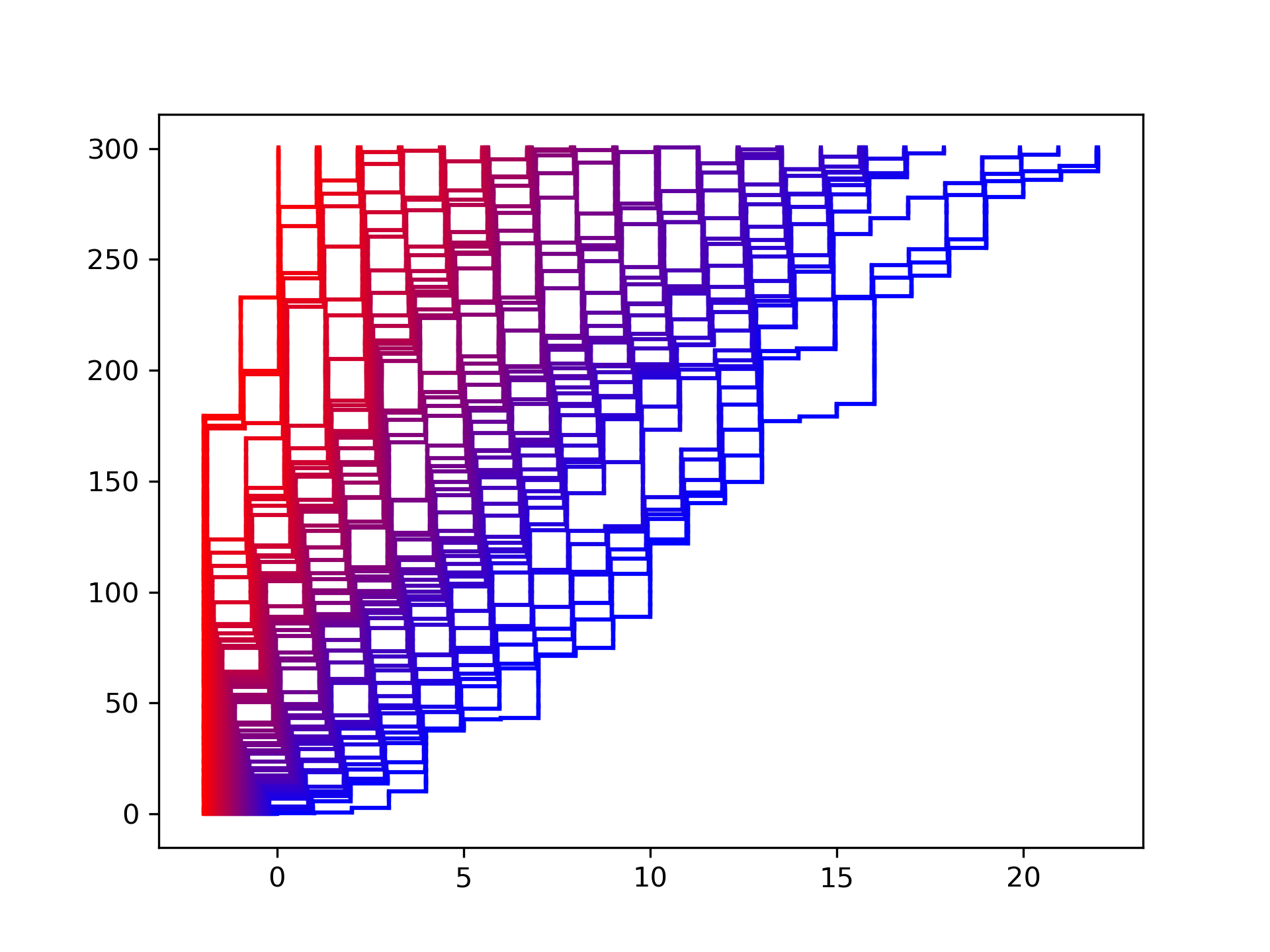}
    \caption{$\theta=\frac{2}{N}, N=60$}
    \label{fig:center}
  \end{subfigure}
  \hfill
  \begin{subfigure}[b]{0.325\textwidth}
    \centering
    \includegraphics[width=\textwidth]{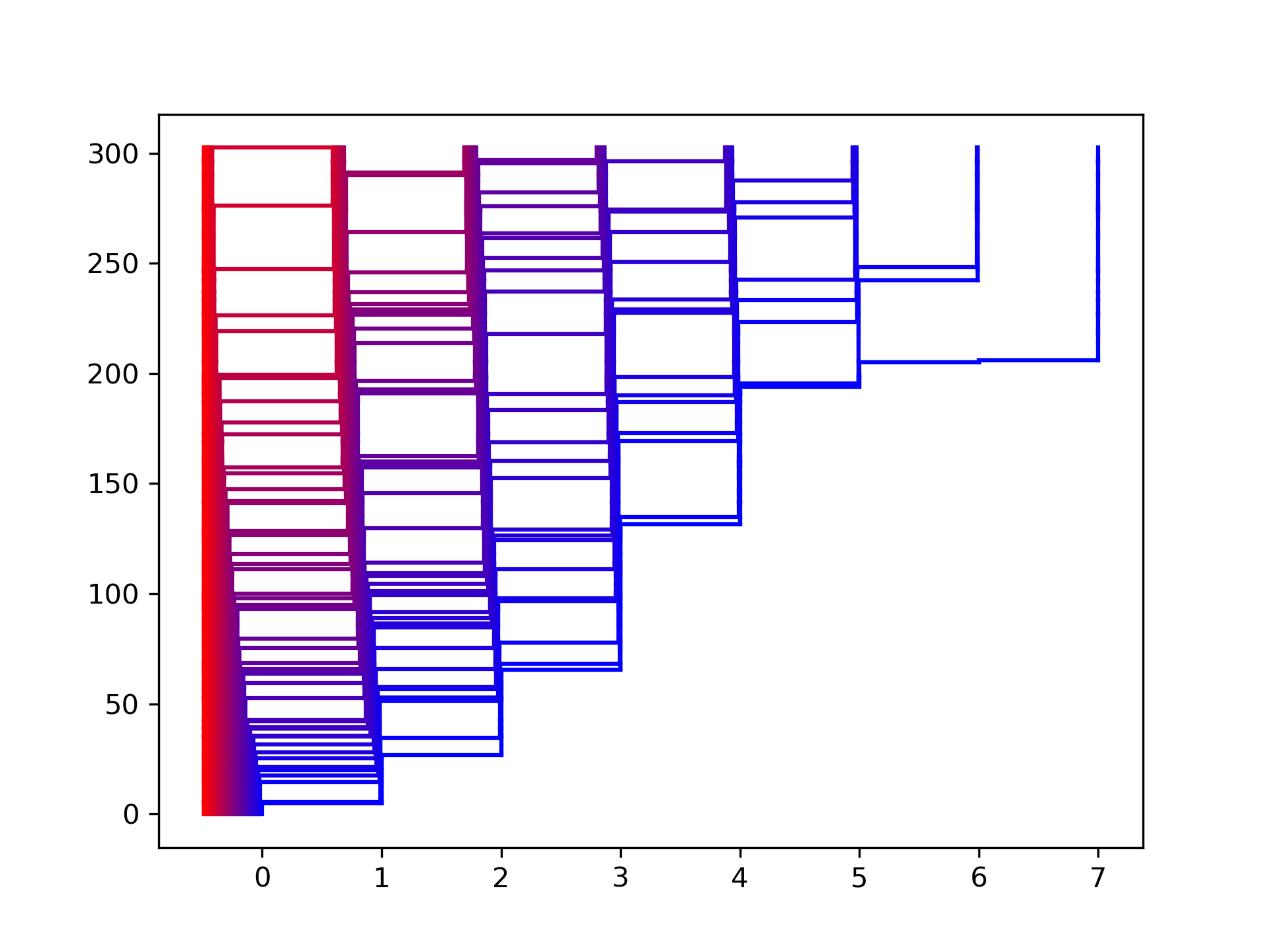}
    \caption{$\theta=\frac{1}{2N}, N=60$}
    \label{fig:right}
  \end{subfigure}
  \caption{Three simulations of the Gorin--Shkolnikov process with $N=60$ particles and initial configuration $\LL^{(0)}_i = \theta(1-i)$, for $1 \leq i \leq N$. The simulations show the asymptotic behavior in the fixed temperature regime on the left (with $\theta=1$), and in the high temperature regime with $\gamma = 2$ in the middle, and with $\gamma = \frac{1}{2}$ on the right. The $x$-axis represents the space and the $y$-axis represents the time.}
  \label{fig:Markov_chain}
\end{figure}

These measures comprise a distinguished class of \emph{Jack measures}, see e.g.~\cite{BorodinOlshanski2005,CuencaDolegaMoll2023,GorinShkolnikov2015}. It is known that the specializations that are nonnegative on Jack symmetric functions are parametrized by points of the form $(\alpha,\beta,\delta)\in(\R_{\ge 0})^\infty\times(\R_{\ge 0})^\infty\times\R_{\ge 0}$ in the infinite-dimensional Thoma cone (see \cref{thm:KOO} for the precise statement).
When $\alpha_i=\beta_i=0$, for all $i$, and only $\delta$ is nonzero, the associated Jack measure (called the \emph{pure-Plancherel Jack measures} in \cref{def:example3}) was used by Gorin--Shkolnikov~\cite{GorinShkolnikov2015} to construct a continuous Markov process on a discrete space of $N$-tuples of particles, which has a diffusive limit to the $\theta$-Dyson Brownian motion, and which can be interpreted as a one-parameter deformation, nonintersecting $N$-particle version of the Poisson random walk on $\Z$ (see~\cref{fig:Markov_chain} for a sample of this process).
Based on~\cite{GorinShkolnikov2015}, Huang defined in~\cite{Huang2021} a large class of discrete Markov chains $\LL^{(0)}(N),\LL^{(1)}(N),\dots$, of $N$ nonintersecting discrete particles with log-gas type interaction, by replacing the pure Plancherel specialization with more general Jack-positive specializations (see~\cref{def:stable,def:Markov_chain}), and proved the LLN at a fixed parameter $\theta$ for the joint law at appropriately chosen sequences of "times" $\{t_N\in\Z_{\ge 1}\}_{N\ge 1}$.
We will generalize these results by providing an analogous result in the high temperature regime.
Moreover, we will show that the limiting measure can be naturally expressed as the quantized $\gamma$-convolution of two probability measures.

\begin{theorem}[\cref{theo:LLN_Markov} in the text]
Let $\rho=\{\rho_N\}_{N\ge 1}$ be a sequence of stable Jack-positive specialization. Under a natural assumption on the limiting behavior (see~\eqref{eqn:limit_trho}) of the sequence of "times" $\{t_N\in\Z_{\ge 1}\}_{N\ge 1}$, we have that:

(1) The sequence of Jack measures $\PP_{t_N\rho_N;N}^{(\theta_N)}$ satisfies the LLN and the associated empirical measures $\mu_N$ converge in moments, in probability, as $N \to \infty$, to a probability measure $\mu_\rho$.
  
(2) Additionally, let $\mu^{(0)}_N$ be the empirical distributions of the law of the initial configuration $\LL^{(0)}(N)$ in the Markov chain $\LL^{(0)}(N),\LL^{(1)}(N),\LL^{(2)}(N),\dots$, with a compactly supported weak limit $\mu^{(0)}$ (in probability, as $N \to \infty$). Then the empirical measure of the law of $\LL^{(t_N)}(N)$ from the Markov chain converges in moments, in probability, as $N \to \infty$, to the quantized $\gamma$-convolution $\mu^{(0)} \boxplus^{(\gamma)}\mu_\rho$.
\end{theorem}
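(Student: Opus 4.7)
The plan is to apply the general LLN criterion (Theorem~\ref{thm:intro_1}) to the Jack measures $\PP_{t_N\rho_N;N}^{(\theta_N)}$ for part~(1), and then combine this with the analysis of the quantized $\gamma$-convolution sketched in the applications section for part~(2).

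\textbf{Step 1 (Factorization of the Jack generating function).} By the Cauchy identity for Jack symmetric functions, the Jack generating function of $\PP_{t_N\rho_N;N}^{(\theta_N)}$ takes the normalized form
\[
G_{\PP_{t_N\rho_N;N},\theta_N}(x_1,\dots,x_N) \;=\; \prod_{i=1}^N f_{N}(x_i),
\]
where $f_N$ is a single-variable function determined by the Thoma parameters $(\alpha_N,\beta_N,\delta_N)$ of $\rho_N$, the time $t_N$, and $\theta_N$ via the Cauchy kernel and the normalization $P_\lambda(1^N;\theta_N)$. The key consequence is that $\log G$ is an exact sum of one-variable terms.

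\textbf{Step 2 (LLN conditions, pure Jack measure).} Condition~(2) of Theorem~\ref{thm:intro_1} is immediate from Step~1, since all mixed partial derivatives of $\log G$ vanish identically. Condition~(1) reduces to showing that
\[
\kappa_n^\gamma \;:=\; \lim_{\substack{N\to\infty\\ N\theta_N\to\gamma}} \frac{1}{(n-1)!}\,\frac{\partial^n}{\partial x^n}\log f_N(x)\Big|_{x=1}
\]
exists and is finite for every $n\geq 1$. The hypothesis~\eqref{eqn:limit_trho} on the scaling of the times $t_N$ relative to $\rho_N$ is precisely the assumption that guarantees this, by controlling the contributions of the $\alpha$-, $\beta$-, and Plancherel parts of the Thoma parameter separately. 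A direct computation yields the $\kappa_n^\gamma$ as explicit sums involving the limiting Thoma data, and stability of $\rho$ ensures the uniform bound $\sup_n |\kappa_n^\gamma|^{1/n}<\infty$. The second assertion of Theorem~\ref{thm:intro_1} then produces the limiting probability measure $\mu_\rho$, uniquely determined by the condition $\kappa_n^\gamma[\mu_\rho] = \kappa_n^\gamma$, and upgrades the convergence to probability.

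\textbf{Step 3 (Markov chain as Jack convolution).} For part~(2), the defining property of Huang's Markov chain is that a single transition $\LL^{(t)}\mapsto\LL^{(t+1)}$ is governed by the Jack Littlewood--Richardson coefficients against a one-step specialization. Iterating $t_N$ times, the law of $\LL^{(t_N)}(N)$ equals the Jack-convolution $\Prob_N^{(0)}\boxplus_{\theta_N}\PP_{t_N\rho_N;N}^{(\theta_N)}$. Equivalently, its Jack generating function is the pointwise product
\[
G_{\Prob_N^{(0)},\theta_N}(x_1,\dots,x_N)\cdot G_{\PP_{t_N\rho_N;N},\theta_N}(x_1,\dots,x_N).
\]
Taking logarithms, the diagonal derivatives add and the mixed derivatives continue to vanish (because the second factor contributes only a one-variable sum, and the first factor's mixed derivatives already tend to $0$ by the assumed LLN for $\mu^{(0)}$). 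Thus Theorem~\ref{thm:intro_1} applies again, with limiting cumulants $\kappa_n^\gamma[\mu^{(0)}]+\kappa_n^\gamma[\mu_\rho]$, which by the discussion of the quantized $\gamma$-convolution means the limit is precisely $\mu^{(0)}\boxplus^{(\gamma)}\mu_\rho$. (Note that a posteriori this shows that, in this representation-theoretic context, the quantized $\gamma$-convolution does exist as a probability measure.)

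\textbf{Main obstacle.} The calculation in Step~2 is the delicate part: although the product structure reduces cumulant asymptotics to a single-variable problem, the simultaneous scaling $\theta_N\to 0$, $t_N\to\infty$, and $\rho_N\to\rho$ in the Thoma cone requires careful bookkeeping of cancellations, in particular between the $(1-x_i y_j)^{-1/\theta}$-type and $(1+x_i y_j)^{\theta}$-type factors and the pure-Plancherel exponential factor. Verifying that~\eqref{eqn:limit_trho} is sharp enough to yield finite $\kappa_n^\gamma$ for all $n$, while also giving the growth bound needed to upgrade to convergence in probability and to ensure the limit measure is determined by its moments, is where most of the technical work will be concentrated.
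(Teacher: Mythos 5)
Your proposal matches the paper's proof essentially step for step: the Cauchy identity gives the product form of the Jack generating function (so all mixed partials of the logarithm vanish identically and condition (1) of HT-appropriateness is exactly the time-scaling hypothesis~\eqref{eqn:limit_trho}), while the skew-Cauchy identity shows that the Jack generating function of the time-$t_N$ law is the product of the initial JGF with that of the Jack measure $\PP^{(\theta_N)}_{t_N\rho_N;N}$, so the logarithms and hence the quantized $\gamma$-cumulants add and \cref{theo:main1} applies. The only minor imprecisions are that a single transition is governed by skew Jack functions (a Pieri-type rule) rather than literally by Littlewood--Richardson coefficients, and that the growth bound $\sup_n\big|\kappa_n^\gamma\big|^{1/n}<\infty$ is not asserted by the paper at this level of generality (it is verified only in the pure-specialization examples); neither affects the argument.
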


We specifically discuss the high temperature LLN above for the three kinds of ``pure specializations'', namely, the case when each $\rho_N$ depends only on finitely many parameters of the same kind ($\alpha_i$'s, or $\beta_i$'s, or $\delta$). The associated Markov chains can be interpreted as one-parameter deformations, nonintersecting $N$-particle versions of the geometric, Bernoulli, and Poisson random walks on $\Z$.
While the LLN in the generic, but fixed $\theta$ regime, proved by Huang~\cite{Huang2021}, did not depend on the parameter $\theta$ after the appropriate scaling, we will see that in the high temperature regime the limiting law depends on the parameter $\gamma=\lim_N{N\theta_N}$ describing the high temperature scaling (compare the left with the middle and the right pictures in~\cref{fig:Markov_chain}).

Let us also mention that the pure-Plancherel Jack measure is a discrete $\beta$-ensemble, in the sense of~\cite{BorodinGorinGuionnet2017}.
That paper proves the LLN for fixed $\theta$, but does not consider the high temperature regime; moreover, it expresses the limit measures as unique solutions to variational problems, while our characterizations are as unique solutions to certain moment problems.
Lastly, we mention that the proof of \cref{thm:application3} shows that the limiting measure in the appropriate scaling for the law of the Gorin--Shkolnikov process starting from the empty partition has all vanishing quantized $\gamma$-cumulants, except for the first one.
As a result, this theorem can be regarded as a generalization of the classical Wigner theorem with the semicircle distribution as the limit (which has only one free cumulant being nonzero) and of the high temperature limit theorem for Hermite $\beta$-ensembles with the limiting measure having only one nonzero $\gamma$-cumulant; see~\cite{Benaych-GeorgesCuencaGorin2022,MergnyPotters2022}.

\subsection{Previous related works}
\label{subsec:methods}

Motivated by asymptotic representation theory, Vershik--Kerov~\cite{VershikKerov1977} and Logan--Shepp~\cite{LoganShepp1977} initiated the study of random partitions by proving the limit shape phenomenon for Plancherel-distributed Young diagrams. The connection to random matrices was later hinted at by Kerov~\cite{Kerov1993,Kerov1993transition}, who linked this shape to Wigner's semicircle distribution. This parallelism between the Plancherel measure and Gaussian Unitary Ensembles has been shown to be much deeper in a series of groundbreaking works~\cite{BaikDeiftJohansson1999,Okounkov2000,BorodinOkounkovOlshanski2000,Johansson2001}, which catalyzed further investigations and insights into the parallelism between random matrices and partitions that continue to this day. In particular, motivated by the one-parameter $\beta$-extrapolations of random (continuous) eigenvalue ensembles, it is natural to ask: what are the parallel one-parameter random partition ensembles?

Many articles have addressed this question by studying several alternatives; see, e.g.,~\cite{Kerov2000,BorodinOlshanski2005,DolegaFeray2016,BorodinGorinGuionnet2017,DolegaSniady2019,
GuionnetHuang2019,Huang2021,Moll2023,CuencaDolegaMoll2023}. The common theme is that all these partition ensembles are based on the theory of Jack symmetric functions and depend on the Jack parameter $\theta>0$, which is related to the inverse temperature $\beta$ from the continuous world by $\theta=\beta/2$. It is worth noting that the random $\theta$-deformed partitions arise not only as natural discrete versions of $\beta$-eigenvalues, but they also have intrinsic interest in various areas, such as algebraic geometry~\cite{Okounkov2003,LiQinWang2003,PohlYoung2024}, Seiberg--Witten theory~\cite{NekrasovOkounkov2006}, etc.

Unlike the continuous case, there are many different models of random partitions, and they differ quite drastically, even regarding the state space: some ensembles are defined on the set of all partitions $\Y$, e.g., the Jack measures~\cite{GorinShkolnikov2015,Moll2023}; some on the set $\Y_d$ of partitions of size $d$, e.g., the Jack--Plancherel measure~\cite{Kerov2000,DolegaFeray2016}, and more general measures from Jack characters with the Approximate Factorization Property~\cite{DolegaSniady2019}; finally, some ensembles are defined on the set $\Y(N)$ of partitions with at most $N$ parts, e.g., discrete $\beta$-ensembles~\cite{BorodinGorinGuionnet2017}. In the case of fixed $\theta$, the LLN and CLT for the global asymptotics of the various models of random $\theta$-partitions have been shown to be independent of the value of $\theta$, as in the continuous case~\cite{DolegaFeray2016,BorodinGorinGuionnet2017,DolegaSniady2019,Huang2021,Moll2023,CuencaDolegaMoll2023}.

In the high and low temperature regimes, Dołęga--Śniady~\cite{DolegaSniady2019} proved LLN and CLT for the global asymptotics of a model of measures on partitions of a fixed size, while Moll~\cite{Moll2023} proved the LLN for a subclass of Jack measures on the set of all partitions. Further, Cuenca--Dołęga--Moll~\cite{CuencaDolegaMoll2023} studied both of these models simultaneously and exposed a deep connection between them, which in particular led to a universal moment-cumulant-type formula deforming the celebrated free version of Speicher~\cite{Speicher1994}. This formula is naturally expressed as a weighted generating function of Łukasiewicz paths (see~\cref{def:lukasiewicz}), and it resembles the generalized framework of the combinatorial approach of Viennot~\cite{Viennot1985} to orthogonal polynomials.

The universal character of the combinatorial moment-cumulant-type formula from \cite{CuencaDolegaMoll2023} motivates the question of whether a similar universal moment-cumulant-type formula exists for the aforementioned models of random partitions with at most $N$ parts. In this paper, we provide such a formula. The connection between the global asymptotics of random partitions with at most $N$ parts and free probability was observed by Biane~\cite{Biane1995} and Collins--{\'S}niady~\cite{CollinsSniady2009} in the context of the tensor product of large unitary representations. However, the asymptotic behavior of their model was studied under a different scaling than the one studied in this paper. In the $\theta=1$ case of the model studied here, Bufetov and Gorin noticed that the free version of cumulants from the previous works of Biane and Collins--Śniady should be replaced by the so-called \emph{quantized} free cumulants~\cite{BufetovGorin2015b}. Further, the global asymptotics for a fixed, but generic $\theta$, studied in Huang~\cite{Huang2021}, turned out to be the same as for $\theta=1$ after the appropriate normalization. The aforementioned papers did not provide a combinatorial moment-cumulant formula and described the LLN in analytic terms. In~\cref{sec:comparison}, we show that the analytic description of LLN from~\cite{Huang2021}, in the regime when $\theta$ is fixed, can be recovered from our combinatorial moment-cumulant formula.
It is worth mentioning that a different limiting case of our combinatorial formula recovers another recent moment-cumulant formula from~\cite{Benaych-GeorgesCuencaGorin2022}.

\subsection{Methodology}

The main technique employed in this article is the method of moments.
More explicitly, we compute the moments of the prelimit empirical measures $\mu_N$ of $\Prob_N$ by extracting them from the Jack generating functions using certain differential-difference operators of representation-theoretic origin and then study the asymptotics of the resulting expression.
This general approach was pioneered for random partition ensembles by Bufetov and Gorin~\cite{BufetovGorin2015b,BufetovGorin2018,BufetovGorin2019}, who considered the Schur generating functions and differential operators with the Schur polynomials as eigenfunctions. Some applications and extensions followed swiftly by Bufetov--Knizel~\cite{BufetovKnizel2018}, Gorin--Sun~\cite{GorinSun2022}, etc.

In the continuous framework, Benaych-Georges, Cuenca, and Gorin~\cite{Benaych-GeorgesCuencaGorin2022} utilized the multivariate Bessel functions and Dunkl operators to study the large-scale asymptotics of one-parameter extrapolations of random matrix eigenvalues in the high temperature regime. These ideas have been extended to the BC-type by~\cite{Xu2023+}, and they have already found several applications~\cite{Yao2021,KeatingXu2024+,GorinXuZhang2024+}. In the discrete case, Huang~\cite{Huang2021} explored similar ideas for ensembles of partitions of a given length, for a generic but fixed finite temperature. He used the Nazarov--Sklyanin operators~\cite{NazarovSklyanin2013} and a different version of Jack generating function than the one used by us, whose expansion in the power-sum basis was crucial, as the Nazarov--Sklyanin operators behave well with respect to this basis.

The present research was mainly motivated by~\cite{Benaych-GeorgesCuencaGorin2022,Huang2021,BufetovGorin2015b} and, simultaneously, it generalizes them all.
On the one hand, the random partition ensembles that we study degenerate in a certain regime to the random eigenvalues considered in~\cite{Benaych-GeorgesCuencaGorin2022}; at the technical level, our Jack generating functions and Cherednik operators also degenerate to the Bessel generating functions and Dunkl operators used in that reference.
Moreover, we recover their main result from our proof of the LLN by picking up the highest degree terms in all polynomial equations.
On the other hand, though we focus on the high temperature regime, we can use the very same analysis to prove analogues of our results at fixed temperature, allowing us to recover the LLN from~\cite{Huang2021}. As we have mentioned before, although a version of the Jack generating function (JGF) also appears in that paper, there are at least two advantages of our version that employs polynomials and not symmetric functions:
(a) our JGF admits explicit product forms for a wide class of Jack measures, as shown in \cref{sec:jack_measures}; (b) the power-sum expansion of the JGF used in~\cite{Huang2021} was an artifact of the used tools, rather than a natural choice of the monomial basis directly related to the model of partitions with a fixed number of parts; the latter choice allows for a direct analysis, which was not possible in~\cite{Huang2021} and required many technicalities. 

A naive approach of mimicking the analysis of the action of Dunkl operators on the Bessel function performed in~\cite{Benaych-GeorgesCuencaGorin2022} and applied to the action of Cherednik operators on the JGF fails. One of the key ideas in~\cite{Benaych-GeorgesCuencaGorin2022} was a decomposition of Dunkl operators as a sum of operators, whose action on the Bessel functions always produces the same result after the specialization of all variables to zero. In fact, this kind of symmetry was always present explicitly or implicitly in all previous analyses of similar problems. Such symmetry does not hold in the case of Cherednik operators and requires new ideas. Our new key insight is that the representation-theoretic Hecke relations allow us to reduce an analysis of generic terms to the analysis of some special terms, and this reduction is also the source of the appearance of some new ingredients in the moment-cumulant formula (see~\cref{sec:hecke_proof}, and \cref{sec:comparison} for the comparison with the formula from~\cite{Benaych-GeorgesCuencaGorin2022}). We hope that this idea gets exploited for further applications.



\subsection*{Acknowledgements}

CC is grateful to Vadim Gorin for many useful conversations over the years.
CC would like to thank the Institute of Mathematics of the Polish Academy of Sciences (IMPAN) for hosting him during his research visits, and also the Royal Swedish Academy of Sciences and Institut Mittag--Leffler, where part of this research was conducted during the special program on Random Matrices and Scaling Limits in the Fall of 2024. MD would like to thank to Houcine Ben Dali for his comments.

\section{Jack polynomial theory and Jack generating functions}\label{sec:jacks}

\subsection{Partitions and signatures}

In this paper, $N$ denotes a positive integer that will later tend to infinity.

A partition is an infinite sequence $\la=(\la_1\ge\la_2\ge\cdots)\in(\Z_{\ge 0})^\infty$ of weakly decreasing nonnegative integers such that $\la_i=0$, for all large $i$. The size of $\la$ is defined to be $|\la|:=\sum_{i\ge 1}{\la_i}$, while its length is $\ell(\la):=\max\{ i\ge 1 \colon \la_i\ne 0 \}$. For the empty partition $\emptyset=(0,0,0,\cdots)$, the size and length are both zero, by convention. For a partition $\la$ with $\ell(\la)\le N$, we can write $\la=(\la_1,\dots,\la_N)$, with the understanding that infinitely many zeroes are being omitted.
We denote the set of all partitions by $\Y$.
For any $N\in\Z_{\ge 1}$, we also denote
\begin{equation*}
\Y(N) := \{ \la\in\Y \colon \ell(\la)\le N \}.
\end{equation*}

For any $N\in\Z_{\ge 1}$, we say that an $N$-tuple $\la=(\la_1\ge\cdots\ge\la_N)\in\Z^N$ of weakly decreasing integers is an \emph{$N$-signature}.
By analogy to partitions, we say that an $N$-signature $\la=(\la_1,\cdots,\la_N)$ has length $N$.
The set of all $N$-signatures is denoted by
\begin{equation*}
\GT(N) := \{ \la = (\la_1\ge\cdots\ge\la_N)\in\Z^N \}.
\end{equation*}
Observe that $\Y(N)\subseteq\GT(N)$.

\subsection{The Jack parameter}\label{sec:jack_parameter}

In this paper, there is a positive real $\theta>0$ that will later tend to zero, simultaneously as $N$ tends to infinity.
The parameter $\theta$ is called the \emph{Jack parameter}, because of its relationship with Jack symmetric polynomials.
In the literature, e.g.~\cite{Stanley1989,Macdonald1995}, sometimes a different Jack parameter $\alpha>0$ is used; the relation between them is $\theta=\frac{1}{\alpha}$.

To any $N$-signature $\la=(\la_1\ge\cdots\ge\la_N)\in\GT(N)$, we associate the $N$-tuple of reals $(\LL_1>\cdots>\LL_N)$ defined by
\begin{equation}\label{eq:ell}
\LL_i := \la_i - (i-1)\theta,\text{ for }i=1,2,\dots,N.
\end{equation}
This gives the following bijection that will be used throughout the paper:
\begin{equation*}
\GT(N) \longleftrightarrow
\{(\LL_1,\dots,\LL_N)\in\R^N\colon \LL_1>\dots>\LL_N \text{ and } \LL_i+(i-1)\theta\in\Z,\text{ for }i=1,\dots,N\}.
\end{equation*}

\subsection{Jack Laurent polynomials}

Given any $N\in\Z_{\ge 1}$, the \emph{Jack (symmetric) polynomials} $P_\la(x_1,\dots,x_N;\theta)$, for $\la\in\Y(N)$, are defined as the unique homogeneous symmetric polynomials of degree $|\la|$ of the form
\begin{multline}\label{eqn:expansion}
P_\la(x_1,\dots,x_N;\theta) = x_1^{\la_1}\cdots x_N^{\la_N} + \text{ monomials of the form }c_\mu x_1^{\mu_1}\cdots x_N^{\mu_N}\\
\text{ with }\mu\le\la\text{ in the lexicographic order},
\end{multline}
that are orthogonal with respect to certain explicit weight function; we do not discuss the details further since they will not be needed, but see~\cite[Ch.~VI.10]{Macdonald1995}.
If $\la\in\Y$ has $\ell(\la)>N$, we use the convention $P_\la(x_1,\dots,x_N;\theta):=0$.
We have the index-stability property:
\begin{equation*}
P_{(\la_1+1,\dots,\la_N+1)}(x_1,\dots,x_N;\theta) = (x_1\cdots x_N)\cdot P_\la(x_1,\dots,x_N;\theta),\quad\la\in\Y(N).
\end{equation*}
This property allows us to define the \emph{Jack Laurent polynomials}, for any $\la\in\GT(N)$, by taking any $M>0$ with $\la_N+M>0$, so that $(\la_1+M,\dots,\la_N+M)\in\Y(N)$, and setting
\begin{equation}\label{eqn:jack_laurent}
P_\la(x_1,\dots,x_N;\theta) := (x_1\cdots x_N)^{-M}\cdot P_{(\la_1+M,\dots,\la_N+M)}(x_1,\dots,x_N;\theta).
\end{equation}
The definition~\eqref{eqn:jack_laurent} does not depend on the choice of $M>0$, because of the index-stability.

From~\eqref{eqn:expansion}, all monomials $x_1^{\mu_1}\cdots x_N^{\mu_N}$ in the expansion of the Jack Laurent polynomial $P_\la(x_1,\dots,x_N;\theta)$, $\la\in\GT(N)$, are such that $\la_1\ge\max_i{\mu_i}\ge\min_i{\mu_i}\ge\la_N$.
We will also need the fact that for any $\theta>0$ and $\la\in\GT(N)$, the coefficients of $P_\la(x_1,\dots,x_N;\theta)$ are nonnegative, see~\cite[Ch.~VI, (10.12)-(10.13)]{Macdonald1995}.

\subsection{Cherednik operators}

The Cherednik operators are the differential-difference operators $\xi_1,\dots,\xi_N$ acting on functions with $N$ variables $x_1,\dots,x_N$, given by
\begin{equation}\label{eq:Cherednik}
\xi_i := x_i\partial_i-\theta(i-1)+\theta\sum_{j=1}^{i-1}\frac{x_i}{x_i-x_j}(1-s_{i,j}) +\theta\sum_{j=i+1}^{N}\frac{x_j}{x_i-x_j}(1-s_{i,j}),
\end{equation}
where $s_{i,j}f(x_1,\dots,x_i,\dots,x_j,\dots,x_N) :=  f(x_1,\dots,x_j,\dots,x_i,\dots,x_N)$ acts by transposing the variables $x_i$ and $x_j$, and we denote throughout $\partial_i := \frac{\partial}{\partial x_i}$, for simplicity. The operators $\xi_i$ were introduced by Cherednik in~\cite{Cherednik1991}.

Evidently, each $\xi_i$ preserves the space of $N$-variate polynomials of degree $d$, for any $d\in\Z_{\ge 0}$.
They also preserve the space of functions that are analytic in some neighborhood of any point of the form $(x_0^N):=(x_0,\dots,x_0)$, for example $(1^N)$.

The following theorem is due to Opdam~\cite[(2.7)]{Opdam1995}; see also \cite[Corollary 3.2]{KnopSahi1997}.

\begin{theorem}\label{thm:cherednik_ops}
The Cherednik operators $\xi_1,\dots,\xi_N$ satisfy the following Hecke relations:
\begin{equation}\label{eq:Hecke}
\begin{aligned}
    \xi_i s_{i} - s_{i}\xi_{i+1} &= \theta,\\
    \xi_{i+1} s_{i} - s_{i}\xi_{i} &= -\theta,\\
    \xi_{i} s_{j} - s_{j}\xi_{i} &= 0, \text{ for all }j \neq i-1,i,
\end{aligned}
\end{equation}
where $s_i$ denotes the transposition $s_i := s_{i,i+1}$ of variables $x_i$ and $x_{i+1}$.
Moreover, the operators $\xi_1,\dots,\xi_N$ pairwise commute and for any symmetric polynomial $f(x_1,\dots,x_N)$, and all $\la\in\GT(N)$, we have
\begin{equation}\label{eq:Cherednik_2}
f(\xi_1,\dots,\xi_N)\, P_\la(x_1,\dots,x_N;\theta) = f(\LL_1,\dots,\LL_N)\cdot P_\la(x_1,\dots,x_N;\theta),
\end{equation}
where $\LL_i:=\la_i-(i-1)\theta$.
\end{theorem}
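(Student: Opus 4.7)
The plan is to prove the three assertions in sequence — the Hecke relations \eqref{eq:Hecke}, the pairwise commutativity of $\xi_1,\dots,\xi_N$, and the spectral identity \eqref{eq:Cherednik_2} — following the strategy of Opdam and Knop--Sahi. Throughout, I would rely on the basic conjugation identities $s_i x_i = x_{i+1} s_i$, $s_i \partial_i = \partial_{i+1} s_i$, and $s_i s_{j,k} s_i = s_{s_i(j),\, s_i(k)}$, combined with the explicit formula \eqref{eq:Cherednik} for $\xi_i$.

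For the Hecke relations, the relation $[\xi_i, s_j] = 0$ when $j \neq i-1, i$ is the simplest: the piece $x_i \partial_i$ commutes with $s_j$ since $\{j, j+1\} \cap \{i\} = \emptyset$, the constant $-\theta(i-1)$ is central, and $s_j$ permutes the summation indices in the two fraction sums of $\xi_i$ so as to leave each sum invariant. The non-trivial relations $\xi_i s_i - s_i \xi_{i+1} = \theta$ and $\xi_{i+1} s_i - s_i \xi_i = -\theta$ require bookkeeping: one pushes $s_i$ past each summand of $\xi_i$ or $\xi_{i+1}$ using the conjugation identities, observes that the constants $-\theta(i-1)$ and $-\theta i$ differ by $\pm\theta$, and verifies that the fraction sums match term-by-term except for the single boundary contribution indexed by $j = i+1$ (respectively $j = i$), which produces the claimed shift after algebraic simplification of the factors $\frac{x_i}{x_i - x_{i+1}}(1 - s_i)$.

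Pairwise commutativity of the $\xi_i$'s can then be deduced from the Hecke relations. For non-adjacent indices $|i - j| \ge 2$, the fraction sums of $\xi_i$ and $\xi_j$ involve transpositions that respect a disjoint block structure, and a direct expansion shows $[\xi_i, \xi_j] = 0$. For adjacent indices $i, i+1$, one uses $s_i^2 = 1$ together with the first two Hecke relations to express $\xi_{i+1}$ as a conjugate of $\xi_i$ by $s_i$ plus an explicit shift, and $[\xi_i, \xi_{i+1}] = 0$ follows from a short symbolic manipulation. An alternative, more conceptual route is to identify the pair $(s_i, \xi_i)$ with the Bernstein presentation of the degenerate affine Hecke algebra, in which the $\xi_i$'s span a commutative subalgebra by construction.

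The eigenvalue identity \eqref{eq:Cherednik_2} is the deepest and technically most involved part. The standard approach proceeds via the \emph{nonsymmetric Jack polynomials} $E_\eta(x_1, \dots, x_N; \theta)$ indexed by compositions $\eta \in \Z_{\ge 0}^N$: one shows that each $\xi_i$ acts triangularly on the monomial basis $\{x^\eta\}$ with respect to a suitable partial order on compositions, and that the common monic eigenfunctions of the commuting family $\{\xi_i\}$ are precisely the $E_\eta$'s, carrying eigenvalues that are explicit $\theta$-dependent shifts of the entries of $\eta$. The symmetric Jack $P_\la$ is then recovered (up to scalar) as a symmetrization of $E_\eta$ over compositions whose decreasing rearrangement equals $\la$, and the eigenvalue $\LL_i = \la_i - (i-1)\theta$ in \eqref{eq:Cherednik_2} is read off from the nonsymmetric case. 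The main obstacle is the combinatorial argument establishing the triangularity and computing the diagonal coefficient of each $\xi_i$ on $x^\eta$; this requires careful analysis of the action of the difference operators $\frac{x_i}{x_i - x_j}(1 - s_{i,j})$ on monomials, and is the technical heart of the Knop--Sahi approach, which I would follow directly.
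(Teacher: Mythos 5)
Two things separate your proposal from what the paper actually does, and one of them is a genuine mathematical gap. First, the paper does not reprove the Hecke relations, the commutativity, or the eigenvalue identity: it cites Opdam~\cite[(2.7)]{Opdam1995} and Knop--Sahi~\cite[Corollary 3.2]{KnopSahi1997} for all of these in the case $\la\in\Y(N)$, and the only argument it supplies is the extension from partitions to arbitrary signatures $\la\in\GT(N)$, using the identity $\xi_i\left[(x_1\cdots x_N)^{-1} f\right] = (x_1\cdots x_N)^{-1}\left[-f + \xi_i f\right]$ together with the definition~\eqref{eqn:jack_laurent} of Jack Laurent polynomials. Your route through nonsymmetric Jack polynomials $E_\eta$ indexed by compositions $\eta\in\Z_{\ge 0}^N$ only reaches $\la\in\Y(N)$; signatures with $\la_N<0$ are not covered, so you omit exactly the one step the paper writes out. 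A smaller point: commutativity of the $\xi_i$ cannot be ``deduced from the Hecke relations'' alone --- in the degenerate affine Hecke algebra the commutativity of the polynomial generators is an independent defining relation, and the manipulation you sketch for adjacent indices reduces $[\xi_i,\xi_{i+1}]=0$ to the $s_i$-invariance of $\xi_{i+1}\xi_i$, which is circular; one must verify commutativity of these particular operators directly, as the references do.

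Second, the final ``read off the eigenvalue'' step does not go through as described, because the per-operator identity~\eqref{eq:Cherednik_2} does not survive symmetrization. For $N=2$ and $\la=(1,0)$ one has $P_\la=x_1+x_2$ and $\xi_1(x_1+x_2)=x_1$, which is not a multiple of $x_1+x_2$; more structurally, applying $\xi_i s_i - s_i\xi_{i+1}=\theta$ to a symmetric eigenfunction would force $\LL_i-\LL_{i+1}=\theta$, i.e.\ $\la_i=\la_{i+1}$. So no symmetrization of the $E_\eta$'s can yield $\xi_i P_\la=\LL_i P_\la$ for each individual $i$. What the nonsymmetric theory gives after summing over the orbit of $\eta$ is that every \emph{symmetric} polynomial $f(\xi_1,\dots,\xi_N)$ acts on $P_\la$ by the scalar $f(\LL_1,\dots,\LL_N)$ --- and this symmetrized form is all the paper ever uses downstream (only $\mathcal{P}_\ell=\sum_{i}\xi_i^\ell$ enters \cref{lem:action_cherednik}). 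You should establish and record the statement in that form rather than claiming the per-operator identity.
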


We point out that~\eqref{eq:Cherednik_2} is stated for $\la\in\Y(N)$ in the references~\cite{Opdam1995,KnopSahi1997}, but this identity can be extended to all $\la\in\GT(N)$.

\subsection{Measures on $N$-signatures and Jack generating functions}

In this paper, we will generally consider sequences of finite signed measures $\PP_N$ on the sets $\GT(N)$ of $N$-signatures; moreover, we will always assume that $\PP_N$ have total mass equal to $1$.
Sometimes, for conciseness, we may omit the qualifiers finite or signed or of total mass one, and simply say \emph{measures}.
In applications, however, we will usually restrict ourselves to the case when $\PP_N$ are probability measures, except in \cref{sec:app_1}.

Given a finite signed measure $\PP_N$ on $N$-signatures $\la=(\la_1\ge\dots\ge\la_N)\in\GT(N)$, with total mass equal to $1$, define its \emph{Jack generating function} as the following formal Laurent series on $N$ variables:
\begin{equation}\label{eq:JackGeneratFunction}
G_{\PP_N,\theta}(x_1,\dots,x_N) := \sum_{\la\in\GT(N)}\PP_N(\la)\frac{P_\la(x_1,\dots,x_N;\theta)}{P_\la(1^N;\theta)}.
\end{equation}
We implicitly assume that the measure $\PP_N$ might additionally depend on the Jack parameter $\theta$, so that the Jack generating function $G_{\PP_N,\theta}$ is defined with respect to the measure $\PP_N$ evaluated at $\theta > 0$, see~\cref{sec:applications} for examples.
Next, we identify a large family of measures $\PP_N$ for which $G_{\PP_N,\theta}$ is an analytic function in a neighborhood of $(1^N)$.

\begin{definition}\label{def:small_tails}
We say that the finite signed measure $\PP_N$ on $\GT(N)$ has \textbf{small tails} if the Laurent series
\begin{equation}\label{eqn:laurent_series}
\sum_{\la\in\GT(N)}{ \PP_N(\la)\frac{P_\la(z_1,\dots,z_N;\theta)}{P_\la(1^N;\theta)} }
\end{equation}
converges absolutely on an $N$-dimensional annulus of the form
\begin{equation}\label{eqn:annulus}
A_{N;R} := \{(z_1,\dots,z_N)\in\C^N\colon R^{-1}<|z_i|<R,\text{ for all }i=1,\dots,N\},
\end{equation}
for some $R>1$.
\end{definition}

\begin{example}
If $\PP_N$ is supported on a finite subset of $\GT(N)$, then $\PP_N$ has small tails.
\end{example}

If $\PP_N$ has small tails, then evidently its Jack generating function $G_{\PP_N,\theta}$ is an analytic function in the annulus \eqref{eqn:annulus}, and moreover
\begin{equation*}
G_{\PP_N,\theta}(1^N) = 1,
\end{equation*}
since $\PP_N$ has total mass equal to $1$.

\begin{remark}
If $\PP_N$ is a probability measure with small tails and is supported on the set $\Y(N)\subseteq\GT(N)$ of partitions of length $\le N$, then the series~\eqref{eqn:laurent_series} is a power series (not a Laurent series) and it converges absolutely on an $N$-dimensional disk
\begin{equation*}
D_{N;R} := \{(z_1,\dots,z_N)\in\C^N\colon |z_i|<R,\text{ for all }i=1,\dots,N\},
\end{equation*}
for some $R>1$.
This follows from the fact that the coefficients of any Jack symmetric polynomial $P_\la(z_1,\dots,z_N;\theta)$ are nonnegative.
\end{remark}

For our examples in \cref{sec:applications}, \cref{def:small_tails} is easy to check.

\subsection{Jack symmetric functions}

We recall some material from \cite[Ch.~VI.10]{Macdonald1995}, \cite{Stanley1989}.
Let $\Sym$ be the real algebra of symmetric functions, that is, the projective limit of the chain
\begin{equation*}
\cdots \xrightarrow{\ x_{N+2}=0\ } \R[x_1,\dots,x_N,x_{N+1}]^{\Sy{N+1}} \xrightarrow{\ x_{N+1}=0\ } \R[x_1,\dots,x_N]^{\Sy{N}} \xrightarrow{\ x_{N}=0\ } \cdots
\end{equation*}
of symmetric polynomials in finitely many variables.
Let $\Y:=\bigcup_{N\ge 1}{\Y(N)}$ be the set of all partitions.
For any $\la\in\Y$, we have the stability property of Jack polynomials
\begin{equation*}
P_\la(x_1,\dots,x_N,x_{N+1};\theta) \big|_{x_{N+1}=0} = P_\la(x_1,\dots,x_N;\theta),
\end{equation*}
which shows the existence of the projective limits $P_\la(\xx;\theta)=\varprojlim{P_\la(x_1,\dots,x_N;\theta)}$, called the \emph{Jack symmetric functions}.
Sometimes we denote the Jack symmetric functions by $P_\la(;\theta)$, without specifying the alphabet $\xx=(x_1,x_2,\dots)$.
It is known that $\{P_\la(;\theta)\}_{\la\in\Y}$ is a basis of $\Sym$.
We will also need the following \emph{Cauchy identity} for Jack symmetric functions:
\begin{equation}\label{eqn:cauchy}
\sum_{\la\in\Y}P_\la(\xx;\theta)Q_\la(\yy;\theta) = \exp\Bigg\{ \theta\sum_{k\ge 1}{\frac{p_k(\xx)p_k(\yy)}{k}} \Bigg\} =: H_\theta(\xx;\yy),
\end{equation}
where $\xx=(x_1,x_2,\dots)$, $\yy=(y_1,y_2,\dots)$, and
\begin{equation}\label{j_lambda}
Q_\la(;\theta) := \prod_{(i,j)\in\la}\frac{(\la_i-j)+\theta(\la_j'-i)+\theta}{(\la_i-j)+\theta(\la_j'-i)+1}\cdot P_\la(;\theta)
\end{equation}
is the $Q$-normalization of Jack polynomials/symmetric functions.
In~\cref{j_lambda}, we denoted $\la'$ the conjugate partition to $\la$, so that $\la_j'$ is the length of the $j$-th largest column of the Young diagram of $\la$.

For any partitions $\la,\mu\in\Y$, we also define the \emph{skew-Jack symmetric function} $Q_{\la/\mu}(\xx;\theta)$ (and analogously, the $P$-normalized version $P_{\la/\mu}(\xx;\theta)$) by the following identity:
\begin{equation}\label{Q_skew}
Q_\la(\xx,\yy;\theta) := \sum_{\mu \in \Y}Q_\mu(\yy;\theta)Q_{\la/\mu}(\xx;\theta).
\end{equation}
We remark that the sum in the right hand side of~\eqref{Q_skew} is actually finite, since it is known that the skew-Jack symmetric functions $Q_{\la/\mu}(\xx;\theta)$ vanish unless $\mu\subseteq\la$.
Moreover, $Q_{\la/\mu}(\xx;\theta)$ coincides with $Q_{\la}(\xx;\theta)$ when $\mu=\emptyset$.
We will also need the following \emph{skew-Cauchy identity}:
\begin{equation}\label{eqn:cauchy_skew}
\sum_{\la\in\Y}P_{\la/\mu}(\xx;\theta)Q_{\la/\rho}(\yy;\theta) = H_\theta(\xx;\yy)\sum_{\la\in\Y}Q_{\mu/\la}(\yy;\theta)P_{\rho/\la}(\xx;\theta).
\end{equation}

\subsection{Jack-positive specializations}\label{sec:jack_specs}

\begin{definition}
A \textbf{specialization} of $\Sym$ is a unital algebra homomorphism $\rho\colon\Sym\to\R$.
For any $f\in\Sym$, we denote the image of $f$ under $\rho$ by $f(\rho)$.
The specialization $\rho\colon\Sym\to\R$ is said to be \textbf{Jack-positive} if
\begin{equation*}
P_\la(\rho;\theta) \ge 0,\quad\text{for all }\la\in\Y.
\end{equation*}
Note that this definition depends on the value of $\theta$, which is usually understood from the context.
\end{definition}

The Cauchy identity~\eqref{eqn:cauchy} turns into a numerical equality if $\xx,\yy$ are replaced by two specializations $\rho_1,\rho_2$, as long as the resulting LHS in~\eqref{eqn:cauchy} converges absolutely.
Furthermore, if $\rho_1,\rho_2$ are Jack-positive, then each term $P_\la(\rho_1;\theta)Q_\la(\rho_2;\theta)$ is nonnegative, hence~\eqref{eqn:cauchy} is a series of nonnegative real numbers that converges to $H_\theta(\rho_1;\rho_2)$ and this value is strictly positive, as $H_\theta(\rho_1;\rho_2)\ge P_\emptyset(\rho_1;\theta)Q_\emptyset(\rho_2;\theta)=1>0$.

\begin{theoremdefinition}[\cite{KerovOkounkovOlshanski1998}]\label{thm:KOO}
Given $\theta>0$ fixed, the set of Jack-positive specializations is in bijection with points of the following \textbf{Thoma cone}:
\begin{multline*}
\Omega := \Big\{ (\alpha, \beta, \delta)\in(\R_{\ge 0})^\infty\times(\R_{\ge 0})^\infty\times\R_{\ge 0} \ \mid 
\ \alpha=(\alpha_1,\alpha_2,\dots),\ \beta=(\beta_1,\beta_2,\dots),\\
\alpha_1\ge\alpha_2\ge\dots\ge 0,\quad \beta_1\ge\beta_2\ge\dots\ge 0,\quad \sum_{i=1}^\infty(\alpha_i+\beta_i) \le \delta \Big\}.
\end{multline*}
For any $\omega=(\alpha,\beta,\delta)\in\Omega$, the corresponding Jack-positive specialization $\rho_\omega\colon\Sym\to\R$ is given by
\begin{equation*}
p_1(\rho_\omega)=\delta,\qquad p_k(\rho_\omega)=\sum_{i=1}^\infty\Big( \alpha_i^k + (-\theta)^{k-1}\beta_i^k \Big),\quad k=2,3,\cdots.
\end{equation*}
If $\omega=(\alpha,\beta,\delta)\in\Omega$, we say that $\alpha_i, \beta_i,\delta$ are the \textbf{Thoma parameters} of $\rho_\omega$.
\end{theoremdefinition}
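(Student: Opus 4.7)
My approach splits the theorem into two halves: sufficiency, namely showing each $\rho_\omega$ with $\omega=(\alpha,\beta,\delta)\in\Omega$ is Jack-positive; and necessity plus uniqueness, showing every Jack-positive specialization arises this way from a unique $\omega$. For sufficiency I would handle three pure building blocks and glue them. The $\alpha$-block (only finitely many $\alpha_i$ nonzero, $\beta=0$, $\delta=\sum\alpha_i$): $P_\la(\rho;\theta)\ge 0$ because Jack polynomials have nonnegative coefficients in the monomial basis, a fact already recorded after the definition of Jack Laurent polynomials in the excerpt. The $\beta$-block: apply the Macdonald involution $\omega_\theta$ on $\Sym$ sending $p_k\mapsto(-1)^{k-1}\theta p_k$ and $P^{(\theta)}_\la$ to a positive scalar multiple of $Q^{(1/\theta)}_{\la'}$; this transports Jack-positivity at parameter $1/\theta$ (furnished by the $\alpha$-block) into a Jack-positive specialization at parameter $\theta$ whose power sums are precisely $(-\theta)^{k-1}\beta^k$, matching the formula in the theorem. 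The pure-Plancherel block $p_1=\delta$, $p_k=0$ for $k\ge 2$: specialize the Cauchy identity~\eqref{eqn:cauchy} with $\yy$ a single scalar and extract coefficients, yielding an explicit product formula for $P_\la(\rho;\theta)$ involving only manifestly positive factors built from contents $(\la_i-j)+\theta(\la'_j-i)$. To combine blocks, use the Jack coproduct identity $P_\la(\rho_1\sqcup\rho_2;\theta)=\sum_{\mu\subseteq\la}P_{\la/\mu}(\rho_1;\theta)P_\mu(\rho_2;\theta)$ which, together with nonnegativity of coefficients of skew Jack polynomials, shows that Jack-positivity is preserved under disjoint unions of specializations; a pointwise-limit argument then extends the result to $\omega$ with infinitely many $\alpha_i$ or $\beta_i$.

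For necessity and uniqueness I would use the ergodic method. Normalize by $p_1(\rho)\le 1$, obtaining a convex compact set $\Pi$ of Jack-positive specializations under pointwise convergence; by Choquet's theorem it is enough to characterize the extreme rays. Each $\rho\in\Pi$ induces a coherent family of weights $m_\rho(\la)\propto Q_\la(\rho;\theta)P_\la(1^N;\theta)$ on $\Y(N)$, and, via the skew-Cauchy identity~\eqref{eqn:cauchy_skew}, a harmonic function on the \emph{Jack-Young graph} whose edge multiplicities are the Jack Pieri coefficients $\kappa^{(\theta)}_{\la/\mu}$ defined by $p_1 P_\mu=\sum_\la\kappa^{(\theta)}_{\la/\mu}P_\la$. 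Extremality corresponds to minimality of this harmonic function, i.e.\ to a point of the minimal Martin boundary of the Jack-Young graph. A Vershik--Kerov-style martingale-convergence argument along paths sampled from the harmonic measure then forces $\la^{(N)}_i/|\la^{(N)}|\to\alpha_i$ and $(\la^{(N)})'_j/|\la^{(N)}|\to\beta_j$ almost surely, with $\sum_i\alpha_i+\sum_j\beta_j\le 1$; the deficit is absorbed as $\delta-\sum_i\alpha_i-\sum_j\beta_j$, giving the Plancherel component. This produces the bijection with $\Omega$ and establishes uniqueness.

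The main obstacle is the martingale-convergence / asymptotic-factorization step, which requires sharp asymptotics of the branching coefficients $\kappa^{(\theta)}_{\la/\mu}$ and of the ratios $P_{\la/\mu}(\,\cdot\,;\theta)/P_\la(\,\cdot\,;\theta)$ as $|\la|\to\infty$ along generic paths in the Young graph. For $\theta=1$ this is Thoma's theorem, reproven via Vershik--Kerov; the Jack-deformed version demands new Jack polynomial estimates and is precisely the technical core of Kerov--Okounkov--Olshanski's proof, which is the step I would expect to be hardest to carry out independently.
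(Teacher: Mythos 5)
The paper does not prove this statement: it is imported verbatim from \cite{KerovOkounkovOlshanski1998}, so there is no internal argument to compare yours against. Your sketch is, in outline, the actual Kerov--Okounkov--Olshanski strategy (positivity of the three pure blocks plus gluing for sufficiency; the Vershik--Kerov ergodic method on the Young graph with Jack edge multiplicities for necessity and uniqueness), and the positive product formulas you invoke for the pure blocks do appear elsewhere in the paper, e.g.\ \eqref{eq:planch_1} for the Plancherel block and the duality of \cref{exam:pure_beta} for the beta block.

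Two points need attention. In the sufficiency half, the gluing step is stated too strongly: nonnegativity of the \emph{monomial} coefficients of $P_{\la/\mu}(\cdot;\theta)$ gives $P_{\la/\mu}(\rho)\ge 0$ only when $\rho$ is a pure-alpha specialization (an evaluation at finitely many nonnegative reals); for an abstract Jack-positive specialization $\rho$ one would instead need $P_{\la/\mu}$ to expand nonnegatively in the straight Jack basis, which is a Littlewood--Richardson-type positivity that is open for general $\theta$ (cf.\ \cref{rem:stanley}). So ``Jack-positivity is preserved under unions'' cannot be taken as an input here --- in the paper it is a \emph{consequence} of \cref{thm:KOO}. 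The repair is to glue only the three explicit blocks: monomial positivity handles skew Jacks under pure-alpha specializations, the involution $\omega_{\theta^{-1}}$ transports this to pure-beta factors, the Plancherel factor is a limit of pure-alpha specializations $\Alpha\big((\delta/n)^n\big)$ as $n\to\infty$, and infinitely many parameters are reached by a further pointwise limit. In the necessity half, your description of the Martin-boundary/martingale argument is a faithful roadmap, but the decisive ingredient --- the asymptotics of the Jack branching ratios along regular paths, which forces the row and column frequencies to converge and identifies the boundary with $\Omega$ --- is exactly the step you defer; as written this direction is an outline of the known proof rather than a proof.
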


\begin{example}
\label{exam:pure_alpha}
If all Thoma parameters of $\omega=(\alpha,\beta,\delta)\in\Omega$ vanish, except possibly $\alpha_1,\dots,\alpha_n$ and $\delta = \sum_{i=1}^n\alpha_i$, then we say that $\rho_\omega$ is a \emph{pure alpha specialization} and we denote it by $\Alpha(\alpha_1,\dots,\alpha_n)$.
For any $f=f(x_1,x_2,\dots)\in\Sym$, we have
\begin{equation*}
f\big(\Alpha(\alpha_1,\dots,\alpha_n)\big) = f(\alpha_1,\dots,\alpha_n),
\end{equation*}
i.e. $\Alpha(\alpha_1,\dots,\alpha_n)$ specializes $n$ variables $x_i\mapsto\alpha_i$ and the rest of $x_i$'s are set to zero.
\end{example}

\begin{example}\label{exam:pure_beta}
If the only possibly nonzero Thoma parameters of $\omega=(\alpha,\beta,\delta)\in\Omega$ are $\beta_1,\dots,\beta_n$ and $\delta = \sum_{i=1}^n\beta_i$, then we say that $\rho_\omega$ is a \emph{pure beta specialization} and we denote it by $\Beta(\beta_1,\dots,\beta_n)$.
For any $f\in\Sym$, we have
\begin{equation*}
f\big(\Beta(\beta_1,\dots,\beta_n)\big) = (\omega_{\theta^{-1}}f)(\theta\beta_1,\dots,\theta\beta_n),
\end{equation*}
where $\omega_{\theta^{-1}}$ is the automorphism of $\Sym$ defined by $\omega_{\theta^{-1}}(p_r):=(-1)^{r-1}\theta^{-1}p_r$, for all $r\ge 1$.
This means that $\Beta(\beta_1,\dots,\beta_n)$ is the composition of $\omega_{\theta^{-1}}$ and the pure alpha specialization $\Alpha(\theta\beta_1,\dots,\theta\beta_n)$.
\end{example}

\begin{example}\label{exam:plancherel}
If all Thoma parameters of $\omega=(\alpha,\beta,\delta)\in\Omega$ vanish, except possibly $\delta$, then we say that $\rho_\omega$ is a \emph{Plancherel specialization} and we denote it by $\Planch(\delta)$.
Equivalently, it is uniquely defined by $p_k(\Planch(\delta)) = \delta\cdot\mathbf{1}_{k=1}$.
\end{example}

For any specializations $\rho_1,\rho_2$ of $\Sym$, we define the \emph{union specialization}, to be denoted simply $(\rho_1,\rho_2)$, by the formula
\[ p_k(\rho_1,\rho_2) := p_k(\rho_1) + p_k(\rho_2),\quad\text{for all }k\ge 1.\]
If $\rho_1,\rho_2$ are Jack-positive specializations, then so is their union $(\rho_1,\rho_2)$, by virtue of \cref{thm:KOO}.
Furthermore, for any specialization $\rho$, define the $n$-fold union $n\rho := (\underbrace{\rho,\dots,\rho}_{n})$; again, if $\rho$ is Jack-positive, then so is $n\rho$.

\section{Statement of the main theorem: Law of Large Numbers at high temperature}\label{sec:main_thm_statement}

We are going to study sequences of finite signed measures $\{\PP_N\}_{N \geq 1}$ on $N$-signatures in the \emph{high temperature regime}
\begin{equation}\label{eq:HTRegime}
N\to\infty,\quad \theta\to 0,\quad N\theta\to\gamma,
\end{equation}
where $\gamma\in(0,\infty)$ is a fixed constant.

Let us assume for the rest of this paper that all measures $\PP_N$ have small tails in the sense of \cref{def:small_tails}.
Then the Jack generating functions, denoted by
\begin{equation}\label{eqn:jack_G}
G_{N,\theta}(x_1,\dots,x_N) := \sum_{\la\in\GT(N)}\PP_N(\la)\frac{P_\la(x_1,\dots,x_N;\theta)}{P_\la(1^N;\theta)},
\end{equation}
are analytic in a neighborhood of $(1^N)$ and $G_{N,\theta}(1^N)=1$.
Consequently, the logarithm
\begin{equation}\label{eqn:jack_F}
F_{N,\theta}(x_1,\dots,x_N) := \ln(G_{N,\theta}(x_1,\dots,x_N))
\end{equation}
also defines an analytic function in a neighborhood of $(1^N)$ and $F_{N,\theta}(1^N)=0$.

\begin{definition}[HT-appropriateness]\label{def:appropriate}
Let $\{\PP_N\}_{N \geq 1}$ be a sequence of finite signed measures on $N$-signatures with small tails and let $F_{N,\theta}=F_{N,\theta}(x_1,\dots,x_N)$ be the logarithms of the Jack generating functions of $\PP_N$, i.e.~the functions $F_{N,\theta}(x_1,\dots,x_N)$ are defined by \eqref{eqn:jack_G}--\eqref{eqn:jack_F}.

We say that the sequence $\{\PP_N\}_{N \geq 1}$ is \text{HT-appropriate}\footnote{HT stands for high temperature.} if 

\begin{enumerate}
	\item $\displaystyle\lim_{\substack{N\to\infty\\N\theta\to\gamma}} \frac{\partial_1^n}{(n-1)!}F_{N,\theta}\big|_{(x_1,\dots,x_N) = (1^N)} = \kappa_n$ exists and is finite, for all $n \in \Z_{\geq 1}$,

	\item $\displaystyle\lim_{\substack{N\to\infty\\N\theta\to\gamma}} \partial_{i_1}\cdots \partial_{i_r}F_{N,\theta}\big|_{(x_1,\dots,x_N) = (1^N)} = 0$, for all $r\ge 2$ and $i_1,\dots,i_r\in\Z_{\geq 1}$ with at least two distinct indices among $i_1,\dots,i_r$.
\end{enumerate}
We also say that the sequence $\{F_{N,\theta}\}_{N \geq 1}$ is HT-appropriate if these conditions hold.
\end{definition}

The measure $\PP_N$ on $N$-signatures $\la=(\la_1\ge\dots\ge\la_N)\in\GT(N)$ can also be regarded as a measure on real $N$-tuples $(\LL_1>\dots>\LL_N)$, where $\LL_i+(i-1)\theta\in\Z$, for all $i=1,\dots,N$, by using the shifted coordinates $\LL_i = \la_i - (i-1)\theta$, as explained in \cref{sec:jack_parameter}.

\begin{definition}[LLN-satisfaction]\label{def:LLN}
Let $\{\PP_N\}_{N \geq 1}$ be a sequence of finite signed measures on $N$-signatures with small tails.
We say that $\{\PP_N\}_{N \geq 1}$ \textbf{satisfies the LLN} (Law of Large Numbers) if  there exist $m_1,m_2,\dots\in\R$ such that, for all $s\in\Z_{\ge 1}$ and $k_1,\dots,k_s\ge 1$, we have
\begin{equation}\label{eq:DefConvMoments}
\lim_{\substack{N\to\infty,\\ N\theta \to \gamma}}\frac{1}{N^s}\,\E_{\PP_N} \left[ \prod_{j=1}^s{ \sum_{i=1}^N{\LL_i^{k_j}} } \right]
= \prod_{j=1}^s{m_{k_j}},
\end{equation}
where, in the LHS, the $N$-tuples $(\LL_1>\dots>\LL_N)$ are $\PP_N$-distributed.\footnote{We use here the probabilistic terminology and notation, as if $\PP_N$ were a probability measure, but in general, for a function $f\colon\GT(N)\to\R$, the expression $\E_{\PP_N}[f(\la)]$ means $\sum_{\la\in\Y(N)}{f(\la)\PP_N(\la)}$.}
\end{definition}

If all $\PP_N$ are probability measures, then the LHS of~\eqref{eq:DefConvMoments} can be interpreted in terms of the \emph{empirical measures} $\mu_N$ of $\PP_N$, which by definition are the (random) probability measures on $\R$, given by
\begin{equation}\label{eqn:empirical_measures}
\mu_N := \frac{1}{N}\sum_{i=1}^N\delta_{\LL_i},\text{ where }(\LL_1>\dots>\LL_N)\text{ is }\PP_N\text{--distributed}.
\end{equation}
Indeed, the moments of the empirical measures are
\begin{equation*}
m_k(\mu_N) := \int_{\R}x^k\mu_N(\text{d}x) = \frac{1}{N}\sum_{i=1}^N{\LL_i^k},
\end{equation*}
then~\eqref{eq:DefConvMoments} is equivalent to:
\begin{equation*}
\lim_{\substack{N\to\infty\\N\theta\to\gamma}} \E_{\PP_N} \left[ \prod_{j=1}^s{ m_{k_j}(\mu_N) } \right]
= \prod_{j=1}^s{m_{k_j}}.
\end{equation*}
As a result, LLN-satisfaction is equivalent to the convergence $\mu_N\to\mu$ in the sense of moments, in probability, to a probability measure $\mu$ with moments $m_1,m_2,\cdots$.
In the case that $m_1,m_2,\dots$ are the moments of a unique probability measure, then LLN-satisfaction implies $\mu_N\to\mu$ weakly, in probability.

Our main result below shows that a sequence $\{\PP_N\}_{N\ge 1}$ is HT-appropriate if and only if it satisfies the LLN. However, this result would not be complete if we did not show how the sequences $(\kappa_n)_{n\ge 1}$ and $(m_n)_{n\ge 1}$ in these definitions are related to each other.
This relation depends on the combinatorial gadgets called Łukasiewicz paths.

\begin{definition}\label{def:lukasiewicz}
A \textbf{Łukasiewicz path $\Gamma$ of length $\ell$} is a lattice path on $\Z^2$ that starts at the origin $(0,0)$, ends at $(\ell,0)$, stays above the $x$-axis, and which has steps $(1,0)$ (horizontal steps), $(1,-1)$ (down steps) and steps of the form $(1, j)$, for some $j\in\Z_{\ge 1}$ (up steps).
We denote by $\Luk(\ell)$ the set of all Łukasiewicz paths of length $\ell$.
\end{definition}

It is known that the cardinality of $\Luk(\ell)$ is the Catalan number $C_\ell = \frac{1}{\ell+1}{2\ell\choose\ell}$.
For example, the five Łukasiewicz paths of length $3$ are depicted in \cref{fig:PathsEx}.
It turns out that the transformation $(\kappa_n)_{n\ge 1}\mapsto (m_n)_{n\ge 1}$ can be described in terms of weighted Łukasiewicz paths.

\begin{definition}\label{def:mk}
Given a constant $\gamma\in (0,\infty)$ and $\vec{\kappa}=(\kappa_n)_{n\ge 1}$, define $\vec{m}=(m_n)_{n\ge 1}$ by
\begin{multline}\label{eq:Moments}
m_\ell := \sum_{\Gamma\in\Luk(\ell)}
\frac{\Delta_\gamma\left(x^{1+\text{\#\,horizontal steps at height $0$ in $\Gamma$}}\right)(\kappa_1)}{1+\text{\#\,horizontal steps at height $0$ in $\Gamma$}}
\cdot\prod_{i\ge 1}(\kappa_1+i)^{\text{\#\,horizontal steps at height $i$ in $\Gamma$}}\\
\cdot\prod_{j\ge 1}(\kappa_j+\kappa_{j+1})^{\text{\#\,steps $(1,j)$ in $\Gamma$}}(j+\gamma)^{\text{\#\,down steps from height $j$ in $\Gamma$}},\quad\text{for all $\ell\in\Z_{\ge 1}$},
\end{multline}
where the term $\Delta_\gamma$ is the divided difference operator, defined by
\begin{equation}\label{eq:DivDiff}
\Delta_\gamma(f)(x) := \frac{f(x)-f(x-\gamma)}{\gamma},
\end{equation}
and then evaluated at $x = \kappa_1$.
We denote the map $\vec{\kappa}\mapsto\vec{m}$ by $\mathcal{J}_\gamma^{\kappa\mapsto m}\colon\R^\infty\to\R^\infty$ and the compositional inverse $\vec{m}\mapsto\vec{\kappa}$ by $\mathcal{J}_\gamma^{m\mapsto\kappa} := (\mathcal{J}_\gamma^{\kappa\mapsto m})^{(-1)}$.
\end{definition}

Note that
\begin{equation*}
\Delta_\gamma\left( \frac{x^{1+n}}{1+n} \right)(\gamma) = \frac{\gamma^n}{1+n},\quad\text{for all }n\in\Z_{\ge 0},
\end{equation*}
and consequently~\eqref{eq:Moments} leads to the special case stated in the following lemma.

\begin{lemma}\label{lem:special_case}
If $\vec{m} = \mathcal{J}^{\kappa\mapsto m}_\gamma(\vec{\kappa})$, in the sense of \cref{def:mk}, and $\kappa_1=\gamma$, then
\begin{multline*}
m_\ell = \sum_{\Gamma\in\Luk(\ell)}
\frac{1}{1+\text{\#\,horizontal steps at height $0$ in $\Gamma$}}\cdot
\prod_{i\ge 0}(i+\gamma)^{\text{\#\,horizontal steps at height $i$ in $\Gamma$}}\\
\cdot\prod_{j\ge 1} (\kappa_j+\kappa_{j+1})^{\text{\#\,up steps $(1,j)$ in $\Gamma$}}(j+\gamma)^{\text{\#\,down steps from height $j$ in $\Gamma$}},\quad\text{for all $\ell\in\Z_{\ge 1}$}.
\end{multline*}
\end{lemma}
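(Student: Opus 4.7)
The plan is to specialize the general formula~\eqref{eq:Moments} by substituting $\kappa_1 = \gamma$ and then to observe that the first factor (involving the divided difference) can be absorbed into the horizontal-step product by extending its index range to include $i = 0$.

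First, I would recall the elementary computation that the text already supplies: for any $n \in \Z_{\geq 0}$,
\begin{equation*}
\Delta_\gamma(x^{1+n})(\gamma) = \frac{\gamma^{1+n} - (\gamma - \gamma)^{1+n}}{\gamma} = \gamma^n,
\end{equation*}
so that
\begin{equation*}
\frac{\Delta_\gamma\bigl(x^{1+n}\bigr)(\kappa_1)}{1+n}\bigg|_{\kappa_1 = \gamma} = \frac{\gamma^n}{1+n}.
\end{equation*}
Applying this with $n$ equal to the number of horizontal steps at height $0$ in a path $\Gamma \in \Luk(\ell)$, the first factor in~\eqref{eq:Moments} becomes $\tfrac{\gamma^n}{1+n}$.

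Second, I would note that $\gamma^n = (0+\gamma)^n$, which is precisely the factor $(i+\gamma)^{\text{\#\,horizontal steps at height $i$ in $\Gamma$}}$ at $i = 0$. Since for $i \ge 1$ the substitution $\kappa_1 = \gamma$ rewrites $(\kappa_1 + i)^{\text{\#\,horizontal steps at height $i$}} = (i+\gamma)^{\text{\#\,horizontal steps at height $i$}}$, I can combine the $i = 0$ contribution with the existing product over $i \ge 1$ into a single product over $i \ge 0$. The remaining factor $\frac{1}{1+(\text{\#\,horizontal steps at height $0$})}$ matches the first factor in the claimed formula, and the products indexed by $j \ge 1$ (over up steps and down steps) are unaffected by the substitution $\kappa_1 = \gamma$ and carry over verbatim. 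Assembling these pieces yields the displayed formula of the lemma.

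There is no serious obstacle here: the statement is a direct substitution, and the only point that requires a line of verification is the identity $\Delta_\gamma(x^{1+n})(\gamma) = \gamma^n$, which is immediate from the definition~\eqref{eq:DivDiff} of the divided difference.
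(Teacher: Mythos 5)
Your proof is correct and matches the paper's own argument: the paper simply records the identity $\Delta_\gamma\bigl(\tfrac{x^{1+n}}{1+n}\bigr)(\gamma)=\tfrac{\gamma^n}{1+n}$ immediately before the lemma and notes that the special case follows by substitution into~\eqref{eq:Moments}, exactly as you do. The absorption of the resulting factor $\gamma^n=(0+\gamma)^n$ into the product over $i\ge 0$ is the same bookkeeping step the paper leaves implicit.
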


It is not hard to see that \cref{eq:Moments} has the form 
\begin{equation}\label{eq:top_moments}
m_\ell = (\gamma+1)^{\uparrow(\ell-1)}\cdot\kappa_\ell\,+ \text{ polynomial over } \Q[\gamma] \text{ in }\kappa_1,\cdots,\kappa_{\ell-1},
\end{equation}
where
\begin{equation}\label{eq:raising_factorial}
g^{\uparrow 0} := 1,\qquad g^{\uparrow n} := \prod_{i=1}^{n}{(g+i-1)},\quad n\in\Z_{\ge 1},
\end{equation}
is the \emph{raising factorial}. Therefore the system of equations~\eqref{eq:Moments} is invertible, 
\begin{equation}\label{eq:kappa-top}
\kappa_\ell = \frac{1}{(\gamma+1)^{\uparrow(\ell-1)}}\cdot m_\ell\,+ \text{ polynomial over } \Q(\gamma) \text{ in }m_1,\cdots,m_{\ell-1},
\end{equation}
which indeed guarantees the existence of the compositional inverse map $\mathcal{J}_\gamma^{m\mapsto\kappa}$.

\begin{example}
  \label{ex:mk}
The first few entries of the sequence $\vec{m}=\mathcal{J}_\gamma^{\kappa\mapsto m}(\vec{\kappa})$ are:
\begin{align*}
m_1 &= \kappa_1 - \frac{\gamma}{2},\\
m_2 &= (\gamma+1)\kappa_2 + \kappa_1^2 + \kappa_1 + \frac{\gamma^2}{3},\\
m_3 &= (\gamma+1)(\gamma+2)\kappa_3 + 3(\gamma+1)\kappa_2\kappa_1+\kappa_1^3+3(\gamma+1)\kappa_2+\frac{3}{2}(\gamma+2)\kappa_1^2+\kappa_1-\frac{\gamma^3}{4}.
\end{align*}
See \cref{fig:PathsEx} for the calculation of $m_3$.
From these equalities, we can deduce the first few entries of the inverse map $\vec{\kappa}=\mathcal{J}_\gamma^{m\mapsto\kappa}(\vec{m})$:
\begin{align*}
\kappa_1 &= m_1 + \frac{\gamma}{2},\\
\kappa_2 &= \frac{m_2-m_1^2-(\gamma+1)m_1-\frac{\gamma(7\gamma+6)}{12}}{\gamma+1},\\
\kappa_3 &= \frac{m_3-3m_2m_1+2m_1^{3}-\frac{3}{2}(\gamma+2)m_2+\frac{3}{2}(\gamma+2)m_1^2+(\gamma+1)(\gamma+2)m_1+\frac{\gamma(5\gamma+4)(\gamma+2)}{8}}{(\gamma+1)(\gamma+2)}.
\end{align*}
\end{example}

\begin{figure}
	\centering
	\includegraphics[width=\textwidth]{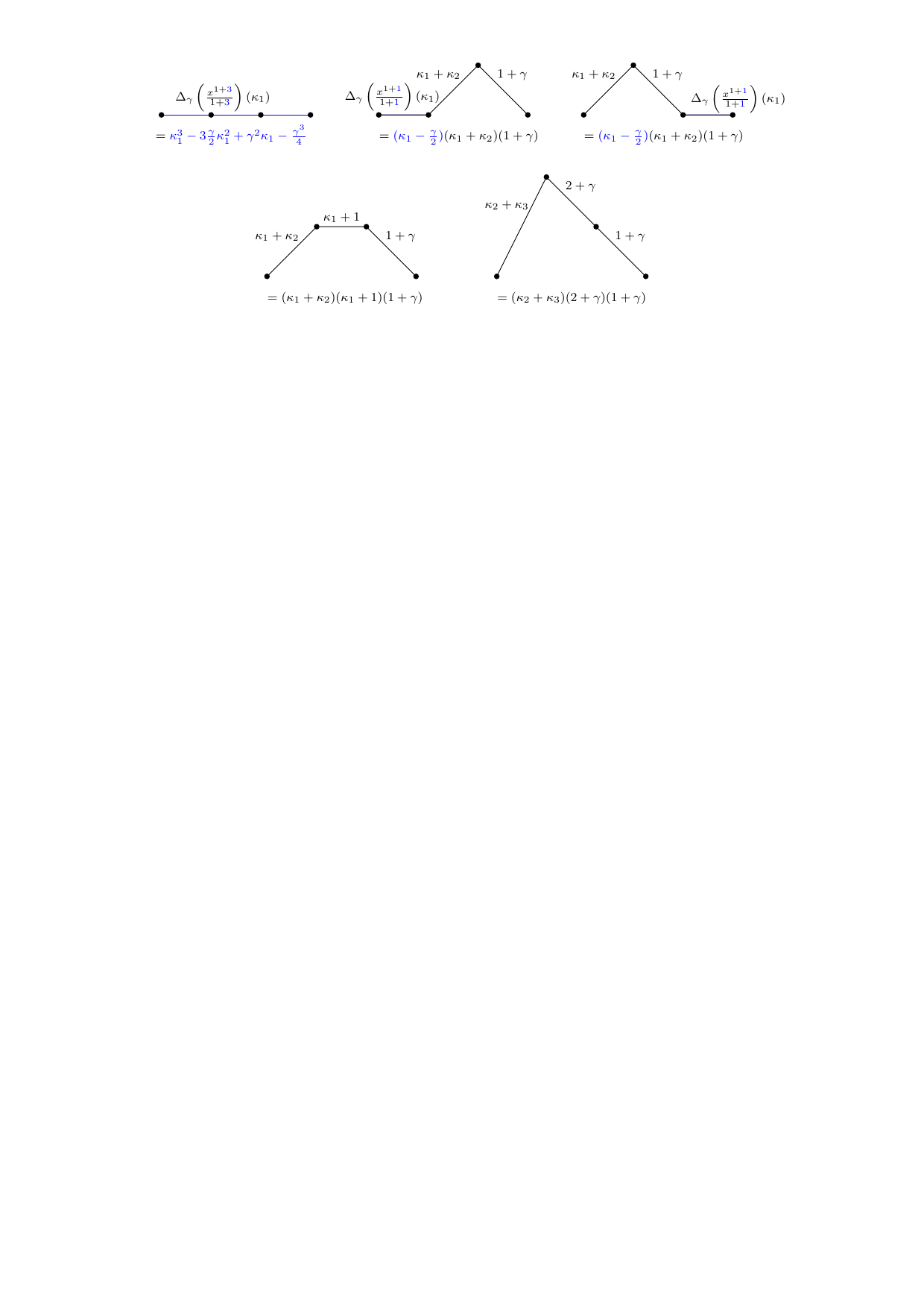}
	\caption{There are five Łukasiewicz paths of length $3$. We present them with the weights associated to each of their steps, and the overall contribution after applying the divided difference operator $\Delta_\gamma$. The horizontal steps at height $0$ and their weight after applying the divided difference operator are marked in blue.}
	\label{fig:PathsEx}
\end{figure} 

We remark that the following computation
\begin{equation}\label{eqn:A_action}
\Delta_\gamma\left( \frac{x^{1+n}}{1+n} \right)(\kappa_1) = \frac{1}{1+n}\sum_{i=0}^n{ \kappa_1^i(\kappa_1-\gamma)^{n-i} },\quad\text{for all }n\in\Z_{\ge 0},
\end{equation}
allows to alternatively express $m_\ell$ as a $\kappa$-weighted sum of Łukasiewicz paths of \emph{length at most $\ell$}.

\begin{theorem}[The main theorem:~HT-appropriateness is equivalent to LLN-satisfaction]\label{theo:main1}
Let $\{\PP_N\}_{N \ge 1}$ be a sequence of finite signed measures with total mass $1$ on $N$-signatures and with small tails.
Then $\{\PP_N\}_{N\ge 1}$ is HT-appropriate (\cref{def:appropriate}) if and only if it satisfies the LLN (\cref{def:LLN}).
If these equivalent conditions hold, then the sequences $\vec{\kappa}=(\kappa_n)_{n\ge 1}$ and $\vec{m}=(m_n)_{n\ge 1}$ are related to each other by $\vec{m}=\mathcal{J}_\gamma^{\kappa\mapsto m}(\vec{\kappa})$, given by \cref{def:mk}.
Furthermore, if all $\PP_N$ are probability measures and moreover there exists a constant $C>0$ such that $|\kappa_n|\le C^n$, for all $n\geq 1$, then there exists a unique probability measure $\mu^\gamma$ on $\R$ with moments $m_\ell$:
\begin{equation*}
m_\ell = \int_{\R}x^\ell \mu^\gamma(\text{d}x),\quad\text{for all }\ell\in\Z_{\ge 1}.
\end{equation*}
\end{theorem}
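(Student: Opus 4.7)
The plan is to prove the equivalence by the method of moments, using the Cherednik operators of \cref{thm:cherednik_ops} to extract joint moments of the particles $\LL_i$ directly from the Jack generating function. Since the $\xi_i$ pairwise commute and act on $P_\la$ by multiplication by $\LL_i$, and since $G_{N,\theta}(1^N)=1$, one has the key identity
\begin{equation*}
\E_{\PP_N}\!\left[\prod_{j=1}^s \sum_{i=1}^N \LL_i^{k_j}\right] = \left(\prod_{j=1}^s \sum_{i=1}^N \xi_i^{k_j}\right) G_{N,\theta} \bigg|_{(x_1,\dots,x_N)=(1^N)},
\end{equation*}
which reduces the study of joint moments to the germ of $G_{N,\theta}$ at $(1^N)$. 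Writing $G_{N,\theta}=e^{F_{N,\theta}}$ and applying chain and product rules expands the right-hand side as a polynomial in partial derivatives of $F_{N,\theta}$ evaluated at $(1^N)$. HT-appropriateness then controls exactly which derivatives persist in the limit $N\theta\to\gamma$: the pure-diagonal derivatives converge to the $\kappa_n$, while mixed derivatives vanish.

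The chief obstacle, as flagged in \cref{subsec:methods}, is that unlike the Dunkl operators of \cite{Benaych-GeorgesCuencaGorin2022}, the Cherednik operators $\xi_i$ are not invariant under the $\mathfrak{S}_N$-action on the index $i$: the residue sums split unevenly into $j<i$ and $j>i$ parts, and the shift $-\theta(i-1)$ depends on $i$. A naive expansion produces many terms with no obvious symmetry to aggregate them after setting $x_1=\cdots=x_N=1$. The resolution is to apply the Hecke relations \eqref{eq:Hecke} repeatedly to commute transpositions past Cherednik operators at the cost of explicit $\theta$-corrections; this reduces each monomial $\xi_{i_1}\cdots\xi_{i_k}$, modulo lower-order terms, to canonical monomials whose contribution at $(1^N)$ is computable in terms of $\partial_1^a F_{N,\theta}|_{(1^N)}$. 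Tracking the bookkeeping of these Hecke reductions — which indices collide, how factors of $\theta$ combine with $N$-fold index sums to give $\gamma$, and which diagonal derivatives of $F$ emerge at each step — naturally labels the surviving terms by the Łukasiewicz paths of $\Luk(\ell)$ with exactly the weights of \eqref{eq:Moments}. In particular, down steps from height $j$ pick up $(j+\gamma)$ from the $-\theta(i-1)$ shift combined with $N\theta\to\gamma$, up steps $(1,j)$ pick up $(\kappa_j+\kappa_{j+1})$ from diagonal derivatives, horizontal steps at positive height $i$ pick up $(\kappa_1+i)$, and horizontal steps at height $0$ absorb the divided difference $\Delta_\gamma$ forced by the discreteness of the $\LL_i$.

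Granted the single-moment formula \eqref{eq:Moments}, the joint-moment factorization $\E_{\PP_N}[\prod_j m_{k_j}(\mu_N)]\to\prod_j m_{k_j}$ required in \cref{def:LLN} follows from the same analysis, because condition (2) of HT-appropriateness is precisely what annihilates the cross-terms between different factors $\sum_i \xi_i^{k_j}$ in the limit. The converse implication (LLN $\Rightarrow$ HT-appropriateness) relies on the triangularity \eqref{eq:top_moments}–\eqref{eq:kappa-top} together with the analogous multi-variable triangular relations between mixed derivatives of $F_{N,\theta}$ and joint moments, allowing one to invert the map and recursively recover each $\kappa_n$ and the vanishing of mixed limits from the convergence of the $m_n$. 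Finally, under the growth bound $|\kappa_n|\le C^n$, the explicit formula \eqref{eq:Moments} yields $|m_\ell|\le C_\ell\cdot(\gamma+1)^{\uparrow(\ell-1)}\cdot(C')^\ell\le (C'')^\ell\,\ell!$, which satisfies Carleman's criterion $\sum_\ell (m_{2\ell})^{-1/(2\ell)}=\infty$ and therefore uniquely determines a probability measure $\mu^\gamma$ with moments $m_\ell$.
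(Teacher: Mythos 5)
Your plan follows essentially the same route as the paper: the identity expressing joint moments as $\prod_j\mathcal{P}_{k_j}G_{N,\theta}\big|_{(1^N)}$ with $\mathcal{P}_k=\sum_i\xi_i^k$, the Hecke relations to reduce the asymmetric Cherednik operators to the single index $i=1$, a Łukasiewicz-path computation for the limiting weights, a triangular system in the degree-$k$ Taylor coefficients of $\ln G_{N,\theta}$ for the converse, and Carleman's condition for uniqueness. One correction to your bookkeeping, which matters if you try to execute the computation: the factor $(j+\gamma)$ on a down step does \emph{not} come from the shift $-\theta(i-1)$; it comes from the exchange terms $\theta\sum_{j\neq 1}\frac{1}{x_1-x_j}(1-s_{1,j})$, which in shifted coordinates act asymptotically as $(N-1)\theta$ times a lowering operator, contributing $\gamma\dd$ so that $(\partial_x+\gamma\dd)x^j=(j+\gamma)x^{j-1}$. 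The shift $-\theta(i-1)$ is instead the source of the divided difference: the Hecke reduction yields $\xi_i^\ell e^{F}\big|_{(1^N)}\approx[z^\ell]\,\Xi_1(z)\sum_{k\ge0}\big((1-i)\theta z\,\Xi_1(z)\big)^k$, and averaging $\frac1N\sum_{i=1}^N\big((1-i)\theta\big)^k\to\frac{(-\gamma)^k}{k+1}=\frac1\gamma\int_0^\gamma(-x)^k\,dx$ produces exactly the $\Delta_\gamma$ weight on horizontal steps at height $0$ --- it has nothing to do with the discreteness of the $\LL_i$. With that correction, the remaining work is the genuinely technical part you compress into ``tracking the bookkeeping'': the approximate factorization $\ev\,\xi_{i_1}^{a}\xi_{i_2}^{b}e^{F}\approx\ev\,\xi_{i_1}^{a}e^{F}\cdot\ev\,\xi_{i_2}^{b}e^{F}$ via a generic-versus-singular term count, and uniform (in $i$) control of the error terms in the $\Xi_i$ recursion, which is where the bulk of the paper's Section 4 lives.
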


In the case where all $\PP_N$ are probability measures and the conditions of the previous theorem are satisfied, then the empirical measures $\mu_N$ of $\PP_N$, defined by~\eqref{eqn:empirical_measures}, converge to $\mu^\gamma$ weakly and in the sense of moments, in probability, as discussed earlier.

\section{Proof of the LLN at high temperature}\label{sec:main_thm_proof}

\subsection{The action of Cherednik operators at high temperature}

It is convenient to define
\begin{equation*}
\mathcal{P}_\ell := \sum_{i=1}^N{\xi_i^\ell},\quad\ell\in\Z_{\ge 1}.
\end{equation*}

Let $\{F_N(x_1,\dots,x_N)\}_{N\ge 1}$ be a sequence of smooth functions in some neighborhoods of $(1^N)=(1,\dots,1)$ such that $F_N(1^N)=0$, for all $N\ge 1$.
Let $k\in\Z_{\ge 1}$ be arbitrary and let the $k$-th order Taylor expansion of $F_N$ at $(1^N)$ be
\begin{equation}\label{eqn:taylor_expansion}
F_N(x_1,\dots,x_N)
= \sum_{\la\in\Y(N)\colon|\la|\le k}{c_{F_N}^\la \, \mathfrak{m}_\la(x_1-1,\dots,x_N-1)} + O(\|\xx-\mathbf{1}\|^{k+1}),
\end{equation}
where $\xx-\mathbf{1}:=(x_1-1,\dots,x_N-1)$ and $\mathfrak{m}_\la$ are the monomial symmetric polynomials.
Note that the coefficients corresponding to the empty partition vanish: $c_{F_N}^\emptyset=0$, for all $N\ge 1$.

The proof of~\cref{theo:main1} employs the next three helper theorems.
The first~\cref{theo:helper1} is a refinement of \cite[Thm.~5.1]{Benaych-GeorgesCuencaGorin2022}, but the argument is similar, so we will keep the proof here fairly brief.
In all these three theorems, we regard $\big\{c_{F_N}^\la\big\}_{\la\in\Y(N)}$ as a sequence of variables of degrees $\deg\big(c_{F_N}^\la\big) := |\la|$.

\begin{theorem}\label{theo:helper1}
Let $k\in\Z_{\ge 1}$, let $\{F_N=F_N(x_1,\dots,x_N)\}_{N\ge 1}$ be a sequence of smooth functions near $(1^N)$ such that $F_N(1^N)=0$, with $k$-th order Taylor expansions of $F_N$ given by~\eqref{eqn:taylor_expansion}, and let $\la=(\la_1\ge\dots\ge\la_s>0)$ be a partition with $|\la|=k$ and $\ell(\la)=s$. Then
\begin{multline}\label{eqn:cherednik_asymptotics}
N^{-s}\left[ \prod_{j=1}^s{ \mathcal{P}_{\la_j} } \right] e^{F_N} \bigg|_{(x_1,\cdots,x_N)=(1^N)}
= b^\la_\la\,c^\la_{F_N} + \sum_{\mu\colon|\mu|=k,\,\ell(\mu)>s}{ b^\la_\mu\,c^\mu_{F_N} }\\
+ \prod_{j=1}^s{ \frac{1}{N}\cdot\mathcal{P}_{\la_j} e^{F_N} \bigg|_{(x_1,\cdots,x_N)=(1^N)} } 
- \mathbf{1}_{\{s=1\}}\cdot k(\gamma+1)_{k-1}\cdot c^{(k)}_{F_N}\\
+ R_1\left( c^\nu_{F_N} \colon |\nu|<k \right)
+ R_2\left( c^\nu_{F_N} \colon |\nu|\le k \right),
\end{multline}
where $b^\la_\mu$ are uniformly bounded coefficients in the high temperature regime~\eqref{eq:HTRegime}. In particular,
\begin{equation}\label{eqn:limit_b}
\lim_{\substack{N\to\infty\\N\theta\to\gamma}}{b^\la_\la} = \prod_{j=1}^s{\la_j(\gamma+1)^{\uparrow(\la_j-1)}}.
\end{equation}
where we used the notation of raising factorial~\eqref{eq:raising_factorial}.
Moreover, $R_1(c^\nu_{F_N}\colon |\nu|<k)$ is a polynomial in the variables $c^\nu_{F_N}$, $|\nu|<k$, of degree at most $k$ with uniformly bounded coefficients in the regime~\eqref{eq:HTRegime} and such that each monomial has at least one factor $c^\nu_{F_N}$ with $\ell(\nu)>1$.
Finally, $R_2(c^\nu_{F_N} \colon |\nu|\le k)$ is a polynomial in $c^\nu_{F_N}$, $|\nu|\le k$, of degree at most $k$ with coefficients of order $o(1)$ in the regime~\eqref{eq:HTRegime}.
\end{theorem}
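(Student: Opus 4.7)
The plan is to compute $\prod_{j=1}^s \mathcal{P}_{\lambda_j} e^{F_N}\bigl|_{(1^N)}$ by iteratively applying each power-sum Cherednik operator and expanding carefully in the Taylor series~\eqref{eqn:taylor_expansion}. Write $G_N := e^{F_N}$, which is symmetric. The key technical reduction comes from the Hecke relations~\eqref{eq:Hecke}: on a symmetric input, $\sum_i \xi_i = \sum_i x_i\partial_i$, since the factor $(1-s_{i,j})$ annihilates symmetric functions, but $\sum_i \xi_i^k$ is much more subtle because $\xi_i f$ fails to be symmetric even when $f$ is. Using~\eqref{eq:Hecke} iteratively, I would rewrite $\xi_i^k$ as a ``derivation part'' built from $x_i\partial_i$ plus $\theta$-corrections arising from commuting $\xi_i$'s past transpositions $s_j$. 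After summing over $i$ and evaluating at $(1^N)$, each $\theta$-correction yields a factor of $\theta$ times a sum over $N$ indices, so that in the high temperature regime $N\theta\to\gamma$ it contributes a finite constant, giving birth to the raising factorial $(\gamma+1)^{\uparrow(k-1)}$.

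First I would handle the case $s=1$ by direct calculation. Expanding $G_N$ in its Taylor series and applying $\mathcal{P}_k=\sum_i \xi_i^k$ to each monomial $\mathfrak{m}_\mu(x-1)$, only partitions $\mu$ with $|\mu|\le k$ survive after differentiation and evaluation at $(1^N)$. The coefficient of $c^{(k)}_{F_N}$ converges to $k(\gamma+1)^{\uparrow(k-1)}$, to be checked inductively on $k$ using the telescoping identity $(\gamma+k)(\gamma+1)^{\uparrow(k-1)}=(\gamma+1)^{\uparrow k}$ applied each time a Hecke correction is ``used up.'' Contributions with $|\mu|<k$ feed into $R_1$, truncation errors feed into $R_2$, and the remaining $|\mu|=k$ with $\ell(\mu)>1$ produce the $b^{(k)}_\mu c^\mu_{F_N}$ terms in~\eqref{eqn:cherednik_asymptotics}.

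For the general case, iterate by applying $\mathcal{P}_{\lambda_s},\ldots,\mathcal{P}_{\lambda_1}$ successively. After $\mathcal{P}_{\lambda_s}$ acts on $G_N$, the result is a new symmetric function whose Taylor coefficients near $(1^N)$ are expressible, via Step~1, in terms of those of $F_N$. Iterating gives rise to a natural decomposition into ``fully connected'' contributions, in which each operator produces a fresh part of some $\mu$ (yielding the main term $b^\lambda_\lambda c^\lambda_{F_N}$ with the product formula~\eqref{eqn:limit_b}, plus $\mu$ with $\ell(\mu)>s$), and ``disconnected'' contributions, in which the operators act independently on $G_N$ (yielding $\prod_j N^{-1}\mathcal{P}_{\lambda_j}e^{F_N}|_{(1^N)}$). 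The correction $-\mathbf{1}_{\{s=1\}}k(\gamma+1)^{\uparrow(k-1)}c^{(k)}_{F_N}$ is then a double-counting subtraction: when $s=1$, the ``disconnected'' product is literally the full answer and already contains the main connected term, so it must be subtracted back before being added.

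The main obstacle will be justifying that no partition $\mu$ with $\ell(\mu)\le s$ and $\mu\ne\lambda$ contributes to the connected sum at leading order, and that $R_1$ and $R_2$ have the claimed structure. Combinatorially, this requires showing that the only way $\prod_j\mathcal{P}_{\lambda_j}$ can produce a Taylor coefficient $c^\mu_{F_N}$ of length $\le s$ at leading order is for each $\mathcal{P}_{\lambda_j}$ to contribute exactly one part equal to $\lambda_j$ (after reordering), so that any ``merging'' of parts either pushes $\ell(\mu)$ below $s$ (but then forces $\mu=\lambda$) or contributes with a factor of $\theta=O(N^{-1})$ absorbed into $R_2$. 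Uniform control of the $o(1)$ coefficients in $R_2$ exploits the $O(\theta)$ scaling of each nonleading Hecke correction, while the requirement that every monomial in $R_1$ contain at least one factor $c^\nu_{F_N}$ with $\ell(\nu)>1$ follows from the impossibility of filling all $s$ slots with length-one coefficients without exhausting the degree budget $k$. Carrying out this bookkeeping uniformly over all $\lambda$ is the technical heart of the argument.
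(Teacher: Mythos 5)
Your proposal captures the correct architecture of the statement --- truncation to the $k$-th Taylor polynomial, a connected/disconnected decomposition whose disconnected part is $\prod_j N^{-1}\mathcal{P}_{\la_j}e^{F_N}|_{(1^N)}$, extraction of the linear degree-$k$ component giving the $b^\la_\mu c^\mu_{F_N}$ terms, and the reading of $-\mathbf{1}_{\{s=1\}}k(\gamma+1)_{k-1}c^{(k)}_{F_N}$ as a double-counting subtraction. The factorization step you describe is essentially the paper's \cref{lem:TopDegree_2}, which makes it precise by classifying the $(N+1)^{\sum\alpha_j}$ words of constituent operators into \emph{generic} ones (all exchange indices distinct and disjoint from $i_1,\dots,i_s$), which factorize exactly, and \emph{singular} ones, which are fewer by a factor of $N$.

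However, there is a genuine gap at what you yourself call the ``technical heart'': the claims that $b^\la_\mu=O(1/N)$ unless $\ell(\mu)>s$ or $\mu=\la$, and that $b^\la_\la\to\prod_j\la_j(\gamma+1)^{\uparrow(\la_j-1)}$, are only flagged as an obstacle, not proved, and the mechanism you propose for them (iterated Hecke relations with a telescoping identity) is not the right tool here. The Hecke relations $\xi_is_i-s_i\xi_{i+1}=\theta$ are what the paper uses to prove \cref{theo:helper2} (reducing $\ev\,\xi_i^\ell$ to $\ev\,\xi_1^\ell$ plus $\theta$-corrections); they do not by themselves isolate the degree-$k$ linear component in the $c^\nu_{F_N}$ or explain why only $\mu=\la$ and $\ell(\mu)>s$ survive. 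The paper's actual route (\cref{lem:TopDegree}) is a change of variables: writing $F_N(\xx)=\hat F_N(\xx-\mathbf{1})$, one has $\prod_j\xi_{i_j}^{\alpha_j}e^{F_N}|_{(1^N)}=\prod_j(\xi_{i_j}+\tilde\xi_{i_j})^{\alpha_j}e^{\hat F_N}|_{(0^N)}$ with $\tilde\xi_i$ the Dunkl operators; since Cherednik operators preserve polynomial degree while Dunkl operators lower it by one, only the pure Dunkl word $\prod_j\tilde\xi_{i_j}^{\alpha_j}$ can contribute to the degree-$k$ linear component, and the needed structure of the $b^\la_\mu$ together with the limit~\eqref{eqn:limit_b} is then imported from the Dunkl-operator analysis of~\cite[Prop.~5.5]{Benaych-GeorgesCuencaGorin2022}. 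Without this (or an equivalent argument), your induction ``using the telescoping identity'' is unsubstantiated. A minor but real slip in the same direction: on symmetric input $\sum_i\xi_i=\sum_ix_i\partial_i-\theta\binom{N}{2}$, not $\sum_ix_i\partial_i$; the constant term contributes $-\gamma/2$ after normalization and cannot be dropped.
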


\begin{theorem}\label{theo:helper2}
Let $\ell\in\Z_{\ge 1}$ and let $\{F_N=F_N(x_1,\dots,x_N)\}_{N\ge 1}$ be a sequence of smooth functions such that $F_N(1^N)=0$ with $\ell$-th order Taylor expansions at $(1^N)$:
\begin{equation}\label{eq:taylor_expansions_helper}
F_N(x_1,\dots,x_N) = \sum_{\la\in\Y(N)\colon|\la|\leq\ell}{c_{F_N}^\la\,\mathfrak{m}_\la(x_1-1,\dots,x_N-1)} + O(\|\xx-\mathbf{1}\|^{\ell+1}).
\end{equation}
Let $(m_1,m_2,\dots):=\mathcal{J}_\gamma^{\kappa\mapsto m}\Big(c^{(1)}_{F_N},\, 2c^{(2)}_{F_N},\, 3c^{(3)}_{F_N},\cdots\Big)$ be the sequence obtained by \cref{def:mk}.
In particular, $m_\ell=m_\ell\Big(c^{(1)}_{F_N},\cdots,\ell c^{(\ell)}_{F_N}\Big)$ is a polynomial in the variables $n c^{(n)}_{F_N}$, $n=1,\dots,\ell$. Then
\begin{equation}\label{eqn:cherednik_asymptotics_2}
\frac{1}{N}\cdot\mathcal{P}_\ell e^{F_N}\big|_{(x_1,\dots,x_N)=(1^N)}
= m_\ell\left( c^{(1)}_{F_N},\cdots,\ell c^{(\ell)}_{F_N} \right)
+ S_1\left( c^\nu_{F_N} \colon |\nu|<\ell \right)
+ S_2\left( c^\nu_{F_N} \colon |\nu|\le\ell \right),
\end{equation}
where $S_1\big(c^\nu_{F_N} \colon |\nu|<\ell\big)$ is a polynomial in $c^\nu_{F_N}$, $|\nu|<\ell$, of degree at most $\ell$ with uniformly bounded coefficients in the regime~\eqref{eq:HTRegime}, and each monomial has at least one factor $c^\nu_{F_N}$ with $\ell(\nu)>1$.
Similarly, $S_2\big(c^\nu_{F_N} \colon |\nu|\le\ell\big)$ is a polynomial in $c^\nu_{F_N}$, $|\nu|\le\ell$, of degree at most $\ell$ with coefficients of order $o(1)$ in the regime~\eqref{eq:HTRegime}.
\end{theorem}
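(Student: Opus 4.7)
The plan is to compute $\frac{1}{N}\mathcal{P}_\ell e^{F_N}|_{(x_1,\dots,x_N)=(1^N)}$ by analyzing the iterated action of the Cherednik operators on $e^{F_N}$. By \cref{thm:cherednik_ops}, the operator $\mathcal{P}_\ell=\sum_i \xi_i^\ell$ has Jack polynomials as eigenfunctions with eigenvalues $\sum_i\LL_i^\ell$, so the quantity admits the probabilistic interpretation $\E_{\PP_N}\!\big[\tfrac{1}{N}\sum_i\LL_i^\ell\big]$. The actual computation, however, is analytic: I would substitute the Taylor expansion~\eqref{eq:taylor_expansions_helper} of $F_N$ at $(1^N)$ into $e^{F_N}$ and iterate $\xi_i^\ell$ term by term.

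The central combinatorial step is to organize the expansion of $\mathcal{P}_\ell e^{F_N}|_{(1^N)}$ into ``histories'' of the $\ell$ operator applications. Each $\xi_i$ splits into the differential part $x_i\partial_i$, the scalar $-\theta(i-1)$, and the two divided-difference sums $\theta\sum_{j\ne i}\frac{\cdot}{x_i-x_j}(1-s_{i,j})$. After iterating $\ell$ times and summing over $i$, the Hecke relations of \cref{thm:cherednik_ops} are used throughout to commute the $\xi$'s past the transpositions $s_{i,j}$, producing explicit $\theta$-corrections. Each history is then encoded as a lattice path whose steps record the type of operator applied: an up step of size $j$ corresponds to creating a new derivative of $F_N$ of order $j$; a down step corresponds to a divided-difference pairing with a prior factor; and a horizontal step corresponds to a scalar contribution or diagonal differentiation. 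Under the HT scaling $N\theta\to\gamma$, the weights on each step emerge from the Taylor expansions of $\tfrac{x_i}{x_i-x_j}$ and $\tfrac{x_j}{x_i-x_j}$ near $(1^N)$: down steps from height $j$ produce $j+\gamma$; up steps $(1,j)$ produce $\kappa_j+\kappa_{j+1}$; horizontal steps at height $i>0$ produce $\kappa_1+i$; and height-zero horizontal steps produce the finite-difference weight $\Delta_\gamma(\,\cdot\,)(\kappa_1)/(1+\#)$ coming from the interplay of $x_i\partial_i$ with $-\theta(i-1)$ after summation over $i$. In this way only paths in $\Luk(\ell)$ survive, matching \cref{def:mk}.

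I would proceed by induction on $\ell$. The base case $\ell=1$ is immediate: on symmetric functions $\mathcal{P}_1=\sum_i x_i\partial_i-\theta\binom{N}{2}$, whence $\frac{1}{N}\mathcal{P}_1 e^{F_N}|_{(1^N)}=c^{(1)}_{F_N}-\tfrac{\gamma}{2}+o(1)$, matching $m_1$ in \cref{ex:mk}. In the inductive step, histories in which derivatives land on distinct Taylor factors contribute some $c^\nu_{F_N}$ with $\ell(\nu)>1$ and are absorbed into $S_1$, while the Taylor remainders and the deviations $N\theta-\gamma$ are absorbed into $S_2$. Sub-leading products $\prod_j \mathcal{P}_{\la_j}$ arising at intermediate stages are controlled via \cref{theo:helper1}, which guarantees that the accompanying polynomials in $c^\nu_{F_N}$, $|\nu|<\ell$, have bounded coefficients of the required form and that each monomial carries at least one multi-row factor. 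The most delicate point I anticipate is establishing the $\Delta_\gamma$ structure at height-zero horizontal steps: this weight emerges only after reorganizing the shifts $-\theta(i-1)$, summed over $i\in\{1,\dots,N\}$ against the $x_i\partial_i$ contribution at the first visit to height zero, into a divided difference at $\kappa_1$. This nontrivial combinatorial identity, enabled precisely by the Hecke relations together with the HT limit $N\theta\to\gamma$, constitutes the main technical obstacle and is the core new input beyond the symmetric analyses used for Dunkl/Bessel pairs in~\cite{Benaych-GeorgesCuencaGorin2022}.
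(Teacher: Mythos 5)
Your outline identifies the right ingredients -- the Hecke relations, the Łukasiewicz-path combinatorics, and the fact that the $\Delta_\gamma$ weight at height-zero horizontal steps must come from averaging the shifts $-\theta(i-1)$ over $i=1,\dots,N$ -- and in that sense it is aligned with the paper's strategy. But as written it has a genuine gap: the two identities that actually carry the proof are asserted rather than established, and you yourself flag the second one as an unresolved ``main technical obstacle.'' Concretely, the paper does not encode histories of $\mathcal{P}_\ell=\sum_i\xi_i^\ell$ directly into paths. It first proves (\cref{lem:Moment}) that the Hecke relations give the exact recursion $\ev\,\xi_i^\ell e^{F_N}=\ev\,\xi_{i-1}^\ell e^{F_N}-\theta\sum_{a+b=\ell-1}\ev\,\xi_{i-1}^a\xi_i^b e^{F_N}$, factorizes the mixed products via \cref{lem:TopDegree_2}, and iterates to obtain the resolvent-type identity $\Xi_i(z)=\Xi_1(z)\sum_{k\ge 0}\big((1-i)\theta z\,\Xi_1(z)\big)^k$ up to admissible errors; only the single generating function $\Xi_1(z)$ is then computed via Łukasiewicz paths (\cref{lem:Xi1}), and there the height-zero horizontal steps carry the plain weight $\kappa_1$, not the divided difference. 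Your plan of a single unified path encoding with the $\Delta_\gamma$ weight built in at ``the first visit to height zero'' skips this reduction, and it is not clear how the divided difference would emerge from a direct term-by-term expansion without it.

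The second missing piece is the combinatorial identity that converts the averaged series into the $\Delta_\gamma$-weighted path sum. After summing over $i$ one gets $\frac{1}{N}\sum_i((1-i)\theta)^k\to\frac{(-\gamma)^k}{1+k}$ and hence the expression $[z^\ell]\,X(z)\sum_{k\ge 0}\frac{(-\gamma z X(z))^k}{1+k}$; to identify this with $m_\ell$ from \cref{def:mk} one needs (a) the decomposition of a Łukasiewicz path at a marked subset of its height-zero horizontal steps, which yields $\sum_\Gamma \wt_x(\Gamma)z^{\ell(\Gamma)}=X(z)\sum_{m\ge 0}(-xz\,X(z))^m$, and (b) the observation $\frac{1}{\gamma}\int_0^\gamma\big(c^{(1)}_{F_N}-x\big)^k dx=\Delta_\gamma\big(\tfrac{x^{1+k}}{1+k}\big)\big(c^{(1)}_{F_N}\big)$. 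Neither appears in your proposal. Finally, the induction on $\ell$ you propose is not a workable organizing principle here: the paper's only induction is on the operator index $i$ inside \cref{lem:Moment}, and it is not apparent how knowledge of $\mathcal{P}_{\ell-1}e^{F_N}$ would feed into the computation of $\mathcal{P}_\ell e^{F_N}$, since the path weights for length $\ell$ are not built recursively from those of length $\ell-1$.
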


An immediate corollary of the previous theorem is the following.

\begin{corollary}\label{cor:helper2}
Assume that the sequence of smooth functions $F_N=F_N(x_1,\dots,x_N)$ in \cref{theo:helper2} satisfies

\begin{enumerate}[label=(\alph*),itemsep=0.1cm]

	\item $\displaystyle\lim_{N\to\infty} \frac{\partial_1^n}{(n-1)!}F_N\,\Big|_{(x_1,\dots,x_N) = (1^N)} =: \kappa_n$ exists and is finite, for all $n \in \Z_{\geq 1}$,

	\item $\displaystyle\lim_{N\to\infty} \partial_{i_1}\cdots \partial_{i_r}F_N\,\Big|_{(x_1,\dots,x_N) = (1^N)} = 0$, for any $r\ge 2$ and any $i_1,\dots,i_r\in\Z_{\geq 1}$ with at least two distinct indices among $i_1,\dots,i_r$.

\end{enumerate}
If we let $(m_1,m_2,\dots):=\mathcal{J}_\gamma^{\kappa\mapsto m}(\kappa_1,\kappa_2,\dots)$, then
\begin{equation}\label{eq:limit_ell}
\lim_{\substack{N\to\infty\\N\theta\to\gamma}} { \frac{1}{N}\cdot\mathcal{P}_\ell e^{F_N}\big|_{(x_1,\dots,x_N)=(1^N)} }
= m_\ell,\quad\text{for all }\ell\in\Z_{\ge 1}.
\end{equation}
\end{corollary}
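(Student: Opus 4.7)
The plan is to apply \cref{theo:helper2} directly and control each of the three terms on its right-hand side using the assumptions (a) and (b).

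First, I would translate the assumptions into statements about the Taylor coefficients $c^\nu_{F_N}$ appearing in~\eqref{eq:taylor_expansions_helper}. For a single-row partition $\nu=(n)$, we have $\mathfrak{m}_{(n)}(\xx-\mathbf{1})=\sum_i(x_i-1)^n$, so the coefficient extraction yields $c^{(n)}_{F_N}=\frac{1}{n!}\partial_1^n F_N\big|_{(1^N)}$. Therefore assumption (a) immediately gives $n\,c^{(n)}_{F_N}\to\kappa_n$ in the high temperature regime, for every $n\ge 1$. For a partition $\nu=(\nu_1,\dots,\nu_s)$ with $s\ge 2$, the product monomial $(x_1-1)^{\nu_1}\cdots(x_s-1)^{\nu_s}$ appears, up to a positive combinatorial factor determined by the multiplicities of parts of $\nu$, only in the expansion of $\mathfrak{m}_\nu$ among all monomial symmetric polynomials. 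Consequently $c^\nu_{F_N}$ is (up to a nonzero constant) equal to the mixed partial derivative $\partial_1^{\nu_1}\cdots\partial_s^{\nu_s}F_N\big|_{(1^N)}$, which tends to $0$ by assumption (b).

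Next I invoke \cref{theo:helper2} to write
\begin{equation*}
\frac{1}{N}\,\mathcal{P}_\ell e^{F_N}\big|_{(x_1,\dots,x_N)=(1^N)}
= m_\ell\!\left(c^{(1)}_{F_N},\,2c^{(2)}_{F_N},\dots,\ell c^{(\ell)}_{F_N}\right)
+ S_1 + S_2,
\end{equation*}
and I analyze each summand separately. By \cref{def:mk}, $m_\ell$ is a polynomial in $\ell$ arguments, and by continuity together with $n\,c^{(n)}_{F_N}\to\kappa_n$ from the previous paragraph, the first term converges to $m_\ell(\kappa_1,\dots,\kappa_\ell)=m_\ell$. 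For $S_1$, each of its monomials carries at least one factor $c^\nu_{F_N}$ with $\ell(\nu)>1$, which we just showed tends to $0$; the remaining factors are of the form $c^\mu_{F_N}$ with $|\mu|<\ell$ and converge (either to $\kappa_n/n$ or to $0$), hence are bounded, while the coefficients of $S_1$ are uniformly bounded by \cref{theo:helper2}; therefore $S_1\to 0$. For $S_2$, all the variables $c^\nu_{F_N}$ with $|\nu|\le\ell$ remain bounded in the limit by the same observations, while the coefficients of the polynomial $S_2$ are $o(1)$ in the regime~\eqref{eq:HTRegime}, so $S_2\to 0$ as well. Combining these three convergences yields~\eqref{eq:limit_ell}.

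I do not expect any genuine obstacle in this argument: the difficult combinatorial and analytic work is already packaged inside \cref{theo:helper2}. The only point requiring care is the passage from assumption (b) on \emph{partial derivatives at}~$(1^N)$ to vanishing of the \emph{monomial-basis Taylor coefficients} $c^\nu_{F_N}$ for $\ell(\nu)>1$; this relies crucially on the fact that $F_N$ is a symmetric function, so that for any partition $\nu$ with $\ell(\nu)>1$ the monomial $(x_1-1)^{\nu_1}\cdots(x_s-1)^{\nu_s}$ isolates $c^\nu_{F_N}$ uniquely in the monomial-symmetric expansion.
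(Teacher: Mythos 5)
Your proposal is correct and follows essentially the same route as the paper: the paper's Remark 4.4 performs exactly your translation of assumptions (a)--(b) into the coefficient conditions $n\,c^{(n)}_{F_N}\to\kappa_n$ and $c^\nu_{F_N}\to 0$ for $\ell(\nu)>1$, after which the corollary is read off from Theorem 4.2 by the same term-by-term limit analysis of $m_\ell$, $S_1$, and $S_2$ that you give.
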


\begin{remark}\label{rem:conditions}
The assumptions (a)-(b) in \cref{cor:helper2} are the same as the ones in \cref{def:appropriate} for $F_N := F_{N,\theta_N}$, where $N\theta_N \to \gamma$, as $N \to \infty$. In terms of the Taylor coefficients $c^\nu_{F_N}$ of $F_N$, these conditions can be expressed as:
\begin{enumerate}[label=(\alph*'),itemsep=0.1cm]

	\item $\displaystyle\lim_{N\to\infty}{\big(n\cdot c^{(n)}_{F_N}\big)} =: \kappa_n$ exists and is finite, for all $n \in \Z_{\geq 1}$,

	\item $\displaystyle\lim_{N\to\infty}{ c^\nu_{F_N} } = 0$, for all $\nu\in\Y(N)$ with $\ell(\nu)>1$.

\end{enumerate}
This reformulation makes the fact that \cref{theo:helper2} implies \cref{cor:helper2} more transparent.
\end{remark}

\begin{theorem}\label{theo:helper3}
Assume that $\{\PP_N\}_{N\ge 1}$ is a sequence of probability measures that is HT-appropriate in the sense of \cref{def:appropriate} and satisfies the LLN in the sense of \cref{def:LLN}.
Let $(\kappa_n)_{n\ge 1}$ and $(m_n)_{n\ge 1}$ be the sequences that arise from these definitions.
Moreover, assume that there exists a constant $C>0$ such that $|\kappa_n|\leq C^n$, for all $n \geq 1$.
Then $(m_n)_{n\ge 1}$ is the sequence of moments of a unique probability measure on $\R$.
\end{theorem}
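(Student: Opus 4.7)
The plan is to establish existence and uniqueness of $\mu^\gamma$ separately. For existence, I would apply Hamburger's moment theorem to the limit of the Hankel matrices associated to the averaged empirical measures $\overline{\mu}_N := \E_{\PP_N}[\mu_N]$. For uniqueness, I would verify Carleman's condition via a polynomial-in-$\ell$ bound on $|m_\ell|^{1/\ell}$, obtained from the path expansion~\eqref{eq:Moments} together with the hypothesis $|\kappa_n|\le C^n$.

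For existence, set $m_0 := 1$. Since each $\PP_N$ is a probability measure with small tails, $\overline{\mu}_N$ is a Borel probability measure on $\R$ with finite moments of all orders, and its $n$-th moment $M_n^{(N)} := \E_{\PP_N}[m_n(\mu_N)]$ converges to $m_n$ as $N\to\infty$, $N\theta\to\gamma$, by the $s=1$ case of \cref{def:LLN}. For every $k$, the Hankel matrix $\bigl(M_{i+j}^{(N)}\bigr)_{0\le i,j\le k}$ is positive semi-definite, so its entrywise limit $(m_{i+j})_{0\le i,j\le k}$ is also positive semi-definite. By the classical solution to the Hamburger moment problem, there exists a positive Borel measure $\mu^\gamma$ on $\R$ with these moments, and $m_0 = 1$ makes it a probability measure.

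For uniqueness, I would verify Carleman's condition $\sum_\ell m_{2\ell}^{-1/(2\ell)} = \infty$. Fix $\ell\ge 1$ and a Łukasiewicz path $\Gamma\in\Luk(\ell)$ with $h_i$ horizontal steps at height $i$, $u_j$ up steps $(1,j)$, and $d_j$ down steps from height $j$; set $h := \sum_i h_i$, $u := \sum_j u_j$, $d := \sum_j d_j$, so that $h+u+d = \ell$ and $\sum_j j u_j = d$. In particular the maximum height of $\Gamma$ is at most $\ell$. Using $|\kappa_n|\le C^n$ (WLOG $C\ge 1$), the four factors in the path weight $W(\Gamma)$ satisfy
\begin{align*}
\left|\tfrac{\Delta_\gamma(x^{1+h_0})(\kappa_1)}{1+h_0}\right|\le (C+\gamma)^{h_0}, &\qquad \prod_{i\ge 1}|\kappa_1+i|^{h_i} \le (C+\ell)^{h-h_0},\\
\prod_{j\ge 1}|\kappa_j+\kappa_{j+1}|^{u_j} \le 2^u\, C^{u+d}, &\qquad \prod_{j\ge 1}(j+\gamma)^{d_j} \le (\ell+\gamma)^d,
\end{align*}
where the first estimate uses the mean value theorem applied to $x\mapsto x^{1+h_0}$ on $[\kappa_1-\gamma,\kappa_1]$. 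Since each base above is at most $A(\ell+1)$ for a constant $A=A(C,\gamma)$, and the four exponents sum to $\ell$, multiplying gives $|W(\Gamma)| \le (A(\ell+1))^\ell$. Combined with the Catalan bound $|\Luk(\ell)|\le 4^\ell$, this yields $|m_\ell| \le (4A(\ell+1))^\ell$, which by Stirling is at most $D^\ell\cdot\ell!$ for some $D = D(C,\gamma)$. Therefore $m_{2\ell}^{1/(2\ell)} = O(\ell)$, Carleman's condition holds, and $\mu^\gamma$ is the unique probability measure with moments $(m_n)_{n\ge 1}$.

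The only nontrivial step is the uniqueness bound: although $|\kappa_j + \kappa_{j+1}|$ can grow like $C^{j+1}$, the balance constraint $\sum_j j u_j = d\le\ell$ limits the aggregate contribution of large-$j$ up steps across a path of length $\ell$, keeping $|m_\ell|$ at the factorial scale where Carleman's theorem still applies.
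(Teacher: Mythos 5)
Your proposal is correct and follows essentially the same route as the paper: the paper also asserts existence from the fact that the $m_\ell$ are limits of moment sequences of probability measures (you simply spell this out via positive semi-definiteness of Hankel matrices and Hamburger's theorem), and proves uniqueness by verifying Carleman's condition through exactly the same four bounds on the Łukasiewicz path weights, including the key use of the balance identity $\sum_j j u_j = d$ to control $\prod_j|\kappa_j+\kappa_{j+1}|^{u_j}$ by $C_3^{\,d}$. The only cosmetic difference is that the paper works directly with $m_{2\ell}\le(4\ell A)^{2\ell}$ rather than passing through a factorial bound, which is immaterial.
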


\subsection{Proof of the main Theorem~\ref{theo:main1}}

Assuming the validity of Theorems~\ref{theo:helper1}, \ref{theo:helper2} and \ref{theo:helper3}, we prove \cref{theo:main1} here.
The argument is similar to the proof of~\cite[Thm.~5.1]{Benaych-GeorgesCuencaGorin2022}.
The three helper theorems will be proved in later subsections.

\begin{lemma}\label{lem:action_cherednik}
Let $\PP_N$ be any finite signed measure on $N$-signatures with small tails and with Jack generating function $G_{N,\theta}(x_1,\dots,x_N)$.
For any $s\in\Z_{\ge 1}$ and $k_1,\dots,k_s\ge 1$, we have
\begin{equation}\label{eq:momentsInCherednik}
\left[ \prod_{j=1}^s{\mathcal{P}_{k_j}} \right] G_{N,\theta}\,\bigg|_{(x_1,\dots,x_N)=(1^N)}
= \E_{\PP_N} \left[ \prod_{j=1}^s{ \sum_{i=1}^N{\LL_i^{k_j}} } \right].
\end{equation}
\end{lemma}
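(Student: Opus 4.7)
The plan is to exploit the eigenfunction property of Cherednik operators on Jack Laurent polynomials, combined with their pairwise commutativity, and then justify termwise application of the operators to the series defining the Jack generating function.

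First, I would invoke \cref{thm:cherednik_ops}: each Cherednik operator $\xi_i$ satisfies $\xi_i P_\la(x_1,\dots,x_N;\theta) = \LL_i \cdot P_\la(x_1,\dots,x_N;\theta)$ for every $\la \in \GT(N)$, with $\LL_i = \la_i - (i-1)\theta$. Iterating, $\xi_i^{k} P_\la = \LL_i^{k} P_\la$, so $\mathcal{P}_k P_\la = \bigl(\sum_{i=1}^N \LL_i^{k}\bigr) P_\la$. Because the operators $\xi_1,\dots,\xi_N$ pairwise commute (again by \cref{thm:cherednik_ops}), so do the power sums $\mathcal{P}_{k_1},\dots,\mathcal{P}_{k_s}$, and each acts on $P_\la$ as multiplication by a scalar. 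Hence
\begin{equation*}
\left[\prod_{j=1}^{s} \mathcal{P}_{k_j}\right] P_\la(x_1,\dots,x_N;\theta)
= \prod_{j=1}^{s}\!\left(\sum_{i=1}^{N} \LL_i^{k_j}\right) \cdot P_\la(x_1,\dots,x_N;\theta).
\end{equation*}

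Next, I would apply this term by term in the defining series~\eqref{eqn:jack_G} of $G_{N,\theta}$. The justification is that $\PP_N$ has small tails: the Laurent series converges absolutely on an annulus $A_{N;R}$ with $R>1$, and on any compact subset the partial sums converge uniformly; each Cherednik operator is a first-order differential operator composed with divided-difference operators in the variables $x_1,\dots,x_N$, so it preserves analyticity in a neighborhood of $(1^N)$ and can be applied termwise to the series (for example by shrinking $R$ slightly and appealing to uniform convergence of derivatives on compact subsets, noting that divided differences $(1-s_{i,j})/(x_i-x_j)$ remove the apparent singularity on the diagonal). Iterating this for the product of $s$ operators yields
\begin{equation*}
\left[\prod_{j=1}^{s} \mathcal{P}_{k_j}\right] G_{N,\theta}(x_1,\dots,x_N)
= \sum_{\la\in\GT(N)} \PP_N(\la)\, \prod_{j=1}^{s}\!\left(\sum_{i=1}^{N}\LL_i^{k_j}\right) \cdot \frac{P_\la(x_1,\dots,x_N;\theta)}{P_\la(1^N;\theta)}.
\end{equation*}

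Finally, I would specialize $(x_1,\dots,x_N) = (1^N)$, so that $P_\la(1^N;\theta)/P_\la(1^N;\theta) = 1$ for every $\la \in \GT(N)$, obtaining exactly
\begin{equation*}
\sum_{\la\in\GT(N)} \PP_N(\la) \prod_{j=1}^{s}\!\left(\sum_{i=1}^{N}\LL_i^{k_j}\right)
= \E_{\PP_N}\!\left[\prod_{j=1}^{s}\sum_{i=1}^{N} \LL_i^{k_j}\right],
\end{equation*}
which is~\eqref{eq:momentsInCherednik}. The one nontrivial point is the termwise application of the Cherednik operators, and the main obstacle is dispatched by the small-tails hypothesis together with the smoothness of $\xi_i$ near $(1^N)$ guaranteed in the discussion following~\eqref{eq:Cherednik}; everything else is purely algebraic.
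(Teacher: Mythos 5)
Your proof is correct and follows essentially the same route as the paper: write $G_{N,\theta}$ as the Laurent series, exchange the operators with the sum (justified by the small-tails hypothesis), and apply the eigenvalue property of the Cherednik operators from \cref{thm:cherednik_ops} to each $P_\la$. Your justification of the termwise interchange is in fact slightly more detailed than the paper's, which simply notes that small tails is what makes the exchange legitimate.
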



\begin{proof}
The desired equation~\eqref{eq:momentsInCherednik} follows from writing $G_{N,\theta}$ as the Laurent series \eqref{eq:JackGeneratFunction}, then exchanging the order between the operator $\prod_{j=1}^s{\mathcal{P}_{k_j}}$ and the infinite sum defining $G_{N,\theta}$, and finally applying \cref{thm:cherednik_ops}, which shows that $\prod_{j=1}^s{\mathcal{P}_{k_j}}$ acts on each $P_\la(x_1,\dots,x_N;\theta)$ as the eigenvalue $\prod_{j=1}^s{ \sum_{i=1}^N{\LL_i^{k_j}} }$.
The condition of $\PP_N$ having small tails is necessary to argue that the series \eqref{eq:JackGeneratFunction} converges and that we can exchange the order between the operators $\mathcal{P}_{k_j}$ and the sum.
\end{proof}

\textbf{HT-appropriateness implies LLN-satisfaction.}
We prove that if $\{\PP_N\}_{N\ge 1}$ is HT-appropriate with sequence $\vec{\kappa}=(\kappa_n)_{n\ge 1}$, then it satisfies the LLN with $\vec{m}=(m_n)_{n\ge 1}:=\mathcal{J}^{\kappa\mapsto m}(\vec{\kappa})$, i.e.~we verify the condition from \cref{def:LLN}, for any $k_1,\dots,k_s\ge 1$.

Assume w.l.o.g. that $k_1\ge\dots\ge k_s$.
By \cref{lem:action_cherednik} and \cref{theo:helper1}, $N^{-s}\cdot\E_{\PP_N} \left[ \prod_{j=1}^s{ \sum_{i=1}^N{\LL_i^{k_j}} } \right]$ is equal to the RHS of~\eqref{eqn:cherednik_asymptotics} with $\la=(k_1,\dots,k_s)$ and $F_N$ replaced by $F_{N,\theta}=\ln(G_{N,\theta})$ (we will drop the dependence of $\theta$ on $N$ in the notation for brevity, but we should keep in mind that we choose any sequence $\theta_N$ such that $N\theta_N\to\gamma$, as $N \to \infty$, as in \cref{rem:conditions}).
It remains to prove that this converges to $\prod_{j=1}^s{m_{k_j}}$ in the regime~\eqref{eq:HTRegime}.
Indeed, $R_2\to 0$ because of the hypothesis on its coefficients, whereas $R_1\to 0$ because each monomial contains a factor
$c^\nu_{F_{N,\theta}}$ with $\ell(\nu)>1$ and this converges to zero by HT-appropriateness.
The terms $b^\la_\mu\,c^\mu_{F_{N,\theta}}$ in the sum have $\ell(\mu)>1$, so they converge to zero.
The term $b^\la_\la\,c^\la_{F_{N,\theta}}$ converges to zero, except when $\ell(\la)=1$, in which case $\la=(k_1)$ and so $b^\la_\la\,c^\la_{F_{N,\theta}}\to(\gamma+1)_{k_1-1}\cdot\kappa_{k_1}$ by \eqref{eqn:limit_b} and the assumption $k_1\cdot c^{(k_1)}_{F_{N,\theta}}\to\kappa_{k_1}$.
Finally, by~\cref{cor:helper2}, the second line in the RHS of~\eqref{eqn:cherednik_asymptotics} converges to $\prod_{j=1}^s{m_{k_j}}-\mathbf{1}_{\{\ell(\la)=1\}}\cdot(\gamma+1)_{k_1-1}\cdot\kappa_{k_1}$.
By putting everything together, the RHS of ~\eqref{eqn:cherednik_asymptotics} converges to $\prod_{j=1}^s{m_{k_j}}$, as desired.

\smallskip

\textbf{LLN-satisfaction implies HT-appropriateness.}
Assume that $\{\PP_N\}_{N\ge 1}$ satisfies the LLN with $\vec{m}=(m_n)_{n\ge 1}$; let us prove HT-appropriateness with $\vec{\kappa}=(\kappa_n)_{n\ge 1}:=\mathcal{J}_\gamma^{m\mapsto\kappa}(\vec{m})$.
By \cref{rem:conditions}, we need to prove the following limits:

\begin{enumerate}[label=(\alph*'),itemsep=0.1cm]

	\item $\lim_{\substack{N\to\infty\\N\theta\to\gamma}}{c^{(n)}_{F_{N,\theta}}} = \frac{\kappa_n}{n}$,

	\item $\lim_{\substack{N\to\infty\\N\theta\to\gamma}}{ c^\nu_{F_{N,\theta}} } = 0$, for all $\nu\in\Y(N)$ with $\ell(\nu)>1$ and $|\nu|=n$.

\end{enumerate}

\noindent The proof is by induction on $n$. Let us start with the base case $n=1$.
We only need to verify (a') for $n=1$, as condition (b') is non-existent for $n=1$.
By the assumption of LLN-satisfaction and \cref{lem:action_cherednik}, we have that
\begin{equation}\label{base_case_1}
\lim_{\substack{N\to\infty\\N\theta\to\gamma}}{ \frac{1}{N}\,\mathcal{P}_1e^{F_{N,\theta}}\big|_{(x_1,\dots,x_N)=(1^N)} } = m_1.
\end{equation}
By definition, $\mathcal{P}_1=\xi_1+\dots+\xi_N$, where $\xi_i$'s are the Cherednik operators~\eqref{eq:Cherednik}.
Since $F_{N,\theta}$ is symmetric in its arguments, it is killed by operators $1-s_{i,j}$, therefore $\mathcal{P}_1e^{F_{N,\theta}} = \sum_{i=1}^N(x_i\partial_i - \theta(i-1))e^{F_{N,\theta}}$.
The symmetry also shows that $x_1\partial_1F_{N,\theta}\big|_{(x_1,\dots,x_N)=(1^N)} = x_i\partial_iF_{N,\theta}\big|_{(x_1,\dots,x_N)=(1^N)}$, for all $i=1,\dots,N$.
Together with the fact that $F_{N,\theta}(1^N)=0$, we deduce
\begin{equation}\label{base_case_2}
\begin{aligned}
\frac{1}{N}\,\mathcal{P}_1e^{F_{N,\theta}}\big|_{(x_1,\dots,x_N)=(1^N)}
&= \frac{1}{N} \Big( Nx_1\partial_1  - \theta\,\frac{N(N-1)}{2} \Big)e^{F_{N,\theta}} \Big|_{(x_1,\dots,x_N)=(1^N)}\\
&= \partial_1F_{N,\theta}\big|_{(x_1,\dots,x_N)=(1^N)} - \frac{\theta(N-1)}{2}.
\end{aligned}
\end{equation}
From \eqref{base_case_1} and \eqref{base_case_2}, we obtain
\begin{equation}\label{base_case_3}
\lim_{\substack{N\to\infty\\N\theta\to\gamma}}\partial_1F_{N,\theta}\big|_{(x_1,\dots,x_N)=(1^N)} = m_1+\frac{\gamma}{2}.
\end{equation}
Since $\kappa_1=m_1+\gamma/2$ (see~\cref{ex:mk}), and the Taylor coefficient $c^{(1)}_{F_{N,\theta}}$ can be expressed as $c^{(1)}_{F_{N,\theta}}=\partial_1F_{N,\theta}\big|_{(x_1,\dots,x_N)=(1^N)}$ (see Equation~\eqref{eqn:taylor_expansion}), it follows that \eqref{base_case_3} is precisely (a') for $n=1$.

\medskip

For the inductive step, take any $k\ge 2$ and assume that (a')-(b') above are true for all $n\le k-1$; to complete the argument, it remains to prove these conditions for $n=k$.

Take any $\la=(\la_1\ge\dots\ge\la_s)$ with $\ell(\la)=s>1$, $|\la|=k$, and write Equation~\eqref{eqn:cherednik_asymptotics} for this partition.
Note that the LHS of this equation converges to
$\prod_{j=1}^s{m_{\la_j}}$, by \cref{lem:action_cherednik} and LLN-satisfaction.
Next, consider equations~\eqref{eqn:cherednik_asymptotics_2} for $\ell=\la_1,\dots,\la_s$ and multiply them (note that the LHS of the product also converges to $\prod_{j=1}^s{m_{\la_j}}$) and plug the result back into the RHS of~\eqref{eqn:cherednik_asymptotics}.
The result is that the linear combination of the variables $c^\mu_{F_{N,\theta}}$, $|\mu|=k$, $\ell(\mu)>1$, with coefficients $b^\la_\mu$, up to an additive error $o(1)$, is equal to zero.
Let $p(k)$ be the number of partitions of size $k$.
By letting $\la$ range over all partitions $\la$ with $|\la|=k$ and $\ell(\la)>1$, we obtain a system of $p(k)-1$ equations on $p(k)$ variables $c^\mu_{F_{N,\theta}}$, $|\mu|=k$.
We need one last equation.
Take \cref{eqn:cherednik_asymptotics_2} with $\ell=k$ and note that the LHS converges to $m_k$.
Moreover, since $kc^{(k)}_{F_{N,\theta}}$ only appears in $m_k(c^{(1)}_{F_{N,\theta}},\cdots,kc^{(k)}_{F_{N,\theta}})$ as the linear monomial $(\gamma+1)_{k-1}\cdot kc^{(k)}_{F_{N,\theta}}$, according to~\eqref{eq:top_moments}, then the term $m_k(c^{(1)}_{F_{N,\theta}},\cdots,kc^{(k)}_{F_{N,\theta}})$ in the RHS of~\eqref{eqn:cherednik_asymptotics_2} is asymptotically equal to $m_k-(\gamma+1)_{k-1}\cdot kc^{(k)}_{F_{N,\theta}}+(\gamma+1)_{k-1}\cdot \kappa_k$. Thus, upon rearranging, this equation is (up to an additive $o(1)$ error) of the form $\frac{\kappa_k}{k}=c^{(k)}_{F_{N,\theta}}+$\,linear combination of $c^\nu_{F_{N,\theta}}$, $|\nu|=k$, with coefficients of order $o(1)$.

The previous paragraph shows how to construct a system of $p(k)$ equations in the $p(k)$ variables $c^\mu_{F_{N,\theta}}$, $|\mu|=k$, for which the matrix of coefficients consists of the $b^\la_\mu$, up to an additive error $o(1)$. But $b^\la_\mu\ne o(1)$ only when $\ell(\mu)>\ell(\la)$ or $\mu=\la$; also, the diagonal elements $b^\la_\la$ are asymptotically nonzero, due to~\eqref{eqn:limit_b}, so if we label the $p(k)$ equations by their associated partitions $\la$, and associated column vectors $(\la_1',\la_2',\dots)$, and order them w.r.t.~lexicographic order of vectors, it follows that the system of equations is asymptotically lower-triangular with nonzero diagonal entries.
In particular, the solution has a limit which must be given exactly by equations (a')-(b') for $n=k$.
This completes the proof.

\smallskip

\textbf{The moment problem.}
The fact that if $|\kappa_n|\le C^n$, for some constant $C>0$, then $m_1,m_2,\dots$ are the moments of a unique probability measure on $\R$ is exactly the content of \cref{theo:helper3}. This ends the proof of \cref{theo:main1}.

\subsection{Method of moments:~Proof of Theorem~\ref{theo:helper1}}

Consider the $k$-th order truncations
\begin{equation}\label{eq:Trunc}
\tilde{F}_N(x_1,\dots,x_N) := \sum_{\la\colon |\la|\le k,\,\ell(\la)\leq N}c_{F_N}^\la \, \mathfrak{m}_\la(x_1-1,\dots,x_N-1).
\end{equation}

The operators $x_i\partial_i$ and $\frac{x_\ell}{x_i-x_j}(1-s_{i,j})$ for some
$i,j,\ell$, are constituents of the Cherednik operators~\eqref{eq:Cherednik}. Their actions on \emph{shifted monomials}, that is, monomials in the variables $x_1-1,\dots,x_N-1$, are the following:
\begin{multline}\label{eq:actionOnMon}
x_i\partial_i \left[ (x_1-1)^{\alpha_1}\cdots(x_N-1)^{\alpha_N} \right] =
\alpha_i\bigg((x_1-1)^{\alpha_1}\cdots(x_N-1)^{\alpha_N} +\\
\hfill+ (x_1-1)^{\alpha_1}\cdots(x_i-1)^{\alpha_i-1}\cdots(x_N-1)^{\alpha_N}\bigg),\\
\frac{x_\ell}{x_i-x_j}(1-s_{i,j}) \left[ (x_1-1)^{\alpha_1}\cdots(x_N-1)^{\alpha_N} \right]
= \pm x_\ell
(x_1-1)^{\alpha_1}\cdots(x_i-1)^{a}\cdots(x_j-1)^{a}\cdots(x_N-1)^{\alpha_N}\\
\times\sum_{c=0}^{b-a-1}{(x_i-1)^c (x_j-1)^{b-a-1-c}},
\end{multline}
where $a := \min(\alpha_i,\alpha_j)$, $b := \max(\alpha_i,\alpha_j)$, and the sign is positive (negative, respectively) if $a = \alpha_j$ ($a = \alpha_i$, respectively). This leads to the following lemma.

\begin{lemma}\label{lem:Polynomiality}
Let $1\leq i_1,\dots,i_k\leq N$ be arbitrary integers, not necessarily distinct. Then
\begin{equation}\label{eq:MainQuant}
\left(\prod_{j=1}^k \xi_{i_j}\right) \exp(F_N)\Big|_{(x_1,\dots,x_N)=(1^N)}
= \left(\prod_{j=1}^k \xi_{i_j}\right) \exp(\tilde{F}_N)\Big|_{(x_1,\dots,x_N)=(1^N)}.
\end{equation}
Moreover, this is a polynomial in the variables $c_{F_N}^\la$, $|\la|\le k$, of degree at most $k$, and with coefficients that are uniformly bounded over all $i_1,\dots,i_k$ in the regime~\eqref{eq:HTRegime}.
\end{lemma}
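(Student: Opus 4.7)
The proof proposal has three parts corresponding to the three assertions, which I outline in order.

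\smallskip

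\textbf{Step 1: the equality \eqref{eq:MainQuant}.} The strategy is to show that each Cherednik operator $\xi_i$ decreases the order of vanishing of a smooth function at $(1^N)$ by at most one; then a product of $k$ Cherednik operators followed by evaluation at $(1^N)$ annihilates any function that vanishes to order strictly greater than $k$ there. Using the explicit formulas \eqref{eq:actionOnMon} from the preceding discussion, I verify that $\xi_i$ maps a shifted monomial $(x_1-1)^{\alpha_1}\cdots(x_N-1)^{\alpha_N}$ of shifted degree $|\alpha|$ into a linear combination of shifted monomials of degrees $|\alpha|$ and $|\alpha|-1$ (the first from $x_i\partial_i$ and $-\theta(i-1)$, the latter from the divided-difference parts after expanding $x_\ell = 1+(x_\ell-1)$). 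Then $e^{F_N}-e^{\tilde F_N} = e^{\tilde F_N}\bigl(e^{F_N-\tilde F_N}-1\bigr)$ is analytic near $(1^N)$ and vanishes to order at least $k+1$ because $F_N-\tilde F_N = O(\|\mathbf x-\mathbf 1\|^{k+1})$, so applying $\prod_{j=1}^k\xi_{i_j}$ and evaluating at $(1^N)$ kills it, giving \eqref{eq:MainQuant}.

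\smallskip

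\textbf{Step 2: polynomiality of degree at most $k$.} Since $\tilde F_N(1^N)=F_N(1^N)=0$ (recall $c_{F_N}^\emptyset=0$), each factor $\tilde F_N$ vanishes at $(1^N)$, hence $\tilde F_N^m$ vanishes to order at least $m$. Expanding $e^{\tilde F_N}=\sum_{m\ge 0}\tilde F_N^m/m!$ and invoking the degree-lowering principle of Step 1, I get
\begin{equation*}
\prod_{j=1}^k\xi_{i_j}e^{\tilde F_N}\bigg|_{(1^N)} = \sum_{m=0}^{k}\frac{1}{m!}\prod_{j=1}^k\xi_{i_j}\tilde F_N^m\bigg|_{(1^N)}.
\end{equation*}
Substituting the expansion $\tilde F_N=\sum_{|\lambda|\le k}c_{F_N}^\lambda\,\mathfrak m_\lambda(\mathbf x-\mathbf 1)$ turns the right-hand side into a polynomial of degree at most $k$ in the Taylor coefficients $c_{F_N}^\lambda$ with $|\lambda|\le k$.

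\smallskip

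\textbf{Step 3: uniform boundedness of the coefficients.} This is the technical heart of the lemma. I decompose each $\xi_{i_j}$ into its simple constituents $x_{i_j}\partial_{i_j}$, the constant $-\theta(i_j-1)=O(\gamma)$, and the difference terms $\pm\theta\,\frac{x_\ell}{x_{i_j}-x_m}(1-s_{i_j,m})$ each carrying a factor $\theta=O(\gamma/N)$. The crucial observation is that $e^{\tilde F_N}$ is symmetric, so I may write
\begin{equation*}
\prod_{j=1}^k\xi_{i_j}e^{\tilde F_N} = Q_{i_1,\dots,i_k}\cdot e^{\tilde F_N},
\end{equation*}
where $Q_{i_1,\dots,i_k}$ is a polynomial in the partial derivatives of $\tilde F_N$. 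Proceeding by induction on $k$, I use the twisted Leibniz identity $(1-s_{a,b})(fg) = (1-s_{a,b})(f)\,g+s_{a,b}(f)(1-s_{a,b})(g)$ and the vanishing of $(1-s_{a,b})$ on symmetric functions to show that, at each step, a difference operator $(1-s_{i_{j},m})$ acts nontrivially only when $m$ lies in a bounded-size subset of $\{1,\dots,N\}$ determined by the indices $i_1,\dots,i_{j-1}$ already applied; the few surviving terms where the $j$-summation is of order $N$ are offset by the $\theta$-factor to give a contribution of order $\gamma$. Evaluating $Q_{i_1,\dots,i_k}$ at $(1^N)$ yields a polynomial in the $c_{F_N}^\lambda$ whose coefficients are bounded by a constant depending only on $k$ and $\gamma$, uniformly in $i_1,\dots,i_k$.

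\smallskip

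\textbf{Anticipated main obstacle.} Step 3 is where the real work lies. After the first $\xi_{i_1}$ is applied, the result is no longer symmetric, so subsequent difference operators stop vanishing automatically, and the analysis requires careful bookkeeping of how broken-symmetry terms propagate. The inductive invariant one needs is roughly that $Q_{i_1,\dots,i_k}$ is a polynomial in $\partial^\mu\tilde F_N$ whose coefficients depend polynomially on the finitely many quantities $\{\theta(i_j-1)\}_{j=1}^k$ (each of size $O(\gamma)$) and on combinatorial counts bounded in $k$, with any remaining $j$-summations over $\{1,\dots,N\}$ always appearing paired with a $\theta$-factor. Verifying this invariant in the inductive step is the content of the proof and is where the twisted Leibniz rule and the symmetry of $e^{\tilde F_N}$ work in concert.
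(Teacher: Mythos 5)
Your proposal is correct and follows essentially the same route as the paper: truncate $F_N$ at order $k$ using the fact that $k$ applications of the constituents of the Cherednik operators lower the order of vanishing at $(1^N)$ by at most $k$, track degrees via the rules \eqref{eq:actionOnMon}--\eqref{eq:DerRules}, and obtain uniform boundedness from the counting argument that the $O(N)$ difference-operator terms each carry a factor $\theta\sim\gamma/N$ while the remaining $O(1)$ terms carry constants $|\theta(i-1)|\le N\theta=O(\gamma)$. One small correction to Step 3: the clause that $(1-s_{i_j,m})$ acts nontrivially only for $m$ in a bounded set is not true (the prefactor accumulated from earlier operators is generically non-symmetric in all variables), but this is harmless since your fallback mechanism --- $O(N)$ surviving terms offset by the $\theta$-factor --- is exactly the argument the paper uses and suffices on its own.
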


\begin{proof}
Note that
\begin{equation*}
\exp(F_N(x_1,\dots,x_N)) = \exp(\tilde{F}_N(x_1,\dots,x_N)) + R(x_1,\dots,x_N),
\end{equation*}
where $R(x_1,\dots,x_N) = O(\|\xx-\mathbf{1}\|^{k+1})$.
This means that $R(x_1,\dots,x_N)$ vanishes up to degree $(k+1)$ at $(1^N)$.
After applying any sequence of $k$ operators $\theta(i-1)$, $x_i\partial_i$ or $\frac{x_\ell}{x_i-x_j}(1-s_{i,j})$, we obtain a continuously differentiable function that still vanishes at $(1^N)$. Since $\prod_{j=1}^k \xi_{i_j}$ is a linear combination of sequences of $k$ operators as just described, we obtain the desired~\eqref{eq:MainQuant}.

We now prove the polynomiality part. Consider any product of $k$ operators, each of the form
$\theta(i-1)$, $x_i\partial_i$ or $\frac{x_\ell}{x_i-x_j}(1-s_{i,j})$, for some $i,j,\ell$. We want to apply them inductively to
$\exp(\tilde{F}_N)$ using the following rules:
\begin{equation}\label{eq:DerRules}
\begin{gathered}
x_i\partial_i \left(H(\xx)\cdot \exp(\tilde{F}_N(\xx))\right) =
\left(x_i\big(\partial_iH(\xx)+H(\xx)\cdot\partial_i\tilde{F}_N(\xx)\big) \right)\cdot\exp(\tilde{F}_N(\xx)),\\
\frac{x_\ell}{x_i-x_j}(1-s_{i,j})\left(H(\xx)\cdot\exp(\tilde{F}_N(\xx))\right) =
\left(\frac{x_\ell}{x_i-x_j}(1-s_{i,j})H(\xx) \right)\cdot\exp(\tilde{F}_N(\xx)).
\end{gathered}
\end{equation}
Taking into account that $\tilde{F}_N(1^N) = 0$, the result of having $k$ such operators act on $\exp(\tilde{F}_N(\xx))$
and then setting all variables equal to $1$ is a finite linear combination of products of actions of $\theta(i-1)$, $x_i\partial_i$ and $\frac{x_\ell}{x_i-x_j}(1-s_{i,j})$ on $\tilde{F}_N(\xx)$, followed by evaluation at $\xx=(1^N)$.
Since $\tilde{F}_N(\xx)$ is a polynomial of the shifted variables $x_1-1,\dots,x_N-1$ with coefficients $c^\la_{F_N}$, and the actions of $x_i\partial_i$, $\frac{x_\ell}{x_i-x_j}(1-s_{i,j})$ on shifted monomials are given by~\eqref{eq:actionOnMon}, we conclude the polynomiality. It remains to verify the statements about the degree and uniform boundedness.

The previous argument shows in fact that the rules~\eqref{eq:actionOnMon} and~\eqref{eq:DerRules} imply that the action of $\xi_{i_q}\cdots\xi_{i_1}$ on $\exp(\tilde{F}_N)$ gives an expression of the form
\begin{equation*}
\xi_{i_q}\cdots\xi_{i_1}\exp(\tilde{F}_N) = \left( \sum_{r_1,\dots,r_N\ge 0}{ h^{(q)}_{r_1,\dots,r_N}(x_1-1)^{r_1}\cdots (x_N-1)^{r_N} } \right)\cdot\exp(\tilde{F}_N),
\end{equation*}
for all $q=1,\dots,k$, and some coefficients $h^{(q)}_{r_1,\dots,r_N}$. At the end, since $\tilde{F}_N(1^N)=0$, we have
\begin{equation*}
\xi_{i_k}\cdots\xi_{i_1}\exp(\tilde{F}_N)\big|_{(x_1,\dots,x_N)=(1^N)} = h^{(k)}_{0,\dots,0}.
\end{equation*}
We claim that each $h^{(q)}_{r_1,\dots,r_N}$, with $1\le q\le k$ and $r_1,\dots,r_N\ge 0$, is a polynomial in the $c^\la_{F_N}$, $|\la|\le k$, of degree at most $r_1+\dots+r_k+q$, and with coefficients that are uniformly bounded in the regime~\eqref{eq:HTRegime}. In particular, this will be true of $h^{(k)}_{0,\dots,0}$ and will end the proof.

The previous claim follows from the explicit expression for the Cherednik operators~\eqref{eq:Cherednik}.
Indeed, the action of Cherednik operators on a homogeneous polynomial of degree $r$ in the shifted variables $x_1-1,\dots,x_N-1$ can be expressed as an inhomogeneous polynomial in these shifted variables, with nonzero components only in degrees $r-1$ and $r$; this is a consequence of the calculations~\eqref{eq:actionOnMon}.
This observation and a simple induction on $q$ proves the claim about the degree of $h^{(q)}_{r_1,\dots,r_N}$.

For the claim about uniform boundedness, note that the rules~\eqref{eq:DerRules} allow one to express each $h^{(q+1)}_{r_1,\dots,r_N}$ as a linear combination of $h^{(q)}_{s_1,\dots,s_N}$, over $s_1,\dots,s_N\ge 0$.
In this linear combination, the number of coefficients coming from the action of constituents $x_{i_{q+1}}\partial_{i_{q+1}}$ and $-\theta(i_{q+1}-1)$ is $O(1)$, whereas the number of coefficients coming from the action of constituents of the form $\theta\frac{x_\ell}{x_i-x_j}(1-s_{i,j})$, with $i$ or $j$ being $i_{q+1}$, is $O(N)$.
Since $|-\theta(i_{q+1}-1)|\le\theta N$ and $\theta N\sim\gamma$, it follows by induction that for $q=1,\dots,k$, the coefficients of $h^{(q)}_{s_1,\dots,s_N}$, as a polynomial on $c^\la_{F_N}$, can be bounded by a constant that only depends on $\gamma$, as desired.
\end{proof}

By virtue of \cref{lem:Polynomiality} we can, and will, assume without loss of generality that $F_N(x_1,\dots,x_N)$ is the polynomial given by the RHS of \eqref{eq:Trunc}:
\begin{equation}\label{eq:truncated}
F_N(x_1,\dots,x_N) = \sum_{\la\colon |\la|\le k,\,\ell(\la)\leq N}c_{F_N}^\la\,\mathfrak{m}_\la(x_1-1,\dots,x_N-1).
\end{equation}

\begin{lemma}\label{lem:TopDegree_2}
Let $s\in\Z_{\ge 1}$ be arbitrary and let $\alpha_1,\dots,\alpha_s\in\Z_{\geq 1}$ be such that $\alpha_1+\cdots+\alpha_s\leq k$. Further, let $1\le i_1,\dots,i_s\le N$ be any distinct integers. Then
\begin{equation*}
\left(\prod_{j=1}^s \xi_{i_j}^{\alpha_j}\right) \exp(F_N)\bigg|_{(x_1,\dots,x_N)=(1^N)}
= \prod_{j=1}^s \left(\xi_{i_j}^{\alpha_j}\exp(F_N) \right) \Big|_{(x_1,\dots,x_N)=(1^N)} + R + O\left( \frac{1}{N} \right),
\end{equation*}
where $R$ is a polynomial in the $c_{F_N}^\la$ such that $\deg(R)\le k$, it has coefficients of order $O(1)$ in the regime~\eqref{eq:HTRegime}, uniformly over all $1\le i_1,\dots,i_s\le N$, and such that each monomial in it contains at least a factor $c_{F_N}^\nu$ with $\ell(\nu)>1$.
Moreover, $O(\frac{1}{N})$ is a polynomial in the $c_{F_N}^\la$ of degree $\le k$, whose coefficients are $O(\frac{1}{N})$ in the regime~\eqref{eq:HTRegime}, uniformly over all $1\le i_1,\dots,i_s\le N$.
\end{lemma}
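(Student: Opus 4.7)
The plan is to reduce the comparison to a purely ``single-row'' problem and then exploit symmetry. Decompose the truncated polynomial $F_N$ as $F_N = F^{(1)}_N + F^{(\ge 2)}_N$, where
\[
F^{(1)}_N(x_1,\dots,x_N) := \sum_{n=1}^{k} c^{(n)}_{F_N}\,\mathfrak{m}_{(n)}(\xx-\mathbf{1}) = \sum_{n=1}^k c^{(n)}_{F_N}\sum_{i=1}^N(x_i-1)^n
\]
collects the contributions from single-row partitions and $F^{(\ge 2)}_N$ collects the rest. Expand $\exp(F_N) = \exp(F^{(1)}_N)\cdot\sum_{m\ge 0}(F^{(\ge 2)}_N)^m/m!$ and do the same on the right-hand side inside each factor $\xi_{i_j}^{\alpha_j}\exp(F_N)$. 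Every resulting monomial in $\{c^\la_{F_N}\}$ that contains at least one factor $c^\nu_{F_N}$ with $\ell(\nu) > 1$ has degree $\le k$ and, by \cref{lem:Polynomiality}, uniformly $O(1)$ coefficients; I would absorb all such ``impure'' terms into $R$. It then suffices to prove
\[
\prod_{j=1}^s\xi_{i_j}^{\alpha_j}\exp(F^{(1)}_N)\Big|_{\xx=(1^N)} = \prod_{j=1}^s \bigl[\xi_{i_j}^{\alpha_j}\exp(F^{(1)}_N)\bigr]\Big|_{\xx=(1^N)} + O(1/N),
\]
with the $O(1/N)$ term being a polynomial in $\{c^\la_{F_N}\}$ of degree $\le k$ with coefficients of that order.

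The point of restricting to $F^{(1)}_N$ is that $\exp(F^{(1)}_N) = \prod_{\ell=1}^N g(x_\ell)$, with $g(x) := \exp\bigl(\sum_{n=1}^k c^{(n)}_{F_N}(x-1)^n\bigr)$, is fully symmetric in all variables. Hence every swap $s_{i,j}$ fixes $\exp(F^{(1)}_N)$, the interaction terms in the Cherednik operator~\eqref{eq:Cherednik} disappear on this function, and
\[
\xi_i \exp(F^{(1)}_N) = \bigl(u(x_i) - \theta(i-1)\bigr)\exp(F^{(1)}_N), \qquad u(x) := xg'(x)/g(x).
\]
This immediately handles $s=1$ and sets up the structure for the induction.

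I would then induct on $\sum_j\alpha_j$, maintaining the invariant
\[
\prod_{j=1}^s\xi_{i_j}^{\alpha_j}\exp(F^{(1)}_N) = \bigl(A(\xx) + \theta B(\xx)\bigr)\exp(F^{(1)}_N),
\]
where $A(\xx)=\prod_j A_j(x_{i_j})$ is a product of single-variable factors that exactly matches the factorized right-hand side upon evaluation, and $B(\xx)$ is a polynomial expression involving divided differences $(h(x_k)-h(x_\ell))/(x_k-x_\ell)$ in the finitely many variables $x_{i_1},\dots,x_{i_s}$. The key mechanism is that when the next $\xi_k$ is applied and its interaction sum $\theta\sum_{\ell\ne k}\frac{x^*_{k,\ell}}{x_k-x_\ell}(1-s_{k,\ell})$ acts on $(A+\theta B)\exp(F^{(1)}_N)$, the swap $s_{k,\ell}$ fixes $\exp(F^{(1)}_N)$ and also fixes $A+\theta B$ unless $\ell$ is among the active indices $\{i_1,\dots,i_s\}\setminus\{k\}$; there are at most $s-1$ such active indices, a number independent of $N$. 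Hence the sum of $N-1$ summands collapses to $O(1)$ nontrivial ones, each carrying an additional $\theta$ factor and a divided-difference coefficient that stays bounded at $\xx=(1^N)$.

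The principal technical obstacle is the bookkeeping of the accumulating divided-difference terms in $B$ through the induction: one must ensure that (i) the polynomial degree in $\{c^\la_{F_N}\}$ remains $\le k$ and the coefficients stay uniformly $O(1)$, both of which follow from the argument of \cref{lem:Polynomiality} applied to intermediate expressions, and (ii) each fresh interaction of a new $\xi_k$ with a previously-built $\theta B$-term contributes yet another $\theta$ factor rather than merely reshuffling existing ones. The distinctness of $i_1,\dots,i_s$ is essential precisely to prevent the sum $\sum_{\ell=1}^N$ from producing $O(N)$ nontrivial summands, which would destroy the $\theta\sim\gamma/N$ gain. Granting these technical pieces, evaluation at $(1^N)$ yields the $O(1/N)$ correction with exactly the polynomial and uniform-boundedness properties demanded by the lemma.
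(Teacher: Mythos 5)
Your reduction to the single-row part $F_N^{(1)}$ and the use of the symmetry of $\exp(F_N^{(1)})$ to kill the interaction terms on the first application of each $\xi_{i_j}$ match the paper's strategy (the paper replaces $F_N$ by $H_N(\xx)=\sum_l\sum_n c^{(n)}_{F_N}(x_l-1)^n$ for exactly the reasons you give). However, your induction invariant contains a genuine gap. As soon as some $\alpha_j\ge 2$, the prefactor produced by $\xi_{i_j}^{\alpha_j}$ is \emph{not} a single-variable function of $x_{i_j}$: already $\xi_{i_1}^2\exp(F_N^{(1)})$ requires applying the interaction sum of \eqref{eq:Cherednik} to $u(x_{i_1})\exp(F_N^{(1)})$, and since $u(x_{i_1})$ is not fixed by $s_{i_1,j}$ for \emph{any} $j$, this produces $N-1$ nonzero divided-difference terms $\theta\,\frac{x_*}{x_{i_1}-x_j}\big(u(x_{i_1})-u(x_j)\big)$. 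Their sum is $O(N\theta)=O(1)$, so it cannot be discarded, and it makes the accumulated prefactor depend on \emph{every} variable $x_j$, not only on the active ones. Consequently your claim that ``$B(\xx)$ involves divided differences only in $x_{i_1},\dots,x_{i_s}$'' is false, and the key step — ``the swap $s_{k,\ell}$ fixes $A+\theta B$ unless $\ell$ is among the active indices, so only $O(1)$ of the $N-1$ summands survive'' — fails: generically all $N-1$ summands are nonzero, and a naive count gives an $O(1)$ rather than $O(1/N)$ cross-term.

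The argument can be repaired, but it needs a second observation that is absent from your proposal: for any fixed spectator index $\ell$, the part of the previously accumulated prefactor that actually depends on $x_\ell$ is itself of size $O(\theta)$ (it arises only from words in which some earlier interaction operator already used the index $\ell$). Hence each surviving summand $\theta\,\frac{x_*}{x_k-x_\ell}(1-s_{k,\ell})(\cdots)$ with spectator $\ell$ carries $\theta^2$, and summing over the $N-1$ choices of $\ell$ gives $O(N\theta^2)=O(1/N)$. This is precisely the second of the two ``singularity'' conditions in the paper's proof (two interaction operators from \emph{different} groups sharing the same spectator index $j=j'$); your proposal only accounts for the first condition ($j$ equal to an active index $i_{a'}$). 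Without tracking this second class of cross-terms, the error estimate does not close, so as written the induction does not establish the $O(1/N)$ bound.
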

\begin{proof}
Since the coefficients $c_{F_N}^\nu$ with $\ell(\nu)>1$ will be swallowed by $R$, and the uniformity of bounds on the coefficients of $R$ is already proved in \cref{lem:Polynomiality}, for this lemma we can replace $F_N(\xx)$ in~\eqref{eq:truncated} by
\begin{equation}\label{eq:H_series}
H_N(\xx) := \sum_{n=0}^k{c_{F_N}^{(n)}\mathfrak{m}_{(n)}(x_1-1,\dots,x_N-1)}
= \sum_{l=1}^N\sum_{n=0}^k{c_{F_N}^{(n)}(x_l-1)^n}.
\end{equation}
Then the desired statement reduces to proving:
\begin{equation}\label{eqn:factorization}
\xi_{i_s}^{\alpha_s}\cdots\xi_{i_1}^{\alpha_1} \exp(H_N) \big|_{(x_1,\dots,x_N)=(1^N)}\\
= \prod_{j=1}^s \left[ \xi_{i_j}^{\alpha_j} \exp(H_N) \right] \bigg|_{(x_1,\dots,x_N)=(1^N)} + O\left( \frac{1}{N} \right).
\end{equation}
Note that $\xi_{i_s}^{\alpha_s}\cdots\xi_{i_1}^{\alpha_1}$ will be a
sum of $(N+1)^{\sum_{j=1}^s{\alpha_j}}$ terms $w_1^s\cdots
w_{\alpha_s}^s\cdots w_1^1\cdots w_{\alpha_1}^1$, where each $w_b^a$
is of the form $x_{i_a}\partial_{i_a}$ or $-\theta(i_a-1)$ or
$\frac{\theta x_\ell}{x_{i_a}-x_j}(1-s_{i_a,j})$, for some $1 \leq j < i_a$
and $\ell = i_a$, or $i_a < j \leq N$
and $\ell = j$.
If a term $w_1^s\cdots w_{\alpha_s}^s\cdots w_1^1\cdots w_{\alpha_1}^1$ satisfies:

\smallskip

$\bullet$ some $w^a_b=\frac{\theta x_\ell}{x_i-x_j}(1-s_{i,j})$ and $j
\in \{i_1,\dots,i_s\}$, or

$\bullet$ some $w^a_b=\frac{\theta x_\ell}{x_i-x_j}(1-s_{i,j}),\
w^{a'}_{b'}=\frac{\theta x_{\ell'}}{x_{i'}-x_{j'}}(1-s_{i',j'})$, such
that $a \neq a'$, and $j=j'$,

\smallskip

\noindent then we call it \emph{singular} and if it does not satisfy any of the two conditions above, we call it \emph{generic}.
There are at least $(N-s)(N-s-1)\cdots (N-s-\sum_{j=1}^s{\alpha_j}+1)$ generic terms, where each $w^a_b$ is of the form $\frac{\theta x_\ell}{x_i-x_j}(1-s_{i,j})$, as can be seen by picking the letters $w_1^s,w_2^s,\cdots,w^{1}_{\alpha_1}$ in that order, preventing at each step both conditions for singularity.
Then the number of singular terms is at most $(N+1)^{\sum_{j=1}^s{\alpha_j}}-(N-s)(N-s-1)\cdots (N-s-\sum_{j=1}^s{\alpha_j}+1)=O(N^{\sum_{j=1}^s{\alpha_j}-1})$, which is smaller by a factor of $N$ than the number of generic terms.
Thus, to analyze the LHS of~\eqref{eqn:factorization}, it suffices to look at the generic terms $w_1^s\cdots w_{\alpha_s}^s\cdots w_1^1\cdots w_{\alpha_1}^1 \exp(H_N) \big|_{(x_1,\dots,x_N)=(1^N)}$, since the singular ones will be part of $O(\frac{1}{N})$ in~\eqref{eqn:factorization}.

By the rules~\eqref{eq:actionOnMon}, \eqref{eq:DerRules}, we deduce
\begin{equation}\label{eqn:factorization_2}
w_1^1\cdots w_{\alpha_1}^1 \exp(H_N) = h_1\cdot\exp(H_N),
\end{equation}
where $h_1=h_1(\xx)$ is a polynomial that involves only variables $x_{i_1}$ and $x_j$, for those $j$ that are part of some operator $w^1_a = \frac{\theta x_\ell}{x_{i_1}-x_j}(1-s_{i_1,j})$; this part of the argument is where the special form of $H_N(\xx)$ in~\eqref{eq:H_series} is used.
Then the next step is to apply $w_1^2\cdots w_{\alpha_2}^2$ to~\eqref{eqn:factorization_2}.
By the condition of being generic and the rules~\eqref{eq:DerRules}, it turns out that these $w^2_a$ do not act on $h_1$, but only on $\exp(H_N)$, thus we conclude that $w_1^2\cdots w_{\alpha_2}^2 w_1^1\cdots w_{\alpha_1}^1 \exp(H_N) = w_1^2\cdots w_{\alpha_2}^2 \exp(H_N)\times w_1^1\cdots w_{\alpha_1}^1 \exp(H_N)$.
By repeating this argument $s$ times, we deduce that for generic terms, we have the exact equality
\begin{equation*}
w_1^s\cdots w_{\alpha_s}^s\cdots w_1^1\cdots w_{\alpha_1}^1 \exp(H_N)
= \prod_{j=1}^s{ w_1^j\cdots w_{\alpha_j}^j \exp(H_N) }.
\end{equation*}
After setting all variables $x_1,\dots,x_N$ equal to $1$, then adding all these generic terms and noting that the special terms amount to an additive uniform error of $O(\frac{1}{N})$, as discussed earlier, we conclude the desired~\eqref{eqn:factorization}.
\end{proof}

\begin{lemma}\label{lem:TopDegree}
Let $s\in\Z_{\ge 1}$ be arbitrary and let $1 \leq i_1,\dots,i_s \leq N$ be distinct integers.
Further, let $\la=(\alpha_1\ge\dots\ge\alpha_s>0)$ be a partition with $\ell(\la)=s$ and $|\la|=\alpha_1+\cdots+\alpha_s=k$.
Then
\begin{equation}\label{eq:MainQuant1}
\left(\prod_{j=1}^s \xi_{i_j}^{\alpha_j}\right) \exp(F_N)\big|_{(x_1,\dots,x_N)=(1^N)}
= b^\la_\la c^\la_{F_N} + \sum_{\mu\colon |\mu|=k,\,\ell(\mu)>s}{b^\la_\mu c^\mu_{F_N}} + R + O\left( \frac{1}{N} \right),
\end{equation}
where the coefficients $b^\la_\mu$ are uniformly bounded in the regime~\eqref{eq:HTRegime}; in particular,
\begin{equation}\label{limit_b_diagonal}
\lim_{\substack{N\to\infty\\N\theta\to\gamma}}{b^\la_\la} = \prod_{j=1}^s{ \alpha_j(\gamma+1)^{\uparrow(\alpha_j-1)} }.
\end{equation}
Moreover, $R$ is a polynomial in the $c^\la_{F_N}$, $|\la|<k$ (i.e.~it does not involve $c^\nu_{F_N}$ with $|\nu|=k$) with $\deg(R)\le k$, and $O(\frac{1}{N})$ is a polynomial in the $c^\la_{F_N}$ of degree $\le k$, whose coefficients are $O(\frac{1}{N})$ in the regime~\eqref{eq:HTRegime}.
\end{lemma}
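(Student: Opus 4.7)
The plan is to combine the factorization from \cref{lem:TopDegree_2} with a single-index induction, and to analyze the cross-index interactions via the Hecke relations. Throughout, I use the key observation that applying $\xi_i$ raises the $c$-weight of an expression (where $c^\nu_{F_N}$ has weight $|\nu|$) by at most one, so $[\xi_i^\alpha \exp(F_N)]|_{x=1^N}$ is a polynomial in the Taylor coefficients of maximum $c$-weight $\alpha$. By \cref{lem:TopDegree_2},
\begin{equation*}
\Big(\prod_{j=1}^s \xi_{i_j}^{\alpha_j}\Big) \exp(F_N)\big|_{(1^N)}
= \prod_{j=1}^s \big[\xi_{i_j}^{\alpha_j}\exp(F_N)\big]\big|_{(1^N)}
+ R^\flat + O(1/N),
\end{equation*}
where every monomial of $R^\flat$ contains a factor $c^\nu_{F_N}$ with $\ell(\nu)>1$.

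First I would handle the case $s=1$ by induction on $\alpha$: using~\eqref{eq:Cherednik} together with the identity $\partial_i^\alpha F_N|_{(1^N)} = \alpha!\, c^{(\alpha)}_{F_N}$, I would derive a recurrence for the coefficient of $c^{(\alpha)}_{F_N}$ in $[\xi_i^\alpha\exp(F_N)]|_{(1^N)}$. Each application of $\xi_i$ contributes either through its differential part $x_i\partial_i$, which can either raise an existing partial of $F_N$ to higher order or create a new partial out of $\exp(F_N)$, or through its coupling part $\tfrac{\theta x_?}{x_i-x_j}(1-s_{i,j})$, whose $N$-fold sum survives in the regime~\eqref{eq:HTRegime} as a finite multiple of $\gamma$. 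Solving the resulting recurrence yields the limit $\alpha(\gamma+1)^{\uparrow(\alpha-1)}$, and every remaining weight-$\alpha$ linear term must have $\ell(\mu)>1$, since the only weight-$\alpha$ Taylor coefficient that the single-variable derivative $\partial_i^\alpha$ picks out is $c^{(\alpha)}_{F_N}$. For $s>1$, the weight bound shows that $\prod_j [\xi_{i_j}^{\alpha_j}\exp(F_N)]|_{(1^N)}$ contributes no term that is linear in a single $c^\mu$ of weight $|\mu|=k=\sum_j\alpha_j$: such a term would force one factor to contribute a weight-$k$ linear piece, while each factor has max weight $\alpha_j<k$. Hence every top-weight linear contribution must come from $R^\flat$.

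The heart of the proof is thus the analysis of the top-weight linear part of $R^\flat$ when $s>1$. Revisiting the proof of \cref{lem:TopDegree_2}, $R^\flat$ encodes the \emph{singular} configurations of operator constituents $w_1^s\cdots w_{\alpha_1}^1$ in which some coupling factor $\tfrac{\theta x_?}{x_{i_a}-x_j}(1-s_{i_a,j})$ has $j\in\{i_1,\dots,i_s\}$, thereby coupling actions on distinct designated indices. Iteratively applying the Hecke relations of \cref{thm:cherednik_ops}, I would rewrite each singular configuration, modulo $O(\theta)$ corrections, as a product of pieces each acting on a single designated index. This Hecke-based decoupling, combined with the mixed-partial identity $\partial_{i_1}^{\alpha_1}\cdots\partial_{i_s}^{\alpha_s}F_N|_{(1^N)} = \big(\prod_j \alpha_j!\big)\, c^\la_{F_N}$ (valid for symmetric $F_N$ and distinct $i_1,\dots,i_s$), produces the factorized limit $b^\la_\la \to \prod_j \alpha_j(\gamma+1)^{\uparrow(\alpha_j-1)}$ asserted in~\eqref{limit_b_diagonal}; simultaneously, it shows that any top-weight linear $c^\mu$ surviving in $R^\flat$ must have $\ell(\mu)\ge s$, because each straightened cross-term distributes the effective derivatives across at least $s$ designated indices. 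The main obstacle I expect is exactly this last step: tracking the signs, combinatorial multiplicities, and lower-order Hecke corrections needed to verify that the decoupling yields precisely the factorized coefficient $\prod_j \alpha_j(\gamma+1)^{\uparrow(\alpha_j-1)}$, and that the intermediate range $1<\ell(\mu)<s$ is ruled out uniformly. This is the representation-theoretic subtlety that forces the use of the Hecke relations rather than the symmetric decomposition available for Dunkl operators.
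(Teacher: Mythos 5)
Your reduction of the top-weight linear component to the error term of \cref{lem:TopDegree_2} is correct as far as it goes (the weight bound from \cref{lem:Polynomiality} does rule out linear weight-$k$ contributions from the factorized product), but the plan goes wrong at the decisive step in two ways. First, you identify $R^\flat$ with the \emph{singular} operator configurations; in fact, in the proof of \cref{lem:TopDegree_2} the polynomial $R$ is defined to absorb \emph{every} contribution involving a multi-row coefficient $c^\nu_{F_N}$, whether it arises from a generic or a singular configuration --- the generic/singular dichotomy is only invoked after $F_N$ has been replaced by the single-row truncation $H_N$. Since for $s>1$ the diagonal term $b^\la_\la c^\la_{F_N}$ has $\ell(\la)=s>1$, it lives entirely inside this $R$, and its dominant contributions come from \emph{generic} configurations in which the multi-row term $c^\la_{F_N}\,\mathfrak{m}_\la(x_1-1,\dots,x_N-1)$ is pulled down exactly once; an analysis restricted to singular configurations would miss them. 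Second, the proposed Hecke-relation ``decoupling'' --- producing the factorized limit $\prod_j\alpha_j(\gamma+1)^{\uparrow(\alpha_j-1)}$, killing every $\mu\neq\la$ with $1<\ell(\mu)\le s$, and controlling the $O(\theta)$ corrections whose number grows with $N$ --- is exactly the content of the lemma, and no concrete mechanism for it is given. The Hecke relations \eqref{eq:Hecke} intertwine $\xi_i$ with adjacent transpositions and are used in this paper for a different purpose (replacing $\xi_i^\ell$ by $\xi_1^\ell$ at the symmetric point in the proof of \cref{theo:helper2}); they do not obviously separate a product of Cherednik operators over distinct indices.

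The paper's proof avoids all of this with a change of variables: writing $F_N(\xx)=\hat F_N(\xx-\mathbf{1})$, one gets $\prod_j\xi_{i_j}^{\alpha_j}\exp(F_N)\big|_{(1^N)}=\prod_j\big(\xi_{i_j}+\tilde\xi_{i_j}\big)^{\alpha_j}\exp(\hat F_N)\big|_{(0^N)}$, where $\tilde\xi_i$ are the Dunkl operators~\eqref{eqn:dunkl}. Since Cherednik operators preserve polynomial degree while Dunkl operators lower it by one, only the pure-Dunkl word $\prod_j\tilde\xi_{i_j}^{\,\alpha_j}$ can contribute to the degree-$k$ linear component after evaluation at $(0^N)$, and the triangular structure together with the diagonal limit is then imported from \cite[Prop.~5.5]{Benaych-GeorgesCuencaGorin2022}. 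A self-contained version of your route would essentially have to reprove that proposition; your $s=1$ recurrence is plausible (and consistent with \cref{lem:Xi1}), but the multi-index case needs the Dunkl reduction or an equivalent substitute, not the Hecke relations.
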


\begin{proof}
According to \cref{lem:Polynomiality}, the LHS
of~\eqref{eq:MainQuant1} is a polynomial of degree at most $k$ in the coefficients $c^\la_{F_N}$, so its linear degree $k$ component is of the form
\begin{equation}\label{linear_component}
\sum_{\mu\colon|\mu|=k}{b^\la_\mu c^\mu_{F_N}}.
\end{equation}
It remains to prove that $b^\la_\mu=O(\frac{1}{N})$, unless $\ell(\mu)>s$ or $\mu=\la$, as well as the limit~\eqref{limit_b_diagonal}.

Consider the polynomial
\begin{equation}\label{eqn:F_hat}
\hat{F}_N(x_1,\dots,x_N) := \sum_{\la\colon |\la|\le k,\,\ell(\la)\leq N}{c_{F_N}^\la\, \mathfrak{m}_\la(x_1,\dots,x_N)},
\end{equation}
which is related to $F_N$ by $F_N(x_1,\dots,x_N)=\hat{F}_N(x_1-1,\dots,x_N-1)$.
We also consider the following \emph{Dunkl operators}~\cite{Dunkl1989},
\begin{equation}\label{eqn:dunkl}
\tilde{\xi}_i := \partial_i+\theta\sum_{j\neq i}\frac{1}{x_i-x_j}(1-s_{i,j}),\quad i=1,2,\dots,N.
\end{equation}
Note that the Dunkl operators decrease the degree of polynomials by $1$, whereas the Cherednik operators~\eqref{eq:Cherednik} preserve the degree of polynomials.
Moreover, by looking at the rules~\eqref{eq:DerRules} of how the Cherednik operators act, and at the analogous relations for Dunkl operators~\cite[Eqns.~(5.8)-(5.9)]{Benaych-GeorgesCuencaGorin2022}, we deduce
\begin{equation*}
\left(\prod_{j=1}^s \xi_{i_j}^{\alpha_j}\right) \exp(F_N)
= \left(\prod_{j=1}^s (\xi_{i_j}+\tilde{\xi}_{i_j})^{\alpha_j}\right) \exp(\hat{F}_N)\bigg|_{x_i\mapsto x_i-1,\ \forall i=1,\dots,N}
\end{equation*}
and therefore
\begin{equation}\label{cherednik_dunkl}
\left(\prod_{j=1}^s \xi_{i_j}^{\alpha_j}\right) \exp(F_N)\bigg|_{(x_1,\dots,x_N)=(1^N)}
= \left(\prod_{j=1}^s (\xi_{i_j}+\tilde{\xi}_{i_j})^{\alpha_j}\right) \exp(\hat{F}_N)\bigg|_{(x_1,\dots,x_N)=(0^N)}.
\end{equation}
To calculate the RHS of~\eqref{cherednik_dunkl}, write $\prod_{j=1}^s (\xi_{i_j}+\tilde{\xi}_{i_j})^{\alpha_j}$ as a sum of $2^k$ sequences of $k$ operators, each being either $\xi_{i_j}$ or $\tilde{\xi}_{i_j}$, then apply each sequence of operators to $\exp(\hat{F}_N)$ in order to obtain an expression of the form $h\cdot\exp(\hat{F}_N)$, and finally pick up the constant term of $h=h(\xx)$.
Since we are only interested in the degree $k$ linear component~\eqref{linear_component} and the Cherednik operators preserve the degree of polynomials, any sequence involving some Cherednik operator $\xi_{i_j}$ will not contribute to~\eqref{linear_component}.
As a result, the part of the RHS of~\eqref{cherednik_dunkl} that contributes to~\eqref{linear_component} comes from
\begin{equation}\label{eqn:dunkl_part}
\left(\prod_{j=1}^s \tilde{\xi}_{i_j}^{\,\alpha_j}\right) \exp(\hat{F}_N)\bigg|_{(x_1,\dots,x_N)=(0^N)}.
\end{equation}
It was precisely proved in \cite[Prop.~5.5]{Benaych-GeorgesCuencaGorin2022} that the coefficients of the linear component $\sum_{\mu\colon|\mu|=k}{b^\la_\mu c^\mu_{F_N}}$ of \eqref{eqn:dunkl_part} are of order $b^{\la}_\mu=O(\frac{1}{N})$, unless $\ell(\mu)>\ell(\la)$ or $\mu=\la$; the limit~\eqref{limit_b_diagonal} was also shown there. Hence, the proof is now finished.
\end{proof}

\smallskip

\begin{proof}[Proof of~\cref{theo:helper1}]
The theorem is an easy consequence of~\cref{lem:Polynomiality,lem:TopDegree_2,lem:TopDegree}; since the argument is
very similar to the proof of
\cite[Thm.~5.1]{Benaych-GeorgesCuencaGorin2022}, we omit the details. Note however that
\cite[Thm.~5.1]{Benaych-GeorgesCuencaGorin2022} has a polynomial $R_2$
with coefficients of order $O(\frac{1}{N})$, while in our theorem
the polynomial $R_2$ has coefficients of order $o(1)$.\footnote{It seems that \cite[Thm.~5.1]{Benaych-GeorgesCuencaGorin2022} has a mistake, namely, the difference between the two sides of \cite[Eqn.~(5.3)]{Benaych-GeorgesCuencaGorin2022} is not of order
$O(\frac{1}{N})$, but of order $o(1)$. However, the applications of \cite[Thm.~5.1]{Benaych-GeorgesCuencaGorin2022} only
use that the contribution of $R_2$ vanishes in the regime~\eqref{eq:HTRegime}, so their mistake does not affect their other results.}
\end{proof}

\subsection{Limiting moments via Łukasiewicz paths:~Proof of Theorem~\ref{theo:helper2}}\label{sec:hecke_proof}

Throughout this subsection, assume that we are in the setting of \cref{theo:helper2}.
For the proof, it is convenient to introduce some notation.
Let $\mathcal{F}:=\{F_N=F_N(x_1,\dots,x_N)\}_{N\ge 1}$ be the sequence of smooth functions with Taylor expansions~\eqref{eq:taylor_expansions_helper} and such that $F_N(1^N)=0$.
The Taylor coefficients $c^\nu_{F_N}$, $\nu\in\Y$, will be treated as variables of degrees $\deg(c^\nu_{F_N})=|\nu|$, as before.

For any $\ell\in\Z_{\ge 1}$, define $I_1(\mathcal{F}, \ell)$ as the set of polynomials $R_1=R_1\big( c^\nu_{F_N} \colon \nu\in\Y \big)$ in the variables $c^\nu_{F_N}$ with real coefficients, possibly depending on $N$ and $\theta$, and such that
\begin{itemize}
	\item $\deg(R_1)\le\ell$;
	\item $R_1$ has uniformly bounded coefficients in the regime~\eqref{eq:HTRegime};
	\item each monomial of $R_1$ has at least one factor $c^\nu_{F_N}$ with $\ell(\nu)>1$.
\end{itemize}

Similarly, define $I_2(\mathcal{F}, \ell)$ as the set of polynomials $R_2=R_2\big( c^\nu_{F_N} \colon \nu\in\Y \big)$ such that
\begin{itemize}
	\item $\deg(R_2)\le\ell$;
	\item $R_2$ has coefficients of order $o(1)$ in the regime~\eqref{eq:HTRegime}.
\end{itemize}

Finally we will need the following formal power series, for all $i=1,2,\dots,N$:
\begin{equation}\label{eq:definitionXi}
\Xi_i(z) := 1 + \sum_{\ell\ge 1}\xi_i^\ell\exp(F_N)\big|_{(x_1,\dots,x_N) = (1^N)}\cdot z^\ell.
\end{equation}

\begin{lemma}\label{lem:Moment}
Assume that we are in the setting of \cref{theo:helper2}, in particular, $\ell\in\Z_{\ge 1}$ and $\mathcal{F}:=\{F_N=F_N(x_1,\dots,x_N)\}_{N\ge 1}$ is a sequence of smooth functions with Taylor expansions~\eqref{eq:taylor_expansions_helper}, such that $F_N(1^N)=0$.
Also, let $i=1,2,\dots,N$. Then
\begin{equation}\label{eq:MainQuant2}
\xi_i^{\ell}\exp(F_{N})\big|_{(x_1,\dots,x_N) = (1^N)} =
[z^\ell]\Xi_1(z)\sum_{k \geq 0}\big( (1-i)\theta z\cdot\Xi_1(z) \big)^k + R_1 + R_2,
\end{equation}
for some $R_1\in I_1(\mathcal{F},\ell)$ and $R_2\in I_2(\mathcal{F},\ell)$.
Moreover, the bounds on the coefficients of $R_1,R_2$ are uniform over $i=1,2,\dots,N$.
\end{lemma}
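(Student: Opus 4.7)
The plan is to induct on the index $i$, using the Hecke relations of~\cref{thm:cherednik_ops} to derive a generating function recursion for $\Psi_i(z) := 1 + \sum_{k \geq 1} \xi_i^k \exp(F_N)|_{(1^N)}\, z^k$ that can be solved in closed form. The base case $i=1$ is immediate because $\Psi_1 = \Xi_1$ and the prefactor $(1-i)\theta$ in the target formula vanishes, making both sides agree.

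For the inductive step, the algebraic heart of the argument is the operator identity
\begin{equation*}
\xi_i^k \exp(F_N) = s_{i-1}\, \xi_{i-1}^k \exp(F_N) - \theta \sum_{j=0}^{k-1} \xi_{i-1}^j\, \xi_i^{k-1-j} \exp(F_N), \qquad k \geq 1,
\end{equation*}
which I will prove by induction on $k$ using the Hecke relation $\xi_i s_{i-1} = s_{i-1}\xi_{i-1} - \theta$ (the second line of~\eqref{eq:Hecke} at index $i-1$), the symmetry $s_{i-1}\exp(F_N) = \exp(F_N)$, and the pairwise commutativity $\xi_i \xi_{i-1} = \xi_{i-1} \xi_i$ from~\cref{thm:cherednik_ops}. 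Evaluating at $(1^N)$, where $s_{i-1}$ acts trivially, this turns into the numerical recursion $a_k^{(i)} = a_k^{(i-1)} - \theta \sum_{j+m=k-1} A_{j,m}$, with $a_k^{(i)} := \xi_i^k \exp(F_N)|_{(1^N)}$ and $A_{j,m} := \xi_{i-1}^j \xi_i^m \exp(F_N)|_{(1^N)}$. The previously proved~\cref{lem:TopDegree_2}, applied to the distinct indices $i-1$ and $i$, then supplies the approximate factorization $A_{j,m} = a_j^{(i-1)}\, a_m^{(i)} + R_1^{(j,m)} + R_2^{(j,m)}$ with errors in $I_1(\mathcal{F},\ell) + I_2(\mathcal{F},\ell)$ whose coefficient bounds are uniform in $i$.

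Packaging in generating functions, the recursion reads $\Psi_i(z) \equiv \Psi_{i-1}(z) - \theta z\, \Psi_{i-1}(z)\, \Psi_i(z) \pmod{I_1+I_2}$, equivalently $\Psi_i \equiv \Psi_{i-1}/(1 + \theta z \Psi_{i-1})$. Iterating from $\Psi_1 = \Xi_1$ yields
\begin{equation*}
\Psi_i(z) \equiv \frac{\Xi_1(z)}{1 + (i-1)\theta z\, \Xi_1(z)} = \Xi_1(z) \sum_{k\geq 0} \bigl((1-i)\theta z\, \Xi_1(z)\bigr)^k,
\end{equation*}
and extracting $[z^\ell]$ gives the desired identity~\eqref{eq:MainQuant2}. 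The main technical subtlety will be maintaining uniformity of the error bounds in $i \leq N$: iterating $i-1$ times could in principle amplify errors, but the extra factor $\theta = O(1/N)$ introduced at each step means that the accumulated correction—a sum of at most $N$ terms each scaled by $O(1/N)$—stays bounded, while the ``at least one $c^\nu$ with $\ell(\nu) > 1$'' structure characterizing $I_1$ is preserved under these sums. Formally, I would carry out a simultaneous induction on $\ell$ and $i$, exploiting the fact that $[z^\ell]$ of any of these formal series depends on only finitely many Taylor coefficients $c^\nu_{F_N}$ with $|\nu| \leq \ell$.
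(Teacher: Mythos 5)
Your proposal is correct and follows essentially the same route as the paper: the telescoping Hecke-relation identity $\ev\,\xi_i^\ell e^{F_N} = \ev\,\xi_{i-1}^\ell e^{F_N} - \theta\sum_{a+b=\ell-1}\ev\,\xi_{i-1}^a\xi_i^b e^{F_N}$, the approximate factorization of the cross terms via \cref{lem:TopDegree_2}, the resulting generating-function recursion $\Xi_i \equiv \Xi_{i-1}(1+\theta z\,\Xi_{i-1})^{-1}$, and its iteration down to $\Xi_1$. The technical subtlety you flag is exactly what the paper's \cref{claim:small_coeffs} handles, and your heuristic (the factor $\theta\sim\gamma/N$ per step compensating the $O(N)$ choices of index at each step, with the $\ell(\nu)>1$ structure preserved under these sums) is the correct mechanism.
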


\begin{proof}
The statement is obvious when $i=1$, so assume that $2\leq i\leq N$ and let $\ev$ denote the operator of evaluation of a function in variables $x_1,\dots,x_N$ at $(1^N)$. Since $\ev = \ev \circ s_{i-1}$, therefore by using the Hecke relation \eqref{eq:Hecke} repeatedly, we get that the LHS
of \eqref{eq:MainQuant2} is equal to 
\begin{multline*}
\ev\xi_{i}^{\ell}\exp(F_{N})
= \ev s_{i-1}\xi_{i}^{\ell}\exp(F_{N})
= \ev (\xi_{i-1}s_{i-1} -\theta)\xi_{i}^{\ell-1}\exp(F_{N})=\\
\ev \bigg(\xi_{i-1}(\xi_{i-1} s_{i-1} -\theta) \xi_{i}^{\ell-2} - \theta \xi_{i}^{\ell-1}\bigg)\exp(F_{N})
=\cdots
= \ev\xi_{i-1}^\ell\exp(F_{N}) - \theta\sum_{\substack{a,b\ge 0\\a+b=\ell-1}}\ev\xi_{i-1}^a \xi_{i}^b\exp(F_{N}).
\end{multline*}

Applying \cref{lem:TopDegree_2} to each summand $\ev\xi_{i-1}^a \xi_{i}^b\exp(F_{N})$ with $a,b\ge 1$, we have
\begin{equation}\label{eq:lemma_Rs}
\ev\xi_{i-1}^a \xi_{i}^b\exp(F_{N}) = \ev\xi_{i-1}^a\exp(F_{N}) \cdot \ev\xi_{i}^b\exp(F_{N}) - R_{i;a,b} - \frac{1}{N}R'_{i;a,b},
\end{equation}
for $R_{i;a,b}$, $R'_{i;a,b}$ as described in that lemma.
If $a=0$ or $b=0$ (or both), then~\eqref{eq:lemma_Rs} is also true if we set both $R_{i;a,b}$ and $R'_{i;a,b}$ be the zero polynomials.
This leads to the power series identity
\begin{equation}\label{eqn:Xi_formula}
\Xi_i(z) = \left(\Xi_{i-1}(z) + \theta z R_i(z)\right)\left(1+\theta z\cdot\Xi_{i-1}(z)\right)^{-1},
\end{equation}
where
\begin{equation}\label{eq:R_i}
R_i(z) := \sum_{a,b\ge 1}\left(R_{i;a,b}+ \frac{1}{N}R'_{i;a,b}\right)z^{a+b}.
\end{equation}

Taking into account that~\eqref{eqn:Xi_formula} holds true for any $2 \leq i\leq N$, 
we obtain by iterating this equation that for any $1 \leq k\leq i-1$, one has
\begin{equation}
  \label{eq:ComRec}
  \Xi_i(z) = \left(\Xi_{k}(z)+\sum_{m = 1}^{i-k}(\theta
    z)^mf^{(k)}_{m}\right)\left(1+(i-k)\theta z\cdot\Xi_{k}(z) + \sum_{m =
      2}^{i-k}(\theta z)^mg^{(k)}_{m}\right)^{-1},
  \end{equation}
where
\begin{align*}
f^{(k)}_{2j} &= \Xi_k(z)\cdot\sum_{k+1\le i_1<\cdots<i_j \le i} (i_1-k-1) \prod_{s=1}^{j-1}(i_{s+1}-i_s-1)\cdot R_{i_1}(z)\cdots R_{i_j}(z),\\
f^{(k)}_{2j-1} &= \sum_{k+1 \leq i_1<\cdots < i_j \leq i} \,\prod_{s=1}^{j-1}(i_{s+1}-i_s-1) \cdot R_{i_1}(z)\cdots R_{i_j}(z),\\
g^{(k)}_{2j} &= \sum_{k+1 \leq i_1<\cdots < i_j \leq i}(i-i_j)\prod_{s=1}^{j-1}(i_{s+1}-i_s-1)\cdot R_{i_1}(z)\cdots R_{i_j}(z),\\
g^{(k)}_{2j+1} &= \Xi_k(z)\cdot\sum_{k+1\le i_1<\cdots<i_j\le i}(i-i_j)(i_1-k-1) 
\prod_{s=1}^{j-1}(i_{s+1}-i_s-1)\cdot R_{i_1}(z)\cdots R_{i_j}(z).
\end{align*}
By taking $k=1$ in~\eqref{eq:ComRec} we obtain that
\begin{equation}\label{eq:pomocnicze}
\Xi_i(z) = \left(\Xi_{1}(z)+z\tilde{R}(z)\right)\left(1+(i-1)\theta z \cdot\Xi_{1}(z) + z\tilde{R}'(z)\right)^{-1},
\end{equation}
where
\begin{equation}\label{eq:tildeR}
\tilde{R}(z) := \theta\sum_{m=1}^{i-1}(\theta z)^{m-1}f^{(1)}_m,\quad
\tilde{R}'(z) := \theta\sum_{m=2}^{i-1}(\theta z)^{m-1}g^{(1)}_m.
\end{equation}

\begin{claim}\label{claim:small_coeffs}
The constant coefficients $[z^0]\tilde{R}(z)$, $[z^0]\tilde{R}'(z)$ are equal to zero.
Moreover, for any $p\in\Z_{\ge 1}$, both $[z^p]\tilde{R}(z)$ and $[z^p]\tilde{R}'(z)$ are of the form $S_1+S_2$, where $S_1\in I_1(\mathcal{F},p)$ and $S_2\in I_2(\mathcal{F},p)$.
\end{claim}

Let us defer the proof of the claim until later.
The desired conclusion is now a consequence of the following chain of equalities
\begin{align}
\xi_i^{\ell}\exp(F_N)\big|_{(x_1,\dots,x_N) = (1^N)} &= [z^\ell]\Xi_i(z)\nonumber\\
&= [z^\ell]\left(\Xi_1(z)+z\tilde{R}(z)\right)\sum_{k\ge 0}\left((1-i)\theta z \cdot\Xi_{1}(z) - z\tilde{R}'(z)\right)^k\label{chain_line2}\\
&= [z^\ell]\,\Xi_1(z)\sum_{k\ge 0}\big( (1-i) \theta z\cdot\Xi_1(z) \big)^k + R_1 + R_2,\label{chain_line3}
\end{align}
for some $R_1\in I_1(\mathcal{F},p)$ and $R_2\in I_2(\mathcal{F},p)$.
Indeed, the first equality above is due to the definition~\eqref{eq:definitionXi} of $\Xi_i(z)$, whereas the second one is due to Equation~\eqref{eq:pomocnicze}, so only the last equality remains to be justified.
For this, let the difference between the coefficient of $z^\ell$ in line~\eqref{chain_line2} and the coefficient of $z^\ell$ in line~\eqref{chain_line3} be denoted by $[z^\ell]A(z)$; the task is to verify that $[z^\ell]A(z)$ has the form $R_1+R_2$, for some $R_1\in I_1(\mathcal{F},p)$ and $R_2\in I_2(\mathcal{F},p)$.
Evidently, $A(z)$ is a finite sum of products, each of which possibly contains the factors $\Xi_1(z)$ and $\big((1-i)\theta\big)^j\cdot \big(z\cdot\Xi_1(z)\big)^j$, for some $1\le j\le\ell$, and it always contains a factor of the form $z\tilde{R}(z)$ or $\big(z\tilde{R}'(z)\big)^j$, for some $1\le j\le\ell$ (or both of them).
By \cref{lem:Polynomiality}, applied to $\Xi_1(z)=\xi_1^{\ell}\exp(F_N)\big|_{(x_1,\dots,x_N) = (1^N)}$, and \cref{claim:small_coeffs}, it follows that $[z^\ell]A(z)$ is a polynomial of the $c^\la_{F_N}$ of degree at most $\ell$.
Then we can gather those monomials that have some factor $c^\nu_{F_N}$ with $\ell(\nu)>1$ into $R_1$ and those that do not contain such factors into $R_2$.
The fact that the coefficients of $R_1$ and $R_2$ are of orders $O(1)$ and $o(1)$, respectively, in the regime~\eqref{eq:HTRegime} also follow from \cref{lem:Polynomiality} and \cref{claim:small_coeffs}; finally, the uniformity claim stated in the lemma follows because the factors $\big((1-i)\theta\big)^j$, for some $1\le j\le\ell$, that could appear are bounded by
\[
\big| ((1-i)\theta)^j \big| \le (N\theta)^j \sim \gamma^j\le\max\{1,\gamma^\ell\}
\]
and this is independent of $i=1,2,\dots,N$. It only remains to show \cref{claim:small_coeffs}.

\smallskip

\emph{Proof of \cref{claim:small_coeffs}.}
From~\eqref{eq:tildeR}, we have
\begin{equation*}
[z^p]\tilde{R}(z) = \sum_{m=1}^{p+1}\theta^m[z^{p+1-m}]f_m^{(1)},\qquad
[z^p]\tilde{R}'(z) = \sum_{m=2}^{p+1}\theta^m[z^{p+1-m}]g_m^{(1)}.
\end{equation*}
All terms inside the sums defining $f_m^{(1)}$, $m\ge 1$, and $g_m^{(1)}$, $m\ge 2$, contain at least some factor $R_i(z)$.
Since $[z^0]R_i(z)=0$, by~\eqref{eq:R_i}, it follows that $[z^0]\tilde{R}(z)=[z^0]\tilde{R}(z)=0$.

Next, assume $p\ge 1$.
Since $\theta\sim\frac{\gamma}{N}$ in the regime~\eqref{eq:HTRegime}, it is enough to check that the coefficients $[z^{p+1-m}]f_m^{(1)}$, $[z^{p+1-m}]g_m^{(1)}$ are polynomials in the $c_{F_N}^\la$ of degree at most $p$ such that the coefficients of the monomials containing at least one $c_{F_N}^\nu$ with $\ell(\nu)>1$ are $O(N^m)$ in the regime~\eqref{eq:HTRegime} and all the other coefficients are $o(N^m)$ and, in fact, we check that these coefficients are $O(N^{m-1})$.
We will only verify these bounds for $f^{(1)}_m$ indexed by an even integer $m$, as the proofs of the remaining cases are fully analogous.

Let $m=2j$, for some $j\in\Z_{\ge 1}$. For any $k\in\Z_{\ge 0}$, the coefficient $[z^k]\Xi_1(z)$ is a polynomial in the $c_{F_N}^\la$ of degree at most $k$
with uniformly bounded coefficients in the regime~\eqref{eq:HTRegime}
by \cref{lem:Polynomiality}, and for any $1 \leq i_s < N$, the cofficient $[z^k]R_{i_s}(z)$ is a polynomial in the $c_{F_N}^\la$ with degree at
most $k$ such that the coefficients of the monomials containing at least one $c_{F_N}^\nu$ with $\ell(\nu)>1$ are $O(1)$ and all the other coefficients are $O(\frac{1}{N})$ in the regime~\eqref{eq:HTRegime}.
Therefore, for any sequence $2\le i_1<\cdots < i_j \le i$, we have that
$[z^{p+1-2j}]\big(\Xi_1(z)\cdot R_{i_1}(z)\cdots R_{i_j}(z)\big)$
is a polynomial in the $c_{F_N}^\la$ with degree at most~$p+1-2j<p$ and such that the coefficients of the monomials containing at least one $c_{F_N}^\nu$ with $\ell(\nu)>1$ are $O(1)$ and all the other coefficients are
$O(\frac{1}{N})$ in the regime~\eqref{eq:HTRegime}.
For each such sequence, we have that $(i_1-2) \prod_{s=1}^{j-1}(i_{s+1} - i_s - 1) = O(N^j)$ and the number of such sequences is also $O(N^j)$, therefore $[z^{p+1-2j}]f_{2j}^{(1)}$ is a polynomial in the $c_{F_N}^\la$ with degree at most $p$ and such that the coefficients of the monomials containing at least one $c_{F_N}^\nu$ with $\ell(\nu)>1$ are $O(N^{2j})$ and all the other coefficients are $O(N^{2j-1})$ in the regime~\eqref{eq:HTRegime}, as desired.

The previous argument ends the proof of \cref{claim:small_coeffs} and, henceforth, we are done.
\end{proof}

\begin{lemma}\label{lem:Xi1}
Assume that we are in the setting of \cref{theo:helper2}. Then
\begin{multline}\label{eq:LimitXi1}
\xi_1^{\ell}\exp(F_{N})\big|_{(x_1,\dots,x_N)=(1^N)} = \sum_{\Gamma\in\Luk(\ell)}
\bigg(\prod_{i\ge 0} \big( c_{F_N}^{(1)}+i \big)^{\text{\# horizontal steps at height $i$ in $\Gamma$}}\\
\cdot \prod_{j\ge 1} \big( jc_{F_N}^{(j)}+(j+1)c_{F_N}^{(j+1)} \big)^{\text{\# steps $(1,j)$ in $\Gamma$}}
(j+\gamma)^{\text{\# down steps from height $j$ in $\Gamma$}}\bigg)+R_1'+R_2',
\end{multline}
for some $R_1'\in I_1(\mathcal{F},\ell)$ and $R_2'\in I_2(\mathcal{F}, \ell)$.
\end{lemma}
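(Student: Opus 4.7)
The plan is as follows. The starting point is that $F_N$ and hence $\exp(F_N)$ are symmetric, so $s_{1,j}\exp(F_N) = \exp(F_N)$. This allows one to define the twisted operator $\tilde{\xi}_1 := \xi_1 + (x_1\partial_1 F_N)\cdot I$ satisfying $\xi_1[H\exp(F_N)] = (\tilde{\xi}_1 H)\exp(F_N)$ for any polynomial $H$, so that iterating gives $\xi_1^\ell \exp(F_N)|_{(1^N)} = (\tilde{\xi}_1^\ell \cdot 1)|_{(1^N)}$. By the polynomiality arguments of \cref{lem:Polynomiality}, $F_N$ can be replaced by its $\ell$-th order Taylor polynomial at $(1^N)$, and working modulo the class $R_1'$ we may further assume that $F_N = \sum_{n=1}^\ell c^{(n)}_{F_N}(y_1^n+\cdots+y_N^n)$ with $y_i := x_i-1$, since any Taylor monomial $\mathfrak{m}_\nu$ with $\ell(\nu)>1$ can only produce coefficients $c^\nu_{F_N}$ with $\ell(\nu)>1$ in the final expression, thus contributing to $R_1'$.

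The key identification is the following. Introduce the \emph{pure states} $y_1^h$ (polynomials in $y_1$ alone) and the abbreviation $S_k := \sum_{j=2}^N y_j^k$, so that $S_0 = N-1$. A direct expansion using the identity $(y_1^h - y_j^h)/(y_1-y_j) = \sum_{a=0}^{h-1} y_1^a y_j^{h-1-a}$ yields
\begin{multline*}
\tilde{\xi}_1(y_1^h) = \bigl(h + \theta(N-1)\bigr)y_1^{h-1} + \bigl(c^{(1)}_{F_N} + h\bigr)y_1^h + \sum_{j\ge 1}\bigl(jc^{(j)}_{F_N} + (j+1)c^{(j+1)}_{F_N}\bigr)y_1^{h+j}\\
 + \theta\sum_{a=0}^{h-1} y_1^a \bigl(S_{h-1-a} + S_{h-a} - (N-1)\delta_{a,h-1}\bigr).
\end{multline*}
In the regime~\eqref{eq:HTRegime}, the coefficient $h + \theta(N-1) \to h + \gamma$, and the pure-state coefficients on the right-hand side reproduce exactly the Łukasiewicz step weights from the lemma: the down-from-height-$h$ weight $h+\gamma$, the horizontal-at-height-$h$ weight $c^{(1)}_{F_N}+h$, and the up-step-$(1,j)$ weight $jc^{(j)}_{F_N}+(j+1)c^{(j+1)}_{F_N}$. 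By induction on $\ell$, tracking only pure-state transitions of $\tilde{\xi}_1$ and evaluating at $y_1 = 0$ (which selects paths terminating at height $0$) produces exactly the sum over $\Gamma\in\Luk(\ell)$ in the statement.

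The remaining task is to show that the non-pure contributions (involving $S_k$ with $k\ge 1$) are absorbed into $R_2'$ after further applications of $\tilde{\xi}_1$. The crucial structural fact is that every non-pure term in $\tilde{\xi}_1(y_1^h)$ carries an explicit prefactor of $\theta\sim\gamma/N$ coming from the symmetrizer part $B := \theta\sum_j \tfrac{x_j}{x_1-x_j}(1-s_{1,j})$ of $\tilde{\xi}_1$, and returning to the pure subspace requires another application of $B$, which by a parallel computation yields a pure contribution of size at most $\theta(N-1)\sim\gamma$ (for instance, $B(S_k P(y_1))$ has pure part $-\theta(N-1)P(0)y_1^{k-1}+\cdots$). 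Each such detour through the non-pure subspace therefore carries a net extra factor of $\theta\to 0$ compared to a fully pure path of the same length, and a careful bookkeeping analogous to \cref{claim:small_coeffs} in the proof of \cref{lem:Moment} confirms that sequences of operator applications with at least one detour produce polynomials in the $c^\nu_{F_N}$ whose coefficients are of order $o(1)$ in the regime~\eqref{eq:HTRegime}, hence belong to $R_2'$.

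The main obstacle is providing a uniform bound on arbitrarily complex detour contributions, that is, sequences of operator applications that enter and leave the non-pure subspace multiple times, because an $S_k$-factor introduced at one step can be further processed by subsequent $B$-operators (producing intermediate $S_{k'}$'s of growing or shrinking index) before eventually returning to the pure subspace. The required estimate amounts to showing that the total scaling of any such sequence is $\theta^p\cdot\gamma^{p+q}$ with $p\ge 1$ counting detours and $q$ counting direct ``pure'' $B$-steps, so that any nontrivial detour contributes $O(\theta)$; this combinatorics parallels but is more intricate than the argument that carried us through \cref{lem:Moment}, and it is where the bulk of the technical work lies.
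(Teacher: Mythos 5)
Your reduction to the single-row Taylor data and your transfer-operator computation of the pure part are correct and in fact coincide with the paper's derivation of the main term: the paper arrives at the same one-dimensional operator $(x\partial_x+\partial_x+\gamma\!\dd+\kappa)^{\ell-1}\kappa(x)\big|_{x=0}$, whose step weights ($h+\gamma$ down, $c^{(1)}_{F_N}+h$ horizontal, $jc^{(j)}_{F_N}+(j+1)c^{(j+1)}_{F_N}$ up) are exactly your pure-state transitions, and the Łukasiewicz-path reading is identical. Where you diverge is in the treatment of the non-pure ("detour") contributions, and that is precisely where your argument is incomplete: you state yourself that the uniform bound on arbitrarily nested excursions into and out of the span of the $S_k$'s "is where the bulk of the technical work lies" and you do not supply it. This is a genuine gap, not a formality. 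Working at $(1^N)$ with the conjugated operator, the detour terms do not vanish — they are only $o(1)$ — and controlling them requires tracking, for every word in the constituents of $\xi_1$, how many of the $O(N)$ summands of each $S_k$ can be recombined to pure monomials, with degree and coefficient bounds uniform in the word. Your heuristic $\theta^p\gamma^{p+q}$ scaling is plausible (each return to the pure subspace selects $O(1)$ of the $N$ indices carried by an $S_k$, costing $\theta\cdot O(1)=O(1/N)$ against the $\theta\cdot(N-1)\sim\gamma$ of a direct pure step), but turning it into a proof is the hard part of the lemma, not an afterthought. Also note that your displayed formula for $\tilde{\xi}_1(y_1^h)$ produces a spurious $\theta(N-1)y_1^{-1}$ term at $h=0$; the correct statement is that both the down-step and the symmetrizer contributions are absent for $h=0$, which is what enforces the Łukasiewicz positivity constraint.

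The paper avoids this difficulty entirely by a change of evaluation point: using $\xi_1^{\ell}\exp(H_N)\big|_{(1^N)}=\big(\xi_1+\tilde{\xi}_1\big)^{\ell}\exp(H_N(\xx+1^N))\big|_{(0^N)}$, where $\tilde{\xi}_1$ is the \emph{Dunkl} operator (note the notational clash with your twisted operator), the exchange parts combine into $\theta\sum_j\frac{x_j+1}{x_1-x_j}(1-s_{1,j})$, and any term that retains a factor $x_j$ with $j\ge 2$ is killed \emph{exactly} by the evaluation at $(0^N)$ rather than being merely small. The identity $\frac{\theta}{x_1-x_j}(1-s_{1,j})=\theta\!\dd_1+O(x_j)$ then isolates the lowering operator $\gamma\!\dd$ immediately, and the only remaining error analysis is the generic-versus-singular index counting already established in \cref{lem:TopDegree_2} and \cref{lem:Polynomiality}. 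I would recommend either adopting that device or writing out in full the inductive bookkeeping you allude to (in the style of \cref{claim:small_coeffs}) before the proof can be considered complete.
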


\begin{proof}
\cref{lem:Polynomiality} implies that the LHS of \eqref{eq:LimitXi1}
is a polynomial in $c_{F_N}^\la$ of degree at most $\ell$ with
uniformly bounded coefficients in the regime~\eqref{eq:HTRegime}. Therefore, similarly as in the proof of \cref{lem:TopDegree_2}, it suffices to prove~\eqref{eq:LimitXi1} with $F_N(\xx)$ replaced by
\[ 
H_N(\xx) = \sum_{i=1}^N{ f_N(x_i-1) },\quad \text{ where }\quad
f_N(x) := \sum_{k\geq 1}c_{F_N}^{(k)}x^k.\]
With this assumption, it is enough to prove that 
\begin{multline}\label{eq:LimitXi1Prim}
\lim_{\substack{N\to\infty\\N\theta\to\gamma}} \xi_{1}^{\ell}\exp(H_N)\big|_{(x_1,\dots,x_N) = (1^N)}
= \sum_{\Gamma\in\Luk(\ell)}\bigg(\prod_{i\ge 0}(\kappa_1+i)^{\text{\# horizontal steps at height $i$ in $\Gamma$}}\\
\cdot \prod_{j\ge 1} (\kappa_j+\kappa_{j+1})^{\text{\# steps $(1,j)$ in $\Gamma$}}
(j+\gamma)^{\text{\# down steps from height $j$ in $\Gamma$}}\bigg),
\end{multline}
provided that $\lim_{N\to\infty}{\big(n\cdot c_{F_N}^{(n)}\big)}=\kappa_n$, for all $n\ge 1$.
Let us define the formal power series
\begin{equation}\label{eq:kappa}
\kappa(x) := \kappa_1 + \sum_{n \geq 1}(\kappa_n+\kappa_{n+1})x^n.
\end{equation}
We claim first that \eqref{eq:LimitXi1Prim} follows from the following formula, that will be proved later:
\begin{equation}\label{eq:LimitXi1'}
\lim_{\substack{N\to\infty\\N\theta\to\gamma}} \xi_{1}^{\ell}\exp(H_N)\big|_{(x_1,\dots,x_N) = (1^N)}
 = (x\partial_x+\partial_x+\gamma\!\dd+\kappa)^{\ell-1}\kappa(x)\big|_{x=0},
\end{equation}
where $\kappa$ is the operator of multiplication by $\kappa(x)$, while $\partial_x=\frac{d}{dx}$ is the derivative, and $\dd$ is the lowering operator:
\begin{equation*}
\dd\left(\sum_{i \geq 0}f_i x^i\right) := \sum_{i\geq 0}{f_{i+1}x^i}.
\end{equation*}
Indeed, note that the RHS of \eqref{eq:LimitXi1'} can be interpreted as the constant
term of a formal power series obtained as a sum of terms obtained by $\ell-1$ applications of
operators $x\partial_x, \partial_x, \gamma\!\dd, \kappa$ to
$\kappa(x)$. Let $w' = w_{\ell}\cdots w_{2}$ be a word of length $\ell-1$, where each $w_i$ is one of the
operators $\partial_x, \gamma\!\dd, x\partial_x,\kappa_1$, or $(\kappa_j+\kappa_{j+1})x^j$, for some $j\geq 1$.
Note that these operators are homogeneous of degrees $-1,-1,0,0,j$,
respectively, therefore $w' x^n\big|_{x=0}$ vanishes unless

\smallskip

\begin{enumerate}[label=(\Alph*),itemsep=0.1cm]
	
	\item for each $2\leq j\leq\ell$, one has $n+\deg(w_2)+\cdots+\deg(w_j) \geq 0$,

	\item $\deg(w') := \sum_{j=2}^{\ell}\deg(w_j) = -n$.

\end{enumerate}

\noindent Define $w_1 := (\delta_{n>0}\kappa_n+\kappa_{n+1})x^n$ and $w:=w_\ell\cdots w_1$, so that $\deg(w)=0$.
For each $w$, associate a lattice path $\Gamma$ in $\Z^2$ starting at $(0,0)$ with the $i$-th step being $(1,\deg(w_i))$; then conditions (A)-(B) are equivalent to $\Gamma$ being a Łukasiewicz path. Note that the height of $\Gamma$ after the $i$-th step is equal to $j$ if and only if $\deg(w_i\cdots w_1) = j$. Together with the following observations:
\begin{equation*}
(\partial_x+\gamma\!\dd)x^j = (j+\gamma)x^{j-1},\quad j\ge 1,\qquad\quad
(\kappa_1+x\partial_x)x^i = (\kappa_1+i)x^i,\quad i\ge 0,
\end{equation*}
this gives that $w'\kappa(x)\big|_{x=0}=w(1)\big|_{x=0}$ is equal to
\begin{multline*}
\prod_{i\ge 0}{ (\kappa_1+i)^{\text{\# horizontal steps at height $i$ in $\Gamma$}} }\\
\times\prod_{j\ge 1}{ (\kappa_j+\kappa_{j+1})^{\text{\# steps $(1,j)$ in $\Gamma$}}(j+\gamma)^{\text{\# down steps from height $j$ in $\Gamma$}} }.
\end{multline*}
Together with~\eqref{eq:LimitXi1'}, this proves the desired \eqref{eq:LimitXi1Prim}, however it remains to prove~\eqref{eq:LimitXi1'}.

\medskip

\emph{Proof of Equation~\eqref{eq:LimitXi1'}.}
Recall that the Dunkl operators $\tilde{\xi}_i $ were defined in the proof of \cref{lem:TopDegree}, namely in equation~\eqref{eqn:dunkl}.
Moreover, we need the following special case of~\eqref{cherednik_dunkl}:
\begin{equation}\label{eq:ChangeOfVar}
\xi_{1}^{\ell}\exp(H_{N}(\xx)) \Big|_{(x_1,\dots,x_N)=(1^N)} 
= \Big( \xi_1 + \tilde{\xi}_1 \Big)^{\ell}\exp\big(H_N\big(\xx+1^N\big)\big) \Big|_{(x_1,\dots,x_N)=(0^N)}.
\end{equation}
Let us define
\begin{multline}\label{eqn:kappa_N}
\kappa_N(x) := (x_1\partial_1+\partial_1) H_N(\xx+1^N) \big|_{x_1\mapsto x} = (x+1)\cdot f_N'(x) \\
= c_{F_N}^{(1)}+\sum_{n\geq 1}\big( n\cdot c_{F_N}^{(n)} + (n+1)\cdot c_{F_N}^{(n+1)} \big) x^n,
\end{multline}
so that, by comparing with \eqref{eq:kappa}, we have $\lim_{N\to\infty}{\kappa_N(x)}=\kappa(x)$ coefficient-wise.
Since $H_N$ is symmetric in its variables, then all operators $1-s_{i,j}$ kill it.
It then follows from the definition~\eqref{eqn:kappa_N} of $\kappa_N(x)$ and the definitions~\eqref{eq:Cherednik}, \eqref{eqn:dunkl} of $\xi_1, \tilde{\xi}_1$, that
\begin{multline}\label{eq:Limit1}
\Big(\xi_1 + \tilde{\xi}_1\Big)^{\ell}\exp(H_{N}(\xx+1^N))
= \Big(\xi_1 + \tilde{\xi}_1\Big)^{\ell-1}(x_1\partial_1+\partial_1)\exp(H_{N}(\xx+1^N))\\
= \Big( \xi_1 + \tilde{\xi}_1\Big)^{\ell-1}\left[\kappa_N(x_1)\exp(H_{N}(\xx+1^N))\right].
\end{multline}
By~\eqref{eq:ChangeOfVar}, we are interested in the limit of~\eqref{eq:Limit1} in the regime~\eqref{eq:HTRegime}.
We note that $\big(\xi_{1}+\tilde{\xi}_{1}\big)^{\ell-1}$ is a sum of products
or $\ell-1$ operators, each of the form $(x_1+1)\partial_1$, $\frac{\theta}{x_1-x_j}(1-s_{1,j})$, of
$\frac{\theta x_j}{x_1-x_j}(1-s_{1,j})$, for some $1<j\le N$.
A generic term involves products of $\ell-1$ operators, where all the indices $j$ are distinct.
However, \eqref{eq:DerRules} implies that all the terms that involve some $\frac{\theta x_j}{x_1-x_j}(1-s_{1,j})$
will not contribute to the limit~\eqref{eq:Limit1}, because the factor $x_j$ in it 
will survive after the action of the operators $(x_1+1)\partial_1$, $\frac{\theta}{x_1-x_{j'}}(1-s_{1,j'})$, and
$\frac{\theta x_{j'}}{x_1-x_{j'}}(1-s_{1,j'})$, where $j'\neq j$, and
therefore the whole contribution will vanish after the substitution
$\xx = (0^N)$. Finally, \eqref{eq:DerRules} implies that
$\frac{\theta}{x_1-x_{j}}(1-s_{1,j}) = \theta\!\dd_1 + O(x_j)$, where
\begin{equation*}
\dd_1(x_1^{\alpha_1}\cdots x_N^{\alpha_N}) := \mathbf{1}_{\alpha_1>0}\cdot x_1^{\alpha_1-1}\cdots x_N^{\alpha_N},
\end{equation*}
therefore
\begin{multline}\label{eq:Limit2}
\lim_{\substack{N\to\infty\\N\theta\to\gamma}}\left(\xi_{1}+\tilde{\xi}_{1}\right)^{\ell-1}
\left[\kappa_N(x_1)\exp(H_{N}(\xx+1^N))\right]\Big|_{(x_1,\dots,x_N) = (0^N)}\\
=\lim_{\substack{N\to\infty\\N\theta\to\gamma}}\big( (x_1+1)\partial_1+(N-1)\theta\!\dd_1+\kappa_N(x_1) \big)^{\ell-1}
\left[\kappa_N(x_1)\right]\Big|_{(x_1,\dots,x_N) = (0^N)}\\
= \big( (x+1)\partial_x + \gamma\!\dd + \kappa \big)^{\ell-1}\left[\kappa(x)\right]\big|_{x=0}.
  \end{multline}
The desired \eqref{eq:LimitXi1'} follows by combining \eqref{eq:ChangeOfVar}, \eqref{eq:Limit1} and \eqref{eq:Limit2}.
The proof is now complete.
\end{proof}

\begin{proof}[Proof of~\cref{theo:helper2}]
Equation~\eqref{eq:MainQuant2} implies that
\begin{equation}\label{eq:ellthMoment}
\frac{1}{N}\cdot\mathcal{P}_\ell e^{F_N}\big|_{(x_1,\dots,x_N)=(1^N)}
= [z^\ell] \frac{1}{N}\Xi_1(z) \sum_{i=1}^N\sum_{k \geq 0}\left( (1-i)\theta z\cdot\Xi_1(z)\right)^k + R_1'' + R_2'',
\end{equation}
where $R_1''\in I_1(\mathcal{F},\ell)$ and $R_2''\in I_2(\mathcal{F},\ell)$.
Next, define the formal power series
\begin{multline}\label{eq:defX}
X(z) := 1+\sum_{\ell \geq 1}\sum_{\Gamma\in\Luk(\ell)}\bigg(\prod_{i\ge 0}
\big( c_{F_N}^{(1)}+i \big)^{\text{\# horizontal steps at height $i$ in $\Gamma$}}\\
\cdot \prod_{j\ge 1} \big( jc_{F_N}^{(j)}+(j+1)c_{F_N}^{(j+1)} \big)^{\text{\# steps $(1,j)$ in $\Gamma$}}
(j+\gamma)^{\text{\# down steps from height $j$ in $\Gamma$}}\bigg)\cdot z^\ell,
\end{multline}
so that \eqref{eq:LimitXi1} can be rewritten as
\begin{equation}\label{eq:LimitXi1'''}
[z^\ell]\Xi_1(z) = [z^\ell]X(z)+R_1'+R_2',
\end{equation}
where $R_1'\in I_1(\mathcal{F},\ell)$ and $R_2'\in I_2(\mathcal{F},\ell)$.
Moreover, we have
\begin{equation}\label{eqn:sum_powers}
\frac{1}{N}\sum_{i=1}^N\big((1-i)\theta\big)^k = \left(-N\theta\right)^{k}\frac{1}{N}\sum_{i=1}^N\left(\frac{i-1}{N}\right)^{k}= (-\gamma)^k \int_0^1x^{k}dx +o(1) = \frac{(-\gamma)^{k}}{1+k}+o(1)
\end{equation}
in the regime \eqref{eq:HTRegime}. Then, by substituting \eqref{eq:LimitXi1'''} and \eqref{eqn:sum_powers} into \eqref{eq:ellthMoment}, we obtain
\begin{equation}\label{eq:before_claim}
\frac{1}{N}\cdot\mathcal{P}_\ell e^{F_N}\big|_{(x_1,\dots,x_N)=(1^N)} 
= [z^\ell] \left(X(z)\sum_{k \geq 0}\frac{\left( -\gamma z\cdot X(z)\right)^k}{1+k}\right) + S_1 + S_2,
\end{equation}
where $S_1\in I_1(\mathcal{F},\ell)$ and $S_2\in I_2(\mathcal{F},\ell)$.
We claim that
\begin{equation}\label{eq:desired_moment_lth}
  [z^\ell] \left(X(z)\sum_{k \geq 0}\frac{\left( -\gamma z\cdot X(z)\right)^k}{1+k}\right) =m_\ell\left( c^{(1)}_{F_N},\cdots,\ell c^{(\ell)}_{F_N} \right).
\end{equation}
Evidently, plugging \eqref{eq:desired_moment_lth} into \eqref{eq:before_claim} would finish the proof, so it only remains to prove the claimed equality~\eqref{eq:desired_moment_lth} and we do this next.

For any $\ell\in\Z_{\ge 1}$ and $\Gamma\in\Luk(\ell)$, consider the $x$-dependent weight of $\Gamma$:
\begin{multline}\label{eq:gamma_prime_weight}
\wt_{x}(\Gamma) := \big( c_{F_N}^{(1)}-x \big)^{\text{\# horizontal steps at height $0$ in $\Gamma$}}\cdot\prod_{i\ge 1}\big( c_{F_N}^{(1)}+i \big)^{\text{\#\,horizontal steps at height $i$ in $\Gamma$}}\\
\cdot \prod_{j\ge 1} \big( jc_{F_N}^{(j)}+(j+1)c_{F_N}^{(j+1)} \big)^{\text{\# steps $(1,j)$ in $\Gamma$}}
(j+\gamma)^{\text{\# down steps from height $j$ in $\Gamma$}}
\end{multline}
and let $\wt_0(\Gamma)$ be the same weight~\eqref{eq:gamma_prime_weight}, but with $x=0$.
In particular, by the definition~\eqref{eq:defX}, we have\footnote{We denote by $\ell(\Gamma)$ the length of the Łukasiewicz path $\Gamma$.}
\begin{equation}\label{eq:alternative_X}
X(z) = \sum_{\ell\ge 0}\sum_{\Gamma\in\Luk(\ell)}{ \wt_0(\Gamma)z^{\ell(\Gamma)} }.
\end{equation}

On the other hand, if we let $s\in\Z_{\ge 0}$ be the number of horizontal steps at height $0$ of $\Gamma$, and let $1\le i_1 < \dots < i_s\le\ell$ be the integers such that $(i_j-1,0)\to (i_j,0)$ are the horizontal steps of $\Gamma$, then by expanding the first factor $( c_{F_N}^{(1)}-x)^s$ in~\eqref{eq:gamma_prime_weight}, we deduce
\begin{multline}\label{eq:weights_comb_1}
\wt_{x}(\Gamma) z^{\ell(\Gamma)} = \sum_{A=\{j_1<\cdots<j_m\}\subseteq\{1,\dots,s\}}
\wt_0\big(\Gamma_{0,\, i_{j_1}-1}\big)z^{\ell\left(\Gamma_{i_{j_a},\, i_{j_{a+1}}-1}\right)}\\
\cdot\prod_{a=1}^m{\left(-x z\cdot\wt_0\big(\Gamma_{i_{j_a},\, i_{j_{a+1}}-1}\big)z^{\ell\left(\Gamma_{i_{j_a},\, i_{j_{a+1}}-1}\right)}\right)},
\end{multline}
where for any subset $A = \{j_1<\cdots<j_m\}\subset\{1,\dots,s\}$ with cardinality $|A|=m$, we denoted $\Gamma_{i_{j_a},\, i_{j_{a+1}}-1}$ the Łukasiewicz path of length $i_{j_{a+1}}-i_{j_a}-1$, which is the subpath of $\Gamma$ starting at $(i_{j_a},0)$ and finishing at $(i_{j_{a+1}}-1,0)$, and we set $i_{j_{m+1}} := \ell+1$, so that $\Gamma_{i_{j_m},\, i_{j_{m+1}}-1}$ is the subpath of $\Gamma$ starting at $(i_{j_m},0)$ and finishing at $(\ell,0)=(\ell(\Gamma),0)$.
From~\eqref{eq:weights_comb_1} and \eqref{eq:alternative_X}, one readily deduces the following identity of formal power series
\begin{equation}\label{eq:complicated_series}
\sum_{\ell\ge 0}\sum_{\Gamma\in\Luk(\ell)}\wt_{x}(\Gamma) z^{\ell(\Gamma)}
= X(z)\sum_{m\ge 0}\big( -x z\cdot X(z)\big)^m.
\end{equation}
On the other hand, note that
\begin{equation}\label{eq:helper_integral}
\Delta_\gamma\left( \frac{x^{1+k}}{1+k}\right)\big(c^{(1)}_{F_N}\big) = \frac{1}{\gamma}\int_{0}^\gamma \left( c^{(1)}_{F_N}-x\right)^k dx.
\end{equation}
Finally, we obtain
\begin{equation*}
m_\ell\left( c^{(1)}_{F_N},\cdots,\ell c^{(\ell)}_{F_N} \right) = [z^\ell] \sum_{\ell\ge 0}\sum_{\Gamma\in\Luk(\ell)}z^{\ell(\Gamma)}\frac{1}{\gamma}\int_{0}^\gamma\wt_x(\Gamma)dx  = [z^\ell] \left(X(z)\sum_{m\ge 0}\frac{\big( -\gamma z\cdot X(z)\big)^m}{1+m}\right),
\end{equation*}
where the first equality follows from \cref{eq:Moments} of \cref{def:mk}, the definition~\eqref{eq:gamma_prime_weight} of $\wt_x(\Gamma)$ and \cref{eq:helper_integral}, while the second equality follows from \cref{eq:complicated_series}.
This concludes the proof of the desired~\eqref{eq:desired_moment_lth} and, hence, finishing the proof of \cref{theo:helper2}.
\end{proof}

\subsection{Carleman's condition:~Proof of Theorem~\ref{theo:helper3}}

As $\vec{m}=(m_n)_{n\ge 1}$ comes from the definition of LLN-satisfaction and we are assuming that the $\PP_N$ are probability measures in this case, then the $m_\ell$'s are the moments of some probability measure. In order to prove the uniqueness, we verify the Carleman's condition, i.e. we prove that $\sum_{\ell\geq 1}m_{2\ell}^{-\frac{1}{2\ell}}$ diverges.
We will use the condition $\vec{m}=\mathcal{J}_\gamma^{\kappa\mapsto m}(\vec{\kappa})$ that expresses each moment as a $\kappa$-weighted sum of Łukasiewicz paths, in order to upper bound each $m_{2\ell}$.

\begin{proof}[Proof of~\cref{theo:helper3}]
We use Equation~\eqref{eq:Moments} that expresses each $m_{2\ell}$ as a weighted sum over Łukasiewicz paths of length $2\ell$.
Take any $\Gamma\in\Luk(2\ell)$ and let us upper bound the corresponding term in the sum in the RHS of~\eqref{eq:Moments}.

First, from \cref{eqn:A_action} for the divided difference operator $\Delta_\gamma$, we deduce that for any $\gamma>0$, $n\in\Z_{\ge 0}$, we have
\begin{equation*}
\bigg| \frac{\Delta_\gamma\left(x^{1+\text{\#\,horizontal steps at height $0$}}\right)(\kappa_1)}{1+\text{\#\,horizontal steps at height $0$}} \bigg| \le C_0^{\,\text{\# horizontal steps at height $0$}},
\end{equation*}
for some sufficiently large $C_0>0$ that depends on $\kappa_1$ and $\gamma$, but not on $\ell$ or the specific $\Gamma$.

Second, note that the height of any $\Gamma\in\Luk(2\ell)$ never exceeds $2\ell-1$, therefore
\begin{multline*}
\bigg| \prod_{i\ge 1}(\kappa_1+i)^{\text{\# horizontal steps at height $i$\,}} \bigg|
= \prod_{i=1}^{2\ell}|\kappa_1+i|^{\text{\,\# horizontal steps at height $i$}}\\
\le\prod_{i=1}^{2\ell}(|\kappa_1|+2\ell)^{\text{\# horizontal steps at height $i$}} 
\le (2\ell C_1)^{\text{\# horizontal steps at heights $\ge 1$}},
\end{multline*}
for some constant $C_1>0$. Similarly,
\begin{equation*}
\bigg| \prod_{j \geq 1}(j+\gamma)^{\text{\# down steps from height $j$}} \bigg|
= \prod_{j=1}^{2\ell}|j+\gamma|^{\text{\,\# down steps from height $j$}}
\le (2\ell C_2)^{\text{\# down steps}},
\end{equation*}
for some constant $C_2>0$. Finally, by using the condition of the theorem, we obtain
\begin{multline*}
\bigg| \prod_{j \geq 1}(\kappa_j+\kappa_{j+1})^{\text{\# steps $(1,j)$}} \bigg|
\leq \prod_{j\ge 1}(|\kappa_j|+|\kappa_{j+1}|)^{\text{\# steps $(1,j)$}}
\leq \prod_{j\ge 1}(C^j+C^{j+1})^{\text{\# steps $(1,j)$}}\\
\leq C_3^{\,\sum_{j \geq 1}j\,\cdot\,\text{\# steps $(1,j)$}}
= C_3^{\,\text{\# down steps}},
\end{multline*}
for some constant $C_3>0$. To explain the last equality, observe that $(1,j)$ is an up step of size $j$, so the sum $\sum_{j \geq 1}j\cdot\text{\# steps$(1,j)$}$ is the total (upward) vertical displacement caused by the up steps in the Łukasiewicz path.
Since the path starts and ends at the same horizontal line $y=0$, then the total (downward) vertical displacement caused by down steps should equal this sum and, simultaneously, equal the number of down steps, since each down step goes down exactly by $1$; this justifies the last equality.

From the previous inequalities, the absolute value of the weight of $\Gamma$ in~\eqref{eq:Moments} is upper bounded by $(2\ell A)^{\text{\# horizontal steps}\,+\,\text{\# down steps}} \leq (2\ell A)^{2\ell}$, 
where $A := \max\{C_0,C_1,C_2C_3\}>0$, for all large $\ell$.
It is known that the number of Łukasiewicz paths of length $\ell$ is the $\ell$-th Catalan number:
\begin{equation*}
|\Luk(\ell)| = \frac{1}{\ell+1}\binom{2\ell}{\ell}
\sim\frac{4^\ell}{\ell^{3/2}\sqrt{\pi}},\quad \text{ as }\ell \to \infty,
\end{equation*}
therefore
\begin{equation*}
m_{2\ell}\le |\Luk(2\ell)|\cdot (2\ell A)^{2\ell}\le (4\ell A)^{2\ell},
\end{equation*}
for all large $\ell$.
Evidently, $m_{2\ell}\ge 0$ because $m_{2\ell}$ is an even moment, consequently, $m_{2\ell}^{-\frac{1}{2\ell}} \ge \frac{1}{4\ell A}$, for all large $\ell$.
Since $\sum_{\ell\ge 1}{\frac{1}{4\ell A}}$ diverges, so does $\sum_{\ell\geq 1}m_{2\ell}^{-\frac{1}{2\ell}}$.
Thus, the Carleman's condition is satisfied and the proof is finished.
\end{proof}

\section{Applications}\label{sec:applications}

\subsection{Application 1: Jack Littlewood--Richardson coefficients and quantized $\gamma$-convolution}\label{sec:app_1}

\subsubsection{Jack Littlewood--Richardson coefficients}\label{sec:quantized_convolution}

Given partitions $\la,\mu,\nu\in\Y(N)$, we will consider the normalization of Jack Littlewood--Richardson coefficients, to be denoted by $c^\la_{\mu,\nu}(\theta)$, uniquely determined by the equality
\begin{equation}\label{eq:multiplication_jacks}
\frac{P_{\mu}(x_1,\dots,x_N;\theta)}{P_{\mu}(1^N;\theta)}
\frac{P_{\nu}(x_1,\dots,x_N;\theta)}{P_{\nu}(1^N;\theta)}
= \sum_{\la\in\Y(N)}{ c^\la_{\mu,\nu}(\theta)\,\frac{P_\la(x_1,\dots,x_N;\theta)}{P_\la(1^N;\theta)} }.
\end{equation}
From the definition, it is clear that $c^\la_{\mu,\nu}(\theta)\ne 0$ implies $|\la|=|\mu|+|\nu|$.
Also, for any fixed $\mu,\nu\in\Y(N)$, we have
\begin{equation}\label{eqn:sum_1}
\sum_{\la\in\Y(N)}{ c^\la_{\mu,\nu}(\theta) } = 1,
\end{equation}
as deduced by setting $x_1=\cdots=x_N=1$ in~\eqref{eq:multiplication_jacks}.

Let $\Y(N)^{\leq L}$ denote the finite subset of partitions in $\Y(N)$ with longest part at most $L$. If $\mu,\nu\in\Y(N)^{\leq L}$, then $c^\la_{\mu,\nu}(\theta)\ne 0$ implies that $\la \in \Y(N)^{\leq 2L}$.
This follows because the largest monomials (in the lexicographic order) in $P_\mu(x_1,\dots,x_N;\theta)$ and $P_\nu(x_1,\dots,x_N;\theta)$ are multiples of $x_1^{\mu_1}\cdots x_N^{\mu_N}$ and $x_1^{\nu_1}\cdots x_N^{\nu_N}$, respectively.
Then the largest monomial in the LHS of~\eqref{eq:multiplication_jacks} is a multiple of $x_1^{\mu_1+\nu_1}\cdots x_N^{\mu_N+\nu_N}$.
Since $\mu_1+\nu_1\le L+L=2L$, by assumption, it follows that any $\la\in\Y(N)$ with nonzero $c^\la_{\mu,\,\nu}(\theta)$ must have $\la_1\le 2L$.

\begin{definition}\label{def:signed_measure}
For any $N\in\Z_{\ge 1}$ and $\Prob_N^{(1)},\Prob_N^{(2)}$ probability measures on $\Y(N)$, define the signed measure $\Prob_N^{(1)}\boxplus_\theta\Prob_N^{(2)}$ on $\Y(N)$ by
\begin{equation*}
  \Prob_N^{(1)}\boxplus_\theta\Prob_N^{(2)}(\la) = \sum_{\mu,\nu\in\Y(N)}c^\la_{\mu,\,\nu}(\theta)\Prob_N^{(1)}(\mu)\Prob_N^{(2)}(\nu),\quad\la\in\Y(N).
\end{equation*}
\end{definition}

Due to~\eqref{eqn:sum_1}, the signed measure $\Prob_N^{(1)}\boxplus_\theta\Prob_N^{(2)}$ has total mass equal to $1$.
Moreover, due to the previous discussion, if $\PP_N^{(1)}$, $\PP_N^{(2)}$ are supported on $\Y(N)^{\le L}$, then the support of $\Prob_N^{(1)}\boxplus_\theta\Prob_N^{(2)}$ is contained in $\Y(N)^{\leq 2L}$, so that $\Prob_N^{(1)}\boxplus_\theta\Prob_N^{(2)}$ is also finitely supported.

\begin{remark}\label{rem:stanley}
It is a conjecture of Stanley~\cite{Stanley1989} that $c^\la_{\mu,\,\nu}(\theta)\ge 0$, whenever $\theta>0$.\footnote{In fact, Stanley's conjecture is stronger and claims that the structure constants that appear in the $J$-normalization of Jack polynomials are polynomials in $\theta^{-1}$ with nonnegative integer coefficients. The $J$-normalization, which is not used here, was proven to be of combinatorial significance in the monomial expansion~\cite{KnopSahi1997} and the power-sum expansion~\cite{BenDaliDolega2023}.}
If the conjecture was true, then $\Prob_N^{(1)}\boxplus_\theta\Prob_N^{(2)}$ would be a probability measure.
\end{remark}

\begin{remark}\label{rem:rep_theory_interpretation}
When $\theta=\frac{1}{2},1,2$, it is known that the conjecture of Stanley, mentioned in \cref{rem:stanley}, is true.
This follows from a representation-theoretic meaning of the Jack Littlewood--Richardson coefficients: up to normalization, $c^\la_{\mu,\,\nu}(\theta)$ is equal to the multiplicity of the spherical function with highest weight $\la$ in the decomposition of the product of the spherical functions of highest weights $\mu,\nu$ for the Gelfand pairs $(\GL_N(\R),\Or_N(\R))$, $(\GL_N(\C),\U_N(\C))$, and $(\GL_N(\HH),\U_N(\HH))$, respectively. Here, $\HH$ denotes the skew-field of quaternions.
\end{remark}

\subsubsection{LLN for $\theta$-tensor products}

Fix $L \in \Z_{\geq 1}$, let $\big\{\Prob_N^{(1)}\big\}_{N\ge 1}$ and $\big\{\Prob_N^{(2)}\big\}_{N\ge 1}$ be sequences of probability measures on $\Y(N)^{\leq L}$, and let $\mu^{(1)}_N, \mu^{(2)}_N$ be the associated empirical measures. Recall the terminology used in \cref{sec:main_results_intro}.
For a probability measure $\mu$ on $\R$ with finite moments $m_1,m_2,\dots$ of all orders (e.g.~if $\mu$ is compactly supported), its \emph{quantized $\gamma$-cumulants} $\kappa_1,\kappa_2,\dots$ are defined from the relation $(\kappa_1,\kappa_2,\dots) = \mathcal{J}_\gamma^{m\mapsto\kappa}\big( m_1,m_2,\dots \big)$.

\begin{theorem}\label{app:sum}
Let $\{\theta_N>0\}_{N\ge 1}$ be such that $N\theta_N\to\gamma$, as $N\to\infty$. Assume that we are in the setting above and that we have the weak limits
\begin{equation}\label{assumption:weak_lims}
\mu^{(1)}_N\to\mu^{(1)}, \qquad \mu^{(2)}_N\to\mu^{(2)}, \qquad \textrm{as $N\to\infty$},
\end{equation}
for certain probability measures $\mu^{(1)}$, $\mu^{(2)}$ on $\R$.
Let $\kappa_n\big[\mu^{(1)}\big]$, $\kappa_n\big[\mu^{(2)}\big]$ be the corresponding quantized $\gamma$-cumulants.
Then the sequence of signed measures $\big\{\Prob_N^{(1)}\boxplus_{\theta_N}\Prob_N^{(2)}\big\}_{N\ge 1}$ satisfies the LLN, in the sense of \cref{def:LLN}, with corresponding quantized $\gamma$-cumulants, denoted by $\kappa_n\big[ \mu^{(1)}\boxplus^{(\gamma)}\mu^{(2)} \big]$, being equal to
\begin{equation}\label{kappas_addition}
\kappa_n\big[\mu^{(1)}\boxplus^{(\gamma)}\mu^{(2)}\big]
= \kappa_n\big[ \mu^{(1)} \big] + \kappa_n\big[ \mu^{(2)} \big],\quad\textrm{for all }n\in\Z_{\ge 1}.
\end{equation}
\end{theorem}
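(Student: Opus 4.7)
The strategy is to lift the convolution on signatures to a multiplicative structure on Jack generating functions and then apply Theorem~\ref{theo:main1} in both directions. First I would observe that the defining relation \eqref{eq:multiplication_jacks} gives the factorization
\begin{equation*}
G_{\Prob_N^{(1)}\boxplus_{\theta_N}\Prob_N^{(2)},\theta_N}(x_1,\dots,x_N) = G_{\Prob_N^{(1)},\theta_N}(x_1,\dots,x_N)\cdot G_{\Prob_N^{(2)},\theta_N}(x_1,\dots,x_N),
\end{equation*}
so that after taking logarithms, $F_{\boxplus,N,\theta_N} = F_{\Prob_N^{(1)},\theta_N} + F_{\Prob_N^{(2)},\theta_N}$ and every partial derivative at $(1^N)$ is additive.

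Next I would establish that each individual sequence $\{\Prob_N^{(i)}\}_{N\ge 1}$ is HT-appropriate. The support condition $\Y(N)^{\le L}$ together with $N\theta_N\to\gamma$ yields a uniform-in-$N$ bound on the particles $\LL_j^{(i)}$, so the random empirical measures $\mu_N^{(i)}$ are supported in a fixed compact interval. Combined with the assumed weak convergence in probability $\mu_N^{(i)}\to\mu^{(i)}$, this upgrades to $L^p$ convergence of each moment $m_k(\mu_N^{(i)})\to m_k(\mu^{(i)})$, from which joint moment convergence \eqref{eq:DefConvMoments} follows by bounded convergence. Hence each $\{\Prob_N^{(i)}\}$ satisfies the LLN in the sense of \cref{def:LLN}, and the "LLN-satisfaction $\Rightarrow$ HT-appropriateness" direction of Theorem~\ref{theo:main1} identifies
\begin{equation*}
\lim_{\substack{N\to\infty\\ N\theta_N\to\gamma}}\frac{1}{(n-1)!}\partial_1^n F_{\Prob_N^{(i)},\theta_N}\big|_{(1^N)} = \kappa_n[\mu^{(i)}]
\end{equation*}
together with the vanishing of all mixed partial derivatives.

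Combined with the additive decomposition above, this yields HT-appropriateness of $\{\Prob_N^{(1)}\boxplus_{\theta_N}\Prob_N^{(2)}\}$ with cumulants $\kappa_n[\mu^{(1)}]+\kappa_n[\mu^{(2)}]$. The small-tails hypothesis is automatic because, by the lexicographic argument of \cref{sec:quantized_convolution}, the convolution is supported on the finite set $\Y(N)^{\le 2L}$. The "HT-appropriateness $\Rightarrow$ LLN-satisfaction" direction of Theorem~\ref{theo:main1} then delivers LLN-satisfaction with the asserted cumulants \eqref{kappas_addition}, and the moments of the putative limit are obtained from these cumulants via \cref{def:mk}. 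The main subtle point is that $\Prob_N^{(1)}\boxplus_{\theta_N}\Prob_N^{(2)}$ is in general only a signed measure (Stanley's positivity conjecture being open, cf.~\cref{rem:stanley}), which precludes any probabilistic interpretation of the limit; the entire argument proceeds at the level of the Jack generating function, which is precisely why Theorem~\ref{theo:main1} was formulated for finite signed measures of total mass one.
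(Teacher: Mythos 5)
Your proposal is correct and follows essentially the same route as the paper's proof: compact support plus weak convergence gives LLN-satisfaction for each factor, the main theorem converts this to HT-appropriateness, the Jack generating function of the convolution factorizes so the logarithms (and hence the cumulant limits) add, and the converse direction of the main theorem yields the conclusion. No substantive differences to note.
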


If the conjecture from \cref{rem:stanley} were true, then $\Prob_N^{(1)}\boxplus_{\theta_N}\Prob_N^{(2)}$ would be probability measures and we would be able to define the random atomic measures
\begin{equation*}
\mu_N := \frac{1}{N}\sum_{i=1}^N{ \delta_{\la_i - \theta_N(i-1)} },\text{ where } (\la_1\ge\cdots\ge\la_N)\text{ is } \Prob_N^{(1)}\boxplus_{\theta_N}\Prob_N^{(2)}\text{--distributed}.
\end{equation*}
Each $\mu_N$ is supported on $[-\theta_N(N-1),\, 2L]$, and \cref{app:sum} would additionally imply the weak limit, in probability,
\begin{equation*}
\mu_{N} \to \mu^{(1)}\boxplus^{(\gamma)}\mu^{(2)},
\qquad \textrm{as }N\to\infty,
\end{equation*}
where $\mu^{(1)}\boxplus^{(\gamma)} \mu^{(2)}$ would be the unique probability measure with quantized $\gamma$-cumulants $\kappa_n\big[ \mu^{(1)}\boxplus^{(\gamma)} \mu^{(2)} \big]$, and it would be supported on $[-\gamma,2L]$.
In general, if given probability measures $\mu^{(1)}$, $\mu^{(2)}$ (compactly supported or not), there exists a probability measure with quantized $\gamma$-cumulants $\kappa_n\big[\mu^{(1)} \big] + \kappa_n\big[ \mu^{(2)}\big]$ and which is uniquely determined by them, then it will be called the \emph{quantized $\gamma$-convolution} of $\mu^{(1)}$ and $\mu^{(2)}$, and will be denoted by $\mu^{(1)}\boxplus^{(\gamma)}\mu^{(2)}$.

\begin{proof}[Proof of Theorem~\ref{app:sum}]
Since $N\theta_N \to \gamma$, as $N \to \infty$, there exists a constant $C>0$ such that $\sup_{N\ge 1}(\theta_N(N-1))\le C$, and all measures $\mu^{(1)}_N$, $\mu^{(2)}_N$ are supported on the same compact interval $[-C,L]$. Thus, the weak limits~\eqref{assumption:weak_lims} imply that the sequences $\{\Prob_N^{(1)}\}_{N\ge 1}$ and $\{\Prob_N^{(2)}\}_{N\ge 1}$ satisfy the LLN.
Therefore \cref{theo:main1} implies HT-appropriateness, meaning that the logarithms
\begin{equation*}
F_{\Prob_N^{(1)},\theta_N} := \ln\left( G_{\Prob_N^{(1)},\theta_N} \right),\quad
F_{\Prob_N^{(2)},\theta_N} := \ln\left( G_{\Prob_N^{(2)},\theta_N} \right),
\end{equation*}
have Taylor coefficients with certain limits (as in \cref{def:appropriate}), in particular,
\begin{equation}\label{eq:ht_1}
\lim_{N\to\infty} \frac{\partial_1^n F_{\Prob_N^{(1)},\theta_N}}{(n-1)!} \Bigg|_{(x_1,\dots,x_N) = (1^N)} = \kappa_n\big[\mu^{(1)}\big],\qquad
\lim_{N\to\infty} \frac{\partial_1^n F_{\Prob_N^{(2)},\theta_N}}{(n-1)!} \Bigg|_{(x_1,\dots,x_N) = (1^N)} = \kappa_n\big[\mu^{(2)}\big],
\end{equation}
for all $n\in\Z_{\ge 1}$.

Next, since the support of $\Prob_N^{(1)}\boxplus_{\theta_N}\Prob_N^{(2)}$ is the finite set $\Y(N)^{\le 2L}$, this measure is finitely supported and, therefore, it has the small tails property.
By \cref{def:signed_measure}, the Jack generating function of $\Prob_N^{(1)}\boxplus_{\theta_N}\Prob_N^{(2)}$, to be denoted by $G_{N,\theta_N}(x_1,\dots,x_N)$, is equal to
\begin{align*}
G_{N,\theta_N}(x_1,\dots,x_N) &= \sum_{\la\in\Y(N)}{ \Prob_N^{(1)}\boxplus_{\theta_N}\Prob_N^{(2)}(\la)\,\frac{P_\la(x_1,\dots,x_N;\theta)}{P_\la(1^N;\theta)} }\\
&=\sum_{\mu,\nu\in\Y(N)}\Prob_N^{(1)}(\mu)\Prob_N^{(2)}(\nu)\sum_{\la\in\Y(N)}c^\la_{\mu,\,\nu}(\theta_N)\,\frac{P_\la(x_1,\dots,x_N;\theta)}{P_\la(1^N;\theta)}\\
&= G_{\Prob_N^{(1)},\theta_N}(x_1,\dots,x_N)\cdot
G_{\Prob_N^{(2)},\theta_N}(x_1,\dots,x_N),
\end{align*}
and therefore $F_{N,\theta_N}(x_1,\dots,x_N) := \ln\left(G_{N,\theta_N}(x_1,\dots,x_N)\right)$ equals
\begin{equation}\label{eq:additive}
F_{N,\theta_N}(x_1,\dots,x_N) = F_{\Prob_N^{(1)},\theta_N}(x_1,\dots,x_N) + F_{\Prob_N^{(2)},\theta_N}(x_1,\dots,x_N).
\end{equation}
Since $\big\{F_{\Prob_N^{(1)},\theta_N}\big\}_{N\ge 1}$ and $\big\{F_{\Prob_N^{(1)},\theta_N}\big\}_{N\ge 1}$ satisfy the conditions of HT-appropriateness from \cref{def:appropriate}, it readily follows that the same conditions are true for $\big\{F_{N,\theta_N}\big\}_{N\ge 1}$, in particular, thanks to~\eqref{eq:ht_1} and~\eqref{eq:additive}, we have
\begin{equation*}
\lim_{N\to\infty} \frac{\partial_1^n F_{N,\theta_N}}{(n-1)!} \Bigg|_{(x_1,\dots,x_N) = (1^N)}
= \kappa_n\big[ \mu^{(1)} \big] + \kappa_n\big[ \mu^{(2)} \big],\quad\textrm{for all $n\in\Z_{\ge 1}$},
\end{equation*}
proving the desired~\eqref{kappas_addition}.
Finally, the fact that $\big\{\Prob_N^{(1)}\boxplus_{\theta_N}\Prob_N^{(2)}\big\}_{N\ge 1}$ satisfies the LLN follows from \cref{theo:main1}.
\end{proof}

\subsection{Application 2: Jack measures and nonintersecting random walks}\label{sec:jack_measures}

Let us discuss the general setting first and then specialize to three special (``pure'') families of examples.

\begin{definition}[\cite{BorodinOlshanski2005}]\label{def:general_jack_measure}
For any Jack-positive specializations $\rho_1,\rho_2\colon\Sym\to\R$ satisfying the finiteness assumption
\begin{equation}\label{eq:finiteness}
\sum_{\la\in\Y}{Q_\la(\rho_1;\theta)P_\la(\rho_2;\theta)} < \infty,
\end{equation}
the \textbf{Jack measure} $\,\PP^{(\theta)}_{\rho_1;\rho_2}(\cdot)$ is the probability measure on the set of all partitions, defined by
\begin{equation}
  \label{def:jack_measure}
\PP^{(\theta)}_{\rho_1;\rho_2}(\la) := \frac{1}{H_\theta(\rho_1;\rho_2)}\cdot Q_\la(\rho_1;\theta)P_\la(\rho_2;\theta),\quad\la\in\Y,
\end{equation}
where $H_\theta(\rho_1;\rho_2)$ and $Q_\la(;\theta)$ are defined by~\eqref{eqn:cauchy} and~\eqref{j_lambda}, respectively.
\end{definition}

We are interested in the Jack measures when $\rho_2$ is the pure alpha specialization with exactly $N$ parameters $\alpha_i$ being equal to $1$, and $\delta=N$, and the rest of Thoma parameters are equal to $0$; this specialization was denoted $\Alpha(1^N)$ in \cref{exam:pure_alpha}.
In other words, $\rho_2$ is defined by
\begin{equation*}
p_k(\rho_2)=N,\quad\textrm{for all }k\ge 1.
\end{equation*}
Then the Jack measure $\PP^{(\theta)}_{\rho_1;\rho_2}$ depends only on $N\in\Z_{\ge 1}$ and one specialization $\rho_1$, to be denoted $\rho$; in this case, the Jack measure is supported on the subset $\Y(N)\subset\Y$, since $P_\la(1^N;\theta)\ne 0$, only if $\la\in\Y(N)$.
Moreover, observe that the finiteness condition~\eqref{eq:finiteness} is satisfied if the series
\begin{equation}\label{eq:stable_series}
\sum_{k\ge 1}\frac{|p_k(\rho)|}{k}z^k
\end{equation}
is absolutely convergent on some open annulus containing the unit circle.

\begin{definition}\label{def:stable}
We call a Jack-positive specialization $\rho$ \textbf{stable} if the series~\eqref{eq:stable_series} is absolutely convergent on $R^{-1} < z < R$, for some $R > 1$.
In this case, the Jack measure $\PP^{(\theta)}_{\rho;\Alpha(1^N)}$ (which is well-defined and supported on $\Y(N)$) will be denoted by $\PP_{\rho;N}^{(\theta)}$ and its Jack generating function will be denoted by $G_{\rho;N,\theta}(x_1,\dots,x_N)$.
\end{definition}

As an application of the Cauchy identity~\eqref{eqn:cauchy}, we deduce
\begin{equation}\label{eqn:Gt_product}
G_{\rho;N,\theta}(x_1,\dots,x_N) = \frac{H_\theta(\rho;\xx_N)}{H_\theta(\rho;1^N)} = \prod_{i=1}^{N} \exp\Bigg\{\theta\sum_{k\ge 1}\frac{p_k(\rho)}{k}\,(x_i^k-1)\Bigg\},
\end{equation}
where $\xx_N:=(x_1,\dots,x_N)$, so its natural logarithm will be
\begin{equation*}
F_{\rho;N,\theta}(x_1,\dots,x_N) := \ln{G_{\rho;N,\theta}(x_1,\dots,x_N)} = \theta\sum_{i=1}^{N}{\sum_{k\ge 1}\frac{p_k(\rho)}{k}\,(x_i^k-1)}.
\end{equation*}
As an immediate consequence, we have the following lemma.

\begin{lemma}\label{lem:check_ht_appropriateness}
Let $\rho$ be stable and let $F_{\rho;N,\theta}(x_1,\dots,x_N)$ be the logarithm of the Jack generating function of $\PP_{\rho;N}^{(\theta)}$. Then $\PP_{\rho;N}^{(\theta)}$ has the small tails property, and for all $r\ge 2$ and $i_1,\dots,i_r\in\Z_{\geq 1}$ with at least two distinct indices among $i_1,\dots,i_r$, we have
$\partial_{i_1}\cdots \partial_{i_r}F_{\rho;N,\theta} = 0$.
Moreover, for any $n\in\Z_{\ge 1}$,
\begin{equation}\label{eqn:first_condition_HT}
\frac{\partial_1^n F_{\rho;N,\theta}}{(n-1)!}\,\bigg|_{(x_1,\dots,x_N) = (1^N)} 
= \frac{\theta}{(n-1)!}\cdot\partial_x^n\sum_{k\ge 1}\frac{p_k(\rho)}{k}x^k\bigg|_{x=1}.
\end{equation}
\end{lemma}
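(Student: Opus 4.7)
The proof essentially amounts to unpacking the product formula already displayed just above the lemma, so the plan is to verify each of the three assertions in turn, with the small-tails property being the only one requiring a genuine argument.

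First, I will establish the small-tails property. Since $\rho$ is stable, the series $\sum_{k\ge 1}\frac{|p_k(\rho)|}{k}z^k$ converges absolutely on some annulus $R^{-1}<|z|<R$ with $R>1$; by replacing $R$ with a slightly smaller value if necessary, I may also assume convergence of $\sum_{k\ge 1}\frac{p_k(\rho)}{k}R^k$. The measure $\PP_{\rho;N}^{(\theta)}$ is supported on $\Y(N)$, and the Jack polynomials $P_\la(\,\cdot\,;\theta)$ have nonnegative coefficients, hence for any $(z_1,\dots,z_N)$ with $|z_i|\le R$ one has $|P_\la(z_1,\dots,z_N;\theta)|\le P_\la(R,\dots,R;\theta)=R^{|\la|}P_\la(1^N;\theta)$ by homogeneity. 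Summing term-by-term and using the definition~\eqref{def:jack_measure} together with the Cauchy identity~\eqref{eqn:cauchy} yields
\begin{equation*}
\sum_{\la\in\Y(N)}\PP_{\rho;N}^{(\theta)}(\la)\,\frac{|P_\la(z_1,\dots,z_N;\theta)|}{P_\la(1^N;\theta)}
\le \frac{H_\theta(\rho;(R,\dots,R))}{H_\theta(\rho;1^N)} = \exp\!\left\{\theta N\sum_{k\ge 1}\frac{p_k(\rho)}{k}(R^k-1)\right\},
\end{equation*}
which is finite by the choice of $R$. This gives absolute convergence on the full polydisk $|z_i|<R$, which is stronger than the annulus condition required by \cref{def:small_tails}.

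Second, from~\eqref{eqn:Gt_product} one reads off
\begin{equation*}
F_{\rho;N,\theta}(x_1,\dots,x_N) = \theta\sum_{i=1}^N\sum_{k\ge 1}\frac{p_k(\rho)}{k}(x_i^k-1) = \sum_{i=1}^N f(x_i),\quad\text{where } f(x):=\theta\sum_{k\ge 1}\frac{p_k(\rho)}{k}(x^k-1).
\end{equation*}
The stability of $\rho$ ensures that $f$ is analytic on $|x|<R$, so differentiation commutes with the series. The separated-variable form $F_{\rho;N,\theta}=\sum_i f(x_i)$ makes the vanishing of mixed partials transparent: if among $i_1,\dots,i_r$ there are at least two distinct indices, then any one variable appears at most $r-1<r$ times, forcing a partial derivative of $f(x_j)$ in some variable $x_i\ne x_j$, which kills the term. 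Hence $\partial_{i_1}\cdots\partial_{i_r}F_{\rho;N,\theta}\equiv 0$ as claimed.

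Finally, for the single-variable case, the separated form gives $\partial_1^n F_{\rho;N,\theta}(x_1,\dots,x_N)=f^{(n)}(x_1)$, and the formula~\eqref{eqn:first_condition_HT} follows by differentiating the series for $f$ term-by-term and evaluating at $x=1$, where the interchange of $\partial_x^n$ with the infinite sum is justified by absolute convergence of $\sum_k\frac{p_k(\rho)}{k}x^k$ on $|x|<R$. No step of the argument presents a substantive obstacle; the only point that requires a small amount of care is verifying the small-tails estimate, and this is handled cleanly by the Cauchy identity as above.
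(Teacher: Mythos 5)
Your proof is correct and follows the same route as the paper, which derives the lemma as an immediate consequence of the product formula~\eqref{eqn:Gt_product} giving $F_{\rho;N,\theta}=\sum_i f(x_i)$ in separated variables. You additionally spell out the small-tails verification (via nonnegativity of Jack coefficients, homogeneity, and the Cauchy identity), which the paper leaves implicit; that argument is sound.
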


This lemma shows that the second condition in the definition of HT-appropriateness (namely, \cref{def:appropriate}) always holds for $\big\{ \PP^{(\theta)}_{\rho;N} \big\}_{N\ge 1}$ with arbitrary stable $\rho$.
Moreover, the first condition is also satisfied if~\eqref{eqn:first_condition_HT} has a limit in the regime~\eqref{eq:HTRegime}, for all $n\in\Z_{\ge 1}$.

\smallskip

We now switch our focus to Markov chains.
Recall from \cref{sec:jack_specs} the definitions of union $(\rho_1,\rho_2)$ and $n$-fold union $n\rho$, for specializations $\rho,\rho_1,\rho_2$.
These operations preserve the property of Jack-positiveness.

\begin{proposition}[\cite{GorinShkolnikov2015}]\label{prop:Transition}
For any Jack-positive specializations $\rho_1,\rho_2\colon\Sym\to\R$ satisfying the finiteness assumption~\eqref{eq:finiteness}, define
\begin{equation*}
p_{\la\to\mu}(\rho_1,\rho_2;\theta) := \frac{1}{H_\theta(\rho_1;\rho_2)}\cdot \frac{P_\mu(\rho_2;\theta)}{P_\la(\rho_2;\theta)}\cdot Q_{\mu/\la}(\rho_1;\theta),\quad\la,\mu\in\Y,
\end{equation*}
where $Q_{\mu/\la}(;\theta)$ is defined by~\eqref{Q_skew}.
Then $\left(p_{\la\to\mu}(\rho_1,\rho_2;\theta)\right)_{\la,\mu\in\Y}$ is a stochastic matrix, so that its entries define transition probabilities for a Markov chain on the set of partitions $\Y$. Moreover, if $\rho_3$ is another Jack-positive specialization such that~\eqref{eq:finiteness} is satisfied for $\rho_2,\rho_3$, then
\begin{align*}
\sum_{\la\in\Y}\PP^{(\theta)}_{\rho_1;\rho_2}(\la)p_{\la\to\mu}(\rho_3,\rho_2;\theta) &= \PP^{(\theta)}_{(\rho_1,\rho_2);\rho_3}(\mu),\\
\sum_{\mu \in \Y}p_{\la\to\mu}(\rho_1,\rho_2;\theta)p_{\mu \to \rho}(\rho_3,\rho_2;\theta) &= p_{\la\to\rho}((\rho_1,\rho_3),\rho_2;\theta).
\end{align*}
\end{proposition}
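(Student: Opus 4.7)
The plan is to verify the three claims in turn—stochasticity, the push-forward identity, and the Chapman--Kolmogorov identity—each by substituting the definition of $p_{\la\to\mu}$ and reducing to a Cauchy-type identity for (skew) Jack symmetric functions.

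For stochasticity, nonnegativity of $p_{\la\to\mu}(\rho_1,\rho_2;\theta)$ is immediate from Jack-positivity: $P_\mu(\rho_2;\theta)\ge 0$, $P_\la(\rho_2;\theta)>0$ wherever the entry is defined, and $Q_{\mu/\la}(\rho_1;\theta)\ge 0$ from the known nonnegative expansion of $Q_{\mu/\la}$ in $\{Q_\nu\}_{\nu\in\Y}$. For the row sum I will factor out everything independent of $\mu$ and then invoke the skew-Cauchy identity \eqref{eqn:cauchy_skew} with $\mu\mapsto\emptyset$, $\rho\mapsto\la$, $\xx\mapsto\rho_2$, $\yy\mapsto\rho_1$; since $Q_{\emptyset/\kappa}=\delta_{\kappa,\emptyset}$, only the $\kappa=\emptyset$ term on the right-hand side survives, producing exactly $H_\theta(\rho_1;\rho_2)P_\la(\rho_2;\theta)$ and cancelling the normalization.

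For the two composition identities, direct substitution into the left-hand side combined with the cancellation of the $P$-ratio isolates an inner sum that must be recognized as a skew Jack function evaluated at a union of specializations. In the first identity the inner sum is $\sum_\la Q_\la(\rho_1;\theta)Q_{\mu/\la}(\rho_3;\theta)$, which by \eqref{Q_skew} with $\yy\mapsto\rho_1$, $\xx\mapsto\rho_3$ equals $Q_\mu((\rho_1,\rho_3);\theta)$; combined with the factorization $H_\theta(\rho_1;\rho_2)H_\theta(\rho_3;\rho_2)=H_\theta((\rho_1,\rho_3);\rho_2)$, which comes from additivity of the power sums under union of specializations, the left-hand side becomes the Jack measure with ``step'' specialization $(\rho_1,\rho_3)$ and unchanged $P$-specialization $\rho_2$. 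In the second identity the inner sum is $\sum_\mu Q_{\mu/\la}(\rho_1;\theta)Q_{\rho/\mu}(\rho_3;\theta)$, which I will match to the branching formula
\begin{equation*}
Q_{\rho/\la}(\xx,\yy;\theta)=\sum_{\mu\in\Y}Q_{\mu/\la}(\yy;\theta)Q_{\rho/\mu}(\xx;\theta),
\end{equation*}
itself derived by applying \eqref{Q_skew} twice to $Q_\rho(\xx,\yy,\zz;\theta)$ after grouping the three alphabets in two different ways and comparing. Specializing $\yy\mapsto\rho_1$, $\xx\mapsto\rho_3$ produces $Q_{\rho/\la}((\rho_1,\rho_3);\theta)$, and the $H_\theta$ factorization then assembles $p_{\la\to\rho}((\rho_1,\rho_3),\rho_2;\theta)$.

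The substantive technical point—rather than any of the algebraic identities themselves, which are standard—will be to justify the interchange of the sums over $\la$ (respectively $\mu$) needed in order to apply \eqref{Q_skew} and its branching consequence at specializations rather than at formal variables. I expect this to be handled cleanly by absolute convergence: Jack-positivity of $\rho_1,\rho_3$ and of the union $(\rho_1,\rho_3)$ makes every summand nonnegative, and the finiteness assumption \eqref{eq:finiteness}, applied to each of the relevant pairs of specializations, guarantees absolute summability and legitimizes the rearrangements used throughout the argument.
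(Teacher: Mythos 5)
The paper does not actually prove this proposition; it is imported wholesale from \cite{GorinShkolnikov2015}, so your write-up is being compared against a citation rather than an argument. Your overall strategy is the standard one and almost all of it is sound: the row-sum computation via the skew-Cauchy identity \eqref{eqn:cauchy_skew} with $\mu\mapsto\emptyset$, the reduction of both composition identities to the branching rule \eqref{Q_skew} together with the multiplicativity $H_\theta(\rho_1;\rho_2)H_\theta(\rho_3;\rho_2)=H_\theta((\rho_1,\rho_3);\rho_2)$, and the Tonelli-type justification of the rearrangements are all correct. (Incidentally, your derivation yields $\PP^{(\theta)}_{(\rho_1,\rho_3);\rho_2}(\mu)$ on the right of the first identity, which is the correct statement and consistent with \eqref{special_jack_measure}; the subscripts $(\rho_1,\rho_2);\rho_3$ in the paper's display are a typo.)

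There is, however, one genuine gap: your justification of $Q_{\mu/\la}(\rho_1;\theta)\ge 0$ appeals to ``the known nonnegative expansion of $Q_{\mu/\la}$ in $\{Q_\nu\}_{\nu\in\Y}$.'' The coefficients of that expansion are exactly the Jack Littlewood--Richardson structure constants, and their nonnegativity for general $\theta>0$ is the open conjecture of Stanley discussed in \cref{rem:stanley} of this very paper — indeed, the whole reason $\Prob_N^{(1)}\boxplus_\theta\Prob_N^{(2)}$ is only a signed measure in \cref{sec:app_1} is that this positivity is unknown. The conclusion you need is nevertheless true, but it must be obtained differently: for a pure alpha specialization in finitely many nonnegative variables one uses the tableau expansion $Q_{\la/\mu}(x_1,\dots,x_n;\theta)=\sum_T\varphi_T(\theta)\,x^T$ with explicitly nonnegative $\varphi_T(\theta)$ (Macdonald, Ch.~VI.7); the pure beta case follows by applying $\omega_{\theta^{-1}}$, which sends $Q_{\la/\mu}(;\theta)$ to $P_{\la'/\mu'}(;\theta^{-1})$; and a general Jack-positive specialization is handled by the union formula $Q_{\la/\mu}(\rho',\rho'')=\sum_\nu Q_{\la/\nu}(\rho')Q_{\nu/\mu}(\rho'')$ together with approximation of an arbitrary Thoma-cone point by finite pure specializations. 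Without replacing your cited justification by such an argument, the stochasticity claim rests on an open problem.
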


\smallskip

We are interested in the Markov chain defined in~\cite{Huang2021} by means of the transition probabilities $p_{\la\to\mu}(\rho_1,\rho_2;\theta)$ when $\rho_2$ is the pure alpha specialization $p_k(\rho_2)=N$, for all $k\ge 1$.
Then $p_{\la\to\mu}(\rho_1,\rho_2;\theta)$ depend only on $N\in\Z_{\ge 1}$ and on one specialization $\rho_1$, to be denoted $\rho$; we will denote these quantities by $p_{\la\to\mu}(\rho;N;\theta)$.

\begin{definition}[\cite{Huang2021}]\label{def:Markov_chain}
Let $\PP_N$ be a fixed probability measure on $\Y(N)$ and $\rho$ a Jack-positive specialization.
Define the associated Markov chain $\la^{(0)},\la^{(1)},\la^{(2)},\dots \in \Y(N)$ by
\[
\PP\big(\la^{(0)} = \la\big) = \PP_N(\la),\quad \PP\big(\la^{(n+1)} = \mu \,\big|\, \la^{(n)} = \la\big) = p_{\la\to\mu}(\rho;N;\theta), \quad n \in \Z_{\geq 0}.
\]
Also, define the auxiliary Markov chain
\begin{equation*}
  \LL^{(0)},\LL^{(1)},\LL^{(2)},\dots,\quad \LL^{(n)} = \Big(\LL^{(n)}_1,\dots,\LL^{(n)}_N\Big),\quad n\in \Z_{\ge 0},
\end{equation*}
by $\LL^{(n)}_i := \la^{(n)}_i - \theta(i-1)$, for all $i=1,\dots,N$ and $n\in\Z_{\ge 0}$.
\end{definition}
The sequence $\LL^{(0)},\LL^{(1)},\LL^{(2)},\dots$ is a discrete Markov chain of $N$ nonintersecting particles.
The initial condition $\LL^{(0)}$ is random and determined by $\PP_N$.
In three special cases, the nonintersecting random walks have interesting interpretations:\footnote{Note that some interpretations are given in~\cite{Huang2021} (see immediately before his Theorem 5.8), but they are slightly inaccurate, as the Markov processes are not simply independent walks conditioned on not intersecting.}

\begin{itemize}
\setlength\itemsep{.1cm}

\item When $\theta = 1$ and $\rho$ is the pure alpha specialization (see~\cref{exam:pure_alpha}) $p_k(\rho) = p^k$, for some $0<p<1$, we have that $\big\{\LL^{(n)}\big\}_{n\ge 0}$ is a geometric-type random walk on $\Z$ in the sense that the $N$ particles can jump to the right by $k$ steps, where $k\in\Z_{\ge 0}$, but subject to certain conditions including the preservation of the nonintersecting property; moreover, when $N=1$, the process is just one particle doing independent jumps with geometric law of parameter $p$.

\item When $\theta = 1$ and $\rho$ is the pure beta specialization (see~\cref{exam:pure_beta}) $p_k(\rho) = (-1)^{k-1}p^k$, for some $0<p<1$, we have that $\big\{\LL^{(n)}\big\}_{n\ge 0}$ is a Bernoulli-type random walk on $\Z$ in the sense that the $N$ particles can jump to the right by $1$ or do not move, as long as the nonintersecting property is preserved; moreover, when $N=1$, the process is just one particle in $\Z$ moving at each time to the right by one with probability $p$.

\item When $\theta = 1$ and $\rho$ is the Plancherel specialization (see~\cref{exam:plancherel}) $p_k(\rho) = \chi\cdot\delta_{k,1}$, for some $\chi>0$, we have that $\big\{\LL^{(n)}\big\}_{n\ge 0}$ is a nonintersecting Poisson-type random walk on $\Z$, in the sense that when $N=1$, the process is just one particle doing independent Poisson jumps of intensity $\chi$.
\end{itemize}

For a general $\theta>0$, the defined Markov chain is a natural one-parameter deformation of log-gas type of the above examples.

\vspace{.1cm}

We are interested in the fixed-time distribution of the Markov chain in the high temperature regime.
In other words, we will study the empirical measure
\[
\mu^{(t)}_N := \frac{1}{N}\sum_{i=1}^N\delta_{\LL^{(t)}_i},
\]
where $t\in\Z_{\ge 0}$ possibly depends on $N$, in the limit as $N\to\infty$, $N\theta\to\gamma$. 
For this, let us compute the Jack generating function $G^{(t)}_{\rho;N,\theta}(x_1,\dots,x_N)$ of the law of $\la^{(t)}$.
From \cref{prop:Transition} and \cref{Q_skew}, we have that
\begin{equation}\label{special_jack_measure}
\PP\left( \la^{(t)} = \la\right) = \frac{1}{H_\theta(t \rho; 1^N)}\cdot \sum_{\la^{(0)} \in \Y(N)}\PP_N(\la^{(0)})\cdot\frac{P_\la(1^N;\theta)}{P_{\la^{(0)}}(1^N;\theta)}\cdot Q_{\la/\la^{(0)}}(t\rho;\theta).
\end{equation}

Note that when $\PP_N(\la) = \delta_{\la,\emptyset}$, the RHS of~\eqref{special_jack_measure} is the probability $\PP^{(\theta)}_{t\rho;N}(\la)$, according to the Jack measure $\PP^{(\theta)}_{t\rho;N}$ from \cref{def:stable}. Let us record this fact, as it will be useful later.

\begin{lemma}\label{lem:empty_initial}
Let $\{\la^{(t)}\}_{t=0,1,2,\dots}$ be the Markov chain from \cref{def:Markov_chain} with Jack-positive specialization $\rho$ and initial law of $\la^{(0)}$ being $\PP_N(\la) = \delta_{\la,\emptyset}$.
Then the law of $\la^{(t)}$ is the Jack measure $\PP^{(\theta)}_{t\rho;N}$, where $t\rho=(\rho,\dots,\rho)$ ($t$-fold union).
\end{lemma}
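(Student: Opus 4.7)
The plan is to extract the claim directly from the formula~\eqref{special_jack_measure} for the law of $\la^{(t)}$ with arbitrary initial distribution $\PP_N$. First I will substitute the Dirac initial condition $\PP_N(\la) = \delta_{\la,\emptyset}$, which collapses the sum over $\la^{(0)}$ to a single nonzero term. Then I will apply the two elementary simplifications $P_\emptyset(1^N;\theta) = 1$ and $Q_{\la/\emptyset}(t\rho;\theta) = Q_\la(t\rho;\theta)$, the latter being the explicitly noted $\mu = \emptyset$ case of the skew-Jack definition~\eqref{Q_skew}. After these substitutions the right-hand side of~\eqref{special_jack_measure} becomes
\begin{equation*}
\PP\bigl(\la^{(t)} = \la\bigr) = \frac{Q_\la(t\rho;\theta)\, P_\la(1^N;\theta)}{H_\theta(t\rho;\,1^N)}.
\end{equation*}

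To finish, I will identify this expression with $\PP^{(\theta)}_{t\rho;N}(\la)$. By \cref{exam:pure_alpha}, the pure alpha specialization $\Alpha(1^N)$ evaluates a symmetric function at $(1,\dots,1)$, so $P_\la(\Alpha(1^N);\theta) = P_\la(1^N;\theta)$, and comparing the displayed formula with \cref{def:general_jack_measure,def:stable} with the choice $\rho_1 = t\rho$, $\rho_2 = \Alpha(1^N)$ completes the identification. One preliminary check is that $t\rho$ is stable in the sense of \cref{def:stable}, so that $\PP^{(\theta)}_{t\rho;N}$ is well-defined: since $p_k(t\rho) = t\, p_k(\rho)$, the Laurent series $\sum_{k \ge 1} |p_k(t\rho)|\,z^k/k$ is just a positive rescaling of the corresponding series for $\rho$, and hence inherits absolute convergence on the same annulus around the unit circle.

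No step poses a genuine obstacle: once~\eqref{special_jack_measure} is in hand, the lemma is essentially a definitional unpacking. An alternative route, if one wished to bypass~\eqref{special_jack_measure}, is induction on $t$ using the composition identities of \cref{prop:Transition} to propagate the Jack-measure form forward in time, with base case $\PP^{(\theta)}_{0\cdot\rho;N} = \delta_\emptyset$ (which holds since $Q_\la$ of the zero specialization vanishes unless $\la = \emptyset$). Both approaches ultimately rest on the skew-Cauchy identity~\eqref{eqn:cauchy_skew} underlying \cref{prop:Transition}.
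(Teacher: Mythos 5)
Your proof is correct and follows exactly the route the paper intends: the lemma is recorded immediately after \eqref{special_jack_measure} precisely as the observation that substituting $\PP_N(\la)=\delta_{\la,\emptyset}$ collapses the sum and, via $P_\emptyset(1^N;\theta)=1$ and $Q_{\la/\emptyset}=Q_\la$, yields the Jack measure $\PP^{(\theta)}_{t\rho;N}$. Your additional check that $t\rho$ is stable is a harmless (and correct) bonus.
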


\begin{remark}\label{rem:GS}
Gorin and Shkolnikov defined in~\cite{GorinShkolnikov2015} the continuous Markov process $\{X_N^{disc}(s) \colon s\in\R_{\ge 0}\}$ on the discrete space $\Y(N)$, with the law given by \eqref{special_jack_measure} with $\rho$ being the pure Plancherel specialization with parameter $\delta = 1$, so that $t\rho$ is the pure Plancherel specialization with parameter $\delta = t$.
In our case, $t\in\Z_{\ge 0}$, so the process $X_N^{disc}(t)$ can be regarded as a continuous version of ours in the special case when $\rho$ is pure Plancherel.
\end{remark}

Let us return to~\eqref{special_jack_measure} again with a general measure $\PP_N$. The Jack generating function $G^{(t)}_{\rho;N,\theta}(x_1,\dots,x_N)$ of the law of $\la^{(t)}$ is then calculated as
\begin{align}
G^{(t)}_{\rho;N,\theta}(x_1,\dots,x_N) &= \frac{1}{H_\theta(t\rho;1^N)}\nonumber\\
&\quad\times\sum_{\la,\la^{(0)}\in\Y(N)} \PP_N(\la^{(0)})\cdot\frac{P_\la(1^N;\theta)}{P_{\la^{(0)}}(1^N;\theta)}\cdot Q_{\la/\la^{(0)}}(t\rho;\theta)\cdot \frac{P_\la(x_1,\dots,x_N;\theta)}{P_\la(1^N;\theta)}\nonumber\\
&= \frac{1}{H_\theta(t\rho;1^N)}\cdot
\sum_{\la,\la^{(0)}\in\Y(N)} \PP_N(\la^{(0)})\,\frac{Q_{\la/\la^{(0)}}(t \rho;\theta) P_\la(x_1,\dots,x_N;\theta)}{P_{\la^{(0)}}(1^N;\theta)}\nonumber\\
&= \frac{H_\theta(t\rho;\xx_N)}{H_\theta(t\rho;1^N)}\cdot\sum_{\la^{(0)}\in\Y(N)}\PP_N(\la^{(0)})\,\frac{P_{\la^{(0)}}(x_1,\dots,x_N;\theta)}{P_{\la^{(0)}}(1^N;\theta)}\nonumber\\
&= G_{t\rho;N,\theta}(x_1,\dots,x_N)\cdot G_{\PP_N,\theta}(x_1,\dots,x_N),\label{eqn:Gt}
\end{align}
where $G_{t\rho;N,\theta}(x_1,\dots,x_N)$ is the Jack generating function of the Jack measure $\PP_{t\rho;N}^{(\theta)}$.
We remark that the third equality is a consequence of the identity~\eqref{eqn:cauchy_skew}, while the last one follows from~\eqref{eqn:Gt_product} and the definition~\eqref{eq:JackGeneratFunction} of Jack generating functions.

\begin{theorem}\label{theo:LLN_Markov}
(1) Let $\{\theta_N>0\}_{N\ge 1}$ be such that $N\theta_N\to\gamma$, as $N\to\infty$.
Let $\rho=\{\rho_N\}_{N\ge 1}$ be a sequence of stable Jack-positive specializations and assume that $\{t_N\in\Z_{\ge 1}\}_{N\ge 1}$ are such that the limits
\begin{equation}\label{eqn:limit_trho}
\kappa_n[\mu_\rho] := \lim_{\substack{N\to\infty\\N\theta\to\gamma}} 
{ \frac{t_N\theta_N}{(n-1)!}\cdot\partial_x^n\sum_{k\ge 1}\frac{p_k(\rho_N)}{k}x^k\bigg|_{x=1} }
\end{equation}
exist, for all $n\in\Z_{\ge 1}$.
Then the sequence of Jack measures $\PP_{t_N\rho_N;N}^{(\theta_N)}$ satisfies the LLN; moreover, if we let $\mu_\rho$ be the limit of the empirical measures, then the quantized $\gamma$-cumulants of $\mu_\rho$ are exactly the quantities~\eqref{eqn:limit_trho}.

\smallskip

(2) Additionally, let $\,\PP_N$ be probability measures on $\Y(N)$ with the small tails property, and let  $\{\la^{(0)}(N)\in\Y(N)\}_{N\ge 1}$ be a sequence of $\PP_N$-distributed partitions. Assume that their empirical measures are all supported on the same compact interval $[-L,L]$, and their weak limit, in probability,
\begin{equation}\label{eq:assumption_LLN}
\lim_{\substack{N\to\infty\\N\theta\to\gamma}} \mu^{(0)}_N = \mu^{(0)}
\end{equation}
exists. Then the law of $\la^{(t_N)}(N)$ from the Markov chain $\la^{(0)}(N),\la^{(1)}(N),\la^{(2)}(N),\cdots$, given by \cref{def:Markov_chain}, satisfies the LLN with corresponding quantized $\gamma$-cumulants, denoted $\kappa_n[ \mu^{(0)} \boxplus^{(\gamma)}\mu_\rho]$, being equal to
\begin{equation}\label{sum_cumulants}
  \kappa_n\big[ \mu^{(0)} \boxplus^{(\gamma)} \mu_\rho \big]
= \kappa_n\big[ \mu^{(0)} \big] + \kappa_n\big[ \mu_\rho \big],\quad\textrm{for all }n\in\Z_{\ge 1}.
\end{equation}
\end{theorem}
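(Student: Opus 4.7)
The plan is to reduce both parts to the main \cref{theo:main1} by exploiting the multiplicative structure of the Jack generating functions. The key observation is that both HT-appropriateness conditions in \cref{def:appropriate} are linear in $F = \ln G$, so any factorization of the Jack generating function translates into an additive decomposition of the conditions, which simplifies the verification considerably.

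For part (1), I would invoke the explicit product form \eqref{eqn:Gt_product} for the Jack generating function of $\PP_{t_N\rho_N;N}^{(\theta_N)}$, together with the identity $p_k(t_N\rho_N) = t_N\,p_k(\rho_N)$ coming from the definition of union of specializations. Its logarithm $F_{t_N\rho_N;N,\theta_N}$ is then a sum $\sum_{i=1}^N f(x_i)$ of a single-variable function, so its mixed partials at $(1^N)$ vanish identically, granting condition (2) of HT-appropriateness for free. Condition (1) follows from \cref{lem:check_ht_appropriateness} applied to the specialization $t_N\rho_N$, whose limit coincides with the assumed $\kappa_n[\mu_\rho]$ in \eqref{eqn:limit_trho}. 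Small tails is immediate from stability of $\rho_N$. Applying the forward direction of \cref{theo:main1} then produces the LLN with the claimed quantized $\gamma$-cumulants.

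For part (2), I would start from the factorization \eqref{eqn:Gt}, namely $G^{(t_N)}_{\rho_N;N,\theta_N} = G_{t_N\rho_N;N,\theta_N} \cdot G_{\PP_N,\theta_N}$, so that the logarithms satisfy $F^{(t_N)} = F_{t_N\rho_N;N,\theta_N} + F_{\PP_N,\theta_N}$. The compact-support hypothesis \eqref{eq:assumption_LLN} upgrades the weak convergence $\mu_N^{(0)} \to \mu^{(0)}$ in probability to convergence of joint moments: boundedness of the support in $[-L,L]$ forces uniform boundedness of the moment functionals $m_{k_j}(\mu_N^{(0)})$, so in-probability convergence becomes $L^p$ convergence for every $p$, and products of $L^s$-convergent sequences converge in $L^1$; this is exactly the LLN-satisfaction of \cref{def:LLN}. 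The converse direction of \cref{theo:main1} then yields HT-appropriateness of $F_{\PP_N,\theta_N}$ with cumulants $\kappa_n[\mu^{(0)}]$. Combined with part (1) and additivity of $F^{(t_N)}$, both HT-appropriateness conditions hold for the product with cumulants summing to $\kappa_n[\mu^{(0)}] + \kappa_n[\mu_\rho]$. A final application of \cref{theo:main1} in the forward direction delivers \eqref{sum_cumulants}.

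I expect the only real technical obstacle to lie in verifying the small-tails property (\cref{def:small_tails}) for the law of $\la^{(t_N)}(N)$, which is needed before the forward direction of \cref{theo:main1} can be applied at the end of part (2). One must check that the formal series defining $G^{(t_N)}_{\rho_N;N,\theta_N}$ converges absolutely on a common disk $D_{N;R}$ with $R>1$ and actually equals the analytic product $G_{t_N\rho_N;N,\theta_N} \cdot G_{\PP_N,\theta_N}$ there. Since the law of $\la^{(t_N)}(N)$ is supported on $\Y(N) \subseteq \GT(N)$, the Jack Laurent series collapses to a genuine power series, and the nonnegativity of the coefficients of Jack polynomials combined with stability of $\rho_N$ (controlling the exponential factor $G_{t_N\rho_N;N,\theta_N}$ on any compact subset) and the small-tails hypothesis on $\PP_N$ allows a Fubini-type interchange of the partition sum and the transition kernel, validating the identification by absolute convergence. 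Once this bookkeeping is done, the application of \cref{theo:main1} is routine and produces the desired sum-of-cumulants formula, which is precisely the defining property of the quantized $\gamma$-convolution $\mu^{(0)} \boxplus^{(\gamma)} \mu_\rho$.
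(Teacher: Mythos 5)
Your proposal is correct and follows essentially the same route as the paper: both parts reduce to \cref{theo:main1} via the product formulas \eqref{eqn:Gt_product} and \eqref{eqn:Gt}, using \cref{lem:check_ht_appropriateness} for the Jack-measure factor and the converse direction of \cref{theo:main1} (after upgrading the compactly supported weak limit to moment convergence) for the initial-condition factor. The extra care you devote to the small-tails verification is a sound elaboration of a step the paper treats as immediate.
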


\begin{proof}
Part (1) follows directly from our main \cref{theo:main1}, \cref{lem:check_ht_appropriateness}, and our assumption~\eqref{eqn:limit_trho}.

For part (2), due to \cref{theo:main1}, it will be enough to show that the sequence of laws of $\la^{(t_N)}(N)$ is HT-appropriate. Equations~\eqref{eqn:Gt}, \eqref{eqn:Gt_product}, together with the assumptions that $\rho_N$ is stable and $\PP_N$ has the small tails property imply the small tails property for the law of $\la^{(t_N)}(N)$. Moreover, \eqref{eqn:Gt} implies that
\begin{equation}\label{eq:sum_logs}
\ln G^{(t_N)}_{\rho_N;\,N,\theta_N}(x_1,\dots,x_N) = \ln{G_{\PP_N,\theta_N}(x_1,\dots,x_N)} + F_{t_N\rho_N;\,N,\theta_N}(x_1,\dots,x_N).
\end{equation}
Equation~\eqref{eq:assumption_LLN} and the fact that all $\mu^{(0)}_N, \mu^{(0)}$ are compactly supported on the same interval imply also the limit $\mu_N^{(0)}\to\mu^{(0)}$ in the sense of moments, in probability.
Therefore, \cref{theo:main1} shows that the sequence $\{\PP_N\}_{N\ge 1}$ is HT-appropriate. Combined with \cref{lem:check_ht_appropriateness} and the assumption~\eqref{eqn:limit_trho}, we deduce that~\eqref{eq:sum_logs} implies that the sequence of laws of $\la^{(t_N)}(N)$ is HT-appropriate with corresponding quantized $\gamma$-cumulants~\eqref{sum_cumulants}, and the proof is finished.
\end{proof}

A few remarks are in order.
First of all, this theorem shows that the empirical measures of the laws of $\la^{(t_N)}(N)$ converge weakly, in probability, to a probability measure with quantized $\gamma$-cumulants being $\kappa_n[\mu^{(0)}]+\kappa_n[\mu_\rho]$.
In the terminology of \cref{sec:quantized_convolution}, this means that the quantized $\gamma$-convolution $\mu^{(0)}\boxplus^{(\gamma)}\mu_\rho$ exists.
Below, we discuss examples when the limits~\eqref{eqn:limit_trho} exist for $\rho_N$ belonging to the three families of \emph{pure specializations}, and prove in those cases the existence of $\mu^{(0)}\boxplus^{(\gamma)}\mu_\rho$, for all $\mu^{(0)}$ arising as a limit of the form~\eqref{eq:assumption_LLN}.

\smallskip

Secondly, note that the sequence $\{\PP_N\}_{N\ge 1}$ of probability measures $\PP_N(\la)=\delta_{\la,\emptyset}$ have Jack generating functions $G_{\PP_N,\theta}(x_1,\dots,x_N)=1$ and the limit of their empirical measures is
\begin{equation}\label{eq:EmptyPartitionLimit}
\lim_{\substack{N\to\infty\\N\theta\to\gamma}}\frac{1}{N}\sum_{i =0}^{N-1}\delta_{-i\theta} = \gamma^{-1}\cdot\mathbf{1}_{[-\gamma,0]}(x)\text{d}x.
\end{equation}
By \cref{theo:main1}, the corresponding quantized $\gamma$-cumulants are $\kappa_n[\gamma^{-1}\cdot\mathbf{1}_{[-\gamma,0]}(x)\text{d}x]=0$, for all $n\in\Z_{\ge 1}$.
This could have also been deduced by~\eqref{sum_cumulants} and the fact that if the law of $\la^{(0)}$ is $\PP_N(\la)=\delta_{\la,\emptyset}$, then the law of $\la^{(t_N)}$ is the Jack measure $\PP^{(\theta_N)}_{t_N\rho_N;\,N}$ (see \cref{lem:empty_initial}).

As an interesting consequence, we have the quantized $\gamma$-convolution identity
\begin{equation}\label{eq:Fixpoint_for_Convolution}
\mu\boxplus^{(\gamma)}\big(\gamma^{-1}\cdot\mathbf{1}_{[-\gamma,0]}(x)\text{d}x\big) = \mu,
\end{equation}
for any probability measure $\mu$.
So the measure $\gamma^{-1}\cdot\mathbf{1}_{[-\gamma,0]}(x)\text{d}x$ is a fixed point for the operation of quantized $\gamma$-convolution; in particular, it is also an $\boxplus^{(\gamma)}$-infinitely divisible distribution.

\subsubsection{Pure-alpha Jack measure}

Let $c\in(0,1)$, $\eta>0$ be arbitrary constants, and let $N\in\Z_{\ge 1}$.
Let us consider the pure alpha specialization $\Alpha(c^{\lfloor N\eta\rfloor})$ (see~\cref{exam:pure_alpha}) defined by $f\big(\Alpha(c^{\lfloor N\eta\rfloor})\big) := f(c^{\lfloor N\eta\rfloor})$, for all $f\in\Sym$, or equivalently,
\begin{equation*}
p_k\big(\Alpha(c^{\lfloor N\eta\rfloor})\big) = \lfloor N\eta\rfloor\cdot c^k,\ \text{ for all $k\ge 1$}.
\end{equation*}
Then the normalization constant for the Jack measure $\PP^{(\theta)}_{\Alpha(c^{\lfloor N\eta\rfloor});\,N}$, given by~\eqref{def:jack_measure}, is
\begin{equation*}
\exp\Bigg\{ N\theta\sum_{k\ge 1}{\frac{p_k\big(\Alpha(c^{\lfloor N\eta\rfloor})\big)}{k}} \Bigg\}
= \exp\big( -N\theta\lfloor N\eta\rfloor\cdot\ln(1-c)\big) = (1-c)^{-N\theta\lfloor N\eta\rfloor}.
\end{equation*}
It follows that
\begin{equation*}
\PP^{(\theta)}_{\Alpha(c^{\lfloor N\eta\rfloor});\,N}(\la) = (1-c)^{N\theta\lfloor N\eta\rfloor}\, c^{|\la|}
\,Q_\la(1^{\lfloor N\eta\rfloor};\theta) P_\la(1^N;\theta),
\end{equation*}
and this is nonzero whenever $\ell(\la)\le\min\{N, \lfloor N\eta\rfloor\}$.
The previous equation can be made more explicit with the help of~\eqref{j_lambda} and the following evaluation identity (see \cite[Ch.~VI.10, Eqn.~(10.20)]{Macdonald1995} or \cite[Thm.~5.4]{Stanley1989}):
\begin{equation}\label{eqn:jack_evaluation}
P_\la(1^N;\theta) = \prod_{(i,j)\in\la}\frac{ N\theta + (j-1) - \theta(i-1) }{(\la_i-j)+\theta(\la_j'-i)+\theta},
\end{equation}
valid for all $N\in\Z_{\ge 1}$ and $\la\in\Y$. We summarize this discussion in the following.

\begin{definition}\label{def:example1}
Let $c\in(0,1)$, $\eta,\theta>0$, and $N\in\Z_{\ge 1}$.
The \textbf{pure-alpha Jack measure} associated to these parameters is the probability measure on partitions $\la$ with $\ell(\la)\le\min\{N, \lfloor N\eta\rfloor\}$, defined by
\begin{multline*}
\PP^{(\theta)}_{\Alpha(c^{\lfloor N\eta\rfloor});\,N}(\la) := (1-c)^{N\theta\lfloor N\eta\rfloor} c^{|\la|}\\
\times\prod_{(i,j)\in\la}{ \frac{ (\lfloor N\eta\rfloor\theta + (j-1) - \theta(i-1)) (N\theta + (j-1) - \theta(i-1)) }
{ ((\la_i-j)+\theta(\la_j'-i)+\theta)((\la_i-j)+\theta(\la_j'-i)+1) } },
\end{multline*}
for all $\la\in\Y\big(\min\{N,\lfloor N\eta\rfloor\}\big)$.
\end{definition}

We claim that $\left\{\PP^{(\theta)}_{\Alpha(c^{\lfloor N\eta\rfloor});\,N}\right\}_{N\ge 1}$ satisfies the two conditions of HT-appropriateness from \cref{def:appropriate}, when $c\in(0,1)$, $\eta>0$ are fixed.
By \cref{lem:check_ht_appropriateness}, the second condition is automatic, while the first follows from Equation~\eqref{eqn:first_condition_HT} and the following calculation:
\begin{multline}\label{eqn:first_condition_HT_example1}
\lim_{\substack{N\to\infty\\N\theta\to\gamma}} 
{ \frac{\theta}{(n-1)!}\partial_x^n\sum_{k\ge 1}\frac{p_k\big( \Alpha(c^{\lfloor N\eta\rfloor}) \big)}{k}x^k\bigg|_{x=1} }
= \frac{1}{(n-1)!}\lim_{\substack{N\to\infty\\N\theta\to\gamma}} { \theta \lfloor N\eta\rfloor \, \partial_x^n\sum_{k\ge 1}\frac{(cx)^k}{k}\bigg|_{x=1} }\\
= -\frac{\gamma\eta}{(n-1)!}{ \partial_x^n \, \ln(1-cx) \Big|_{x=1} }
= -\frac{\gamma\eta}{(n-1)!} { \frac{-(n-1)!\cdot c^n}{(1-cx)^n} \bigg|_{x=1} }
= \frac{\gamma\eta c^n}{(1-c)^n}.
\end{multline}

\begin{theorem}\label{thm:application1}
Let $c\in(0,1)$ and $\gamma,\eta>0$ be fixed.
Let $\{\theta_N>0\}_{N\ge 1}$ be such that $N\theta_N\to\gamma$, as $N\to\infty$.
Let $\big\{\la^{(t)}(N)\big\}_{t=0,1,2,\dots}$ be the Markov chain from \cref{theo:LLN_Markov} with $\rho_N = \Alpha(c)$ (independent of $N$), and initial laws of $\la^{(0)}(N)$ given by $\PP_N$.

\smallskip

(1) If $\PP_N(\la)=\delta_{\la,\emptyset}$, then at time $t=\lfloor N\eta\rfloor$, the law of $\la^{(t)}(N)=\la^{(\lfloor N\eta\rfloor)}(N)$ is the pure-alpha Jack measure $\PP^{(\theta_N)}_{\Alpha(c^{\lfloor N\eta\rfloor});\,N}$.
Moreover, this sequence of Jack measures satisfies the LLN and the corresponding empirical measures converge weakly, in probability, to the unique probability measure $\mu_{\Alpha}^{\gamma;c;\eta}$ with moments
\begin{multline*}
\int_\R{ x^\ell \mu_{\Alpha}^{\gamma;c;\eta}(\dd\!x) } = 
\sum_{\Gamma\in\Luk(\ell)}
(a+ab)^{\text{\#\,up steps}} \, b^{\text{\#\,down steps}} \ \frac{\Delta_\gamma\left(x^{1+\text{\#\,horizontal steps at height $0$}}\right)(ab)}{1+\text{\#\,horizontal steps at height $0$}}\\
\cdot\prod_{j\ge 1} (j+ab)^{\text{\#\,horizontal steps at height $j$}}
(j+\gamma)^{\text{\#\,down steps from height $j$}},\quad\text{for all $\ell\in\Z_{\ge 1}$},
\end{multline*}
where we set $a:=\gamma\eta>0$ and $b:=c/(1-c)>0$, for convenience.

\smallskip

(2) In the general case, when $\PP_N$ are such that the empirical measures $\mu_N^{(0)}$ of $\la^{(0)}(N)$ converge to $\mu^{(0)}$, as in~\eqref{eq:assumption_LLN}, then the empirical measures of $\la^{(\lfloor N\eta\rfloor)}(N)$ converge weakly, in probability, to
\[
\lim_{\substack{N\to\infty\\N\theta\to\gamma}}
\mu_N^{(\lfloor N\eta\rfloor)} 
= \mu^{(0)} \boxplus^{(\gamma)}\mu_{\Alpha}^{\gamma;c;\eta},
\]
where the RHS is understood as the measure with quantized $\gamma$-cumulants $\kappa_n\big[\mu^{(0)}\big] + \kappa_n\big[\mu_{\Alpha}^{\gamma;c;\eta}\big]$. The existence and uniqueness of this probability measure is part of the statement of the theorem.
\end{theorem}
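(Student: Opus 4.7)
The plan is to reduce everything to \cref{theo:LLN_Markov} combined with an explicit simplification of the Łukasiewicz-path moment formula under the geometric ansatz for the cumulants.

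For part (1), I would first identify the law of $\la^{(t_N)}(N)$ when the initial distribution is $\delta_{\emptyset}$. By \cref{lem:empty_initial}, this law is the Jack measure $\PP^{(\theta_N)}_{t_N\rho_N;\,N}$. Since power sums are additive under unions, $p_k(\lfloor N\eta\rfloor\cdot\Alpha(c)) = \lfloor N\eta\rfloor\,c^k = p_k(\Alpha(c^{\lfloor N\eta\rfloor}))$, so this specialization coincides with the pure-alpha $\Alpha(c^{\lfloor N\eta\rfloor})$ of \cref{def:example1}. Stability of $\rho_N = \Alpha(c)$ is immediate from $\sum_k p_k(\rho_N)z^k/k = -\ln(1-cz)$, and the calculation \eqref{eqn:first_condition_HT_example1} verifies the hypothesis \eqref{eqn:limit_trho} of \cref{theo:LLN_Markov}(1), giving the limits $\kappa_n[\mu_\Alpha^{\gamma;c;\eta}] = \gamma\eta\cdot c^n/(1-c)^n = ab^n$, with $a := \gamma\eta$ and $b := c/(1-c)$.

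To convert these cumulants into the stated moment formula I would substitute $\kappa_n = ab^n$ into \eqref{eq:Moments}. The crucial observation is that for each up step of type $(1,j)$ one has $\kappa_j+\kappa_{j+1} = ab^j(1+b)$, while $a(1+b) = a+ab$. Taking the product over all up steps in a Łukasiewicz path $\Gamma$, and using the identity $\sum_{j\ge 1} j\cdot(\#\text{ steps }(1,j)\text{ in }\Gamma) = \#\text{ down steps in }\Gamma$ (total upward displacement equals total downward displacement for a closed path), yields
\begin{equation*}
\prod_{j\ge 1}(\kappa_j+\kappa_{j+1})^{\#\text{ steps }(1,j)} \;=\; (a+ab)^{\#\text{ up steps}}\cdot b^{\#\text{ down steps}}.
\end{equation*}
Combined with $\kappa_1 = ab$ in the remaining height-$0$ divided-difference weight and the height-$i\ge 1$ horizontal weights $(\kappa_1+i)=(ab+i)$, this reproduces the claimed expression for the moments of $\mu_\Alpha^{\gamma;c;\eta}$. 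The geometric bound $|\kappa_n[\mu_\Alpha^{\gamma;c;\eta}]| = ab^n$ then triggers Carleman's criterion inside \cref{theo:main1} (proved via \cref{theo:helper3}), yielding uniqueness of $\mu_\Alpha^{\gamma;c;\eta}$ and upgrading moment convergence to weak convergence in probability.

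Part (2) is a direct application of \cref{theo:LLN_Markov}(2), which provides moment convergence of $\mu_N^{(\lfloor N\eta\rfloor)}$ to a measure with quantized $\gamma$-cumulants $\kappa_n[\mu^{(0)}]+ab^n$. To upgrade this to weak convergence and to obtain the existence and uniqueness of $\mu^{(0)}\boxplus^{(\gamma)}\mu_\Alpha^{\gamma;c;\eta}$, I need the exponential bound $|\kappa_n[\mu^{(0)}]+ab^n|\le C^n$ required by the final clause of \cref{theo:main1}. The $ab^n$ summand is already geometric; for the other summand, compactness of $\supp(\mu^{(0)})\subseteq[-L,L]$ gives $|m_n[\mu^{(0)}]|\le L^n$, and a straightforward induction on $n$ using the triangular inversion \eqref{eq:kappa-top} (whose coefficients lie in $\Q(\gamma)$) produces $|\kappa_n[\mu^{(0)}]|\le C^n$ for a suitable $C=C(L,\gamma)$. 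The main obstacle I anticipate is precisely this inductive bound: the coefficients in \eqref{eq:kappa-top} are rational functions of $\gamma$ and one must track their growth with $n$ carefully to obtain a genuinely exponential, rather than factorial, estimate. All other steps are either direct invocations of theorems already established in the paper or the clean combinatorial simplification of the Łukasiewicz-path sum under $\kappa_n=ab^n$.
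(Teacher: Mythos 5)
Your proposal is correct and follows the paper's proof essentially verbatim for part (1): \cref{lem:empty_initial} to identify the law, the computation \eqref{eqn:first_condition_HT_example1} together with \cref{lem:check_ht_appropriateness} to get HT-appropriateness with $\kappa_n=ab^n$, the bound $|\kappa_n|\le(\max\{a,1\}\cdot b)^n$, and an appeal to \cref{theo:main1}; your explicit simplification $\prod_{j\ge 1}(\kappa_j+\kappa_{j+1})^{\#\,\text{steps }(1,j)}=(a+ab)^{\#\,\text{up steps}}\,b^{\#\,\text{down steps}}$ via the total-displacement identity is correct and is left implicit in the paper. For part (2) the paper simply cites \cref{theo:LLN_Markov}; the additional step you raise --- an exponential bound on $\kappa_n\big[\mu^{(0)}\big]$ so that Carleman's condition applies to the convolution --- concerns a point the paper's proof passes over in silence, and while your inductive sketch via \eqref{eq:kappa-top} is not obviously exponential as stated (as you acknowledge), flagging the issue is a refinement of, not a deviation from, the paper's argument.
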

\begin{proof}
The fact that the law of $\la^{(\lfloor N\eta\rfloor)}(N)$ is $\PP^{(\theta_N)}_{\Alpha(c^{\lfloor N\eta\rfloor});\,N}$ if $\PP_N(\la)=\delta_{\la,\emptyset}$ follows from \cref{lem:empty_initial}.
Next, thanks to Equations \eqref{eqn:first_condition_HT} and \eqref{eqn:first_condition_HT_example1}, as well as \cref{lem:check_ht_appropriateness}, it follows that the sequence $\Big\{ \PP^{(\theta_N)}_{\Alpha(c^{\lfloor N\eta\rfloor});\,N} \Big\}_{N\ge 1}$ is HT-appropriate with $\kappa_n = ab^n$.
Note that $|\kappa_n|=ab^n\le (\max\{a,1\}\cdot b)^n$, for all $n\ge 1$, therefore we can apply \cref{theo:main1}, which completes the proof of part (1). Finally, part (2) follows from \cref{theo:LLN_Markov}.
\end{proof}

\begin{corollary}
Assume we are in the context of \cref{thm:application1}.
If $\eta=(1-c)/c$, then the moments of $\mu_{\Alpha}^{\gamma;c;\eta}=\mu_{\Alpha}^{\gamma;c;\frac{1-c}{c}}$ can be expressed as
\begin{multline*}
\int_\R{ x^\ell \mu_{\Alpha}^{\gamma;c;\frac{1-c}{c}}(\dd\!x) } = 
\sum_{\Gamma\in\Luk(\ell)}
\frac{(1-c)^{-\,\text{\#\,down steps}} \, c^{\text{\#\,down steps} \,-\, \text{\#\,up steps}}}{1+\text{\#\,horizontal steps at height $0$}}\\
\cdot\gamma^{\text{\#\,up steps} \ +\ \text{\#\,horizontal steps at height $0$}}
\cdot\prod_{j\ge 1} (j+\gamma)^{\text{\#\,horizontal steps at height $j$}\,\,+\,\,\text{\#\,down steps from height $j$}}.
\end{multline*}
\end{corollary}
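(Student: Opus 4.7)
The plan is to observe that the special choice $\eta = (1-c)/c$ forces the first quantized $\gamma$-cumulant to equal $\gamma$, and then to invoke the simplified moment formula of \cref{lem:special_case} before performing a direct algebraic simplification of the weights.

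Concretely, with $a = \gamma\eta = \gamma(1-c)/c$ and $b = c/(1-c)$ we have $ab = \gamma$. Since \cref{thm:application1} gives $\kappa_n[\mu_{\Alpha}^{\gamma;c;\eta}] = ab^n$, we immediately get $\kappa_1 = \gamma$ in this case, so \cref{lem:special_case} applies and yields
\begin{multline*}
\int_\R x^\ell\,\mu_{\Alpha}^{\gamma;c;(1-c)/c}(\dd x) = \sum_{\Gamma\in\Luk(\ell)} \frac{1}{1+h_0(\Gamma)}\prod_{i\ge 0}(i+\gamma)^{h_i(\Gamma)} \\
\cdot\prod_{j\ge 1}(\kappa_j+\kappa_{j+1})^{u_j(\Gamma)}(j+\gamma)^{d_j(\Gamma)},
\end{multline*}
where $h_i(\Gamma), u_j(\Gamma), d_j(\Gamma)$ denote the numbers of horizontal steps at height $i$, of up steps $(1,j)$, and of down steps from height $j$, respectively.

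Next I would simplify. The factor $\prod_{i\ge 0}(i+\gamma)^{h_i}$ splits as $\gamma^{h_0}\prod_{j\ge 1}(j+\gamma)^{h_j}$, which combines with $\prod_{j\ge 1}(j+\gamma)^{d_j}$ to produce the factor $\gamma^{h_0}\prod_{j\ge 1}(j+\gamma)^{h_j+d_j}$ appearing in the corollary. For the remaining factor, write
\[
\kappa_j+\kappa_{j+1} = ab^j(1+b) = \gamma\,b^{j-1}(1+b) = \frac{\gamma\, c^{j-1}}{(1-c)^j},
\]
and hence
\[
\prod_{j\ge 1}(\kappa_j+\kappa_{j+1})^{u_j} = \gamma^{\,U}\, c^{\sum_j (j-1)u_j}\,(1-c)^{-\sum_j j u_j},
\]
where $U=\sum_j u_j$ is the total number of up steps. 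The key combinatorial identity to invoke is the trivial observation that for any Łukasiewicz path $\Gamma$ the total upward displacement equals the total downward displacement, i.e.\ $\sum_{j\ge 1} j\,u_j(\Gamma) = D(\Gamma)$, where $D$ is the total number of down steps; consequently $\sum_{j\ge 1}(j-1)u_j = D - U$. Substituting this back turns the up-step weight into $\gamma^U c^{D-U}(1-c)^{-D}$, and assembling everything reproduces exactly the stated formula.

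There is no real obstacle here; this is a routine verification once one notices that $ab=\gamma$ places us in the regime of \cref{lem:special_case}. The only step that requires any care is the displacement identity $\sum_j j u_j = D$, but this is immediate from the path starting and ending on the $x$-axis.
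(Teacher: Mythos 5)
Your proof is correct and follows exactly the paper's route: observing that $\eta=(1-c)/c$ forces $\kappa_1=ab=\gamma$ and then invoking \cref{lem:special_case}. The paper leaves the subsequent weight simplification implicit, whereas you carry it out explicitly (correctly using the displacement identity $\sum_j j\,u_j(\Gamma)=D(\Gamma)$), so your write-up is just a more detailed version of the same argument.
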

\begin{proof}
By the proof of \cref{thm:application1}, $\kappa_1=\gamma$ iff $\eta = (1-c)/c$, therefore \cref{lem:special_case} gives the desired formula.
\end{proof}

\subsubsection{Pure-beta Jack measure}\label{sec:pure_beta}

Let $c>0$ and $N, M\in\Z_{\ge 1}$.
Let us consider the pure beta specialization $\Beta((c/\theta)^M)$ (see~\cref{exam:pure_beta}), defined by
\begin{equation*}
p_k\big(\Beta((c/\theta)^M)\big) := (-1)^{k-1}\theta^{-1}M\,c^k,\ \text{ for all $k\ge 1$}.
\end{equation*}
Then the normalization constant for the Jack measure $\PP^{(\theta)}_{\Beta((c/\theta)^M);\,N}$, defined in~\eqref{special_jack_measure}, is
\begin{equation}\label{eqn:norm_beta}
\exp\Bigg\{ N\theta\sum_{k\ge 1}{\frac{p_k\big(\Beta((c/\theta)^M)\big)}{k}} \Bigg\}
= \exp\Bigg\{ NM\sum_{k\ge 1}{(-1)^{k-1}\,\frac{c^k}{k}} \Bigg\} = (1+c)^{NM}.
\end{equation}

Recall that the pure beta specialization $\Beta((c/\theta)^M)$ is equal to the composition of the $\Sym$-automorphism $\omega_{\theta^{-1}}$ defined by $\omega_{\theta^{-1}}(p_r)=(-1)^{r-1}\theta^{-1}p_r$, for all $r\ge 1$, and the pure alpha specialization $\Alpha(c^M)$.
In particular, since $\omega_{\theta^{-1}}(Q_\la(;\theta)) = P_{\la'}(;\theta^{-1})$ (see \cite[Ch.~VI.10, Eqn.~(10.17)]{Macdonald1995}), then
\begin{equation}\label{eqn:jack_eval_beta}
Q_\la\big(\Beta\big((c/\theta)^M\big);\theta\big) = c^{|\la|}\cdot P_{\la'}(1^M;\theta^{-1}).
\end{equation}

The identities \eqref{eqn:norm_beta}-\eqref{eqn:jack_eval_beta} show that the Jack measure~\eqref{def:jack_measure} admits the following formula
\begin{equation*}
\PP^{(\theta)}_{\Beta((c/\theta)^M);\,N}(\la) = (1+c)^{-NM}\, c^{|\la|}
P_\la(1^N;\theta) P_{\la'}(1^M;\theta^{-1}).
\end{equation*}
This is nonzero whenever $\ell(\la)\le N$ and $\ell(\la')\le M$, i.e.~when the Young diagram of $\la$ fits into an $N\times M$ rectangle.
By virtue of~\eqref{eqn:jack_evaluation}, we obtain the explicit formula in the following definition.
\begin{definition}\label{def:example2}
Let $c,\theta>0$, and $N,M\in\Z_{\ge 1}$.
Then the \textbf{pure-beta Jack measure} associated to these parameters is the probability measure on partitions $\la\subseteq (M^N)=(\underbrace{M,M,\dots,M}_{N \text{ times}})$, defined by
\begin{equation*}
\PP^{(\theta)}_{\Beta((c/\theta)^M);\,N}(\la) :=  (1+c)^{-NM}\, c^{|\la|} \prod_{(i,j)\in\la} \frac{ (N\theta+(j-1)-\theta(i-1))(M+\theta(i-1)-(j-1)) }{ ((\la_i-j)+\theta(\la_j'-i)+\theta)((\la_i-j)+\theta(\la_j'-i)+1) },
\end{equation*}
for all $\la\subseteq(M^N)$.
\end{definition}

We want to prove the HT-appropriateness. The second condition is always true, while the first one follows from \cref{eqn:first_condition_HT} and the following calculation, valid for all $n\in\Z_{\ge 1}$:
\begin{equation}\label{eqn:first_condition_HT_example2}
\lim_{\substack{N\to\infty\\N\theta\to\gamma}} { \frac{\partial_1^n F_{\Beta((c/\theta)^M);N,\theta}}{(n-1)!}\bigg|_{(x_1,\dots,x_N) = (1^N)} }
= \frac{M}{(n-1)!}\partial_x^n\sum_{k\ge 1}\frac{(-1)^{k-1}(cx)^k}{k}\bigg|_{x=1}
= \frac{(-1)^{n-1}Mc^n}{(1+c)^n}.
\end{equation}

\begin{theorem}\label{thm:application2}
Let $\gamma,c>0$ and $M\in\Z_{\ge 1}$ be fixed.
Let $\{\theta_N>0\}_{N\ge 1}$ be such that $N\theta_N\to\gamma$, as $N\to\infty$.
Let $\big\{\la^{(t)}(N)\big\}_{t=0,1,2,\dots}$ be the Markov chain from \cref{theo:LLN_Markov} with $\rho_N = \Beta(c/\theta_N)$ and initial laws of $\la^{(0)}(N)$ given by $\PP_N$.

\smallskip

(1) If $\PP_N(\la)=\delta_{\la,\emptyset}$, then the law of $\la^{(M)}(N)$ is the pure-beta Jack measure $\PP^{(\theta_N)}_{\Beta((c/\theta_N)^M);\,N}$.
Moreover, these Jack measures satisfy the LLN and the corresponding empirical measures converge weakly, in probability, to the unique probability measure $\mu_{\Beta}^{\gamma;c;M}$ with moments
\begin{multline*}
\int_\R{ x^\ell \mu_{\Beta}^{\gamma;c;M}(\dd\!x) } = 
\sum_{\Gamma\in\Luk(\ell)}
(Mb-M)^{\text{\#\,up steps}}\, (-b)^{\text{\#\,down steps}}\ \frac{\Delta_\gamma\left(x^{1+\text{\#\,horizontal steps at height $0$}}\right)(Mb)}{1+\text{\#\,horizontal steps at height $0$}}\\
\cdot \prod_{j\ge 1}(j+Mb)^{\text{\#\,horizontal steps at height $j$}}
(j+\gamma)^{\text{\#\,down steps from height $j$}},\quad\text{ for all }\ell\in\Z_{\ge 1},
\end{multline*}
where we set $b:=c/(1+c)\in(0,1)$, for convenience.

\smallskip

(2) In the general case, when $\PP_N$ are such that the empirical measures $\mu_N^{(0)}$ of $\la^{(0)}(N)$ converge to $\mu^{(0)}$, as in~\eqref{eq:assumption_LLN}, then the empirical measures of $\la^{(M)}(N)$ converge weakly, in probability, to
\[
\lim_{\substack{N\to\infty\\N\theta\to\gamma}}{\mu_N^{(M)}} = \mu^{(0)} \boxplus^{(\gamma)} \mu_{\Beta}^{\gamma;c;M}.
\]
\end{theorem}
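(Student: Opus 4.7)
The plan is to mirror exactly the proof of the pure-alpha analogue, Theorem~\ref{thm:application1}, replacing the specialization $\Alpha(c^{\lfloor N\eta\rfloor})$ by $\Beta((c/\theta_N)^M)$ throughout. For part~(1), I would first observe that if $\la^{(0)} = \emptyset$ almost surely, Lemma~\ref{lem:empty_initial} identifies the law of $\la^{(M)}(N)$ with the Jack measure $\PP^{(\theta_N)}_{M\rho_N;\,N}$, where $\rho_N=\Beta(c/\theta_N)$. One checks immediately that the $M$-fold union $M\rho_N$ equals $\Beta((c/\theta_N)^M)$ by comparing power sums: both sides yield $p_k = (-1)^{k-1}\theta_N^{-1}Mc^k$. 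This gives the desired identification with the measure of Definition~\ref{def:example2}.

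Next, I would verify HT-appropriateness of $\big\{\PP^{(\theta_N)}_{\Beta((c/\theta_N)^M);\,N}\big\}_{N\ge 1}$ using Lemma~\ref{lem:check_ht_appropriateness}: the small-tails property and the second condition of Definition~\ref{def:appropriate} are automatic, and the first condition reduces, via Equation~\eqref{eqn:first_condition_HT}, to the already-performed computation~\eqref{eqn:first_condition_HT_example2}, giving
\[
\kappa_n = \frac{(-1)^{n-1}Mc^n}{(1+c)^n} = (-1)^{n-1}Mb^n, \qquad b = \frac{c}{1+c}.
\]
Since $|\kappa_n| = Mb^n$ grows at most exponentially in $n$, the uniqueness hypothesis of Theorem~\ref{theo:main1} is met, so the empirical measures converge in moments, in probability, to a unique probability measure $\mu_{\Beta}^{\gamma;c;M}$ with moments given by the combinatorial formula of Definition~\ref{def:mk}.

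The only calculation that is not purely mechanical is rewriting that formula in the form stated in the theorem. The key identity is
\[
\kappa_j + \kappa_{j+1} = (-1)^{j-1}Mb^j(1-b) = (Mb - M)(-b)^j,
\]
so each up step $(1,j)$ contributes $(Mb-M)(-b)^j$. Using the fact that for any Łukasiewicz path the total up-displacement $\sum_{j\ge 1} j\cdot u_j$ equals the number of down steps, the product over all up steps collapses to $(Mb-M)^{\#\,\text{up steps}}\cdot(-b)^{\#\,\text{down steps}}$, and the stated expression for the moments of $\mu_{\Beta}^{\gamma;c;M}$ drops out. The horizontal-step weights $\kappa_1+i = Mb+i$ and the divided-difference factor at height $0$ carry over from Definition~\ref{def:mk} verbatim.

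For part~(2), I would invoke Theorem~\ref{theo:LLN_Markov} directly: the specializations $\rho_N=\Beta(c/\theta_N)$ are Jack-positive and stable (the series~\eqref{eq:stable_series} is $M^{-1}\ln(1+cz)$, analytic on $|z|<c^{-1}$ which is an annulus containing the unit circle since $c$ is fixed); the required limits~\eqref{eqn:limit_trho} with $t_N=M$ exist and coincide with the $\kappa_n[\mu_{\Beta}^{\gamma;c;M}]$ from part~(1); and the assumption on $\mu_N^{(0)}$ matches~\eqref{eq:assumption_LLN}. The conclusion is then precisely~\eqref{sum_cumulants}, which identifies the limit as the quantized $\gamma$-convolution $\mu^{(0)}\boxplus^{(\gamma)}\mu_{\Beta}^{\gamma;c;M}$. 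No new obstacle beyond the algebraic rewriting of Step~3 is anticipated.
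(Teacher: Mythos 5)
Your proposal matches the paper's proof essentially step for step: \cref{lem:empty_initial} for identifying the law of $\la^{(M)}(N)$ with $\PP^{(\theta_N)}_{\Beta((c/\theta_N)^M);\,N}$, \cref{lem:check_ht_appropriateness} together with the computation \eqref{eqn:first_condition_HT_example2} for HT-appropriateness with $\kappa_n=(-1)^{n-1}Mb^n$, the bound $|\kappa_n|=Mb^n\le(Mb)^n$ to invoke \cref{theo:main1}, and \cref{theo:LLN_Markov} for part~(2); the algebraic collapse of the up-step weights via $\kappa_j+\kappa_{j+1}=(Mb-M)(-b)^j$ and the total-displacement identity, which you spell out, is correct and is left implicit in the paper. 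The one inaccuracy is your side remark that the stability series for $\Beta(c/\theta_N)$ is $M^{-1}\ln(1+cz)$ with a convergence region containing the unit circle: it is actually $-\theta_N^{-1}\ln(1-cz)$, convergent only for $|z|<c^{-1}$, which misses the unit circle when $c\ge 1$ --- but this does not damage the argument, since the pure-beta Jack measure is supported on the finite set of partitions inside an $N\times M$ box, so the small-tails and finiteness conditions hold trivially.
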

\begin{proof}
The law of $\la^{(M)}(N)$ is $\PP^{(\theta_N)}_{\Beta((c/\theta_N)^M);\,N}$ if $\PP_N(\la)=\delta_{\la,\emptyset}$, because of \cref{lem:empty_initial}.
By \eqref{eqn:first_condition_HT}, \eqref{eqn:first_condition_HT_example2} and \cref{lem:check_ht_appropriateness}, this sequence of Jack measures is HT-appropriate with $\kappa_n = (-1)^{n-1}Mb^n$. Therefore the first statement follows directly from \cref{theo:LLN_Markov}.
Since $|\kappa_n|=Mb^n\le (Mb)^n$, for all $n\ge 1$, \cref{theo:main1} ends the proof of (1).
Part (2) follows from \cref{theo:LLN_Markov}.
\end{proof}

\begin{corollary}\label{cor:beta}
In the setting of \cref{thm:application2}, if $\gamma=Mc/(1+c)$, or equivalently, if $\gamma<M$ and $c=\gamma/(M-\gamma)$, then the moments of the measure $\mu_{\Beta}^{\gamma;c;M}=\mu_{\Beta}^{\gamma;\frac{\gamma}{M-\gamma};M}$ can be expressed as
\begin{multline}\label{eqn:beta}
\int_\R{ x^\ell \mu_{\Beta}^{\gamma;\frac{\gamma}{M-\gamma};M}(\dd\!x) } = 
\sum_{\Gamma\in\Luk(\ell)}
(-1)^{\text{\#\,up steps} \ +\ \text{\#\,down steps}}\,
\frac{(M-\gamma)^{\text{\#\,up steps}} \, M^{-\text{\#\,down steps}}}{1+\text{\#\,horizontal steps at height $0$}}\\
\cdot\gamma^{\text{\#\,down steps\ +\ \#\,horizontal steps at height $0$}} \, \prod_{j\ge 1} (j+\gamma)^{\text{\#\,horizontal steps at height $j$}\ +\ \text{\#\,down steps from height $j$}}.
\end{multline}
\end{corollary}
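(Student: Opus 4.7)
The plan is to obtain this corollary as a direct specialization of Theorem 5.12 via Lemma 3.4, once the hypothesis is rewritten as the condition $\kappa_1 = \gamma$. Concretely, the proof of Theorem 5.12 identifies the quantized $\gamma$-cumulants of $\mu_{\Beta}^{\gamma;c;M}$ as $\kappa_n = (-1)^{n-1} M b^n$, where $b = c/(1+c)$. The hypothesis $\gamma = Mc/(1+c)$ is precisely $\gamma = Mb$, so $\kappa_1 = \gamma$ and $b = \gamma/M$, whence $1-b = (M-\gamma)/M$. In particular, for every $j \geq 1$,
\begin{equation*}
\kappa_j + \kappa_{j+1} = (-1)^{j-1} M b^j (1-b) = (-1)^{j-1} (M-\gamma)\,\gamma^j M^{-j}.
\end{equation*}

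Next I would invoke Lemma 3.4, which applies precisely because $\kappa_1 = \gamma$, to write
\begin{equation*}
m_\ell = \sum_{\Gamma \in \Luk(\ell)} \frac{\prod_{i \ge 0}(i+\gamma)^{\#\text{\,horiz at height }i}}{1 + \#\text{\,horiz at height }0}\prod_{j\ge 1}(\kappa_j+\kappa_{j+1})^{\#\text{\,up steps }(1,j)}(j+\gamma)^{\#\text{\,down from height }j}.
\end{equation*}
Substituting the expression for $\kappa_j+\kappa_{j+1}$ obtained above, the entire product over $j \ge 1$ of the up-step weights factors into a sign, a power of $(M-\gamma)$, a power of $\gamma$, and a power of $M^{-1}$. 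Letting $u_j$ denote the number of up steps of type $(1,j)$ in $\Gamma$, the Łukasiewicz condition that $\Gamma$ returns to height $0$ gives the identities $\sum_{j\ge 1} u_j = \#\text{up}$ and $\sum_{j\ge 1} j\,u_j = \#\text{down}$. Consequently $\sum_{j\ge 1}(j-1)u_j = \#\text{down} - \#\text{up}$, so the product of signs is $(-1)^{\#\text{down}-\#\text{up}} = (-1)^{\#\text{up}+\#\text{down}}$, and the powers of $\gamma$ and of $M^{-1}$ accumulate as $\gamma^{\#\text{down}}M^{-\#\text{down}}$, while the factors $(M-\gamma)$ accumulate as $(M-\gamma)^{\#\text{up}}$.

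Finally I would split off the $i=0$ term from $\prod_{i\ge 0}(i+\gamma)^{\#\text{horiz at height }i}$ to produce the factor $\gamma^{\#\text{horiz at height }0}$, combine it with $\gamma^{\#\text{down}}$, and merge the remaining factor $\prod_{i\ge 1}(i+\gamma)^{\#\text{horiz at height }i}$ with $\prod_{j\ge 1}(j+\gamma)^{\#\text{down from height }j}$ into the single product $\prod_{j\ge 1}(j+\gamma)^{\#\text{horiz at height }j + \#\text{down from height }j}$. This produces exactly the right-hand side of~\eqref{eqn:beta}. Since each step is a direct bookkeeping computation rather than an analytical argument, there is no serious obstacle; the only thing to watch carefully is the identity $\sum_{j\ge 1}(j-1)u_j = \#\text{down} - \#\text{up}$ that realigns the sign pattern, and the equivalence $(-1)^{\#\text{down}-\#\text{up}} = (-1)^{\#\text{down}+\#\text{up}}$ that matches the formatting of the target formula.
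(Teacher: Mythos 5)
Your proposal is correct and follows exactly the paper's route: the paper's proof of this corollary simply notes that $\kappa_1=\gamma$ holds iff $\gamma<M$ and $c=\gamma/(M-\gamma)$, and then invokes \cref{lem:special_case} together with the cumulants $\kappa_n=(-1)^{n-1}Mb^n$ identified in the proof of \cref{thm:application2}. Your explicit bookkeeping of the sign via $\sum_{j\ge 1}(j-1)u_j=\#\text{down}-\#\text{up}$ and the accumulation of the powers of $\gamma$, $M^{-1}$, and $(M-\gamma)$ is exactly the (omitted) computation behind the paper's one-line argument.
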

\begin{proof}
According to the proof of \cref{thm:application2}, we have $\kappa_1=\gamma$ iff $\gamma<M$ and $c=\gamma/(M-\gamma)$. As a result, \cref{lem:special_case} gives the desired formula.
\end{proof}

\subsubsection{Pure-Plancherel Jack measure}\label{sec:pure_plancherel}

Consider the Plancherel specialization $\Planch(t/\theta)$, given by $p_k\big( \Planch(t/\theta) \big) = \frac{t}{\theta}\cdot\delta_{k, 1}$, for all $k\ge 1$.
As the coefficient of $p_1^{|\la|}$ in $P_\la(;\theta)$ is equal to $\prod_{(i,j)\in\la}\frac{\theta}{(\la_i-j)+\theta(\la_j'-i)+\theta}$ (see~\cite[Ch.~VI, (10.27),(10.29)]{Macdonald1995}), it follows that
\begin{equation}\label{eq:planch_1}
P_\la\big( \Planch(t/\theta); \theta \big) = \prod_{(i,j)\in\la}\frac{t}{(\la_i-j)+\theta(\la_j'-i)+\theta},\quad\text{for all }\la\in\Y.
\end{equation}

The normalization constant corresponding to the Jack measure $\PP^{(\theta)}_{\Planch(t/\theta);\,N}$ in~\eqref{special_jack_measure} is
\begin{equation}\label{eq:planch_2}
\exp\Bigg\{ N\theta\sum_{k\ge 1}{\frac{p_k\big( \Planch(t/\theta) \big)}{k}} \Bigg\} = \exp(Nt).
\end{equation}

The identities \eqref{eq:planch_1}--\eqref{eq:planch_2} turn the Jack measure~\eqref{special_jack_measure} into the following explicit formula.

\begin{definition}\label{def:example3}
Let $t,\theta>0$ and $N\in\Z_{\ge 1}$.
Then the \textbf{pure-Plancherel Jack measure} associated to these parameters is the probability measure on partitions of length $\le N$, given by
\begin{equation}\label{eq:formal_pure_plancherel}
\PP^{(\theta)}_{\Planch(t/\theta);\,N}(\la) = e^{-Nt} t^{|\la|}
\prod_{(i,j)\in\la}{ \frac{N\theta + (j-1) - \theta(i-1)}{((\la_i-j) + \theta(\la_j'-i) + \theta)((\la_i-j) + \theta(\la_j'-i) + 1)} },
\end{equation}
for all $\la\in\Y(N)$.
\end{definition}

We show that $\big\{\PP^{(\theta)}_{\Planch(t/\theta);\,N}\big\}_{N\ge 1}$ is HT-appropriate in the sense of \cref{def:appropriate}.
The second condition is automatic; as for the first, we need~\eqref{eqn:first_condition_HT} and the following calculation:
\begin{equation}\label{eqn:first_condition_HT_example3}
\lim_{\substack{N\to\infty\\N\theta\to\gamma}} { \frac{\partial_1^n\, F_{\Planch(t/\theta);\,N,\theta}}{(n-1)!}\bigg|_{(x_1,\dots,x_N) = (1^N)} }
= \frac{1}{(n-1)!}\cdot \lim_{\substack{N\to\infty\\N\theta\to\gamma}} { \theta\cdot\partial_x^n \left( \frac{t}{\theta}\, x \right) \Big|_{x=1} }
= t\cdot\delta_{n,1}.
\end{equation}

\begin{theorem}\label{thm:application3}
Let $\gamma,t>0$ be fixed.
Let $\{\theta_N>0\}_{N\ge 1}$ be such that $N\theta_N\to\gamma$, as $N\to\infty$.
Let $\big\{\la^{(t)}(N)\big\}_{t=0,1,2,\dots}$ be the Markov chain from \cref{theo:LLN_Markov} with $\rho_N = \Planch(1/\theta_N)$ and initial laws of $\la^{(0)}(N)$ given by $\PP_N$.

\smallskip

(1) If $\PP_N(\la)=\delta_{\la,\emptyset}$, then the law of $\la^{(t)}(N)$ is the pure-Plancherel Jack measure $\PP^{(\theta_N)}_{\Planch(t/\theta_N);\,N}$.
Moreover, these Jack measures satisfy the LLN and the corresponding empirical measures converge weakly, in probability, to the unique probability measure $\mu_{\Planch}^{\gamma;t}$ with moments
\begin{multline}\label{eq:Moments_example3}
\int_\R{ x^\ell \mu_{\Planch}^{\gamma;t}(\dd\!x) } =
\sum_{\Gamma\in\mathbf{M}(\ell)}
t^{\text{\#\,up steps}}\ \frac{\Delta_\gamma\left(x^{1+\text{\#\,horizontal steps at height $0$}}\right)(t)}{1+\text{\#\,horizontal steps at height $0$}}\\
\cdot\prod_{j\ge 1}(j+t)^{\text{\#\,horizontal steps at height $j$}}\,
(j+\gamma)^{\text{\#\,down steps from height $j$}},\quad\text{for all }\ell\in\Z_{\ge 1}.
\end{multline}
In this formula, $\mathbf{M}(\ell)$ is the set of all \textbf{Motzkin paths of length $\ell$}; by definition, they are the Łukasiewicz paths of length $\ell$ whose up steps are all $(1,1)$.\footnote{In other words, Motzkin paths of length $\ell$, by definition, are the lattice paths with steps $(1,1), (1,0), (1,-1)$, that stay above the $x$-axis, begin at $(0,0)$ and end at $(\ell,0)$.}

\smallskip

(2) In the general case, when $\PP_N$ are such that the empirical measures $\mu_N^{(0)}$ of the laws of $\la^{(0)}(N)$ converge to $\mu^{(0)}$, as in~\eqref{eq:assumption_LLN}, then the empirical measures of $\la^{(t)}(N)$ converge weakly, in probability, to
\[
\lim_{\substack{N\to\infty\\N\theta\to\gamma}}{\mu_N^{(t)}} = \mu^{(0)} \boxplus^{(\gamma)} \mu_{\Planch}^{\gamma;t}.
\]
\end{theorem}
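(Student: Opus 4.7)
The plan is to assemble this as a straightforward application of the main Theorem~\ref{theo:main1} together with the abstract Markov chain result in Theorem~\ref{theo:LLN_Markov}; the real content is the verification of HT-appropriateness and the combinatorial specialization of the moment formula.

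First, for the identification of the law of $\la^{(t)}(N)$ when $\PP_N(\la)=\delta_{\la,\emptyset}$: by \cref{lem:empty_initial}, the distribution of $\la^{(t)}(N)$ is the Jack measure $\PP^{(\theta_N)}_{t\rho_N;N}$ where $\rho_N=\Planch(1/\theta_N)$. Since Plancherel specializations satisfy $p_k(s\rho_N)=s\cdot p_k(\rho_N)$ additively under union (indeed $p_k(\Planch(\delta))=\delta\cdot\mathbf{1}_{k=1}$), the $t$-fold union $t\rho_N$ is $\Planch(t/\theta_N)$, giving the explicit formula~\eqref{eq:formal_pure_plancherel}.

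Next, I would establish HT-appropriateness of $\{\PP^{(\theta_N)}_{\Planch(t/\theta_N);N}\}_{N\ge 1}$. Stability of $\Planch(t/\theta_N)$ is automatic since only $p_1$ is nonzero. By \cref{lem:check_ht_appropriateness}, the mixed-partial condition (2) of \cref{def:appropriate} holds trivially, while condition (1) reduces, via \eqref{eqn:first_condition_HT}, to the single computation already recorded in~\eqref{eqn:first_condition_HT_example3}, yielding
\begin{equation*}
\kappa_n = t\cdot\delta_{n,1},\qquad n\in\Z_{\ge 1}.
\end{equation*}
This is the first half of part (1).

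The second half — the moment formula via Motzkin paths — will follow from the general identity $\vec m=\mathcal J_\gamma^{\kappa\mapsto m}(\vec\kappa)$ of \cref{def:mk} with the above $\vec\kappa$. Substituting $\kappa_1=t$ and $\kappa_j=0$ for $j\ge 2$ into~\eqref{eq:Moments}, the factors $(\kappa_j+\kappa_{j+1})^{\#\text{ steps }(1,j)}$ equal $t^{\#\text{ steps }(1,1)}$ when $j=1$ and vanish whenever some up step $(1,j)$ with $j\ge 2$ occurs; hence only Łukasiewicz paths whose up steps are all of size one contribute, and these are exactly the Motzkin paths. The remaining factors $(\kappa_1+i)^{h_i}=(t+i)^{h_i}$, $(j+\gamma)^{d_j}$, and $\Delta_\gamma(x^{1+h_0})(t)/(1+h_0)$ match the statement~\eqref{eq:Moments_example3} verbatim. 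Because $|\kappa_n|\le(\max\{|t|,1\})^n$, the hypothesis on geometric growth of cumulants in \cref{theo:main1} is satisfied, so $\mu_{\Planch}^{\gamma;t}$ exists and is uniquely determined by its moments; LLN-satisfaction for the pure-Plancherel Jack measures then gives weak convergence in probability to $\mu_{\Planch}^{\gamma;t}$.

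Finally, part (2) follows immediately from \cref{theo:LLN_Markov}(2): the limits~\eqref{eqn:limit_trho} exist with values $\kappa_n[\mu_\rho]=t\cdot\delta_{n,1}$ (so $\mu_\rho=\mu_{\Planch}^{\gamma;t}$), and the hypothesis on $\mu_N^{(0)}$ is exactly~\eqref{eq:assumption_LLN}, giving the desired convergence to $\mu^{(0)}\boxplus^{(\gamma)}\mu_{\Planch}^{\gamma;t}$. Honestly, there is no genuine obstacle here: the two theorems \ref{theo:main1} and \ref{theo:LLN_Markov} do all the heavy lifting, and the only task specific to this subsection is the combinatorial observation that restricting the weighted sum over $\Luk(\ell)$ to paths contributing nonzero weight when $\kappa_j=0$ for $j\ge 2$ cuts down exactly to $\mathbf M(\ell)$.
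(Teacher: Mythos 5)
Your proposal is correct and follows essentially the same route as the paper: identify the law via \cref{lem:empty_initial}, verify HT-appropriateness through \cref{lem:check_ht_appropriateness} and the computation~\eqref{eqn:first_condition_HT_example3} giving $\kappa_n=t\cdot\delta_{n,1}$, check the geometric bound on the cumulants, and invoke \cref{theo:main1} and \cref{theo:LLN_Markov}. The only addition is that you spell out explicitly why setting $\kappa_j=0$ for $j\ge 2$ kills every Łukasiewicz path containing an up step $(1,j)$ with $j\ge 2$ and hence reduces the sum to Motzkin paths — a detail the paper leaves implicit, and which you handle correctly.
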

\begin{proof}
The law of $\la^{(t)}(N)$ is $\PP^{(\theta_N)}_{\Planch(t/\theta_N);\,N}$ if $\PP_N(\la)=\delta_{\la,\emptyset}$, because of \cref{lem:empty_initial}.
By \eqref{eqn:first_condition_HT}, \eqref{eqn:first_condition_HT_example3} and \cref{lem:check_ht_appropriateness}, the sequence $\Big\{ \PP^{(\theta_N)}_{\Planch(t/\theta_N);\,N} \Big\}_{N\ge 1}$ is HT-appropriate with $\kappa_n = t\cdot\delta_{n,1}$.
Therefore the first statement follows from \cref{theo:LLN_Markov}.
Since $|\kappa_n|=t\cdot\delta_{n,1}\le t^n$, for all $n\ge 1$, \cref{theo:main1} proves part (1).
Part (2) follows from \cref{theo:LLN_Markov}.
\end{proof}

\begin{remark}
Note that $t$ in $\PP^{(\theta_N)}_{\Planch(t/\theta_N);\,N}$ does not necessarily have to be an integer, but it can also be a positive real number.
In this case, we obtain the LLN in the high temperature regime for the fixed-time distribution of the continuous Markov chain of Gorin--Schkolnikov mentioned in \cref{rem:GS}.
\end{remark}

\begin{corollary}
In the setting of \cref{thm:application3}, if $t=\gamma$, the moments of $\mu_{\Planch}^{\gamma;\gamma}$ are equal to
\begin{multline*}
\int_\R{ x^\ell \mu_{\Planch}^{\gamma;\gamma}(\dd\!x) } = 
\sum_{\Gamma\in\mathbf{M}(\ell)}
\frac{\gamma^{\text{\#\,up steps} \,\,+\,\, \text{\#\,horizontal steps at height $0$}}}{1+\text{\#\,horizontal steps at height $0$}}\\
\cdot\prod_{j\ge 1}(j+\gamma)^{\text{\#\,horizontal steps at height $j$} \,\,+\,\, \text{\#\,down steps from height $j$}}.
\end{multline*}
\end{corollary}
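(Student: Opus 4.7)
The plan is to derive this corollary as an immediate specialization of Theorem~\ref{thm:application3} combined with Lemma~\ref{lem:special_case}. Recall that in the proof of Theorem~\ref{thm:application3} it was established that the quantized $\gamma$-cumulants of $\mu_{\Planch}^{\gamma;t}$ are $\kappa_n = t\cdot\delta_{n,1}$, so that upon setting $t=\gamma$ we have $\kappa_1=\gamma$ and $\kappa_n=0$ for $n\ge 2$. In particular, the hypothesis $\kappa_1=\gamma$ of Lemma~\ref{lem:special_case} is satisfied, so that lemma applies and rewrites the $\ell$-th moment as
\begin{equation*}
m_\ell = \sum_{\Gamma\in\Luk(\ell)}\frac{\prod_{i\ge 0}(\gamma+i)^{\text{\#\,horiz.~steps at height $i$}}}{1+\text{\#\,horiz.~steps at height $0$}}\cdot\prod_{j\ge 1}(\kappa_j+\kappa_{j+1})^{\text{\#\,steps $(1,j)$}}(j+\gamma)^{\text{\#\,down steps from height $j$}}.
\end{equation*}

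Next I would use the vanishing of $\kappa_j+\kappa_{j+1}$ for $j\ge 2$: since $\kappa_j=\kappa_{j+1}=0$ whenever $j\ge 2$, the only up steps $(1,j)$ that produce a nonzero contribution are those with $j=1$, and for those $\kappa_1+\kappa_2=\gamma+0=\gamma$. Consequently, every Łukasiewicz path that contains at least one up step of size $\ge 2$ contributes zero, and the sum restricts to those $\Gamma$ having only up steps of size exactly~$1$, i.e.~to the Motzkin paths $\mathbf{M}(\ell)\subseteq\Luk(\ell)$. On each such $\Gamma$, the product $\prod_{j\ge 1}(\kappa_j+\kappa_{j+1})^{\text{\#\,steps $(1,j)$}}$ collapses to $\gamma^{\text{\#\,up steps}}$.

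Finally, I would split the remaining product over horizontal steps according to whether they occur at height $0$ or not: the $i=0$ factor is $(\gamma+0)^{\text{\#\,horiz.~steps at height $0$}}=\gamma^{\text{\#\,horiz.~steps at height $0$}}$, while the factors $i\ge 1$ combine with the down-step contributions to yield $\prod_{j\ge 1}(j+\gamma)^{\text{\#\,horiz.~steps at height $j$}\,+\,\text{\#\,down steps from height $j$}}$. Collecting the two $\gamma$-powers into $\gamma^{\text{\#\,up steps}+\text{\#\,horiz.~steps at height $0$}}$ gives exactly the stated formula for $\int_{\R}x^\ell\,\mu_{\Planch}^{\gamma;\gamma}(\dd x)$. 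There is no substantial obstacle here: the entire argument is a specialization-plus-bookkeeping derivation once Lemma~\ref{lem:special_case} and Theorem~\ref{thm:application3} are in hand.
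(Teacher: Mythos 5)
Your proposal is correct and is essentially the paper's own argument: the paper's proof likewise notes that $\kappa_n=t\cdot\delta_{n,1}$ gives $\kappa_1=\gamma$ when $t=\gamma$ and then invokes \cref{lem:special_case}, with the restriction to Motzkin paths and the collection of $\gamma$-powers being exactly the bookkeeping you spell out. No gaps.
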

\begin{proof}
By the proof of \cref{thm:application3}, $\kappa_1=\gamma$ iff $t=\gamma$; then apply \cref{lem:special_case}.
\end{proof}

\section{Comparison with previous related results}\label{sec:comparison}

Here, we discuss the relations between our results and related ones from the literature.

Even though the main topic of this paper is the asymptotic behavior of measures in the high temperature regime $N\to\infty, N\theta\to\gamma$, using the verbatim arguments, one can prove the analogous results in the fixed temperature regime, i.e. when $N \to \infty$ and $\theta$ is fixed.
As the proofs are the same, we will only sketch the arguments and leave the details to the interested reader.
In the case of a fixed $\theta>0$, the correct scaling of the empirical measure $\mu_N$ of
$\Prob_N$ is as follows:
\begin{equation}\label{eq:FixedEmpirical}
\tilde{\mu}_N := \frac{1}{N}\sum_{i=1}^N\delta_{\frac{\LL_i}{N\theta}},
\end{equation}
where $\LL_i := \la_i - (i-1)\theta$, for $i=1,2,\dots,N$.

\begin{definition}\label{def:J_infty}
Given a sequence $\vec{\kappa}=(\kappa_n)_{n\ge 1}$, define $\vec{m}=(m_n)_{n\ge 1}$ by
\begin{multline}\label{eq:MomentsInf}
m_\ell :=  \sum_{\Gamma\in\Luk(\ell)}
\frac{\kappa_1^{1+\text{\#\,horizontal steps at height $0$ in $\Gamma$}}- (\kappa_1-1)^{1+\text{\#\,horizontal steps at height $0$ in $\Gamma$}} }{1+\text{\#\,horizontal steps at height $0$ in $\Gamma$}}\\
\cdot\kappa_1^{\text{\#\,horizontal steps at height larger than $0$  in $\Gamma$}}\cdot\prod_{j\ge 1}(\kappa_j+\kappa_{j+1})^{\text{\#\,steps $(1,j)$ in $\Gamma$}},\quad\text{for all $\ell\in\Z_{\ge 1}$}.
\end{multline}
We denote the map $\vec{\kappa}\mapsto\vec{m}$ by $\mathcal{J}_\infty^{\kappa\mapsto m}\colon\R^\infty\to\R^\infty$ and the compositional inverse $\vec{m}\mapsto\vec{\kappa}$ by $\mathcal{J}_\infty^{m\mapsto\kappa} := (\mathcal{J}_\infty^{\kappa\mapsto m})^{(-1)}$.
\end{definition}

Then we have the following result.

\begin{theorem}\label{thm:Fixed_1}
Let $\theta>0$ be fixed.
Let $\{\PP_N\}_{N\ge 1}$ be a sequence of probability measures with total mass $1$ on
$N$-signatures and with small tails, and let $\{\tilde{\mu}_N\}_{N\ge 1}$ be their
empirical measures, given by~\eqref{eq:FixedEmpirical}.
Also, recall that $G_{\PP_N,\theta}(x_1,\dots,x_N)$ denotes the Jack generating function of $\PP_N$.
Suppose that the following two conditions are satisfied, for some constants $\kappa_1^\infty,\kappa_2^\infty,\cdots$:
\vspace{.1cm}

\begin{enumerate}

\item $\displaystyle\lim_{N\to\infty} \frac{1}{N\theta}\frac{\partial_1^n}{(n-1)!} \ln\big( G_{\Prob_N,\theta}\big) \big|_{(x_1,\dots,x_N) = (1^N)} = \kappa_n^\infty$ exists and is finite, for all $n \in \Z_{\geq 1}$,

\item $\displaystyle\lim_{N\to\infty} \frac{1}{N\theta}\partial_{i_1}\cdots \partial_{i_r} \ln\big( G_{\Prob_N,\theta}\big) \big|_{(x_1,\dots,x_N) = (1^N)} = 0$, for all $r\ge 2$ and $i_1,\dots,i_r\in\Z_{\geq 1}$ with at least two distinct indices among $i_1,\dots,i_r$.
\end{enumerate}
Then $\tilde{\mu}_N$ converges, as $N\to\infty$, in the sense of moments, in probability, to a probability measure $\mu_\infty$ with finite moments, denoted by $m^\infty_n$.
Moreover, the sequences $\vec{\kappa}=(\kappa^\infty_n)_{n\ge 1}$ and $\vec{m}=(m^\infty_n)_{n\ge 1}$ are related to each other by $\vec{m}=\mathcal{J}_\infty^{\kappa\mapsto m}(\vec{\kappa})$.
Finally, if $\sup_{n\ge 1}\big|\kappa_n^\infty\big|^{1/n}<\infty$, then $\mu_\infty$ is uniquely determined by its moments.
\end{theorem}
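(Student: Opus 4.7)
The plan is to mimic the proof of \cref{theo:main1} in \cref{sec:main_thm_proof}, with scalings adapted to the fixed-$\theta$ regime. The regime-independent identity from \cref{lem:action_cherednik}, namely $\left[\prod_{j=1}^s \mathcal{P}_{k_j}\right] G_{N,\theta}\big|_{(1^N)} = \mathbb{E}_{\PP_N}\big[\prod_{j=1}^s \sum_{i=1}^N \LL_i^{k_j}\big]$, reduces matters to computing the limit of $\frac{1}{N(N\theta)^{k_1+\dots+k_s}}$ times this quantity. Conditions (1)--(2) translate into Taylor-coefficient asymptotics $c^{(n)}_{F_{N,\theta}} \sim \frac{N\theta\cdot\kappa_n^\infty}{n}$ and $c^\nu_{F_{N,\theta}} = o(N\theta)$ whenever $\ell(\nu) > 1$, which are the fixed-regime analogues of (a')--(b') in \cref{rem:conditions}.

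The core step is to establish fixed-temperature versions of \cref{theo:helper1,theo:helper2}. The polynomial-truncation argument of \cref{lem:Polynomiality} is regime-independent, and the generic/singular decomposition in \cref{lem:TopDegree_2} still isolates the same leading terms. The Dunkl-operator analysis in \cref{lem:TopDegree} proceeds identically, except that the factor $(\gamma+1)^{\uparrow(\alpha_j-1)}$ coming from down steps is now of order $(N\theta)^{\alpha_j-1}$ and gets absorbed into the overall $(N\theta)^{k_1+\dots+k_s}$ rescaling. Re-running the Hecke-relation analysis of \cref{lem:Moment,lem:Xi1}, the dominant contribution at each step of the Łukasiewicz path is of order $N\theta$: up steps $(1,j)$ weighted by $N\theta(\kappa_j^\infty + \kappa_{j+1}^\infty)$ via $jc^{(j)}_{F_{N,\theta}} + (j+1)c^{(j+1)}_{F_{N,\theta}}$, horizontal steps at positive height weighted by $c^{(1)}_{F_{N,\theta}} + i \sim N\theta\kappa_1^\infty$, and down steps from height $j$ weighted by $j + (N-1)\theta \sim N\theta$. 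For horizontal steps at height $0$, the finite factor arises from the Riemann-sum limit $\frac{1}{N(N\theta)^k}\sum_{i=1}^N \big((1-i)\theta\big)^k \to \int_0^1 (-x)^k\,dx = \frac{(-1)^k}{k+1}$, which combined with the weight $\big(c^{(1)}_{F_{N,\theta}}\big)^k$ produces exactly the factor $\tfrac{\kappa_1^{1+k} - (\kappa_1-1)^{1+k}}{1+k} = \int_0^1 (\kappa_1 - x)^k \, dx$ appearing in \eqref{eq:MomentsInf}. Dividing by $(N\theta)^\ell$ yields the formula $\vec{m}^\infty = \mathcal{J}_\infty^{\kappa\mapsto m}(\vec{\kappa}^\infty)$.

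Once these helper analogues are established, the sufficient direction follows exactly as in the proof of ``HT-appropriateness implies LLN-satisfaction'' in \cref{sec:main_thm_proof}: the error polynomials $R_1, R_2$ vanish in the limit because condition (2) forces each mixed $c^\nu_{F_{N,\theta}}/N\theta \to 0$, and the remaining terms assemble into $\prod_j m_{k_j}^\infty$. The moment-problem uniqueness under $\sup_n |\kappa_n^\infty|^{1/n} < \infty$ is verified via Carleman's criterion, as in \cref{theo:helper3}: each Łukasiewicz-path weight in \eqref{eq:MomentsInf} is bounded by $(A\ell)^\ell$ for some constant $A > 0$ (now without the $(j+\gamma)$-factors, using only the power bound on $\kappa_n^\infty$ and the fact that heights never exceed $\ell-1$), so $m_{2\ell}^\infty \le (4\ell A)^{2\ell}$ and $\sum_\ell (m_{2\ell}^\infty)^{-1/(2\ell)}$ diverges.

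The main technical obstacle lies in keeping careful track of the $N\theta$-powers throughout the Hecke-recursion analysis of \cref{sec:hecke_proof}. Specifically, one must verify the fixed-regime analogue of \cref{claim:small_coeffs}, showing that the tail factors $\tilde{R}(z)$ and $\tilde{R}'(z)$ produced by iterating \eqref{eqn:Xi_formula} contribute only subleading orders after dividing by $(N\theta)^\ell$; this requires confirming that each singular pairing in the Cherednik action costs a factor $N^{-1}$ relative to the generic count of $O(N^k)$ terms, uniformly in the range of indices $i = 1,\dots,N$. Once the uniform bounds are adapted to this rescaling, every other step of the argument transfers mechanically from \cref{sec:main_thm_proof,sec:hecke_proof}.
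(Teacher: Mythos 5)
Your proposal is correct and takes essentially the same route as the paper, which itself only sketches this result by asserting that the arguments of \cref{theo:main1} transfer verbatim to the fixed-$\theta$ normalization; your accounting of how each Łukasiewicz-path weight rescales by powers of $N\theta$ matches the paper's own remark that $\mathcal{J}_\infty^{\kappa\mapsto m}$ is the $\gamma\to\infty$ limit of $\mathcal{J}_\gamma^{\kappa\mapsto m}$ under $m_\ell=\gamma^\ell\tilde m_\ell$, $\kappa_\ell=\gamma\tilde\kappa_\ell$. The only slip is cosmetic: the joint-moment normalization should be $N^{-s}(N\theta)^{-(k_1+\cdots+k_s)}$ rather than $N^{-1}(N\theta)^{-(k_1+\cdots+k_s)}$.
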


This theorem can also be generalized to the case where $\PP_N$ are finite signed measures, as in \cref{theo:main1}.

\begin{example}\label{exam:formulas_infty}
The first few moments $m_n^\infty$ in terms of $\kappa_n^\infty$ are
\begin{align*}
m_1^\infty &= \kappa_1^\infty - \frac{1}{2},\\
m_2^\infty &= \kappa_2^\infty + \big(\kappa_1^\infty\big)^2+\frac{1}{3},\\
m_3^\infty &= \kappa_3^\infty + 3\kappa_2^\infty\kappa_1^\infty + \big(\kappa_1^\infty\big)^3 + \frac{3}{2}\big(\kappa_1^\infty\big)^2 - \frac{1}{4}.
\end{align*}
\end{example}

The fixed temperature regime was studied previously by Bufetov--Gorin~\cite{BufetovGorin2015b} for $\theta=1$, and by Huang~\cite{Huang2021} for arbitrary $\theta >0$.
Bufetov--Gorin employed certain differential operators on the space of symmetric polynomials that were designed from the explicit determinantal formula for Schur polynomials.
As a result, there is no obvious extension of their ideas to our setting, since Jack polynomials do not have determinantal formulas; moreover, the Cherednik operators that we use are differential-difference operators.
On the other hand, the Jack generating function in the work of Huang is different from ours, namely, his version is a formal powers series of symmetric functions, not symmetric polynomials.
Moreover, the hypothesis on the asymptotic behavior of the Jack generating function studied by Huang is much more complicated and technical than ours and it seems more difficult to verify in practice; this is a consequence of the fact that the techniques in that paper are different and rely on the analysis of the Nazarov--Sklyanin operators~\cite{NazarovSklyanin2013} that do not directly reflect the behavior of the model.\footnote{
Huang's technical assumptions involve infinite sums of differential operators with respect to the power-sum symmetric functions. They were derived from the explicit form of Nazarov--Sklyanin operators, which are differential operators in these power-sum variables. These operators are very useful in studying other models of discrete random $\beta$-ensembles, as shown in~\cite{Moll2023,CuencaDolegaMoll2023}, but the model studied here is intrinsically related to the monomial expansions of Jack generating functions, and therefore Cherednik operators are much better suited to its analysis.
}
Nevertheless, it is proved in \cite[Appendix B]{Huang2021} that his assumptions are actually equivalent to our conditions in \cref{thm:Fixed_1}; in particular, this last theorem recovers the LLN proved in~\cite{Huang2021}.

As a sanity check, let us next deduce the formula obtained in \cite[Theorem~2.4]{BufetovGorin2018}, \cite[Eqn.~(2.7) in Theorem~2.6]{BufetovGorin2019} and \cite[Theorem~2.4]{Huang2021} (expressed by extracting coefficients of certain generating functions) from our combinatorial formula~\eqref{eq:MomentsInf}.

\begin{proposition}
Let $\vec{\kappa}=(\kappa_n)_{n\ge 1}$ be arbitrary, let $R(z) := \sum_{n\ge 1}{\kappa_n z^{n-1}}$ and $(m_n)_{n\ge 1}:=\mathcal{J}_\infty^{\kappa\mapsto m}(\vec{\kappa})$.
Then, for any $n\in\Z_{\ge 1}$, we have
\begin{equation}\label{eq:MomentHuang}
m_n = \frac{1}{n+1}[z^{-1}]\frac{1}{1+z}\left(z^{-1}+(1+z)R(z)\right)^{n+1}.
\end{equation}
\end{proposition}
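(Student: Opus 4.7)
The plan is to convert the combinatorial formula~\eqref{eq:MomentsInf} for $m_n$ into a form amenable to Lagrange--B\"urmann inversion, which will yield the residue formula~\eqref{eq:MomentHuang}. The main obstacle is that the weight $(\kappa_1^{h_0+1}-(\kappa_1-1)^{h_0+1})/(h_0+1)$ attached to each path depends non-locally on the total number $h_0$ of height-$0$ horizontal steps and therefore does not factor as a product of per-step contributions; this prevents a direct first-return analysis of any associated generating function. I would overcome this by first re-expressing the weight as an alternating sum and then interpreting it bijectively so that the contribution becomes local, at the cost of introducing auxiliary marked steps.

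First, using the elementary identity $(a^{h+1}-b^{h+1})/(h+1)=\sum_{t=1}^{h+1}\frac{(-1)^{t-1}}{t}\binom{h}{t-1}a^{h+1-t}$ (valid whenever $a-b=1$), applied with $a=\kappa_1$, $b=\kappa_1-1$, I would rewrite~\eqref{eq:MomentsInf} as
\[
m_n=\sum_{t\ge 1}\frac{(-1)^{t-1}}{t}\sum_{\Gamma\in\Luk(n)}\binom{h_0(\Gamma)}{t-1}\kappa_1^{h(\Gamma)-t+1}\prod_{j\ge 1}(\kappa_j+\kappa_{j+1})^{u_j(\Gamma)},
\]
where $h(\Gamma)$ is the total number of horizontal steps and $u_j(\Gamma)$ the number of up steps $(1,j)$. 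I would then introduce the \emph{agnostic} Łukasiewicz generating function $\mathcal{L}(z):=\sum_{\ell\ge 0}z^\ell\sum_{\Gamma\in\Luk(\ell)}\kappa_1^{h(\Gamma)}\prod_{j\ge 1}(\kappa_j+\kappa_{j+1})^{u_j(\Gamma)}$, in which every horizontal step (at any height) carries the uniform weight $\kappa_1$, and establish a weight-preserving bijection between pairs $(\Gamma,M)$, with $\Gamma\in\Luk(n)$ and $M$ a distinguished subset of size $t-1$ among the height-$0$ horizontal steps of $\Gamma$, and ordered $t$-tuples $(\Gamma_1,\dots,\Gamma_t)$ of (possibly empty) Łukasiewicz paths satisfying $\sum|\Gamma_i|=n+1-t$. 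In one direction, remove the marked horizontal steps from $\Gamma$ (each at height $0$, hence splitting $\Gamma$ at a height-$0$ position) to obtain the tuple; in the reverse, concatenate the $\Gamma_i$ with a marked $H$-step inserted at height $0$ between consecutive components. The bijection preserves weights because each removed or inserted $H$ accounts for one of the $t-1$ ``missing'' $\kappa_1$ factors, so the per-$t$ sum collapses to $[z^{n+1-t}]\mathcal{L}(z)^t$, yielding
\[
m_n=\sum_{t\ge 1}\frac{(-1)^{t-1}}{t}[z^{n+1-t}]\mathcal{L}(z)^t=[z^{n+1}]\ln\bigl(1+z\mathcal{L}(z)\bigr).
\]

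Next, the standard first-primitive-excursion decomposition of Łukasiewicz paths, combined with the identity $\sum_{j\ge 1}(\kappa_j+\kappa_{j+1})y^j=(1+y)R(y)-\kappa_1$, gives the functional equation $\mathcal{L}(z)=1+z\mathcal{L}(z)(1+z\mathcal{L}(z))R(z\mathcal{L}(z))$. Setting $y:=z\mathcal{L}(z)$, this reads $y=z\,P(y)$ with $P(y):=1+y(1+y)R(y)$, which is the Lagrange--B\"urmann setup. Applied to $G(y)=\ln(1+y)$, the inversion formula gives
\[
m_n=[z^{n+1}]\ln(1+y)=\frac{1}{n+1}[y^n]\frac{P(y)^{n+1}}{1+y},
\]
and using $y^{-n-1}P(y)^{n+1}=(y^{-1}+(1+y)R(y))^{n+1}$ together with $[y^n]F(y)=[y^{-1}]y^{-n-1}F(y)$ converts this into the desired residue form $\frac{1}{n+1}[z^{-1}]\frac{1}{1+z}(z^{-1}+(1+z)R(z))^{n+1}$. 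The bijection in the second paragraph is the conceptual heart of the argument: once the non-local height-$0$ weight has been localized at the cost of summing an alternating series over multi-Łukasiewicz structures, the remaining steps are routine applications of Lagrange--B\"urmann inversion and formal residue manipulations.
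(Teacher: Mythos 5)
Your proposal is correct and is essentially the paper's own argument run in the reverse direction: the paper starts from the residue expression, applies Lagrange inversion to get $[t^{n+1}]\sum_{i}\frac{(-1)^i}{i+1}(tL(t))^{i+1}$, and then uses the same concatenation/marking bijection and the binomial identity $\frac{1}{i+1}\binom{h_0}{i}=\frac{1}{1+h_0}\binom{1+h_0}{i+1}$ to recover the combinatorial formula, whereas you go from the combinatorial formula to the residue. All the key ingredients — the alternating binomial rewriting of the non-local height-$0$ weight, the bijection between marked paths and tuples of paths, the functional equation $y=zP(y)$ with $P(y)=1+y(1+y)R(y)$, and Lagrange--Bürmann with test function $\ln(1+y)$ — coincide with the paper's.
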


\begin{proof}
Let $F(z) = 1+\sum_{i \geq 1}f_i z^i$ be a formal power series. It is a classical result in enumerative combinatorics (see e.g.~\cite[Sections 5.3--5.4]{Stanley:EC2}) that the generating function for Łukasiewicz paths
\[ L(t) := 1 + \sum_{n\ge 1}\sum_{\Gamma\in\Luk(n)}
t^n \prod_{j\ge 0}(f_{j+1})^{\text{\#\,steps $(1,j)$ in $\Gamma$}}\]
satisfies the following functional equation:
\[ t\cdot L(t) = t\cdot F(t\cdot L(t)).\]
In particular, it follows from the Lagrange inversion formula (see~\cite[Theorem 5.4.2]{Stanley:EC2}) that
\begin{multline}\label{eqn:from_lagrange} 
\frac{1}{n+1}[z^{-1}]\frac{1}{1+z}\left(z^{-1}\cdot F(z)\right)^{n+1} = \sum_{i\geq 0}^{n}\frac{(-1)^{i}}{i+1}\cdot\frac{i+1}{n+1}[z^{n+1-(i+1)}]\left(F(z)\right)^{n+1} \\
= [t^{n+1}]\sum_{i\geq 0}^{n}\frac{(-1)^{i}}{i+1}\left(t \cdot L(t)\right)^{i+1}.
\end{multline}
Note that $\left(t \cdot L(t)\right)^{i+1}$ can be interpreted as the generating series for the sequences of $i+1$ Łukasiewicz paths, where each of them starts with a horizontal step at height $0$. By concatenating these paths and then removing the first horizontal step at height $0$, we obtain a Łukasiewicz path of length $n$ (if looking at the coefficient of $t^{n+1}$) with $i$ marked horizontal steps at height $0$. Therefore, we can rewrite the RHS of~\eqref{eqn:from_lagrange} as a sum over all Łukasiewicz paths of length $n$ with some number of marked horizontal steps at height $0$ (we can mark $i$ steps in $\binom{h_0(\Gamma)}{i}$ ways, where $h_0(\Gamma)$ is the number of horizontal steps at height $0$ in $\Gamma$), and then assigning the corresponding weight:
\[ \sum_{\Gamma\in\Luk(n)}\sum_{i=0}^{h_0(\Gamma)}\frac{(-1)^{i}}{i+1}\binom{h_0(\Gamma)}{i}f_1^{(\text{\#\,steps $(1,0)$ in $\Gamma$})\,-\,i}\prod_{j\ge 1}(f_{j+1})^{\text{\#\,steps $(1,j)$ in $\Gamma$}}.\]
By choosing $F(z)=1+z(1+z)R(z)=1+\kappa_1z+\sum_{j\ge 1}{(\kappa_j+\kappa_{j+1})z^{j+1}}$, and applying the identity
$\frac{1}{i+1}\binom{h_0(\Gamma)}{i} = \frac{1}{1+h_0(\Gamma)}\binom{1+h_0(\Gamma)}{i+1}$ in the above expression, we can express the RHS of~\eqref{eq:MomentHuang} as
\[
\sum_{\Gamma\in\Luk(n)}\frac{1}{1+h_0(\Gamma)}\sum_{i=0}^{h_0(\Gamma)} (-1)^i\binom{1+h_0(\Gamma)}{i+1}\kappa_1^{\text{(\#\,steps $(1,0)$ in $\Gamma$}) \,-\, i}\prod_{j\ge 1}(\kappa_j+\kappa_{j+1})^{\text{\#\,steps $(1,j)$ in $\Gamma$}}.
\]
This latter formula is clearly equal to the expression from \cref{def:J_infty}, due to the binomial formula, thus the proof is completed.
\end{proof}

\begin{remark}
The formula in \cite[Theorem~2.4]{BufetovGorin2018} and \cite[Eqn.~(2.7)]{BufetovGorin2019} is actually:
\begin{equation}\label{eq:bg_formula}
m_n = \frac{1}{n+1}[z^{-1}]\frac{1}{1+z}\left(z^{-1}+1+(1+z)R(z)\right)^{n+1}.
\end{equation}
Note that \cref{eq:bg_formula} differs slightly from \cref{eq:MomentHuang}, but this difference is a result of the simple fact that (when $\theta=1$) our empirical measures are $\frac{1}{N}\sum_{i=1}^N\delta\big(\frac{\la_i+1-i}{N}\big)$, while the empirical measures used in the previous references are $\frac{1}{N}\sum_{i=1}^N\delta\big(\frac{\la_i+N-i}{N}\big)$.
\end{remark}

\begin{remark}
As discussed in \cref{sec:app_1}, the quantized $\gamma$-cumulants linearize the quantized $\gamma$-convolution.
The limits of the quantized $\gamma$-cumulants when $\gamma\to\infty$, denoted $\kappa_n^\infty$, should therefore linearize the quantized convolution from \cite{BufetovGorin2015b}.
In that paper, a quantized $R$-transform was also introduced and shown to linearize the quantized convolution; see \cite[Theorem~2.9]{BufetovGorin2015b}.
It turns out that the coefficients of that transform are not exactly $\kappa_n^\infty$, but linear combinations of them.
For example, let $\mathfrak{m}$ be a probability measure with finite moments $m_n^\infty$ and let $(\kappa_n^\infty)_{n\ge 1} := \mathcal{J}_\infty^{m\mapsto\kappa}\big( (m_n^\infty)_{n\ge 1} \big)$.
Further, let $R_{\mathfrak m}^{quant}(z)$ be the quantized $R$-transform; then $R_{\mathfrak m}^{quant}(z) = R_{\mathfrak m}(z)+z^{-1}-(1-e^{-z})^{-1}$, where $R_{\mathfrak m}(z)$ is the $R$-transform of $\mathfrak{m}$, by virtue of \cite[(2.4)-(2.5)]{BufetovGorin2015b}.
From this formula and \cref{exam:formulas_infty}, we deduce
\[
R_{\mathfrak{m}}^{quant}(z) = \kappa_1^\infty + (\kappa_2^\infty + \kappa_1^\infty)z + \left( \kappa_3^\infty + \frac{3}{2}\kappa_2^\infty + \frac{1}{2}\kappa_1^\infty \right)z^2 + O(z^3),
\]
from which it is clear that even the first few coefficients of $R_{\mathfrak{m}}^{quant}(z)$ are already nontrivial linear combinations of $\kappa_n^\infty$.
\end{remark}

\smallskip

We claim that the map $\mathcal{J}_\infty^{\kappa\mapsto m}$ from \cref{def:J_infty} can be regarded as the limit of $\mathcal{J}_\gamma^{\kappa\mapsto m}$, as $\gamma\to\infty$.
Indeed, in the high temperature regime $N\to\infty$, $N\theta\to\gamma$, the moments $m^N_\ell$ of the empirical measure $\mu_N$ have limits $m_\ell$ if and only if the moments $\tilde{m}^N_\ell$ of the empirical measure $\tilde{\mu}_N$ have limits $\tilde{m}_\ell$.
In that case, these limits are related to each other by $m_\ell = \gamma^\ell \cdot \tilde{m}_\ell$.
Similarly, the relation between the $\kappa_\ell$'s that appear in \cref{def:appropriate} and the $\tilde{\kappa}_\ell$'s from \cref{thm:Fixed_1} is given by $\kappa_\ell = \gamma\cdot\tilde{\kappa}_\ell$.
Therefore, the formula \eqref{eq:Moments}, rewritten in terms of $\tilde{\kappa}_\ell$ and $\tilde{m}_\ell$, reads
\begin{multline}\label{eq:MomentsInfty}
\tilde{m}_\ell =  \sum_{\Gamma\in\Luk(\ell)}
\frac{\tilde{\kappa}_1^{1+h_0(\Gamma)} - (\tilde{\kappa}_1-1)^{1+h_0(\Gamma)}}{1+h_0(\Gamma)}\cdot\prod_{i\ge 1}(\tilde{\kappa}_1+i/\gamma)^{\text{\#\,horizontal steps at height $i$ in $\Gamma$}}\\
\cdot\prod_{j\ge 1}(\tilde{\kappa}_j+\tilde{\kappa}_{j+1})^{\text{\#\,steps $(1,j)$ in $\Gamma$}}(j/\gamma+1)^{\text{\#\,down steps from height $j$ in $\Gamma$}},\quad\text{for all $\ell\in\Z_{\ge 1}$},
\end{multline}
where $h_0(\Gamma) := \text{\#\,horizontal steps at height $0$ in $\Gamma$}$.
Evidently, the formula~\eqref{eq:MomentsInfty} in the limit $\gamma\to\infty$ is exactly the formula~\eqref{eq:MomentsInf} for the moments $m^\infty_\ell$, as claimed.
This limit can also be understood by introducing a certain gradation on the polynomial algebra $\C[\gamma,\kappa_1,\kappa_2,\dots]$.
If we define
\[
\deg_1(\gamma) = \deg_1(\kappa_1) = \deg_1(\kappa_2) = \dotsm = 1,
\]
then the combinatorial formula~\eqref{eq:Moments} for the moments $m_\ell$ implies that $m_\ell \in \C[\gamma,\kappa_1,\kappa_2,\dots]$ has $\deg_1(m_\ell)=\ell$, and $m_\ell^\infty$ is equal to the top degree term of $m_\ell$ followed by the substitutions $\gamma\mapsto 1$ and $\kappa_n\mapsto\kappa^\infty_n$.

\smallskip

It turns out that there is another gradation $\deg_2$ on the polynomial algebra $\C[\gamma,\kappa_1,\kappa_2,\dots]$ that allows to deduce a recent result of~\cite{Benaych-GeorgesCuencaGorin2022} from our formula~\eqref{eq:Moments}, namely:
\[
\deg_2(\gamma) = 0, \quad \deg_2(\kappa_n) = n,\quad\forall\,n\ge 1.
\]
In order to motivate this gradation, recall that in the proof of \cref{theo:main1}, the moments $m_\ell$ were calculated as the limits
\[\lim_{\substack{N\to\infty\\N\theta\to\gamma}}\sum_{i=1}^N\big(\xi_i + \tilde{\xi}_i\big)^\ell \exp\left(\hat{F}_N\right)\bigg|_{(x_1,\dots,x_N)=(0^N)},\]
where $\xi_i$ are the Cherednik operators, $\tilde{\xi}_i$ are the Dunkl operators, and $\hat{F}_N=\hat{F}_N(x_1,\dots,x_N)$ is a formal power series such that
\begin{enumerate}
\item $\displaystyle\lim_{\substack{N\to\infty\\N\theta\to\gamma}} \frac{1}{N\theta}\frac{\partial_1^n}{(n-1)!}\hat{F}_N \big|_{(x_1,\dots,x_N) = (0^N)} = \kappa_n$ exists and is finite, for all $n \in \Z_{\geq 1}$,

\item $\displaystyle\lim_{\substack{N\to\infty\\N\theta\to\gamma}} \frac{1}{N\theta}\,\partial_{i_1}\cdots \partial_{i_r}\hat{F}_N \big|_{(x_1,\dots,x_N) = (0^N)} = 0$, for all $r\ge 2$ and $i_1,\dots,i_r\in\Z_{\geq 1}$ with at least two distinct indices among $i_1,\dots,i_r$;
\end{enumerate}
see \cref{cherednik_dunkl}.
The Cherednik operators preserve the degree of power series, while the Dunkl operators decrease the degree by $1$, therefore by introducing a formal variable $t$, the limit
\[\lim_{\substack{N\to\infty\\N\theta\to\gamma}}\frac{1}{N}\sum_{i=1}^N \big(\xi_i+t\cdot\tilde{\xi}_i\big) ^{\ell} \exp\left(\hat{F}_N\right)\bigg|_{(x_1,\dots,x_N)=(0^N)},\]
is equal to the following $t$-deformation of the formula for $m_\ell$:
\begin{multline*}
\sum_{\Gamma\in\Luk(\ell)}
\frac{\Delta_\gamma\left(x^{1+\text{\#\,horizontal steps at height $0$ in $\Gamma$}}\right)(t\cdot\kappa_1)}{1+\text{\#\,horizontal steps at height $0$ in $\Gamma$}}
\cdot\prod_{i\ge 1}(t\cdot\kappa_1+i)^{\text{\#\,horizontal steps at height $i$ in $\Gamma$}}\\
\cdot\prod_{j\ge 1}(t^j\kappa_j+t^{j+1}\kappa_{j+1})^{\text{\#\,steps $(1,j)$ in $\Gamma$}}(j+\gamma)^{\text{\#\,down steps from height $j$ in $\Gamma$}}.
\end{multline*}
In this formula, the coefficient of $t^d$ has degree $\deg_2$ equal to the exponent $d$.
Define $m_\ell^{\topp}$ as the top homogenous degree part of the RHS of \cref{eq:Moments} with respect to the gradation given by $\deg_2$.
From the formula above, it follows that
\begin{equation}\label{eq:MomentsTop}
m^{\topp}_\ell =  \sum_{\Gamma\in\Luk(\ell)}\prod_{j\ge 0}\kappa_{j+1}^{\text{\#\,steps $(1,j)$ in $\Gamma$}}(j+\gamma)^{\text{\#\,down steps from height $j$ in $\Gamma$}}.
\end{equation}
This formula corresponds exactly to the limit
\[
\lim_{\substack{N\to\infty\\N\theta\to\gamma}} \frac{1}{N} \sum_{i=1}^N \big(\tilde{\xi}_i\big)^{\ell} \exp\left(\hat{F}_N\right)\bigg|_{(x_1,\dots,x_N)=(0^N)},
\]
with the above assumptions on $\hat{F}_N$.
The calculation of such a limit was one of the main calculations in~\cite{Benaych-GeorgesCuencaGorin2022}, where the authors showed that it is the $\ell$-th moment of the limiting measure of a sequence of empirical measures of certain
continuous $\beta$-ensembles defined by the Bessel generating functions.
Their combinatorial formula was formulated in terms of set partitions, but is, in fact, equivalent to \cref{eq:MomentsTop}; indeed, \cite[Proposition 5.11]{Xu2023+} formulates it in terms of noncrossing set partitions and it can be translated to Łukasiewicz paths by using the well-known bijection in \cite[Proposition 9.8]{NicaSpeicherBook}.
Our discrete model is a quantization of the continuous model from~\cite{Benaych-GeorgesCuencaGorin2022} (see the introduction of \cite{BufetovGorin2015b}), yet it is still quite interesting that we can recover their exact results as corollaries of our proofs.

\bibliographystyle{amsalpha}

\bibliography{biblio2015}

\end{document}